\documentclass{article}
\usepackage[utf8]{inputenc}
\usepackage[left=3cm,right=3cm,top=3cm,bottom=3cm]{geometry}
\usepackage{graphicx}
\usepackage{enumitem}
\usepackage{color}
\usepackage{amsmath,amsfonts,amssymb,amsthm,epsfig,epstopdf,titling,url,array}
\usepackage{mathrsfs}
\usepackage{caption}
\usepackage{subcaption}
\usepackage{tikz-cd}
\usepackage{MnSymbol}

\usepackage{fontawesome}

\usepackage{hyperref}      
\hypersetup{
    colorlinks,
    citecolor=black,
    filecolor=black,
    linkcolor=blue,
    urlcolor=black
}

\newcounter{ppart}

\newcommand{\ppart}[1]{%
    \refstepcounter{ppart}%
    \bigskip
    \noindent
    {\Large\bfseries Part~\Roman{ppart}. #1\par}
    \medskip
    \addcontentsline{toc}{part}{Part~\Roman{ppart}. #1}%
}

\DeclareMathAlphabet{\mathpzc}{OT1}{pzc}{m}{it}

\DeclareMathOperator{\conv}{Conv}
\DeclareMathOperator{\ham}{Ham}

\DeclareMathOperator{\osc}{Osc}
\DeclareMathOperator{\supp}{Supp}
\DeclareMathOperator{\leg}{Leg}

\DeclareMathOperator{\sympex}{\mathrm{Symp}_{\mathrm{ex}}}
\DeclareMathOperator{\Spec}{Spec}

\DeclareMathOperator{\leb}{leb}
\DeclareMathOperator{\im}{Im}
\DeclareMathOperator{\Span}{Span}
\DeclareMathOperator{\codim}{codim}

\newcommand{\R}{\mathbb{R}}

\newcommand{\N}{\mathbb{N}}
\newcommand{\Z}{\mathbb{Z}}
\newcommand{\C}{\mathbb{C}}

\newcommand{\T}{\mathbb{T}}
\newcommand{\cL}{\mathcal{L}}
\newcommand{\G}{\mathcal{G}}

\newcommand{\Mane}{Mañé }

\def\XXint#1#2#3{{\setbox0=\hbox{$#1{#2#3}{\int}$ }
\vcenter{\hbox{$#2#3$ }}\kern-.6\wd0}}

\newtheorem{theo}{Theorem}[section]
\newtheorem{prop}[theo]{Proposition}
\newtheorem{lem}[theo]{Lemma}
\newtheorem{cor}[theo]{Corollary}
\newtheorem{theor}{Theorem}
\newtheorem{coro}[theor]{Corollary}

\theoremstyle{definition}
\newtheorem{defi}[theo]{Definition}

\newtheorem{exmp}[theo]{Example}
\newtheorem{rem}[theo]{Remark}

\newtheorem{quest}{Question}

\theoremstyle{remark}

\numberwithin{equation}{section}

\title{A Multidimensional Birkhoff Theorem for \\ some $C^0$ Lagrangians}
\author{Skander Charfi \\ Université Paris-Saclay \and Ibrahim Trifa \\ ETH Z\"urich}
\date{}

\begin{document}

\maketitle

\begin{abstract}
    We prove a multidimensional Birkhoff theorem for a new class of ``$C^0$ Lagrangian subsets'' in cotangent bundles, obtained as Hausdorff limits of compact exact Lagrangian submanifolds with control on their Liouville primitives. If the successive images of such a subset $L$ under the flow of a Tonelli Hamiltonian admit convergent subsequences in both positive and negative time, then $L$ and all its images are Lipschitz graphs over the base. This extends Birkhoff's celebrated theorem for twist maps of the annulus and its known higher-dimensional generalizations, and provides a first version of such results for merely continuous objects. The proof combines Floer-theoretic graph selectors, variational solutions of the Hamilton--Jacobi equation, and weak KAM theory. We also investigate the rigidity and uniqueness of limiting primitives for this new class of singular Lagrangian subsets, which may be of independent interest in $C^0$ symplectic topology.
\end{abstract}

\tableofcontents

\hfill

\section{Introduction}

\subsection{Overview}

In his studies of twist maps of the cylinder, G.~D.~Birkhoff \cite{MR1555175} showed that any essential invariant curve is a Lipschitz graph over the circle. This result was a first step toward a non-perturbative study of invariant tori for twist maps. The extension of this phenomenon to higher dimensions has led to several generalizations in the settings of twist maps and convex Hamiltonian systems, often under additional assumptions on the topology or on the dynamics restricted to the invariant submanifold \cite{MR1067380,MR1027911,MR1001842,MR1043418,MR1159829,MR1157317,MR4597700}. One of the most notable generalizations is due to Arnaud \cite{MR2738994}, who proved that a Lagrangian submanifold of a cotangent bundle, Hamiltonian isotopic to the zero section and invariant under an autonomous Tonelli (convex) Hamiltonian flow, is a Lipschitz graph over the base. The non-autonomous case was established later by Arnaud and Venturelli \cite{MR3674224}.\\

While the original Birkhoff theorem applied to any continuous invariant circle, the higher-dimensional version of Arnaud required the invariant Lagrangian to be at least $C^1$. Therefore, one could ask the following question:
\begin{quest} \label{quest:BirkhoffC0}
    Are closed ``exact'' $C^0$ Lagrangian submanifolds\footnote{By a closed exact $C^0$ Lagrangian submanifold, we mean the image of a closed exact Lagrangian by a homeomorphism arising as a $C^0$-limit of exact symplectomorphisms.} that are invariant under the flow of a Tonelli Hamiltonian always graphs over the base manifold?
\end{quest}
This question has received significant attention and has been addressed, among others, by Marie-Claude Arnaud, Vincent Humilière, Rémi Leclercq, Sobhan Seyfaddini, Andrea Venturelli and Claude Viterbo. Important progress was made by Bernard--dos Santos \cite{MR2860422} and Amorim--Oh--dos Santos \cite{MR3860396}, who established the autonomous result for Lipschitz Lagrangian submanifolds, where the latter work also replaced the Hamiltonian isotopy assumption by an exactness condition.\\

In the present paper, we give a partial answer to Question \ref{quest:BirkhoffC0}: we prove a Birkhoff theorem for $C^0$ Lagrangians for which a notion of ``continuous Liouville primitive'' can be uniquely defined. We also show that this class of objects contains Lagrangians that are less regular than Lipschitz.\\

Another direction of generalization is to relax the invariance hypothesis. Fathi's weak KAM theory \cite{fathi2008weak,MR1650261} implies that in the autonomous setting a Lipschitz graph invariant under the time-one map is automatically invariant for all times, whereas this phenomenon fails in the non-autonomous case. In \cite{Recurrent}, the first author constructed a recurrent, non-periodic, smooth Lagrangian submanifold under a time-periodic Tonelli flow, and later established in \cite{Birkhoff} a Birkhoff-type theorem under a two-sided recurrence condition, more generally requiring only convergence along sequences of positive and negative times to Lagrangian submanifolds, with control on the Liouville primitives.\\

The ``$C^0$ Lagrangians'' we consider are inspired by this notion of convergence: they arise as Hausdorff limits of exact Lagrangian branes with control on their primitive. We call them \emph{bare Lagrangian branes}. As in \cite{Birkhoff}, the Birkhoff theorem we prove also holds under the two-sided convergence assumption, without requiring invariance.\\

The proof of our Birkhoff theorem combines Lagrangian Floer spectral invariants, graph selectors and weak KAM theory. Graph selectors were introduced by Sikorav and Chaperon \cite{MR2025275,MR1094198}, and were later constructed for exact Lagrangians using Floer homology in \cite{MR3860396}. They provide a natural way to construct variational solutions of the Hamilton--Jacobi equation, which have been studied in \cite{MR1604386,MR3151090,MR3957150}. We extend these constructions to bare Lagrangian branes. Using weak KAM techniques \cite{MR1451248,fathi2008weak}, we then show that a suitable asymptotic behaviour in both negative and positive time forces the Hamiltonian trajectories to be calibrated, ultimately yielding the Lipschitz graph property.\\

In the invariant case, our theorem requires both the Lagrangian object and its primitive to be fixed by the action of the Hamiltonian flow. When the primitive attached to a bare Lagrangian brane is unique (up to a constant shift), its invariance is automatically implied by the invariance of the underlying set. Therefore, to understand the range of application of our theorem, we need to study the uniqueness of primitives for bare Lagrangian branes. We prove uniqueness of the primitive for continuous Lagrangians of the form $\varphi(\operatorname{graph}(dv))$, where $v\in C^1(M)$ and $\varphi$ is an exact symplectic diffeomorphism, and even for images of closed exact Lagrangians by some merely continuous symplectic homeomorphisms $\varphi$. This provides new classes of $C^0$ Lagrangian subsets to which the Birkhoff theorem applies.\\

We also investigate some rigidity and flexibility properties of bare Lagrangian branes, which could be of independent interest in $C^0$ symplectic topology. We prove a semi-local rigidity theorem showing that if the support of a bare Lagrangian brane is a topological manifold of the same dimension as the base, then any $C^1$ open subset of it is Lagrangian, and the limiting function is a Liouville primitive on it. This is related to $C^0$-rigidity phenomena for Lagrangian and Legendrian submanifolds \cite{MR1266111,MR4334195,nakamura2020c0limitslegendriansubmanifolds,MR4876451,asano2025c0rigiditylegendrianscoisotropicssheaf}. Our proof uses microlocal sheaf theory and the relation between singular supports, $\gamma$-supports and the Humilière completion \cite{MR1299726,guillermou2015,viterbo2019sheafquantizationlagrangiansfloer,MR4768582,MR4683312}. If the base manifold has dimension at least two, we also prove a flexibility result: if a compact connected set contains the support of a bare Lagrangian brane, then any continuous extension of its primitive to the larger set can again be realized as the primitive of a bare Lagrangian brane. The main construction is inspired by convex integration \cite{MR864505,MR3024860,MR4381219,MR3619673}. This leads to examples of non-uniqueness of the primitives of bare Lagrangian branes, and to Birkhoff-type results for suitable stratified Lagrangian subsets for which uniqueness of the primitive need not hold.

\subsection{Main Results}

Fix a closed connected manifold $M$ of dimension $d$, endowed with a Riemannian metric $\langle \cdot , \cdot \rangle$, with induced norm $\Vert \cdot \Vert$ on the cotangent bundle and induced distance $d$ on the manifold $M$. The cotangent bundle $T^*M$ has a natural exact symplectic structure $(T^*M, \omega = -d\lambda)$ defined as follows. Let $\pi_M : T^*M \to M$ be the natural projection and denote by $(q,p)=(q_1,\ldots,q_d,p_1,\ldots,p_d)$ the coordinates in $T^*M$, where $q = (q_1,\ldots,q_d)$ are local coordinates of $M$ and $p = (p_1, \ldots, p_d)$ are fibrewise coordinates with respect to the cotangent vectors $dq_1,\ldots,dq_d$. The \textit{Liouville form} $\lambda$ is defined by $\lambda(q,p) = p \circ d\pi_M = p.dq$. The $1$-jet bundle of $M$ is denoted by $J^1M = T^*M \times \R$.

On $X = T^*M$ or $J^1M$, the \textit{Hausdorff distance} $d_H$ on the set of closed subsets of $X$ is defined for all such $\mathcal{K}$ and $\mathcal{K}'$ as
\begin{equation} \label{Hausdorff}
	d_H(\mathcal{K}, \mathcal{K}') = \max \Big\{ \sup_{x' \in \mathcal{K}'} d(x', \mathcal{K}) \; , \; \sup_{x \in \mathcal{K}} d(x, \mathcal{K}') \Big\} 
\end{equation}
where $d$ denotes the distance induced on $X$ by the Riemannian metric on $M$.

For $Y = M$ or $\R \times M$, the space $C^0(Y,\R)$ denotes the set of continuous scalar maps $u : Y \to \R$, endowed with the infinity norm $\Vert u \Vert_\infty = \sup_{q \in Y} |u(q)|$. Note that when $Y$ is not compact, this is not a norm since it may take infinite values. We say that a scalar map is $C^{1,1}$ regular if it is $C^1$ regular with Lipschitz differential. The graph of $du$ is denoted by $\mathcal{G}(du) \subset T^*M$. For any subset $A$ of $Y$, we will use the notation $\Vert u \Vert_\infty^{A} = \sup_{q \in A} |u(q)|$. The \textit{oscillation} of a scalar map $u$ is $\osc(u) = \sup u - \inf u$.

A \textit{symplectomorphism} $\varphi$ of $(T^*M,\omega)$ is a diffeomorphism of $T^*M$ such that $\varphi^* \omega = \omega$. A symplectomorphism is \textit{exact} if there exists a $C^1$ scalar map $f : T^*M \to \R$ such that $\varphi^* \lambda - \lambda = df$. We denote by $\sympex (T^*M,\lambda)$ the set of exact symplectomorphisms of $T^*M$.

Set $\mathbb{T}^1 = \mathbb{R}/ \mathbb{Z}$ and denote by $t$ the time coordinate in $\mathbb{T}^1$. A $C^2$ time-periodic \textit{Hamiltonian} is a map $H : \mathbb{T}^1 \times T^*M \to \mathbb{R}$. Given $H$, the \textit{Hamiltonian vector field} $X_H$ is the time-dependent vector field uniquely defined by the equation $\iota_{X^t_H} \omega = \omega( X^t_H, \cdot ) = dH_t$, where $H_t = H(t,\cdot,\cdot)$. The corresponding \textit{Hamiltonian flow} from time $s$ to time $t$ is denoted by $\phi_H^{s,t}$, with inverse $\phi_H^{t,s}$. We set $\phi_H^t := \phi_H^{0,t}$. The \textit{Hamiltonian diffeomorphism group} $\ham(T^*M, \omega)$ is the group of Hamiltonian diffeomorphisms, i.e., time-one maps of globally defined Hamiltonian flows. The \textit{Hamilton--Jacobi equation } associated to $H$ is given by 
\begin{equation} \label{HJ}
	\partial_t u + H(t,d_qu) = 0
\end{equation}

A Hamiltonian is \textit{Tonelli} if it is strictly convex and superlinear on fibres, and if it has a complete flow (Definition \ref{Tonelli}).\\

A \textit{Lagrangian submanifold} $\mathcal{L}$ of $T^*M$ is a submanifold such that $\dim \mathcal{L} =d$ and $\omega_{|T \mathcal{L}} =0$. The Lagrangian submanifold $\mathcal{L}$ is \textit{exact} if $\lambda_{|T \mathcal{L}}$ is an exact form, i.e., there exists a \textit{Liouville primitive} $h : \mathcal{L} \to \mathbb{R}$ such that $\lambda_{|\mathcal{L}} = dh$. The pair $(\cL,h)$ is called a \textit{Lagrangian brane}.\\

\subsubsection{The Multidimensional Birkhoff Result}

We aim to reach Lagrangian submanifolds of lower regularity while preserving a notion of primitive. To do so, we define a topology on the set of compact Lagrangian branes and subsequently consider the objects that can be approximated by Lagrangian branes with respect to this topology.

\begin{defi} \label{BraneDef}
	\begin{enumerate}
		\item A \textit{bare brane} $(\mathcal{K},h)$ is a pair consisting of a closed set $\mathcal{K}$ and a continuous map $h : \mathcal{K} \to \mathbb{R}$. We associate to it the partial graph $\Gamma_{(\mathcal{K},h)}=\{(x,h(x))\in T^*M\times \R\mid x\in \mathcal{K}\}$, which is a closed subset of the $1$-jet $J^1M = T^*M \times \R$ over the cotangent bundle $T^*M$. We denote by $\mathcal{B}_c(T^*M)$ the set of non-empty compact bare branes on $T^*M$.
		\item A \textit{Lagrangian brane} is a bare brane $(\mathcal{L},h)$ where $\mathcal{L}$ is an exact Lagrangian submanifold and $h$ is an associated Liouville primitive such that $\lambda_{|\mathcal{L}} = dh$. We denote by $\mathcal{LB}(T^*M,\omega)$ the set of compact Lagrangian branes on $(T^*M,\omega)$. Note that in this case, the graph $\Gamma_{(\cL,h)}$ is the Legendrian lift of $(\cL,h)$.
		\item We define the \textit{brane distance} $d_B$ between two compact bare branes to be the Hausdorff distance between their graphs in $J^1M$
        \[d_B((\mathcal{K},h),(\mathcal{K}',h'))=d_H(\Gamma_{(\mathcal{K},h)},\Gamma_{(\mathcal{K}',h')}),\]
        where $J^1M$ is endowed with the metric $d_{J^1M}$ defined by
		\begin{equation} \label{Formula:BraneDistanceHausdorff}
			d_{J^1M}\big( (x,z) ,(x',z') \big) = d(x,x') + |z-z'|.
		\end{equation}
		We call \textit{brane convergence} the convergence with respect to the brane distance $d_B$.
		\item We denote by $\overline{\mathcal{LB}(T^*M,\omega)}$ the closure of the set $\mathcal{LB}(T^*M,\omega)$ of Lagrangian branes in $\mathcal{B}_c(T^*M)$ endowed with the topology induced by the brane metric $d_B$. Its elements are called \textit{bare Lagrangian branes}.
		\item We denote by $\mathfrak{B}_c (T^*M)$ the quotient set $\mathcal{B}_c (T^*M)/ \mathbb{R}$ of bare branes $(\mathcal K,[h])$, where $[h]$ is the equivalence class of $h$ under the relation $h \sim h' \Leftrightarrow h' = h+c$ for some constant $c \in \mathbb{R}$. We define the metric $d_{\mathfrak{B}}$ on $\mathfrak{B}_c(T^*M)$ by
		\begin{equation}\label{distance on quotient}
			d_{\mathfrak{B}} \big((\mathcal K,[h]),(\mathcal K',[h'])\big) = \inf_{c \in \mathbb{R}} d_B\big((\mathcal{K},h+c),(\mathcal{K}',h')\big)
		\end{equation}
		We denote by $\mathfrak{LB} (T^*M,\omega)$ the quotient set $\mathcal{LB}(T^*M,\omega) / \mathbb{R}$ of Lagrangian branes. Since Liouville primitives are unique up to shift by a constant, $\mathfrak{LB} (T^*M,\omega)$ can be identified with the set $\mathscr{L}(T^*M)$ of closed exact Lagrangian submanifolds of $T^*M$.
		
		We denote by $\overline{\mathfrak{LB} (T^*M,\omega)}$ its closure in $\mathfrak{B}_c(T^*M)$ endowed with the topology induced by the brane metric $d_\mathfrak{B}$.
	\end{enumerate}
\end{defi}

\begin{rem} 
	\begin{enumerate}
		\item Since it is derived from the Hausdorff distance, the brane distance $d_B$ is indeed a metric.		
		\item The compact sets reached by this topology in $\overline{\mathfrak{LB} (T^*M,\omega)}$ can be extremely irregular, exhibiting, for instance, `filaments' (or hairs), or having Hausdorff dimension greater than $d = \dim M$. They may be as irregular as certain $\gamma$-supports, although the latter can display even more erratic behaviour. However, the primitive will contain additional information that will, in certain cases, allow for the control of the behaviour of the bare Lagrangian branes under consideration.
		\item We insist that the set of bare Lagrangian branes $\overline{\mathcal{LB}(T^*M,\omega)}$ is the closure, and not the completion of $\mathcal{LB}(T^*M,\omega)$ in $(\mathcal{B}_c (T^*M),d_B)$. Indeed, the set $\mathcal{B}_c (T^*M)$ is not complete for the brane metric $d_B$. As an example, consider the sequence of compact sets $\mathcal{K}_n = [0,1/n]$ together with continuous maps $h_n : \mathcal{K}_n \to \mathbb{R}$ defined by $h_n(x) = nx$. Then the sequence of graphs $\Gamma_{(\mathcal{K}_n,h_n)}$ converges in the Hausdorff topology to the vertical segment $\{0\}\times [0,1]$. Therefore, the sequence $(\mathcal{K}_n,h_n)$ is Cauchy for the brane distance, but does not converge in $\mathcal{B}_c (T^*M)$ since the vertical segment is not a partial graph.
		\item In the case of Lagrangian branes that are Hamiltonian isotopic to the zero section, brane convergence is equivalent to the “Reduced Complexity Convergence” introduced in \cite{Birkhoff}. This is shown in Appendix \ref{SectionRC}.
	\end{enumerate}
\end{rem}

We need to define a natural way to consider the image of a ``bare brane'' under the Hamiltonian flow. This is provided by the notion of a bare variational brane, defined as follows:

\begin{defi} \label{LagsImages} 
	Let $H : \mathbb{T}^1 \times T^*M \to \mathbb{R}$ be a Hamiltonian. To any bare brane $(\mathcal{K},h)$ in $\mathcal{B}_c(T^*M)$, we associate its \textit{variational bare brane} $(\mathcal{K}_t,h_t)_{t \in\mathbb{R}}$ defined by $\mathcal{K}_t = \phi_H^t(\mathcal{K})$ and
	\begin{equation} \label{LagsImagesFormula}
		h_t(x_t) = h (x) + \int_0^t \big( x_\tau^* \lambda - H(\tau, x_\tau) \big) \;d\tau \quad \text{where} \quad x_\tau = \phi_H^\tau (x).
	\end{equation}
	The variational bare brane associated to its class $(\mathcal{K},[h])$ in $\mathfrak{B}_c(T^*M)$ is the family $(\mathcal{K}_t,[h_t])_{t \in\mathbb{R}}$.
\end{defi}

We can now state the main theorem of this paper.

\begin{theor} \label{MainTheorem}
    	Let $H : \mathbb{T}^1 \times T^*M \to \mathbb{R}$ be a Tonelli Hamiltonian and let $(\mathcal{L},[h])$ be a bare Lagrangian brane in $\overline{\mathfrak{LB} (T^*M,\omega)}$. Let $(\mathcal{L}_t = \phi^t_H(\mathcal{L}),[h_t])$ be the associated variational brane. We assume that there exist two sequences of real times $t_k \to +\infty$ and $s_k \to -\infty$ such that $(\mathcal{L}_{t_k},[h_{t_k}])$ and $(\mathcal{L}_{s_k},[h_{s_k}])$ brane-converge, respectively, to two compact bare branes $(\mathcal{L}_\omega,[h_\omega])$ and $(\mathcal{L}_\alpha,[h_\alpha])$ in $\mathfrak{B}_c(T^*M)$.
	Then $\mathcal{L}$ and all its images $\mathcal{L}_t = \phi_H^t(\mathcal{L})$ are Lipschitz graphs over the zero section $0_M$.
\end{theor}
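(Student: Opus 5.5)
The plan is to follow the scheme announced in the introduction: graph selectors, then variational solutions of the Hamilton--Jacobi equation, then weak KAM calibration.

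\textbf{Step 1: graph selectors for bare branes.} Fix a representative $h$. For a genuine compact exact Lagrangian brane $(\mathcal{L}^n,h^n)$, the Floer-theoretic construction of \cite{MR3860396} gives a graph selector $u^n: M\to\R$. It is Lipschitz, and on a full-measure open set it satisfies $(q,du^n(q))\in\mathcal{L}^n$ and $u^n(q)=h^n(q,du^n(q))$. It is characterized by spectral invariants, which are $1$-Lipschitz with respect to the $C^0$ norm of the primitive data. Choose $(\mathcal{L}^n,h^n)$ brane-converging to $(\mathcal{L},h)$. Then $u^n$ is uniformly bounded and Cauchy in $C^0$, because the spectral values are controlled by $d_B$ through Hausdorff closeness of the Legendrian lifts, which is the key monotonicity/continuity property. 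This yields a limiting selector $u\in C^0(M)$ whose generalized differentials lie in $\mathcal{L}$ with values prescribed by $h$.

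\textbf{Step 2: variational solutions along the flow.} Apply Step 1 to $(\mathcal{L}_t,h_t)$ for every $t$, obtaining $u_t$. Using the Lax--Oleinik/variational characterization for Tonelli $H$, as in \cite{MR1604386,MR3151090}, show that for $s\le t$,
\[u_t \le T^{s,t}u_s, \qquad T^{s,t}v(q)=\min_{\gamma(t)=q}\Big(v(\gamma(s))+\int_s^t L(\tau,\gamma,\dot\gamma)\,d\tau\Big).\]
Use the symmetric inequality with the backward semigroup $\check T$. These are the standard properties of variational solutions. Convexity enters here: it makes the minmax solution a subsolution, dominated by the viscosity solution.

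\textbf{Step 3: two-sided convergence forces calibration.} This is the main obstacle. The brane limits in positive and negative time give $C^0$-compactness of $u_{t_k}$ and $u_{s_k}$ modulo constants. From the squeeze $\check T^{s_k,0}u_{s_k}\ge u_0\ge$ (backward) and $u_{t_k}\le T^{0,t_k}u_0$, together with weak KAM compactness of $T^{0,t}$ modulo constants \cite{fathi2008weak}, deduce that the inequalities are asymptotically equalities up to the additive constant $c(H)$ times $t$. The normalization of $[h_t]$ by Definition \ref{LagsImages} is consistent with subtracting the critical value. Bounded oscillation in both directions should then give that $u_0$ is simultaneously a forward and a backward weak KAM type solution along the sequences, with $u_0=T^{s,0}u_s$ exactly. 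The first attempt is to compare actions: $h_t(x_t)-h(x)$ equals the Lagrangian action of the orbit, and boundedness of $\osc$ along both sequences forces every orbit through $\mathcal{L}$ to realize the min in the Lax--Oleinik formula. This makes every orbit a calibrated curve for the pair $(u_s,u_t)$.

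\textbf{Step 4: Lipschitz graph.} A function that is both a forward and a backward solution (up to constants) along calibrated curves is $C^{1,1}$, by Fathi's theorem on conjugate pairs. Every point $x\in\mathcal{L}$ lies on a calibrated orbit whose momentum equals $du_0(\pi x)$. Hence $\mathcal{L}\subset\mathcal{G}(du_0)$. Conversely, the selector's differential points lie in $\mathcal{L}$ and $\mathcal{L}$ is closed, so $\mathcal{L}=\mathcal{G}(du_0)$, a Lipschitz graph. The same argument at time $t$, or transport by the flow of calibrated data, gives the claim for all $\mathcal{L}_t$.
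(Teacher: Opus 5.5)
\textbf{There is a genuine gap in Step 3, which is where the theorem actually lives.} Your Steps 1, 2 and 4 follow the same overall route as the paper; the trouble is Step 3.

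Combine domination (your Step 2) with $h_b(x_b)-h_a(x_a)=\int_a^b L(\tau,q_\tau,\dot q_\tau)\,d\tau$. Along the orbit of any $x\in\mathcal{L}$, this shows that $\Delta(\tau):=h_\tau(x_\tau)-u(\tau,q_\tau)$ is non-decreasing; $\Delta$ does not depend on the representative of $[h]$. Moreover, the calibration defect on $[a,b]$ is exactly $\Delta(b)-\Delta(a)$.

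Bounded oscillation of $h_t$ along $t_k$ and $s_k$ only makes $\Delta$ bounded, so it merely gives limits $\Delta_{\pm\infty}$. Calibration on all of $\R$ is equivalent to $\Delta_{+\infty}=\Delta_{-\infty}$. Nothing in your squeeze $u_t\le T^{0,t}u_0$ forces this equality, nor does weak KAM compactness, nor the critical value $c(H)$ (which plays no role, since constants are already quotiented out in $[h_t]$). The sentence ``boundedness of $\osc$ along both sequences forces every orbit to realize the min'' is a non sequitur: boundedness bounds the defect, it does not make it vanish. The claimed ``asymptotic equalities'' are asserted, not derived.

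\textbf{The missing idea is a spectrality statement at the limit branes.} You must show $\Delta_{+\infty}=h_\omega(x_\omega)-u_\omega(0,q_\omega)=0$ and $\Delta_{-\infty}=h_\alpha(x_\alpha)-u_\alpha(0,q_\alpha)=0$. Here $x_\omega,x_\alpha$ are limit points of $x_{n_k},x_{-m_k}$, and $u_\omega,u_\alpha$ are the variational subsolutions of the limit branes. In the paper this takes three steps.
\begin{enumerate}
\item Choose, by a diagonal argument, smooth approximating branes that converge simultaneously at the times $n_k,n_{k+1},-m_k,-m_{k+1}$. Then both selectors and primitives pass to the limit.
\item Prove that the orbit of $x_\omega$ is calibrated by $u_\omega$. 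This comes from comparing defects over $[a,a+n_{k+1}-n_k]$ with $n_{k+1}-n_k\to\infty$.
\item Apply Fathi's differentiability theorem along that calibrated curve, giving $du_\omega(0,q_\omega)=(-H(0,x_\omega),p_\omega)$. Combine this with the Clarke/Bernard--dos Santos lemma on limits of gradients and strict convexity of $H$: this covector is an extremal point of $\{(E,p)\mid E+H(0,q_\omega,p)\le 0\}$. Hence it is an actual limit of differentials $d\tilde u^k$ of smooth approximating selectors, taken at points where the smooth identity $\tilde u^k=\tilde h^k(q,d_q\tilde u^k)$ holds. Passing to the limit yields $u_\omega(0,q_\omega)=h_\omega(x_\omega)$.
\end{enumerate}

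Two further corrections:
\begin{itemize}
\item Your Step 1 asserts that the limit selector has differentials in $\mathcal{L}$ with values given by $h$. For bare branes this is not available: only $d_qu\in\conv(\mathcal{L}_t\cap T^*_qM)$ survives the limit. That is exactly why spectrality has to be recovered along calibrated curves, as above.
\item In Step 4, the ``converse'' inclusion should instead come from surjectivity of $\mathcal{L}_t\to M$. Every closed exact Lagrangian meets every fibre, and this property passes to Hausdorff limits.
\end{itemize}
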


We state an important particular case:
\begin{theor}[Invariant case]\label{MainThmInvariant}
    Let $H : \mathbb{T}^1 \times T^*M \to \mathbb{R}$ be a Tonelli Hamiltonian and let $(\mathcal{L},[h])$ be a compact brane in $\overline{\mathfrak{LB} (T^*M,\omega)}$. Let $(\mathcal{L}_t = \phi^t_H(\mathcal{L}),[h_t])$ be the associated variational brane. We assume that $(\mathcal{L}_{1},[h_{1}])=(\mathcal{L},[h])$. 
	Then $\mathcal{L}$ is a Lipschitz graph over the zero section $0_M$.
\end{theor}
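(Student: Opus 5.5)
Theorem \ref{MainThmInvariant} follows from Theorem \ref{MainTheorem} by taking the two sequences of times to be integers. The only point to check is that the variational brane is compatible with the time-periodicity of $H$. Since $H$ is $1$-periodic in $t$, we have $\phi_H^{n,n+1}=\phi_H^{0,1}$ for every $n\in\Z$. Moreover, the action increment in \eqref{LagsImagesFormula} over $[n,n+1]$, computed along $\tau\mapsto\phi_H^{n,\tau}(x)$, equals the increment over $[0,1]$ computed along $\phi_H^{\tau}(x)$. This follows from the substitution $\tau\mapsto\tau-n$, since $H(\tau,\cdot)=H(\tau-n,\cdot)$.

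First, I will show that the map $(\mathcal{K},h)\mapsto(\mathcal{K}_1,h_1)$ is well defined on classes: adding a constant $c$ to $h$ adds $c$ to $h_1$. Call the induced map $\Phi$. The variational brane then satisfies the cocycle relation $(\mathcal{L}_{n+1},[h_{n+1}])=\Phi(\mathcal{L}_n,[h_n])$. To see this, I will write $h_{n+1}(x_{n+1})=h_n(x_n)+\int_n^{n+1}(x_\tau^*\lambda-H(\tau,x_\tau))\,d\tau$, which is additivity of the integral in \eqref{LagsImagesFormula}, and then use the periodicity remark above.

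Next, by induction, the hypothesis $(\mathcal{L}_1,[h_1])=(\mathcal{L},[h])$ gives $(\mathcal{L}_n,[h_n])=(\mathcal{L},[h])$ for all $n\ge 0$. For negative $n$, I apply $\Phi^{-1}$, which is given by the same formula with the backward flow. From $\Phi(\mathcal{L},[h])=(\mathcal{L},[h])$ we get $\Phi^{-1}(\mathcal{L},[h])=(\mathcal{L},[h])$, hence $(\mathcal{L}_{-n},[h_{-n}])=(\mathcal{L},[h])$ for all $n\ge 0$. This requires checking that the formula for negative $t$ inverts the forward one, which is immediate because $\int_0^{-1}=-\int_{-1}^0$ along the same orbit.

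Finally, I take $t_k=k$ and $s_k=-k$. Both sequences are constant, equal to $(\mathcal{L},[h])$, which is a compact bare brane in $\mathfrak{B}_c(T^*M)$, so they trivially brane-converge. Theorem \ref{MainTheorem} then gives that $\mathcal{L}$ is a Lipschitz graph over $0_M$.

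The only mildly delicate step is the bookkeeping of the time shift in the action integral, and it is routine. All the real difficulty lies in Theorem \ref{MainTheorem}, whose proof I will not attempt here.
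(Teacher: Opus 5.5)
Your argument is correct and follows the paper's route. The paper treats this statement as the special case of Theorem~\ref{MainTheorem} in which time-periodicity of $H$ makes every integer-time image $(\mathcal{L}_{\pm k},[h_{\pm k}])$ equal to $(\mathcal{L},[h])$, so one takes $(\mathcal{L}_\omega,[h_\omega])=(\mathcal{L}_\alpha,[h_\alpha])=(\mathcal{L},[h])$; your cocycle relation $(\mathcal{L}_{n+1},[h_{n+1}])=\Phi(\mathcal{L}_n,[h_n])$ and the backward-time check just make explicit the bookkeeping the paper leaves implicit.
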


Let us discuss these results in two distinct points: the reduction of the invariance condition in Theorem \ref{MainTheorem}, and the nontrivial gain in regularity compared to the well-known $C^{1,1}$ version of Theorem \ref{MainThmInvariant}.

\paragraph{Optimality of Recurrence.}

We discuss a relaxation of the invariance assumption in Theorem \ref{MainTheorem}. Indeed, the theorem only requires the convergence, in positive and negative time, of subsequences of the variational bare Lagrangian brane, rather than full invariance under the Hamiltonian flow. This type of relaxation first appeared in \cite{Birkhoff}.\\

The graph property is therefore not a direct consequence of invariance, but rather of a specific asymptotic behaviour of the Lagrangian submanifolds at infinity. Moreover, we will see in this section that this behaviour is quite restrictive: more precisely, the considered bare Lagrangian branes turn out to be recurrent in both positive and negative time under the Hamiltonian flow, with respect to the brane topology.\\

The reason behind this fact comes from the proof of Theorem \ref{MainTheorem} itself. In order to show that $\mathcal{L}_t$ are Lipschitz graphs over the zero section, we prove that they are precisely the graphs of some $d u(t,\cdot)$, where $u : \R \times M \to \R$ is a $C^{1,1}$ solution of the Hamilton--Jacobi equation \eqref{HJ}. The behaviour of such solutions is quite rigid. We now make this precise.\\

It is known (\cite{fathi2008weak,MR2393423}) that there exists a unique real constant $\alpha_0$ known as the \textit{\Mane critical value} such that all (viscosity) solutions $u:[s,+\infty) \times M \to \mathbb{R}$ of the critical Hamilton--Jacobi equation
\begin{equation} \label{HJAlpha}
	\partial_t u + H(t,d_qu) = \alpha_0
\end{equation}
are bounded in $C^0([s,+\infty) \times M,\mathbb{R})$.\\ 

The notion of viscosity solution was introduced by Crandall and Lions \cite{MR0690039} and has since become a standard framework for the Hamilton--Jacobi equation. We will not need to recall its general definition here, since throughout this article we only consider sufficiently regular solutions. In this regular setting, viscosity solutions coincide with classical solutions.

\begin{theo}  [\cite{MR1650261,MR2041603,Representation, Birkhoff, MR2150356}]	 \label{TheoremViscosityAsymptotic}
	Let $H : \mathbb{T}^1 \times T^*M \to \mathbb{R}$ be a Tonelli Hamiltonian. Let $u : \mathbb{R} \times M \to \mathbb{R}$ be a global viscosity solution of the Hamilton--Jacobi equation \eqref{HJAlpha}. Then,
	\begin{enumerate}
		\item General Case \cite{Representation, Birkhoff, MR2150356}: $u$ is recurrent in both positive and negative times, and more precisely, there exist two increasing sequences $n_k$ and $m_k$ of positive integers such that
		\begin{equation}
			\begin{split}
				\lim_{k \to + \infty} \Vert u(n_k,\cdot) - u(0,\cdot) \Vert_\infty = 0 \quad &, \quad \lim_{k \to + \infty} \Vert u(-m_k,\cdot) - u(0,\cdot) \Vert_\infty = 0 \\
				\lim_{k \to + \infty} d_H \left( \overline{\mathcal{G}\big(d_qu(n_k,\cdot)\big)} , \overline{\mathcal{G}\big(d_qu(0,\cdot)\big)} \right) =0 \quad &, \quad \lim_{k \to + \infty} d_H \left( \overline{\mathcal{G}\big(d_qu(-m_k,\cdot)\big)} , \overline{\mathcal{G}\big(d_qu(0,\cdot)\big)} \right) =0
			\end{split}
		\end{equation}
		where $\overline{\mathcal{G}\big(d_qu(t,\cdot)\big)}$ denotes the closure of the graph of $d_qu(t, \cdot)$ in $T^*M$.
		\item One-Dimensional Case \cite{MR2041603}: If $M = \mathbb{T}^1$, then there exists an integer $N = N(H) \geq 1$ depending only on $H$ such that $u$ is $N$-periodic in time, i.e. $u(t+N,q) = u(t,q)$ for any $(t,q) \in \mathbb{R} \times M$.
		\item Autonomous Case \cite{MR1650261}: If $H : T^*M \to \mathbb{R}$ is autonomous, then $u(t,q) = u(q) : M \to \mathbb{R}$ is independent of time.
	\end{enumerate}
\end{theo}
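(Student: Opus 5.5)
The statement to address is Theorem \ref{TheoremViscosityAsymptotic}, a compilation of known results. The plan is to prove the general recurrence statement by compactness in the Lax--Oleinik framework, and then indicate how the two special cases follow.

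\emph{Setting up the semigroup.} Write $T^{s,t}$ for the Lax--Oleinik operator associated with $H-\alpha_0$, so that $u(t,\cdot)=T^{s,t}u(s,\cdot)$ for every $s<t$. Since $H$ is $1$-periodic in time, $T^{n,n+m}=T^{0,m}$ for integers $n,m$. The first step is to show that the family $\{u(n,\cdot)\}_{n\in\Z}$ is bounded and equi-Lipschitz. Equi-Lipschitz follows from Tonelli superlinearity: $T^{0,1}$ maps $C^0$ into a fixed Lipschitz class up to constants. Boundedness comes from the choice of $\alpha_0$ together with the fact that $u$ is global. Bounded in positive time is by definition of the critical value. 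For negative times, I would use that $u(0)=T^{-n,0}u(-n)$. I would combine this with the standard estimate $\min(v)+ c_1 \le T^{0,n}v - n\cdot 0 \le \max(v)+c_2$, whose constants $c_1,c_2$ do not depend on $n$ at the critical value. This prevents $\osc u(-n)$ and the drift of $\min u(-n)$ from blowing up.

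\emph{Recurrence by Arzelà--Ascoli and non-expansiveness.} By the previous step, the orbit $\{u(n)\}$ lies in a compact subset of $C^0$. The map $S=T^{0,1}$ is $1$-Lipschitz for $\Vert\cdot\Vert_\infty$. A non-expansive map on a compact metric space restricted to the closure of an orbit is recurrent; this is the classical Freudenthal--Hurewicz-type argument.

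Concretely, I would pick a limit point $w=\lim u(p_j)$. For $p_j<p_i$ I would then estimate
\[\Vert u(p_i-p_j)-u(0)\Vert_\infty\le \Vert u(p_i)-u(p_j)\Vert_\infty\to0.\]
This inequality uses non-expansiveness applied backward, which is available because both terms are images under $S^{p_j}$ of $u(0)$ and $u(p_i-p_j)$ only in the forward direction. So I would instead first prove it on the $\omega$-limit set, where $S$ is an isometry onto itself, and then transport it to $u(0)$ using that $u(0)$ lies in the $\alpha$-limit set of the global orbit.

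This isometry-on-the-limit-set step is the main obstacle. I expect to handle it via the known fact that a non-expansive surjection of a compact metric space is an isometry. Applying it to the closure $\overline{\{u(n):n\in\Z\}}$, which $S$ maps onto itself because the orbit is bi-infinite, gives that $S$ is an isometry there, and an isometry of a compact space is uniformly recurrent. This yields $n_k$ and $m_k$ with $u(\pm n_k)\to u(0)$ uniformly.

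\emph{Graph convergence and special cases.} For the Hausdorff convergence of graphs, I would use that global solutions are semiconcave and semiconvex, hence $C^{1,1}$ with uniform constants (Fathi's theorem). Then $d_qu(n_k)$ is equi-Lipschitz, and uniform convergence of $u(n_k)$ upgrades to uniform convergence of the differentials by interpolation. This gives the Hausdorff convergence of $\overline{\mathcal{G}}$.

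For the autonomous case, $u$ is calibrated. Fathi's convergence argument, with $u(t)$ recurrent and the quantity $\max(u(t)-u(0))$ monotone along the flow, forces $u(t)=u(0)$. For $M=\T^1$, the graph of $d_qu$ is an invariant circle of the time-one twist map, and rotation-number rigidity (Aubry--Mather) gives periodicity. I would cite \cite{MR2041603} for this case rather than reprove it.
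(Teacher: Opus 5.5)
The paper gives no proof of this theorem; it quotes it from the cited works. So your proposal has to stand on its own, and one step fails.

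\textbf{The gap: Hausdorff convergence of the graphs.} Your argument for the convergence of $\overline{\mathcal{G}(d_qu(n_k,\cdot))}$ rests on the claim that global viscosity solutions are semiconcave \emph{and} semiconvex, hence $C^{1,1}$ with uniform constants. This is false. Each $u(t,\cdot)=T^{t-1,t}u(t-1,\cdot)$ is uniformly semiconcave. Semiconvexity would require $u(t,\cdot)$ to also be a backward Lax--Oleinik image of $u(t+1,\cdot)$, and that fails in general. Already in the autonomous case, the critical weak KAM solution of the pendulum of Example~\ref{exp:Pendulum} has a concave kink; it is a time-independent global solution that is not even $C^1$. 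Theorem~\ref{Fathi} only gives Lipschitz regularity of $du$ on the set $A_{\varepsilon,u}$ of points lying on two-sided calibrated curves.

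Without $C^{1,1}$ bounds, uniform convergence of uniformly semiconcave functions gives only one inclusion, $\overline{\mathcal{G}(d_qu(0,\cdot))}\subset\liminf_k\overline{\mathcal{G}(d_qu(n_k,\cdot))}$. The $\limsup$ is only known to lie in the graph of the superdifferential of $u(0,\cdot)$, i.e.\ in the fibrewise convex hull of $\overline{\mathcal{G}(d_qu(0,\cdot))}$. Smooth concave approximations of $-|x|$ show that new intermediate slopes really can appear in such limits. Excluding them requires dynamical input that your proposal does not contain. One would, for instance, pass backward calibrated curves to the limit (which uniform convergence $u(\cdot+n_k,\cdot)\to u$ on $\R\times M$ permits), and then still relate their final momenta to actual reachable gradients at time $0$. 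This is precisely the content the paper delegates to the references.

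Your argument is valid in the only place the paper uses the theorem, Corollary~\ref{OptimalityCor}. There every point lies on a curve calibrated on all of $\R$, so $u(t,\cdot)$ is both a forward and a backward Lax--Oleinik image. It is therefore uniformly semiconcave and semiconvex, and your interpolation step goes through.

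\textbf{The $C^0$ recurrence.} This part is correct, but only in its final form. A non-expansive map need not be recurrent on a forward orbit closure (think of $x\mapsto x/2$ on $[0,1]$). Your displayed inequality would need $S$ to be non-contracting, and invoking the $\alpha$-limit set of $u(0,\cdot)$ is circular. What works is your last argument. The set $X=\overline{\{u(n,\cdot):n\in\Z\}}$ is compact: the oscillation bound for negative times comes from the equi-Lipschitz property, and your critical-value estimate then fixes the level. Because the orbit is bi-infinite, $S(X)=X$, so by Freudenthal--Hurewicz $S$ is an isometry of $X$. Hence $S^{n_k}\to\mathrm{id}$ uniformly on $X$ along some $n_k\to+\infty$, and $\sup_{j\in\Z}\Vert u(j\pm n_k,\cdot)-u(j,\cdot)\Vert_\infty\to0$.

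\textbf{Items 2 and 3.} This uniform two-sided recurrence gives a cleaner route than your heuristics. For $M=\mathbb{T}^1$, the graph of $d_qu$ is in general not an invariant circle, for the same regularity reason as above. Instead, suppose $\Vert u(t,\cdot)-w(t,\cdot)\Vert_\infty\to0$ as $t\to+\infty$ for some $N$-periodic (resp.\ stationary) solution $w$, as given by Bernard--Roquejoffre (resp.\ Fathi). Along a subsequence with $n_k\equiv r\bmod N$ one gets $u(j,\cdot)=\lim_k u(j+n_k,\cdot)=w(j+r,\cdot)$ for every $j\in\Z$. This forces $u$ to be $N$-periodic (resp.\ time-independent).
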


\begin{coro} \label{OptimalityCor}
	Under the assumptions of Theorem \ref{MainTheorem}, we have the following:
	\begin{enumerate}
		\item General Case: There exist two increasing sequences $n_k$ and $m_k$ of positive integers such that both $(\mathcal{L}_{n_k},[h_{n_k}])$ and $(\mathcal{L}_{-m_k},[h_{-m_k}])$ brane-converge to $(\mathcal{L},[h])$ in $\mathfrak{B}_c(T^*M)$.
		\item One-Dimensional Case: If $M = \mathbb{T}^1$, then there exists $N = N(H) \geq 1$ depending only on $H$ such that $\phi_H^N(\mathcal{L}) = \mathcal{L}$. Additionally, $[h_N] = [h]$.
		\item Autonomous Case: If $H : T^*M \to \mathbb{R}$ is autonomous, then $\mathcal{L}$ is invariant under the Hamiltonian flow $\phi_H^t$, i.e. for all times $t \in \mathbb{R}$, $\phi_H^t(\mathcal{L}) = \mathcal{L}$. Additionally, for all times $t$ in $\R$, $[h_t] = [h]$.
	\end{enumerate}
\end{coro}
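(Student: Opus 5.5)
The plan is to use the structure behind Theorem \ref{MainTheorem}: its proof identifies each $\mathcal{L}_t$ with $\mathcal{G}(d_qu(t,\cdot))$ for a global $C^{1,1}$ solution $u:\R\times M\to\R$ of the Hamilton--Jacobi equation. I also expect the proof to identify the primitive $h_t$ with $u(t,\cdot)\circ\pi_M$ up to a constant. I will take both facts as the output of the main theorem's proof; the second one is the key point to check.

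First I justify the second fact. Since $u$ is $C^{1,1}$ and $\mathcal{L}_t=\mathcal{G}(d_qu(t,\cdot))$, the restriction of $\lambda$ to the graph equals $d(u(t,\cdot)\circ\pi_M)$. Along a Hamiltonian trajectory $x_\tau=(q_\tau,d_qu(\tau,q_\tau))$ we have
\[
\frac{d}{d\tau}u(\tau,q_\tau)=\partial_t u+d_qu\cdot\dot q_\tau=x_\tau^*\lambda-H(\tau,x_\tau),
\]
using \eqref{HJ}. This matches formula \eqref{LagsImagesFormula} exactly. So if $h=u(0,\cdot)\circ\pi_M+c$ at time $0$, then $h_t=u(t,\cdot)\circ\pi_M+c$ for all $t$. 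The initial identification at time $0$ must come from the proof of Theorem \ref{MainTheorem}, where $u(0,\cdot)$ is built from the limiting primitive via graph selectors. If that step instead only gives a bare Lagrangian brane supported on a Lipschitz graph, I would need uniqueness of the primitive there, presumably available because on a $C^{1,1}$ graph $h$ is a limit of primitives of Lagrangians converging to a graph.

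Next I pass to the critical equation. Set $v(t,q)=u(t,q)+\alpha_0 t$, which solves \eqref{HJAlpha}, is $C^{1,1}$, and hence is a global viscosity solution. The shift changes $h_t$ only by a constant, so the class $[h_t]$ is equal to $[v(t,\cdot)\circ\pi_M]$.

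Then I apply Theorem \ref{TheoremViscosityAsymptotic} to $v$.

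\textbf{General case.} There are sequences $n_k,m_k$ with $v(n_k,\cdot)\to v(0,\cdot)$ uniformly and with the closed graphs $\mathcal{G}(d_qv(n_k,\cdot))=\mathcal{L}_{n_k}$ Hausdorff-converging to $\mathcal{L}$; likewise for $-m_k$. I must convert this into convergence for $d_{\mathfrak B}$. I would bound
\[
d_B\big((\mathcal{L}_{n_k},v(n_k)\circ\pi),(\mathcal{L},v(0)\circ\pi)\big)
\]
as follows. Take a point $(x',v(n_k,\pi x'))$ and the nearest point $x\in\mathcal{L}$. Then
\[
|v(n_k,\pi x')-v(0,\pi x)|\le\|v(n_k)-v(0)\|_\infty+\mathrm{Lip}(v(0))\,d(x,x').
\]
The Lipschitz constant of $v(0,\cdot)$ is finite because $d_qv$ is bounded on the compact $\mathcal{L}$. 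This gives convergence of the graphs in $J^1M$.

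\textbf{One-dimensional case.} When $M=\T^1$, $N$-periodicity gives $d_qv(N,\cdot)=d_qv(0,\cdot)$, hence $\phi_H^N(\mathcal{L})=\mathcal{L}$ and $v(N)=v(0)$, so $[h_N]=[h]$.

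\textbf{Autonomous case.} Time independence of $v$ gives $\mathcal{L}_t=\mathcal{L}$ for all $t$, and $h_t=v\circ\pi_M+c-\alpha_0t$, so $[h_t]=[h]$.

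The main obstacle is the identification $h=u(0,\cdot)\circ\pi_M+\text{const}$ on $\mathcal{L}$. The rest is bookkeeping with the Lipschitz bound above.
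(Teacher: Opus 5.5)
Your architecture matches the paper's: pass to $\tilde u=u+\alpha_0t$, apply Theorem \ref{TheoremViscosityAsymptotic}, and transfer the recurrence of $\tilde u$ to the branes by identifying $h_t$ with $u(t,\cdot)\circ\pi_M$. The bookkeeping is correct. Your propagation of the identity in time via \eqref{HJ} is valid. Your Lipschitz estimate on $d_B$ is a fine substitute for the paper's route, which goes through $C^1$ convergence of $\tilde u(n_k,\cdot)$ and Hausdorff convergence of $1$-jets.

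The gap is the step you yourself flag. The identification $h=u(0,\cdot)\circ\pi_M+c$ on $\mathcal{L}$ is assumed, not proved, and it is exactly the non-trivial content of the corollary. The proof of Theorem \ref{MainTheorem} only gives $\mathcal{L}_t\subset\mathcal{G}(d_qu(t,\cdot))$, through calibration. Spectrality (Lemma \ref{SpectralityLemma}) is established there only at the limit points $x_\alpha$ and $x_\omega$. Your fallback is not an argument either. Example \ref{exp:Pendulum} shows that one support can carry inequivalent primitive classes, and ``$h$ is a limit of primitives of Lagrangians converging to a graph'' just restates the hypothesis. Without this step, none of the statements about $[h_{n_k}]$, $[h_N]$ or $[h_t]$ follow; only the set-level conclusions do.

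The paper fills the gap by rerunning the proof of Lemma \ref{SpectralityLemma} for $(\mathcal{L},h)$ itself. This is possible because Proposition \ref{CalibTout} gives a $u$-calibrated curve through every point of every $\mathcal{L}_t$. The steps are:
\begin{itemize}
\item Theorem \ref{Fathi} gives $\partial_tu+H(t,q_t,d_qu)=0$, so $du(t,q_t)$ is an extremal point of the strictly convex set $\{(E,p)\mid E+H(t,q_t,p)\le 0\}$.
\item Lemma \ref{lem:BdS}, together with this extremality, makes $du(t,q_t)$ a genuine limit of $du^k(t^k,q^k)$ at points where Proposition \ref{Graphext} gives $u^k(t^k,q^k)=h^k_{t^k}(q^k,d_qu^k(t^k,q^k))$.
\item Passing to the limit with Proposition \ref{BranePointwiseConv} and Corollary \ref{GSConv} yields $h_t(x_t)=u(t,q_t)$ exactly, at every $t$. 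Your propagation step is then not even needed.
\end{itemize}

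Since you already know $\mathcal{L}=\mathcal{G}(du_0)$, a shorter version of your fallback also works:
\begin{itemize}
\item Take $q$ in the full-measure set where the graph selector $u^k_0=u^k(0,\cdot)$ of $(\mathcal{L}^k,h^k)$ satisfies $(q,du^k_0(q))\in\mathcal{L}^k$ and $u^k_0(q)=h^k(q,du^k_0(q))$. This is the time-independent statement of \cite[Theorem 4.6]{MR3860396}.
\item Because $\mathcal{L}\cap T^*_qM=\{(q,du_0(q))\}$, and $\mathcal{L}^k\to\mathcal{L}$ in the Hausdorff topology inside a fixed compact set, the points $(q,du^k_0(q))$ converge to $(q,du_0(q))$.
\item Proposition \ref{BranePointwiseConv} and Corollary \ref{GSConv} then give $h(q,du_0(q))=u_0(q)$ on a dense set, hence everywhere by continuity.
\end{itemize}
You need to supply one of these two arguments for the proof to be complete.
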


\begin{rem}
	We make some remarks about the weakening of the invariance assumption.
	\begin{enumerate}
			\item The proof relies heavily on the convergence of the bare Lagrangian branes in both positive and negative time. This naturally raises the following question:
			\begin{quest}
				Is it possible to construct an exact Lagrangian submanifold that is recurrent in positive time, but not in negative time?
			\end{quest}
			\item We will see in the next paragraph that weakening the regularity of the considered bare Lagrangian branes may produce counterexamples to the graph property, as in Example \ref{exp:Pendulum}, which provides an example of a $\gamma$-support that is invariant under a Tonelli Hamiltonian flow while not being a graph over the base. However, one may ask whether the topology of convergence of $\mathcal{L}_{n_k}$ and $\mathcal{L}_{-m_k}$ can also be weakened.
			\begin{quest}
				If $\cL$ is an exact Lagrangian submanifold that is recurrent, in the spectral or Hausdorff topology, under the flow of a Tonelli Hamiltonian, must it be a graph over the base?
			\end{quest}
	\end{enumerate}		 
\end{rem}

\paragraph{Gain in Regularity.} We discuss the gain in regularity of the considered Lagrangian submanifolds for which the Birkhoff Theorem \ref{MainThmInvariant} applies.\\

The issue is that, although brane convergence allows us to reach extremely irregular sets, this does not imply that Theorem \ref{MainThmInvariant} remains valid for such sets. More precisely, the theorem fails if we only assume that the set $\cL$ is invariant instead of the condition $(\mathcal{L}_1,[h_1])=(\mathcal{L},[h])$.\\

Indeed, we present the following counterexample:

\begin{exmp} \label{exp:Pendulum}
	We consider the pendulum $H(q,p) = p^2 + \cos (2\pi q)$. Then, the elliptic island of the phase portrait $\mathcal{L} = \{(q,p) \in T^* \mathbb{T}^1 \; | \; |p| \leq \sqrt{2}|\sin (\pi q ) | \}$ does correspond to a bare Lagrangian brane $(\cL,[0])\in \overline{\mathfrak{LB} (T^*M,\omega)}$. The approximating sequence of Lagrangian branes $(\cL_n,h_n)$ is represented in Figure \ref{fig:CounterExmpUniq}.

	The elliptic island $\mathcal{L}$ is invariant under the Tonelli Hamiltonian flow of $H$. However, it is not a graph over the base $\mathbb{T}^1$. Nevertheless, this does not contradict Theorem \ref{MainThmInvariant} since for all times $t \in \mathbb{R}\setminus\{0\}$, $[h_t] \neq [0]$. In particular, there is no uniqueness of the class $[h]$ for $(\mathcal{L},[h]) \in \overline{\mathfrak{LB} (T^*M,\omega)}$.
    \begin{figure}[h]
		\centering
		\begin{subfigure}{0.48\textwidth}
			\centering
			\includegraphics[width=\textwidth]{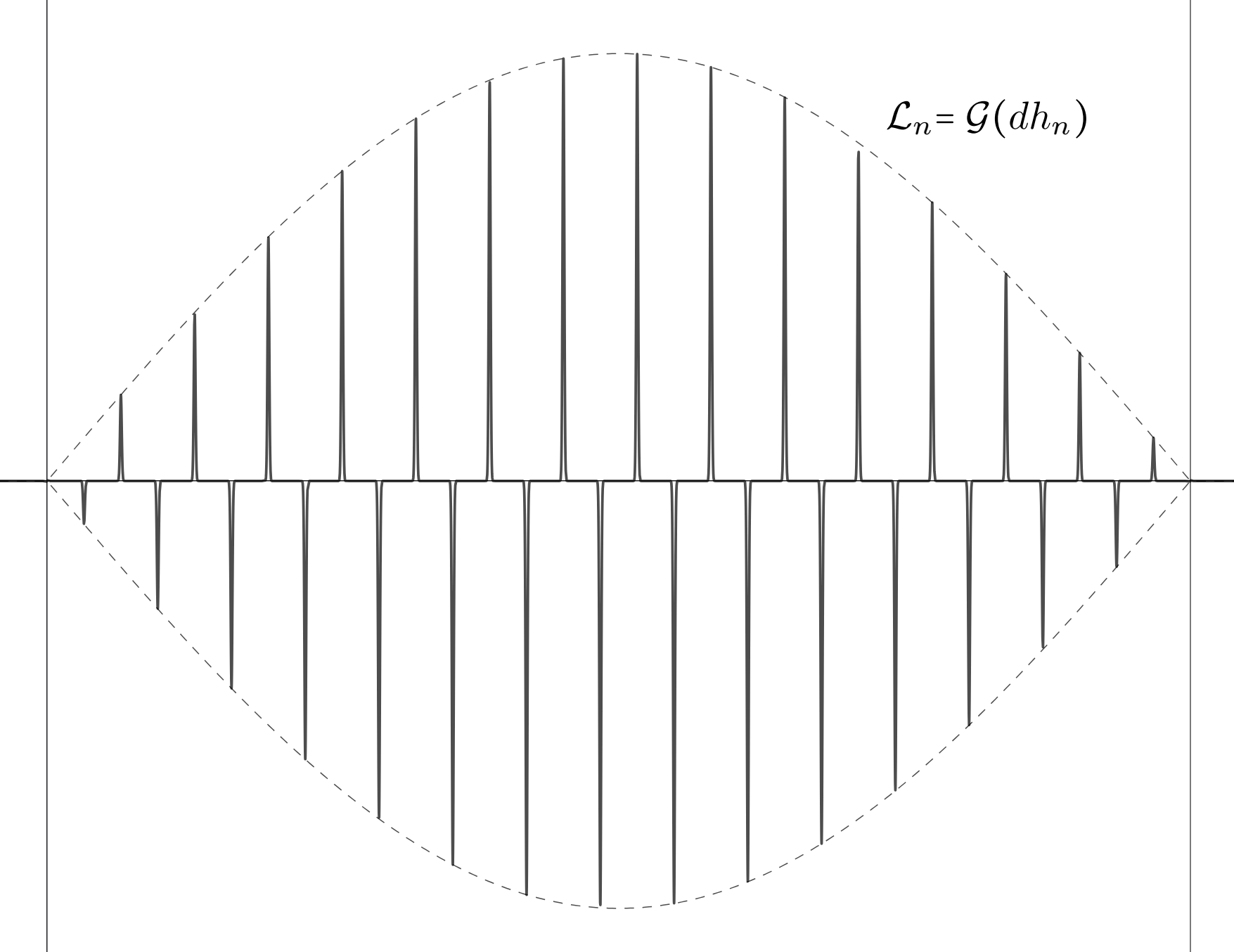}
			\caption{$(\mathcal{L}_n,h_n \approx 0)$}
		\end{subfigure}
		\begin{subfigure}{0.495\textwidth}
			\centering
			\includegraphics[width=\textwidth]{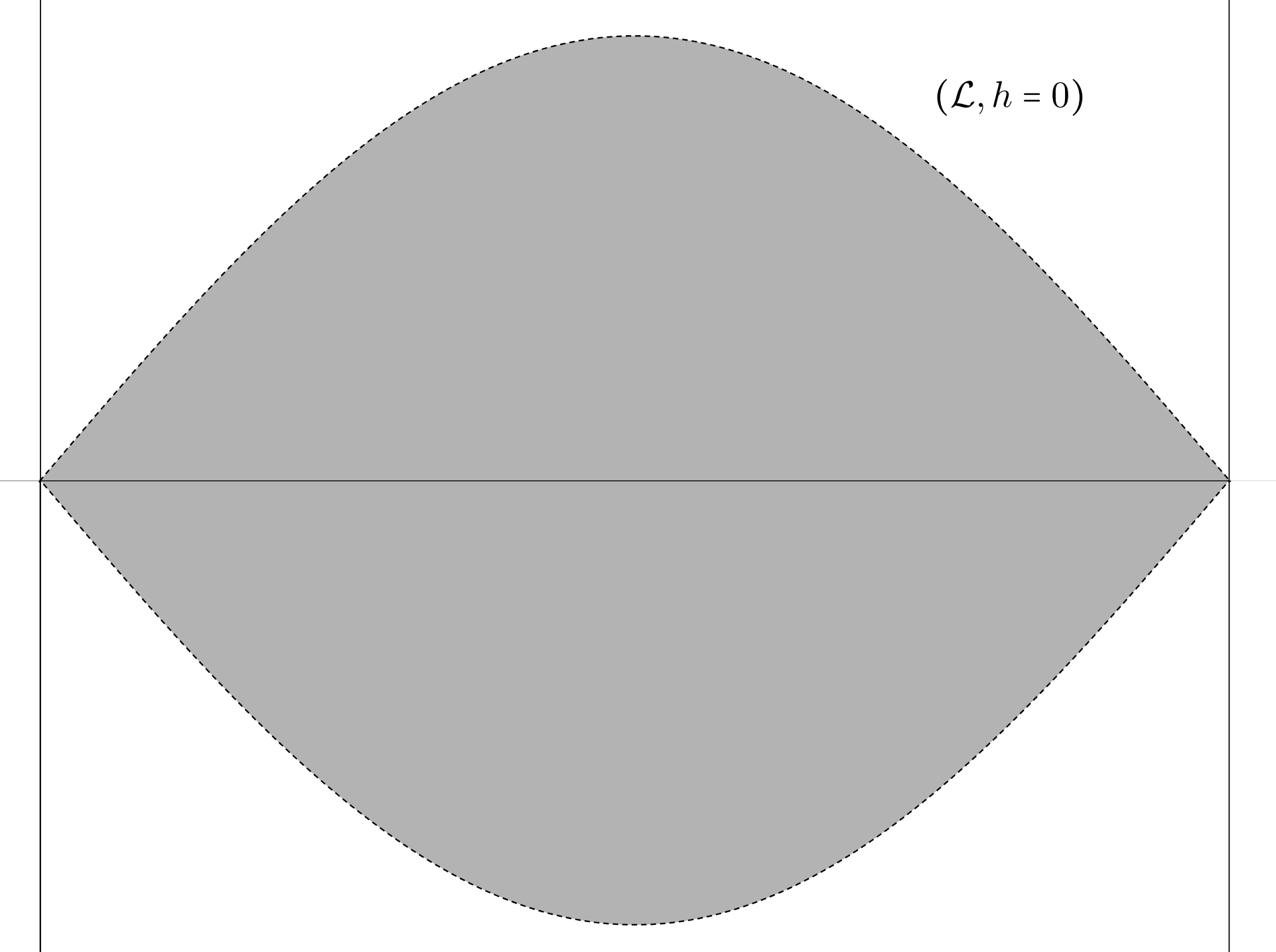}
			\caption{$(\mathcal{L},h\equiv 0)$}
		\end{subfigure}
		\caption{Counterexample to the uniqueness of $[h]$ in $\overline{\mathfrak{LB} (T^*M,\omega)}$.}
		\label{fig:CounterExmpUniq}
	\end{figure}
\end{exmp}

This example shows that the gain in regularity in Theorem \ref{MainThmInvariant} is not immediate and requires a better understanding of the set of bare Lagrangian branes $\overline{\mathcal{LB} (T^*M,\omega)}$. In particular, the Birkhoff theorem fails in this example due to the lack of uniqueness of the primitives $[h_t]$ associated with $\mathcal{L}$ in the quotient $\overline{\mathfrak{LB} (T^*M,\omega)}$. We therefore introduce the following definitions:

\begin{defi}
    \begin{enumerate}
        \item A compact subset $\mathcal{K}$ of $T^*M$ is said to be a \textit{liftable Lagrangian subset} if there exists a continuous map $h: \mathcal{K} \to \R$ in $C^0(\mathcal{K},\R)$ such that $(\mathcal{K},h)$ belongs to $\overline{\mathcal{LB} (T^*M,\omega)}$. We denote by $\mathscr{L}^\exists(T^*M,\omega)$ the set of liftable Lagrangian subsets of $T^*M$.
        \item A compact subset $\mathcal{K}$ of $T^*M$ is said to be a \textit{uniquely liftable Lagrangian subset} if there exists a unique class $[h]$ in $C^0(\mathcal{K},\R)/\R$ such that $(\mathcal{K},[h])$ belongs to $\overline{\mathfrak{LB} (T^*M,\omega)}$. We denote by $\mathscr{L}^!(T^*M,\omega)$ the set of uniquely liftable Lagrangian subsets of $T^*M$.
    \end{enumerate}
\end{defi}

With these new notions, Theorem \ref{MainThmInvariant} can be reformulated as follows, generalizing the results of \cite{MR2860422,MR3860396} to the non-autonomous setting and to weaker regularity assumptions.

\begin{theor}\label{thm:Invariance}
	 Let $H: \mathbb{T}^1 \times T^*M \to \mathbb{R}$ be a Tonelli Hamiltonian, and let $\mathcal{L} \in \mathscr{L}^!(T^*M,\omega)$ be a uniquely liftable Lagrangian subset. If $\phi_H^1(\mathcal{L}) = \mathcal{L}$, then $\mathcal{L}$ is a Lipschitz graph over the base $M$.
\end{theor}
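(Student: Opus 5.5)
The plan is to reduce Theorem \ref{thm:Invariance} to Theorem \ref{MainThmInvariant}. The task is to show that the unique class $[h]$ attached to $\mathcal{L}$ is automatically preserved by the time-one variational brane construction.

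Since $\mathcal{L}\in\mathscr{L}^!(T^*M,\omega)$, fix the unique class $[h]$ with $(\mathcal{L},[h])\in\overline{\mathfrak{LB}(T^*M,\omega)}$. Let $(\mathcal{L}_t,[h_t])$ be the associated variational brane of Definition \ref{LagsImages}. By hypothesis $\mathcal{L}_1=\phi_H^1(\mathcal{L})=\mathcal{L}$. To conclude $[h_1]=[h]$ from uniqueness, I only need $(\mathcal{L}_1,[h_1])\in\overline{\mathfrak{LB}(T^*M,\omega)}$: it is then a class on the same set $\mathcal{L}$ realizing a bare Lagrangian brane, so it equals $[h]$. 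Theorem \ref{MainThmInvariant} then yields that $\mathcal{L}$ is a Lipschitz graph over $0_M$.

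The key step is therefore that the time-one variational image preserves the closure $\overline{\mathcal{LB}(T^*M,\omega)}$, and I would argue it in three parts.

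First, for a genuine compact Lagrangian brane $(\cL',h')$, the pair $(\phi_H^1(\cL'),h'_1)$, with $h'_1$ given by \eqref{LagsImagesFormula}, is again a Lagrangian brane. This is the classical computation $(\phi_H^t)^*\lambda-\lambda=d\big(\int_0^t(\lambda(X_H)-H)\circ\phi_H^\tau\,d\tau\big)$, applied along orbits.

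Second, the map $\Phi:(\mathcal{K},h)\mapsto(\phi_H^1(\mathcal{K}),h_1)$ is well defined on compact bare branes and is continuous for $d_B$ on a neighbourhood of a given compact brane. It lifts to the map of $J^1M$ given by $(x,z)\mapsto(\phi_H^1(x),z+S(x))$, where $S(x)=\int_0^1(x_\tau^*\lambda-H(\tau,x_\tau))\,d\tau$ is a $C^1$ function of $x$. The graph $\Gamma_{\Phi(\mathcal{K},h)}$ is exactly the image of $\Gamma_{(\mathcal{K},h)}$ under this map. A continuous map of $J^1M$ is uniformly continuous on a compact neighbourhood of $\Gamma_{(\mathcal{K},h)}$, hence continuous for the Hausdorff distance on compact subsets there. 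This is also compatible with the quotient by constants.

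Third, take Lagrangian branes $(\cL_n,h_n)\to(\mathcal{L},h)$. These eventually lie in a fixed compact region, since Hausdorff convergence bounds them. Then $\Phi(\cL_n,h_n)$ are Lagrangian branes converging to $\Phi(\mathcal{L},h)=(\mathcal{L}_1,h_1)$.

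I expect the only delicate point to be this continuity and compactness bookkeeping: making sure the approximants stay in a compact set where $\phi_H^1$ and $S$ are uniformly continuous. Completeness of the Tonelli flow guarantees that $\phi_H^1$ is defined everywhere, so the argument goes through. The substantive content lies entirely in Theorem \ref{MainThmInvariant}, whose own proof rests on Theorem \ref{MainTheorem} applied with the constant sequences $t_k=k$, $s_k=-k$. Invariance of $(\mathcal{L},[h])$ under $\Phi$ also gives invariance under $\Phi^{-1}$, which is the time $-1$ map, so both limits exist trivially.
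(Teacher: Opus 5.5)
Your proposal is correct and follows the paper's proof. The paper likewise fixes the unique class $[h]$ and invokes Proposition~\ref{ConvTimeProp}, which packages exactly your three-part continuity argument via Propositions~\ref{BraneInvariance} and~\ref{LagsImagesPrimitive}, to get $(\mathcal{L}_1,[h_1])\in\overline{\mathfrak{LB}(T^*M,\omega)}$; it then deduces $[h_1]=[h]$ from uniqueness and concludes, the only cosmetic difference being that it applies Theorem~\ref{MainTheorem} directly with $(\mathcal{L}_\omega,[h_\omega])=(\mathcal{L}_\alpha,[h_\alpha])=(\mathcal{L},[h])$ instead of passing through Theorem~\ref{MainThmInvariant}.
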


\begin{proof} 
	Let $\mathcal{L}$ be in $\mathscr{L}^!(T^*M, \omega)$ such that $\phi_H^1(\cL) = \cL$. There exists a unique $[h]$ in $C^0(\cL,\R) / \R$ such that $(\mathcal{L},[h])$ belongs to $\overline{\mathfrak{LB}(T^*M, \omega)}$. Let $(\mathcal{L}_t,[h_t])$ be the corresponding variational bare brane. By Proposition \ref{ConvTimeProp}, $(\mathcal{L}_1 = \cL,[h_1])$ also belongs to $\overline{\mathfrak{LB}(T^*M, \omega)}$. Hence, by uniqueness of $[h]$, we deduce that $[h_1] = [h]$. Therefore, Theorem \ref{MainTheorem} applies for $(\mathcal{L},[h]) = (\mathcal{L}_\omega,[h_\omega]) = (\mathcal{L}_\alpha,[h_\alpha])$.
\end{proof}

\begin{rem}
    It was pointed out to us by Claude Viterbo that the theorem would still hold if, instead of considering uniquely liftable Lagrangian subsets, we were considering elements $\cL$ of $\mathscr{L}^\exists(T^*M,\omega)$ that admit a finite number of lifts $(\cL,[h])\in\overline{\mathfrak{LB}(T^*M, \omega)}$. Indeed, if $\cL_1=\cL$, then for some integer $n$ we would have $(\cL_n,[h_n])=(\cL,[h])$, and Theorem \ref{MainTheorem} would apply.
    However, so far we have not found any $\cL$ admitting finitely many lifts but that is not in $\mathscr{L}^!(T^*M)$.
\end{rem}

We are thus led to study the set $\mathscr{L}^!(T^*M,\omega)$ of uniquely liftable Lagrangian subsets in order to understand the extent to which this theorem generalizes the results of \cite{MR2860422,MR3860396}.

\subsubsection{The Set $\mathscr{L}^!$ of Uniquely Liftable Lagrangian Subsets.}
For this definition to be meaningful, the set $\mathscr{L}^!(T^*M,\omega)$ of uniquely liftable Lagrangian subsets should at least contain all smooth closed exact Lagrangian submanifolds. We will see later that this is the case, and that it also contains the exact Lipschitz Lagrangian submanifolds introduced by Amorim–Oh–dos Santos in \cite{MR3860396}. Indeed, it contains a larger class of substantially less regular objects, but to describe them, we first need to introduce a new set of notations and definitions.

\begin{defi}\label{def:ExactSympeo}
    \begin{enumerate}
        \item A homeomorphism $\varphi\colon (U,\lambda_U)\to (V,\lambda_V)$ between exact symplectic manifolds is called a \textit{weakly exact symplectic homeomorphism} if there exists a sequence of $C^1$ exact symplectic diffeomorphisms $\varphi_n$ converging uniformly to $\varphi$ on compact sets.
        \item A homeomorphism $\varphi\colon (U,\lambda_U)\to (V,\lambda_V)$ between exact symplectic manifolds is called a \textit{strongly exact symplectic homeomorphism} if there exist a continuous function $S\colon U\to \R$, a sequence of $C^1$ diffeomorphisms $(\varphi_n)_{n\geq 0}$ and a sequence of $C^1$ functions $(S_n\colon U\to \R)_{n\geq 0}$ with $\varphi_n^*\lambda_V = \lambda_U +dS_n$ for all $n$, such that the sequences $(\varphi_n)$ and $(S_n)$ converge, respectively, to $\varphi$ and $S$ uniformly on compact sets. We will refer to a pair $(\varphi,S)$ satisfying this definition as a strongly exact symplectic homeomorphism brane.
        \item We denote by $\mathscr{L}_\text{Sympeo}$ (resp. $\mathscr{L}_\text{Sympeo}^\text{str}$) the set of images of closed exact smooth Lagrangians of $T^*M$ by weakly exact symplectic homeomorphisms (resp. strongly exact symplectic homeomorphisms) of $T^*M$.
        \item We denote by $\mathscr{L}_{\text{gr}}$ the set of elements of the form $\mathcal{L} = \varphi (\mathcal{G}(dv))$, where $v : M \to \R$ is a $C^1$ map and $\varphi$ is an exact $C^1$ symplectomorphism of $T^*M$.
    \end{enumerate}
\end{defi}

Weakly exact symplectic homeomorphisms of $T^*M$ form a group\footnote{Note that because the spaces we consider are locally compact and locally connected, uniform convergence of diffeomorphisms $\varphi_n$ on compact sets implies the same convergence for the sequence of inverses.}, and we will see in Proposition \ref{prop:strongExactSubgroup} that strongly exact homeomorphisms of $T^*M$ are a subgroup of it.

\begin{theor} \label{thm:UniquenessPrimitiveC0} We have the following results:
	\begin{enumerate}[label=\roman*.]
		\item \label{prop:UniquenessPrimitive3} There is a strict inclusion $\mathscr{L}_{\text{gr}}\subsetneq \mathscr{L}_\text{Sympeo}^\text{str}$.
        \item \label{prop:UniquenessPrimitive4} Strongly exact symplectic homeomorphisms preserve $\mathscr{L}^\exists(T^*M)$. Therefore, $\mathscr{L}_\text{Sympeo}^\text{str}$ is contained in $\mathscr{L}_\text{Sympeo}\cap\mathscr{L}^\exists$.
        \item \label{prop:UniquenessPrimitive6} The set $\mathscr{L}_\text{Sympeo}\cap\mathscr{L}^\exists$ is contained in $\mathscr{L}^!(T^*M)$.
   		\item \label{prop:UniquenessPrimitive5} For $M = \T^1$, let $\mathcal{L}$ be an exact Lagrangian submanifold of $T^*\mathbb{T}^1$, and let $\gamma : [0,1] \to T^*\mathbb{T}^1$ be a smooth embedded curve such that $\mathcal{L} \cap \im(\gamma) = \{ \gamma(0) \}$. The set $\mathcal{K} := \mathcal{L} \cup \im(\gamma)$ belongs to $\mathscr{L}^! \setminus (\mathscr{L}_\text{Sympeo}\cap\mathscr{L}^\exists)$.
	\end{enumerate}
    In summary, 
    \begin{equation}\label{eq:inclusionChain}
	   \mathscr{L}_{\text{gr}} \subsetneq  \mathscr{L}_{\text{Sympeo}}^{\text{str}} \subseteq \mathscr{L}_{\text{Sympeo}} \cap \mathscr{L}^\exists  \subseteq \mathscr{L}^!
    \end{equation} 
    where the last inclusion is strict in the one-dimensional case. In particular, the Birkhoff Theorem \ref{thm:Invariance} applies to these objects.
\end{theor}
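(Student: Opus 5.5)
The four items are proved separately, and the chain \eqref{eq:inclusionChain} is then assembled from them.

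\textbf{Item i.} Let $\mathcal{L}=\varphi(\mathcal{G}(dv))$ with $v\in C^1(M)$ and $\varphi$ exact $C^1$. I will mollify $v$ to smooth $v_n$ converging to $v$ in $C^1$. The fibrewise translation $\tau_n(q,p)=(q,p+dv_n(q)-dv(q))$ is then a homeomorphism converging to the identity in $C^0$. It is only a $C^0$ map, since $dv$ is merely continuous. Next, take $\psi_n(q,p)=(q,p+dv_n(q))$, which is smooth and exact with $\psi_n^*\lambda=\lambda+d(v_n\circ\pi_M)$. Set $\Phi(q,p)=(q,p+dv(q))$; it is a homeomorphism, and it is the $C^0$ limit of $\psi_n$ together with the primitives $v_n\circ\pi_M\to v\circ\pi_M$. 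Hence $\Phi$ is a strongly exact symplectic homeomorphism, and $\varphi\circ\Phi$ is one as well by composing the primitives. Since $\mathcal{G}(dv)=\Phi(0_M)$, this gives the inclusion. For strictness, I will exhibit an element of $\mathscr{L}^{\mathrm{str}}_{\mathrm{Sympeo}}$ that is not a topological $C^1$ submanifold. For instance, on $T^*\mathbb{T}^1$ take a strongly exact homeomorphism that creates a corner or a H\"older singularity: a $C^0$ limit of shears $(q,p)\mapsto(q+f_n'(p),p)$ with $f_n\to f$ in $C^0$ but $f\notin C^1$ would do, with primitives chosen to converge. Elements of $\mathscr{L}_{\mathrm{gr}}$ are $C^1$ submanifolds, being $C^1$ images of $C^0$ graphs of $C^0$ differentials of $C^1$ functions. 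I must check this regularity claim carefully: $\mathcal{G}(dv)$ is only a topological manifold, so I will instead distinguish the two classes by a dynamical or metric invariant, for example the existence of a $C^1$ chart.

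\textbf{Item ii.} Let $(\varphi,S)$ be a strongly exact brane with approximants $(\varphi_n,S_n)$, and let $(\mathcal{K},h)$ be the limit of Lagrangian branes $(\mathcal{L}_k,h_k)$. For the candidate image brane $(\varphi(\mathcal{K}),h\circ\varphi^{-1}+S\circ\varphi^{-1})$ I will choose a diagonal sequence $(\varphi_{n_k}(\mathcal{L}_k),(h_k+S_{n_k})\circ\varphi_{n_k}^{-1})$. Each term is a Lagrangian brane, since $\varphi_n^*\lambda=\lambda+dS_n$. Convergence in $d_B$ follows from uniform convergence on a compact neighbourhood of $\mathcal{K}$ and from the convergence of the inverses (the footnote). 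Finally, $\mathscr{L}^{\mathrm{str}}_{\mathrm{Sympeo}}\subset\mathscr{L}^\exists$ because smooth exact Lagrangians belong to $\mathscr{L}^\exists$.

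\textbf{Item iii, the main obstacle.} Take $\mathcal{K}=\varphi(L)$ with $\varphi$ weakly exact and approximated by $\varphi_n$, and suppose two lifts $[h],[h']$ exist. Pulling back by $\varphi_n^{-1}$ does not control the primitives, since there are no $S_n$. My plan is to use spectral invariants instead. First, two lifts of the same set give limiting Lagrangian branes whose Legendrian lifts are $d_B$-close. Next, I will use the $C^0$ continuity of spectral invariants of Lagrangian branes (of $\gamma$-type), together with the fact that for branes $\gamma$-close to $\varphi_n(L)$ the spectral distance between primitives is controlled by $\operatorname{osc}$ of $h-h'$ only through Hausdorff data. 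This yields that $h-h'$ is constant on $\mathcal{K}$. Concretely, I will show that $h$ is determined by $\mathcal{K}$ via the Humilière completion/$\gamma$-support: $h(x)$ is recovered as the critical value of $\ell(\mathcal{K}_\bullet;\cdot)$ at $x$ through a graph-selector argument, using that $\gamma$-limits of the $\varphi_n(L)$ exist (by weak exactness) and have $\gamma$-support $\mathcal{K}$. This step is where I expect the real work.

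\textbf{Item iv.} For $M=\mathbb{T}^1$, uniqueness follows because on a curve the primitive is forced by arclength integration of $\lambda$ along any approximating sequence. Hairs have zero area, so the limit primitive on $\im(\gamma)$ is $h(\gamma(0))+\int\gamma^*\lambda$. Non-membership in $\mathscr{L}_{\mathrm{Sympeo}}$ follows because $\mathcal{K}$ is not homeomorphic to a circle.
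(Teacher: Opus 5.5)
Your item ii and the inclusion half of item i match the paper; the paper applies closedness of $\overline{\mathcal{LB}(T^*M,\omega)}$ to $(\varphi_n(\cL),(h+S_n)\circ\varphi_n^{-1})$ instead of a diagonal sequence.

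\textbf{Item iii.} This is the main gap: you leave it as a programme whose steps do not connect. Hausdorff information alone cannot force $h-h'$ to be constant, since the island of Example \ref{exp:Pendulum} carries distinct classes. The completion element you produce is the $\gamma$-limit of $\varphi_n(L)$. Its existence needs the $C^0$-continuity of $\gamma$ due to Buhovsky--Humili\`ere--Seyfaddini, not just weak exactness. More importantly, it is a priori unrelated to the elements $\underline{\cL}_h,\underline{\cL}_{h'}$ defined by the brane sequences realizing your two lifts. The paper uses three inputs that are absent from your sketch.
\begin{enumerate}
\item[(a)] $\operatorname{\gamma-supp}(\underline{\cL}_h)=\cL$ when $\cL$ is a topological $d$-manifold. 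Semicontinuity gives $\subset$. A missing point $x$ would contradict the Arnaud--Humili\`ere--Viterbo injectivity of $H^d(M)\to\check H^d(\operatorname{\gamma-supp}(\underline{\cL}_h))$, since this map factors through $H^d(\cL\setminus\{x\})=0$.
\item[(b)] Uniqueness of the completion element with $\gamma$-support $\varphi(L)$. One transports by $\varphi^{-1}$, which acts isometrically on $\widehat{\mathscr{L}}(T^*M)$ and commutes with $\gamma$-supports, and then invokes the Asano--Guillermou--Ike--Viterbo theorem that regular Lagrangians are smooth. Hence $\underline{\cL}_h$ and $\underline{\cL}_{h'}$ differ only by a shift.
\item[(c)] A way to read off $h$ at every point of $\cL$ from $\underline{\cL}_h$. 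The paper proves $SS(\widehat Q(\underline{\cL}_h))\cap\{\tau=1\}=\Gamma_{(\cL,-h)}\times\{1\}$, using lower semicontinuity of singular supports and $RS=\operatorname{\gamma-supp}$.
\end{enumerate}
Your graph-selector idea cannot supply (c). A graph selector only sees points $(q,d_qu(q))$, one per fibre over a dense set of $q$, whereas elements of $\mathscr{L}_{\text{Sympeo}}$ are generally not graphs. For bare branes even the identity $u(q)=h(q,d_qu(q))$ fails.

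\textbf{Strictness in item i.} Your shear example is inconsistent. The map $(q,p)\mapsto(q+f_n'(p),p)$ converges uniformly only if $f_n'$ does, which forces $f\in C^1$; a genuine limit shear $(q,p)\mapsto(q+g(p),p)$ just maps $0_{\T^1}$ onto itself. Since, as you observe, $C^1$-regularity does not separate the classes, you are left without an invariant. The missing idea is this: each point $\varphi'(q',d_{q'}v)$ of $\varphi'(\mathcal{G}(dv))$ lies on the $C^1$ exact Lagrangian $\varphi'(T^*_{q'}M)$, which meets $\varphi'(\mathcal{G}(dv))$ only at that point. The paper violates this with the infinite twist $z\mapsto e^{if(\Vert z\Vert^2)}z$, where $f(r)=-\frac12\log r$ near $0$.
\begin{enumerate}
\item[$\cdot$] The rotation is slow enough that the primitive stays continuous at the origin, so the twist is strongly exact. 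With $f=1/r$ the primitive blows up and one even leaves $\mathscr{L}^\exists$.
\item[$\cdot$] Yet the image of $\R^d$ meets every $C^1$ exact Lagrangian through $0$ infinitely often near $0$. This uses clean intersection with its tangent plane at a discrete set of radii, the self-similarity of the logarithmic spiral, and a critical-point argument in a Weinstein chart adapted to the clean intersection.
\end{enumerate}

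\textbf{Item iv.} You never show $\mathcal{K}\in\mathscr{L}^\exists$, which membership in $\mathscr{L}^!$ requires. The paper straightens $\mathcal{K}$ to $0_{\T^1}\cup(\{0\}\times[0,1])$ by the exact Weinstein chart of Lemma \ref{lem:WeinsteinLag+Curve}. It then approximates this by $\mathcal{G}(dw_n)$, with $w_n'$ a concentrating bump of mean zero, and treats tangency by $C^1$ approximation. Your uniqueness argument is only heuristic: Hausdorff convergence gives no $C^1$ control of the approximants along the hair, so ``arclength integration'' is not available. Proposition \ref{prop:1DLocalRigidity} makes ``zero area'' rigorous by an exit-time and Stokes argument in a box where the approximants are $\delta_n$-close to the straightened hair. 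This gives $k_0(t)\in\{k_0(a),k_0(b)\}$, hence constancy by connectedness. It must be applied both on $\cL\setminus\{\gamma(0)\}$ and on the open hair. Your non-membership argument is fine.
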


Figure \ref{fig:ULLS} below illustrates various examples of uniquely liftable Lagrangian subsets.

\begin{figure}[h]
	\centering
	\begin{subfigure}{0.48\textwidth}
		\centering
		\includegraphics[width=\textwidth]{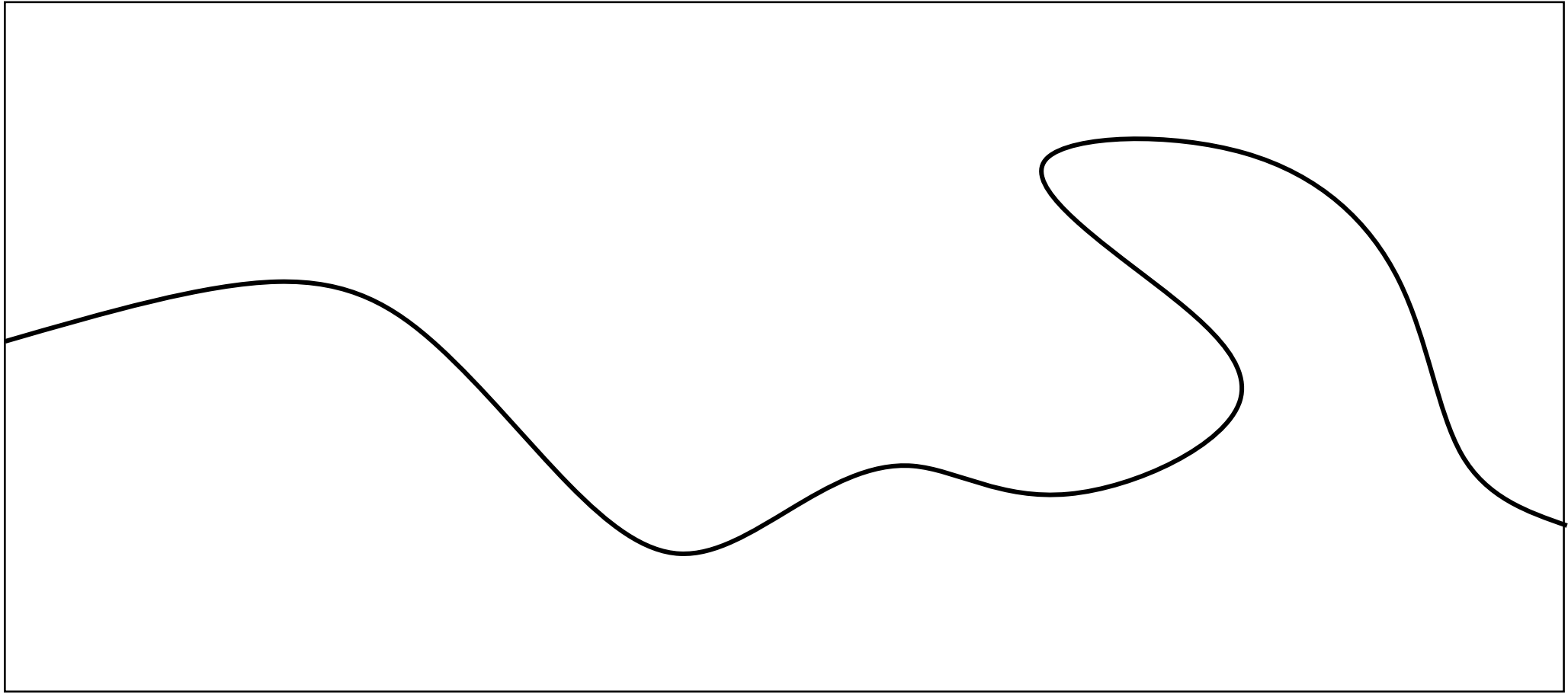}
		\caption{A regular Lagrangian submanifold.}
	\end{subfigure}
	\begin{subfigure}{0.50\textwidth}
		\centering
		\includegraphics[width=\textwidth]{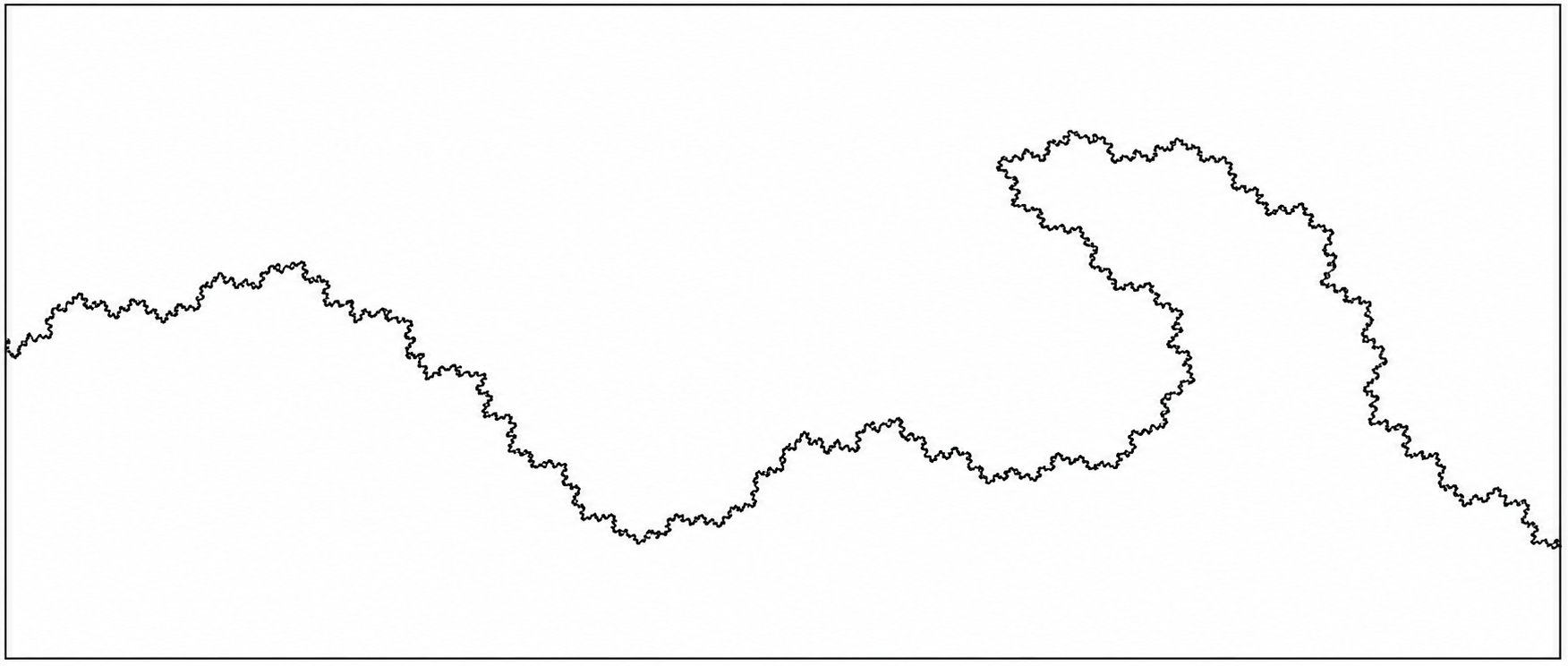}
		\caption{A $C^0$ Lagrangian submanifold $\mathcal{L} = \varphi (\mathcal{G}(dv))$.}
	\end{subfigure}
    \begin{subfigure}{0.48\textwidth}
		\centering
		\includegraphics[width=\textwidth]{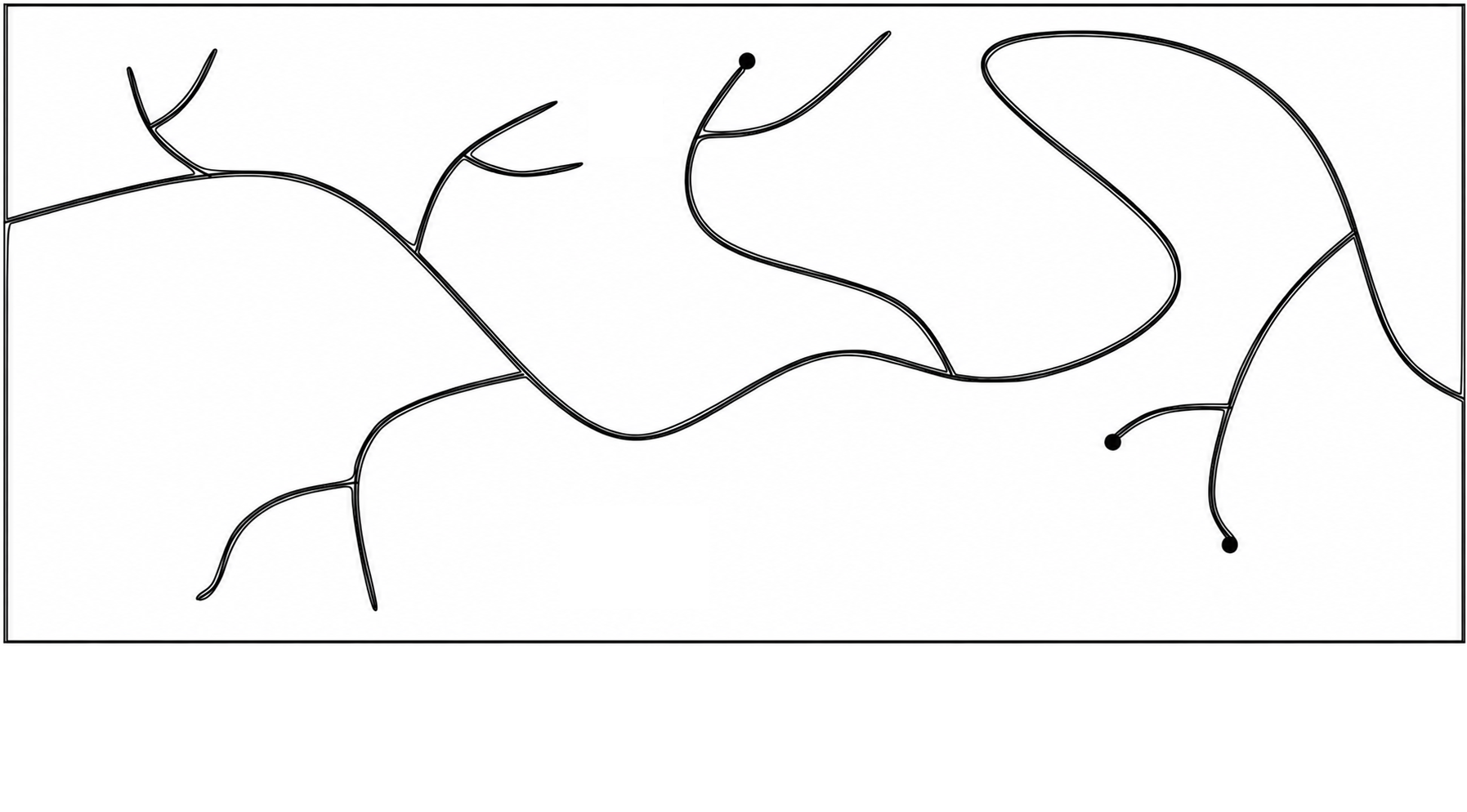}
		\caption{A $1$-dimensional `haired' Lagrangian subset.}
        \label{fig:ULLS:Fils}
	\end{subfigure}
	\begin{subfigure}{0.47\textwidth}
		\centering
		\includegraphics[width=\textwidth]{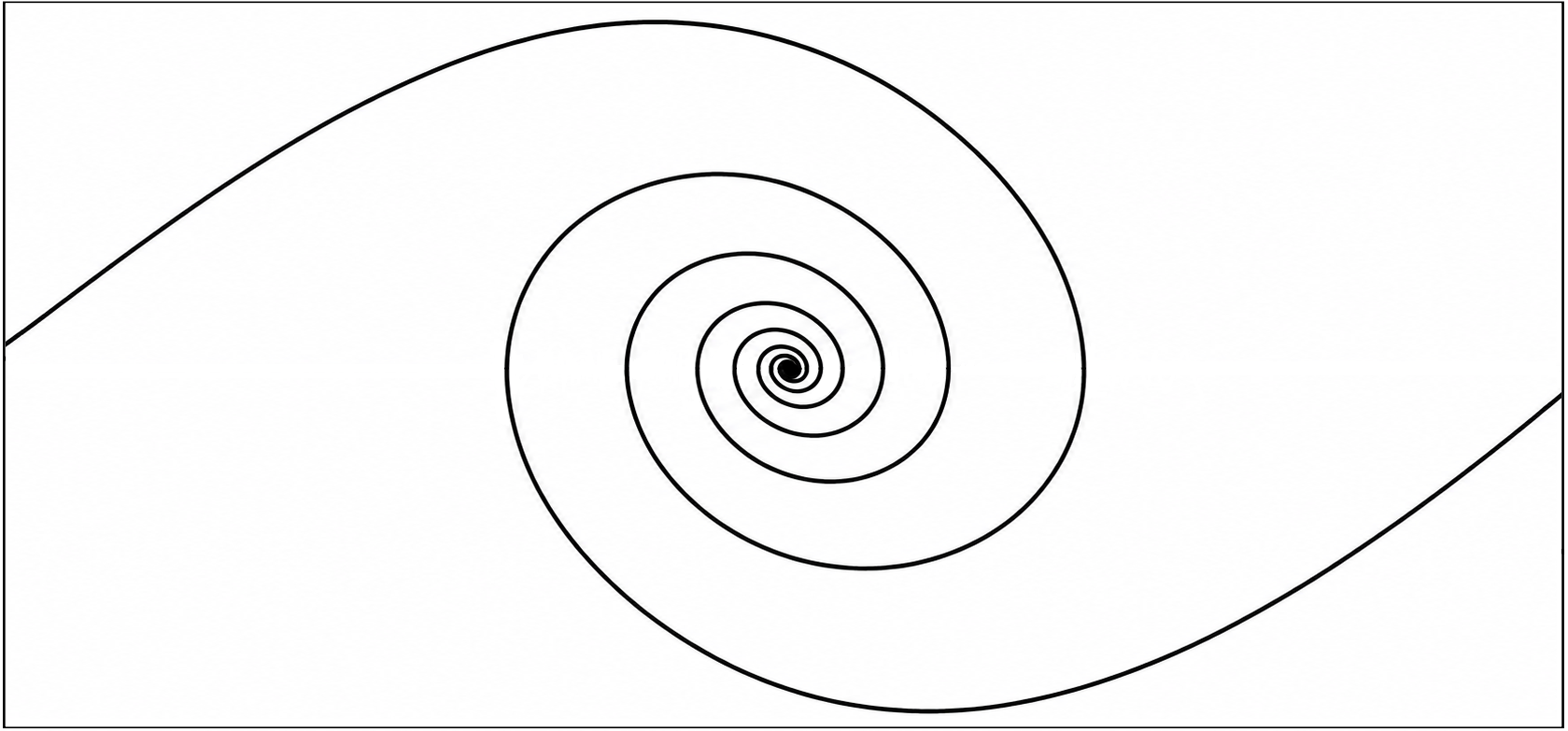}
		\caption{An element of $\mathscr{L}^!(T^*M,\omega) \setminus \mathscr{L}_{\text{gr}}$. If the winding around the origin is controlled, then $\cL$ is liftable to $(\cL, h)$ with a unique finite primitive $h$.}
        \label{fig:ULLS:Uzumaki}
	\end{subfigure}
	\caption{Various uniquely liftable Lagrangian subsets.}
	\label{fig:ULLS}
\end{figure}

\begin{rem}
    \begin{enumerate}
        \item The set $\mathscr{L}_\text{Sympeo}^\text{str}$ clearly contains all closed exact Lagrangians, which are therefore uniquely liftable as we claimed earlier.
    
        \item Although we do not prove this explicitly in the present article, Item \ref{prop:UniquenessPrimitive5} of Theorem \ref{thm:UniquenessPrimitiveC0} can be generalized, with the same proof, to any simple essential curve $\cL$ to which a finite or countable family of pairwise disjoint filaments $\gamma_i$ is attached, possibly with further filaments attached in turn to these filaments. This yields the same conclusion for sets of the type illustrated in Figure \ref{fig:ULLS:Fils}.
        
        \item To show that the inclusion of Item \ref{prop:UniquenessPrimitive3} is strict, we construct an element of $\mathscr{L}^{\text{str}}_{\text{Sympeo}} \setminus \mathscr{L}_{\text{gr}}$ obtained by twisting the zero section faster and faster near a point to obtain a singularity that cannot be turned into a graph via a $C^1$ symplectomorphism. The rate of divergence of the rotation speed needs to be controlled to ensure the Liouville primitive does not blow up at the singularity (see Figure \ref{fig:ULLS:Uzumaki}). We observe from this construction, in the case of a Lagrangian topological manifold, that the obstruction to being uniquely liftable does not arise from the regularity of the curve itself, but rather from the divergence of the associated primitive $h$. This suggests that Theorem \ref{thm:UniquenessPrimitiveC0} could potentially extend to any topological submanifold of dimension $d = \dim M$ that is liftable to a brane, i.e. belonging to the set $\mathscr{L}^\exists(T^*M,\omega)$. We introduce the following definition, inspired by Viterbo's definition of topological Lagrangian submanifolds, and needed to formulate the remaining questions precisely.
		\begin{defi}\label{def:C^0Lag} Recall that $d$ is the dimension of $M$.
        		\begin{enumerate}
           		\item (\cite{viterbo2026supportshumilierecompletiongammacoisotropic}) A topological Lagrangian submanifold is a $\gamma$-support that is a $d$-dimensional topological manifold.
                \item  A topological Lagrangian brane is a bare Lagrangian brane $(\cL,h)$ in $\overline{\mathcal{LB}(T^*M,\omega)}$, where $\cL$ is a $d$-dimensional topological manifold. 
       		\end{enumerate}
  		\end{defi}  
        We will show in Proposition \ref{prop:topologicalLagrangian} that topological Lagrangian branes are topological Lagrangian submanifolds.
  		Birkhoff's Theorem for topological Lagrangian branes can be translated from a dynamical question to a purely topological one, namely the following.
  		\begin{quest} \label{quest:regularity1}
            Are topological Lagrangian branes $(\cL,h)$ uniquely liftable, i.e. does $\cL$ belong to $\mathscr{L}^!(T^*M,\omega)$?
		\end{quest}

        \item In Birkhoff's original theorem in the cylinder $T^*\T^1$, there are continuous curves for which no primitive can be defined, and for which invariance still implies the Lipschitz graph property. Hence, our version of the theorem still lacks topological Lagrangian submanifolds for which the result remains valid. For instance, this includes submanifolds of the form $\mathcal{L} = \varphi(0_M) \in \mathscr{L}_{\text{Sympeo}} \setminus \mathscr{L}^\exists$. However, our proof no longer works for such examples, and one needs new ideas in order to attack this harder question. 
  		
        Observe that one cannot relax the regularity condition too much. In this regard, note that in Example \ref{exp:Pendulum}, the elliptic island can represent the $\gamma$-support of a Lagrangian in Humilière's completion. Since the elliptic island is not a graph, we can conclude that Birkhoff-type theorems do not apply to $\gamma$-supports in full generality.
        \begin{quest} \label{quest:regularity2}
			Does the multidimensional Birkhoff theorem hold for
		      \begin{enumerate}
			     \item topological submanifolds $\mathcal{L}\in\mathscr{L}_\text{Sympeo}$, without assuming they belong to $\mathscr{L}^\exists(T^*M)$?
			     \item topological Lagrangian branes, and topological Lagrangian submanifolds in general?
		      \end{enumerate}
		\end{quest}
    \end{enumerate}
    
\end{rem}

\subsubsection{A Birkhoff Theorem for Stratified Lagrangian Subsets} \label{section:ResultsStratifiedLag}

The examples in Theorem \ref{thm:UniquenessPrimitiveC0} and Figure \ref{fig:ULLS} illustrate some of the possible behaviours of uniquely liftable Lagrangian subsets. In particular, Birkhoff-type theorems may apply to sets that are not even topological manifolds, as shown by the example of the one-dimensional Lagrangian submanifold with an attached filament. However, in higher dimensions, we will see in Theorem \ref{thm:NonUniqueness2D} that such examples are no longer uniquely liftable. Nevertheless, one may still expect a Birkhoff theorem to hold for these stratified Lagrangian subsets.

\begin{theor} \label{thm:BirkhoffStratifiedLag}
	Let $M$ be a closed manifold, and let $H : \T^1 \times T^*M \to \R$ be a Tonelli Hamiltonian. Let $\cL \subset T^*M$ be a set satisfying the following properties:
	\begin{enumerate}[label=\roman*.]
		\item There exists a relatively open subset $U \subset \cL$ such that $\cL \setminus U$ is countable and, for every $x \in U$, there exists a relatively open neighbourhood $\cL_x$ of $x$ in $\cL$ which is a smooth isotropic submanifold of $T^*M$.
		
		\item There exists a continuous map $h : \cL \to \R$ such that $(\cL,h)$ belongs to $\overline{\mathcal{LB}(T^*M,\omega)}$ and, for every $x \in U$,
		\begin{equation}
			d(h|_{\cL_x}) = \lambda|_{\cL_x}
		\end{equation}
		\item $\phi_H^1(\cL) = \cL$.
	\end{enumerate}
	
	Then, $\cL$ is a Lipschitz graph over the base $M$.
\end{theor}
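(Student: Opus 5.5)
The plan is to reduce to Theorem \ref{MainTheorem}. It suffices to show that the variational primitive $[h_1]$ coincides with $[h]$ on $\cL_1=\phi_H^1(\cL)=\cL$. Then the constant sequences $t_k=k$, $s_k=-k$ give $(\cL_{\pm k},[h_{\pm k}])=(\cL,[h])$, and Theorem \ref{MainTheorem} yields the Lipschitz graph property. Note that we cannot invoke uniqueness of lifts, since Theorem \ref{thm:NonUniqueness2D} shows these stratified sets need not be uniquely liftable. Instead we exploit the local regularity in hypotheses i and ii.

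First I verify that $h_1$ is again a local Liouville primitive on the smooth part. For $x\in U$, take the isotropic piece $\cL_x$. Formula \eqref{LagsImagesFormula} is the standard action transport, and the classical computation $d\big(\int_0^1 x_\tau^*\lambda - H\,d\tau\big)=(\phi_H^1)^*\lambda-\lambda$ on any smooth submanifold shows that
\[
d\big(h_1|_{\phi_H^1(\cL_x)}\big)=\lambda|_{\phi_H^1(\cL_x)}.
\]
Now $\phi_H^1$ maps $\cL$ to itself homeomorphically, so it permutes the singular set $\cL\setminus U$ and the regular set in a compatible way. Since $\phi_H^1(U)$ is also relatively open with countable complement, the set $U':=U\cap \phi_H^1(U)$ has countable complement. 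On $U'$ both $h$ and $h_1\circ\phi_H^1\circ(\phi_H^1)^{-1}=h_1$ are continuous functions with differential $\lambda$ along the smooth local pieces. Here I must check that the smooth structures agree on the overlap, which holds because both are smooth submanifold structures of the same set. Hence $h_1-h$ is locally constant on $U'$.

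The main step, and the main obstacle, is to pass from locally constant on $U'$ to constant on $\cL$. The difference $g=h_1-h$ is continuous on $\cL$ and locally constant off a countable closed set $C=\cL\setminus U'$. I would argue that $g(\cL)$ is countable. Indeed, each connected component of $U'$ contributes one value. There are only countably many components, because $U'$ is a second countable locally connected space, so its components are open and hence countable in number. The set $C$ contributes countably many values as well. Connectedness of $\cL$ would then force $g$ to be constant, since a continuous function with countable image on a connected space is constant. So I need connectedness of $\cL$. This should come from $\cL$ being a Hausdorff limit of compact exact Lagrangians whose $\gamma$-support or projection properties force connectedness, e.g. via Proposition \ref{prop:topologicalLagrangian}-type arguments or via the graph selector, which is defined on all of connected $M$. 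If connectedness fails, I would treat each component separately. The Floer-theoretic arguments behind Theorem \ref{MainTheorem} show that the limit brane $\cL$ must carry a single graph-selector spectral value, which ties the constants together. This is the delicate point.

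Finally, with $[h_1]=[h]$, apply Theorem \ref{MainTheorem} (equivalently Theorem \ref{MainThmInvariant}) to conclude that $\cL$ is a Lipschitz graph over $M$.
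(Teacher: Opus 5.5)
Your strategy is the paper's. You show that $h_1-h$ is locally constant on the co-countable, relatively open set $U\cap\phi_H^1(U)$; the paper works equivalently with $(h_1-h_0)\circ\phi_H^1=h+\mathcal{A}-h\circ\phi_H^1$ on $U\cap(\phi_H^1)^{-1}(U)$, where $(\phi_H^1)^*\lambda-\lambda=d\mathcal{A}$. You then deduce that its image is countable, conclude by connectedness that $[h_1]=[h]$, and apply Theorem~\ref{MainThmInvariant}. Your local computation, including the compatibility of the smooth structures on overlaps, and your countability argument are correct.

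The gap is the connectedness of $\cL$. You rightly say it is indispensable, but you do not prove it, and none of the routes you sketch works:
\begin{itemize}
\item Proposition~\ref{prop:topologicalLagrangian} concerns topological $d$-manifolds, which the stratified set $\cL$ need not be. Its proof also takes connectedness of $\cL$ as an input.
\item That graph selectors live on the connected base $M$ says nothing about $\cL$: a compact set projecting onto $M$ can be disconnected.
\item The appeal to ``a single graph-selector spectral value'' tying together constants on different components is not an argument as it stands.
\end{itemize}

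The missing ingredient is Proposition~\ref{prop:Connectedness}, which the paper invokes at exactly this point: every bare Lagrangian brane is connected. Take an approximating sequence $(\cL^n,h^n)$ of compact Lagrangian branes brane-converging to $(\cL,h)$.
\begin{itemize}
\item Each $\cL^n$ is connected. Two of its components would be disjoint closed exact Lagrangian submanifolds of $T^*M$, but any two such submanifolds intersect, since their Floer homology is $H_*(M;\Z/2\Z)\neq 0$ by Theorem~\ref{Thm:equivZeroSec}.
\item Brane convergence implies Hausdorff convergence of the supports, by Proposition~\ref{BraneHausdorffProp}.
\item A Hausdorff limit of compact connected sets is connected.
\end{itemize}
With this inserted, your proof is complete.
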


A less regular version is given by the following theorem.

\begin{theor} \label{thm:BirkhoffStratifiedLagC0}
	Let $M$ be a closed $d$-manifold, and let $H : \T^1 \times T^*M \to \R$ be a Tonelli Hamiltonian. Let $\cL \subset T^*M$ be a set satisfying the following properties:
	\begin{enumerate}[label=\roman*.]
		\item \label{item1BirkhoffStratifiedLagC0} There exists a relatively open subset $U \subset \cL$ such that $\cL \setminus U$ is countable and, for every $x \in U$, there exists a relatively open neighbourhood $\cL_x$ of $x$ in $\cL$ of the form
		\begin{equation*}
			\cL_x = \Phi_x(U_x \cap 0_{N_x})
		\end{equation*}
		where $N_x$ is a $d$-manifold and $(\Phi_x : U_x \to V_x,S_x)$ is a strongly exact symplectic homeomorphism brane from an open set $U_x \subset T^*N_x$ onto a neighbourhood $V_x$ of $\cL_x$.
		
		\item There exists a continuous map $h : \cL \to \R$ such that $(\cL,h)$ belongs to $\overline{\mathcal{LB}(T^*M,\omega)}$ and, for every $(q,0) \in U_x \cap 0_{N_x}$,
		\begin{equation} \label{thm:BirkhoffStratifiedLagC0:eq:h}
			h(\Phi_x(q,0)) = S_x(q,0)
		\end{equation}
		
		\item $\phi_H^1(\cL) = \cL$.
	\end{enumerate}
	
	Then, $\cL$ is a Lipschitz graph over the base $M$.
\end{theor}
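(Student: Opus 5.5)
The plan is to reduce Theorem \ref{thm:BirkhoffStratifiedLagC0} to Theorem \ref{MainTheorem}, exactly as Theorem \ref{thm:Invariance} was deduced: it suffices to show that $[h_1]=[h]$, where $(\cL_1=\cL,[h_1])$ is the variational image of $(\cL,[h])$. Then the constant sequences $t_k=k$, $s_k=-k$ satisfy the two-sided convergence hypothesis. The primitive along the orbit is $h_k=h_{k-1}\circ\phi_H^{-1}+c$ by the cocycle property of \eqref{LagsImagesFormula}, so $(\cL_k,[h_k])=(\cL,[h])$ for all $k\in\Z$, and Theorem \ref{MainTheorem} gives the Lipschitz graph property. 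By Proposition \ref{ConvTimeProp}, $(\cL,[h_1])$ is again a bare Lagrangian brane, so everything hinges on a local uniqueness statement for primitives on $U$, plus a propagation across the countable set $\cL\setminus U$.

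\textbf{Step 1: the local primitive is canonical and transported by the flow.} On each chart $\cL_x=\Phi_x(U_x\cap 0_{N_x})$, condition (ii) says $h=S_x\circ\Phi_x^{-1}$ on $\cL_x$. Fix $x\in U$ and set $y=\phi_H^1(x)$. I claim $y\in U$: if $U$ were not invariant, a countable orbit argument would be needed, but first check that $\phi_H^1(U)$ and $U$ differ by at most a countable set, which is harmless. I will compare $h_1$ with $h$ near $y$. The composition $\Psi=\phi_H^1\circ\Phi_x$ is a strongly exact symplectic homeomorphism brane, with function $S_x+F\circ\Phi_x$, where $F$ is the action function of $\phi_H^1$ (i.e.\ $(\phi_H^1)^*\lambda-\lambda=dF$, with $F$ given by the integral in \eqref{LagsImagesFormula}). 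Hence $h_1=(S_x+F\circ\Phi_x)\circ\Psi^{-1}$ on $\phi_H^1(\cL_x)$. Near $y$ we also have $h=S_y\circ\Phi_y^{-1}$. Both $(\Phi_y,S_y)$ and $(\Psi,S_x+F\circ \Phi_x)$ parametrize the same piece of $\cL$ near $y$. The key lemma is that the strong exactness forces the two induced functions on this common piece to differ by a locally constant function. To prove it, apply $\Phi_y^{-1}\circ\Psi$, again a strongly exact homeomorphism brane (Proposition \ref{prop:strongExactSubgroup}), which maps an open piece of $0_{N_x}$ into $0_{N_y}$. This reduces to the statement: if a strongly exact homeomorphism brane $(\chi,S)$ maps an open subset of the zero section into the zero section, then $S$ is locally constant there. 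I would prove this by approximating with $(\chi_n,S_n)$ and computing $S_n$ along paths in the zero section. Then $S_n(\gamma(1))-S_n(\gamma(0))=\int_{\chi_n\circ\gamma}\lambda$, which is an action along a curve $C^0$-close to the zero section. It should tend to $0$ by the $C^0$-rigidity of action differences; an alternative is to use Theorem \ref{thm:UniquenessPrimitiveC0}(iii) on a localized model, since it gives uniqueness of primitives for images of the zero section under such homeomorphisms. This lemma is where I expect the \textbf{main obstacle}: local uniqueness has to be extracted from a statement formulated for closed Lagrangians, and it may require cutting off and gluing the approximations $\Phi_n$ to a global exact symplectomorphism.

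\textbf{Step 2: globalization.} From Step 1, $h_1-h$ is continuous on $\cL$ and locally constant on $U$. Since $\cL\setminus U$ is countable and $h_1-h$ is continuous, its image is countable on the complement. It remains to show that the function is globally constant. Here I would use the connectedness of $\cL$, which holds because bare Lagrangian branes are Hausdorff limits of connected closed exact Lagrangians and may be taken connected. A continuous function on a connected compact set that is locally constant off a countable set has a countable image, and a connected subset of $\R$ that is countable is a point. Hence $h_1-h$ is constant, so $[h_1]=[h]$, and the conclusion follows from Theorem \ref{MainTheorem} as explained above.
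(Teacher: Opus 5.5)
Your architecture matches the paper's proof. You reduce to $[h_1]=[h]$ and compare $h_1$ with $h$ through the composite chart $\Phi_y^{-1}\circ\phi_H^1\circ\Phi_x$, which is a strongly exact homeomorphism brane by Proposition \ref{prop:strongExactSubgroup}. You then get local constancy on $U\cap\phi_H^1(U)$, whose complement in $\cL$ is countable, and conclude with the countable-image plus connectedness argument. (Connectedness is automatic by Proposition \ref{prop:Connectedness}, because closed exact Lagrangians in $T^*M$ are connected; it is not a matter of choosing them so.) The gap is the key lemma of Step 1: if a strongly exact homeomorphism brane $(\chi,S)$ maps an open piece of a zero section into a zero section, then $S$ is locally constant there. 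This lemma carries all the $C^0$ content of the theorem, and neither route you suggest proves it.

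Your identity $S_n(\gamma(1))-S_n(\gamma(0))=\int_{\chi_n\circ\gamma}\lambda$ is correct. However, uniform closeness of $\chi_n\circ\gamma$ to a curve in the zero section gives no control on $\int\lambda$. Take a curve whose fibre component $\nu_n$ tends uniformly to $0$ while its base component oscillates with small amplitude and large derivative $\mu_n'$. Then $\int\nu_n\mu_n'\,dt$ can tend to anything, exactly as for the curves $\sigma_l$ in Step~4 of the proof of Proposition \ref{prop:fil}. So there is no ``$C^0$-rigidity of action differences'' for curves. The rigidity must come from the $\chi_n$ being symplectomorphisms whose limit preserves the zero section. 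Brane convergence alone cannot supply it either: by Theorem \ref{thm:NonUniqueness2D}, limiting functions need not be Liouville primitives on smooth Lagrangian pieces when $d\ge2$.

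Your fallback to Item (iii) of Theorem \ref{thm:UniquenessPrimitiveC0} does not close the gap. That result concerns closed Lagrangians and globally defined weakly exact homeomorphisms, and you give no way to globalize charts $\Phi_x$ defined only on $U_x\subset T^*N_x$.

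The missing idea is the paper's Lemma \ref{lem:SympeoLocalConstance}. Lift $(\chi,S)$ to $\Psi(x,z,r)=(\chi(x),z+S(x),r)$ on $T^*N\times\R_z\times\R_r$, with symplectic form $d\bigl(e^r(dz-\lambda)\bigr)$. The lifts of $(\chi_n,S_n)$ are exact symplectomorphisms converging locally uniformly to $\Psi$, so $\Psi$ is a symplectic homeomorphism. It maps the smooth coisotropic $C_1=(\text{zero-section piece})\times\R_z\times\R_r$ onto the analogous smooth $C_2$. By the Humilière--Leclercq--Seyfaddini theorem (Theorem \ref{thm:HLS}), $\Psi$ sends the characteristic leaves $W\times\{(z_0,r_0)\}$ of $C_1$ into characteristic leaves of $C_2$. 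This forces $q\mapsto z_0+S(q,0)$ to be locally constant.
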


\begin{rem} \label{rem:BirkhoffStrata}
	\begin{enumerate}
		\item In Theorems \ref{thm:BirkhoffStratifiedLag} and \ref{thm:BirkhoffStratifiedLagC0}, one obstruction to the existence of the map $h$ is the presence of Lagrangian loops enclosing non-zero symplectic area, since such loops prevent local Liouville primitives from extending to a global one. In the $C^0$ setting of Theorem \ref{thm:BirkhoffStratifiedLagC0}, one additionally needs the local strata to admit strongly exact symplectic models of the form appearing in Item \ref{item1BirkhoffStratifiedLagC0} so that a local primitive can be defined.
		
		For instance, if $\cL$ deformation retracts onto an exact Lagrangian submanifold $\cL_0 \subset \cL$, and if the geometry of the strata is sufficiently controlled so that the local Liouville primitives extend to a continuous global primitive $h$, then the theorem applies. Moreover, we will see in the Flexibility Theorem \ref{thm:NonUniqueness2D} that, in dimension at least $2$, such pairs $(\cL,h)$ belong to $\overline{\mathcal{LB}(T^*M,\omega)}$ whenever $\cL$ is obtained as a compact connected enlargement of an exact Lagrangian brane.
        
		\item As in Item \ref{prop:UniquenessPrimitive5} of Theorem \ref{thm:UniquenessPrimitiveC0}, in the one-dimensional case, one can show that the Lagrangian subsets considered in Theorem \ref{thm:BirkhoffStratifiedLagC0} belong to $\mathscr{L}^!$. Hence, Theorem \ref{thm:Invariance} applies directly. However, this one-dimensional result was already known through Fathi's version of Birkhoff's theorem in Herman's Astérisque \cite{MR728564}, which states that the boundary components of any essential annulus of the cylinder invariant under a symplectic twist map are Lipschitz graphs over the zero section.
		
		\item The higher-dimensional case of our theorem is new, and the general Theorem \ref{MainThmInvariant} should apply to a broader class of sets than those described in Theorems \ref{thm:BirkhoffStratifiedLag} and \ref{thm:BirkhoffStratifiedLagC0}. For instance, the assumption that $\cL \setminus U$ is countable is not optimal. For instance, it can be replaced by the weaker condition that $\cL \setminus U$ has countably many connected components, that they are all smooth isotropic submanifolds, and that the restriction of $h$ to each of them is a Liouville primitive.
		
		\item The difference between Theorems \ref{thm:BirkhoffStratifiedLag} and \ref{thm:BirkhoffStratifiedLagC0} lies in the dimension of the strata. In the smooth version, the strata are only assumed to be isotropic and may therefore have dimension strictly smaller than $d$. In the $C^0$ version, by contrast, the proof requires the strata to have the full Lagrangian dimension $d$.

        \item It is worth emphasizing that the absence of invariant filamented exact Lagrangian submanifolds is purely symplectic. Indeed, invariant sets with hairs may appear in the conformally symplectic setting, as is the case for global attractors containing multidimensional Birkhoff attractors; see \cite{BSMF_1932__60__1_0,MR1505024,MR951271,MR5052827}. Note also that our definition naturally produces objects with hairs, since we consider the limit sets of sequences of Lagrangian submanifolds themselves, rather than only the $\gamma$-support of their limits in Humilière's completion. This contrasts with the definition of multidimensional Birkhoff attractors given by Arnaud, Humilière and Viterbo in \cite{MR5052827}.
	\end{enumerate}
\end{rem}

\subsection{Further Results: Rigidity and Flexibility Properties of Bare Lagrangian Branes} 
In this section, we describe some rigidity and flexibility results on bare Lagrangian branes.

\paragraph{Rigidity of bare Lagrangian branes.}
Although we already saw that smooth closed exact Lagrangians are uniquely liftable, we can also view it as a consequence of a more general rigidity statement.

\begin{theor}[Rigidity of bare Lagrangian branes]\label{thm:rigidity}
    Let $(\cL,h)\in \overline{\mathcal{LB}(T^*M)}$, with $\cL$ a topological manifold of dimension $d$. Then, on any open subset of $\cL$ where $\cL$ is $C^1$, $\cL$ is Lagrangian and $h$ is a Liouville primitive:
    \[\lambda|_U=dh|_U.\]
    In particular, if $\cL\in\mathscr{L}^\exists$ is a $C^1$ manifold, then it is an exact Lagrangian submanifold and $h$ is a Liouville primitive, and therefore $\cL\in\mathscr{L}^!$.
\end{theor}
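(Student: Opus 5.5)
The plan is to pass to the $1$-jet bundle and exploit microlocal sheaf theory, in particular the involutivity of microsupports. Take the approximating sequence $(\cL_n,h_n)\in\mathcal{LB}(T^*M)$ with $d_B((\cL_n,h_n),(\cL,h))\to 0$. Their Legendrian lifts $\Gamma_n=\Gamma_{(\cL_n,h_n)}\subset J^1M$ then converge in the Hausdorff sense to $\Gamma=\Gamma_{(\cL,h)}$. By Guillermou's quantization theorem, each $\Gamma_n$ admits a simple sheaf quantization $F_n$ on $M\times\R$. Its reduced microsupport is $\Gamma_n$, and it is normalized by $F_n\simeq \mathbf{k}_{M\times[c,\infty)}$ near $t=+\infty$, with $c$ fixed by the constant normalization of $h_n$.

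\textbf{Step 1: a limiting sheaf.} The first step is to extract a limit object. Since $\cL$ is a topological $d$-manifold, I would invoke Proposition \ref{prop:topologicalLagrangian}, which says that $\cL$ is a $\gamma$-support. More precisely, I would reuse its proof to obtain, after passing to a subsequence, a limit $F$ of the $F_n$ in the interleaving/$\gamma$-completion (Humilière completion, in its sheaf incarnation). This limit should satisfy two properties:
\begin{enumerate}[label=(\alph*)]
\item its reduced microsupport $\dot{SS}(F)$ is contained in $\Gamma$, by the upper semicontinuity of microsupports under such limits together with the Hausdorff convergence $\Gamma_n\to\Gamma$;
\item every point of $\Gamma$ lying over $\cL$ belongs to $\dot{SS}(F)$, since $\cL$ is the full $\gamma$-support.
\end{enumerate}
The containment (a) is where the primitive $h$ enters: it pins down the $\R$-coordinate, so that the limit is controlled inside the graph $\Gamma$ and not merely inside $\cL\times\R$.

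\textbf{Step 2: local structure over $U$.} Next, localize over an open set $U\subset\cL$ where $\cL$ is a $C^1$ submanifold. Over $U$, Step 1 gives the equality $\dot{SS}(F)=\Gamma|_U$, the graph of the continuous function $h|_U$ over the $d$-dimensional $C^1$ manifold $U$. By the Kashiwara--Schapira involutivity theorem, the conification of this set in $T^*(M\times\R)$ is a closed coisotropic subset in the weak (normal cone) sense. It is also a topological manifold of dimension $d+1=\tfrac12\dim T^*(M\times\R)$. From this I want to conclude that $U$ is isotropic, hence Lagrangian for dimension reasons. Concretely, at a point of $U$, suppose the symplectic form did not vanish on $T_xU$. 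Then the projection of the cone to $T^*M$, which is $U$ itself, would have a tangent cone failing coisotropy at the corresponding point. This contradicts the involutivity of the normal cone, because the $\R$-direction of the jet space contributes only the Reeb direction.

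\textbf{Step 3: regularity of $h$ and $dh=\lambda$.} It remains to upgrade $h$ and show $dh|_U=\lambda|_U$. Since $U$ is Lagrangian and $C^1$, it locally has a $C^1$ Liouville primitive $g$. The Legendrian lift $\Gamma_g$ of $(U,g)$ is then a $C^1$ Legendrian. Consider the difference $h-g$, a continuous function on a connected piece of $U$. Its graph combined with $\Gamma_g$ is still an involutive set, namely the reduced microsupport of $F$ shifted fibrewise. A graph of a continuous function $f$ over a Legendrian whose conification is coisotropic must be invariant under the characteristic (Reeb-transverse) foliation. Equivalently, the Clarke/viscosity sub- and superdifferentials of $f$ along $U$ must vanish. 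I would run this argument with the local test-function criterion for microsupports, which plays the role of a viscosity-type argument, and conclude that $f$ is locally constant. Hence $h=g+\mathrm{const}$ is $C^1$ and $dh|_U=\lambda|_U$.

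The final assertion follows from this. If $\cL$ is a $C^1$ manifold, take $U=\cL$; then $\cL$ is an exact Lagrangian and $h$ is a primitive. Any other lift $h'$ is also a primitive on the connected manifold $\cL$, so $[h']=[h]$ and $\cL\in\mathscr{L}^!$.

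I expect the main obstacle to be Step 1. Hausdorff convergence of the graphs $\Gamma_n$ does not by itself give $\gamma$-convergence, so one must check that the limiting process (via Proposition \ref{prop:topologicalLagrangian} or a compactness argument for sheaves with microsupport in a fixed compact set) yields a nonzero $F$. We need its microsupport to be exactly $\Gamma$ over $U$, and not just some subset of $\cL\times\R$. The regularity of $h$ in Step 3 is the second delicate point.
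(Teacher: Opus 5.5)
The gap is in Step 3, and this step carries the whole content of $dh|_U=\lambda|_U$. You assert that a continuous Reeb-direction graph over the Legendrian $\Gamma_g$ with involutive conification must be invariant under the characteristic foliation, i.e.\ that $h-g$ is locally constant. You do not prove it: ``the local test-function criterion for microsupports'' is not an argument, and this is exactly where the mere continuity of $h$ must be handled. There is also a problem with the ``fibrewise shift''. If it means applying $(x,z)\mapsto(x,z-g(x))$, that map is not a contactomorphism, because $g$ is a primitive of $\lambda$ only along $U$, so it does not transport involutivity. Only a $C^1$ exact symplectic chart flattening $U$ would.

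The paper closes this step with Proposition \ref{prop:cone-coiso} (Kashiwara--Schapira, Prop.~6.5.2): a closed cone-coisotropic set is invariant under $C^1$ Hamiltonian flows of functions vanishing on it. Take a Hamiltonian $H$ vanishing on $U$ and its homogeneous lift $\tau H(s,q,p/\tau)$, whose flow on $\{\tau=1\}$ is $(x,t)\mapsto\bigl(\phi^s_H(x),t+\int_0^s(H-\lambda(X_H))\bigr)$. Invariance of $\nu^{-1}(\Gamma_{(U,-h)})$ then gives $h(\phi^s_H(x))=h(x)+\int_0^s\lambda(X_H)$. One concludes by realizing every $C^1$ path in $U$ by such a $C^1$ flow, using $C^1$ Darboux charts flattening $U$ and $G_t=p\cdot\dot\gamma(t)$.

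Alternatively, your viscosity idea can be made rigorous directly from the cone definition of involutivity. This uses the same computation your Step 2 needs but only gestures at. Let $S=\nu^{-1}(\Gamma_{(U,-h)})$, and let $B=\{(q,\tau p,\tau)\mid (q,p)\in U,\ \tau>0\}$ be its $C^1$ image after forgetting $t$. Let $V$ be the preimage of $T_xB$, so that $\partial_t\in V$. Then $C_x(S,S)\subset V$, so involutivity yields $V^{\perp\omega}\subset C_x(S)$. At $\tau=1$, $V^{\perp\omega}$ consists of the vectors with $\delta\tau=0$, $T^*M$-component $u\in(T_xU)^{\perp\omega}$, and $t$-component $-\lambda(u)$.

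Projecting to $T^*M$ gives $(T_xU)^{\perp\omega}\subset T_xU$, which is your Step 2; the paper instead uses $\gamma$-coisotropy of $\operatorname{\gamma-supp}(\underline{\cL}_h)=\cL$ and Proposition \ref{prop:smoothGammaCoisotropic}. Once $U$ is Lagrangian, the same inclusion says that $(u,\lambda(u))$ lies in the contingent cone of $\Gamma_{(U,h)}$ for every $u\in T_xU$. Hence, in every direction, $f=h-g$ has difference quotients tending to $0$ along some sequence. This forces its viscosity sub- and superdifferentials to lie in $\{0\}$ everywhere, so $f$ is locally constant. This route would even bypass the path-realization step.

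Finally, the obstacle you anticipate in Step 1 does not occur. Proposition \ref{BraneGammaProp} shows that brane convergence already makes $(\cL_n,h_n)$ Cauchy for the $c$-distance, so no subsequence or sheaf compactness is needed. Your properties (a) and (b) are exactly Proposition \ref{prop:SStopologicalBrane}.
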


\begin{rem}
    \begin{enumerate}
        \item This theorem can be reformulated as a Legendrian rigidity statement: if the Legendrian lifts of closed exact Lagrangian submanifolds converge in the Hausdorff topology within $J^1M$ to the graph $\Gamma_{(\cL,h)}$ of a function $h$ over a topological manifold $\cL$ of dimension $d$, then $\Gamma_{(\cL,h)}$ is Legendrian over any smooth part of $\cL$. Note that the rigidity of Legendrian submanifolds is false in general. For instance, in dimension 3, one can show, using Gromov's $h$-principle for Legendrian immersions \cite{MR864505}, that any smooth knot can be approached in the Hausdorff topology by Legendrian knots (see \cite{MR2179261,MR2397738,MR3619673}), and therefore a smooth Hausdorff limit of Legendrian knots can fail to be Legendrian. However, some rigidity results for Legendrians have been established under additional assumptions, as in \cite{nakamura2020c0limitslegendriansubmanifolds,MR4719181,MR4876451,asano2025c0rigiditylegendrianscoisotropicssheaf}. Even though our setting is more restrictive, our result is stronger than what already exists in the literature in two aspects. Firstly, our result is semi-local: we only need to assume smoothness locally to get the conclusion locally, but under the global assumption that the projection $\cL$ of the limit $\Gamma_{(\cL,h)}$ into $T^*M$ is a topological manifold of dimension $d$. Secondly, we only need to assume smoothness of the projection $\cL$, and not of $\Gamma_{(\cL,h)}$ itself.
        \item If we wanted to obtain the rigidity in the case when $\cL$ is assumed to be globally a $C^1$ submanifold from existing results in the literature on Lagrangian rigidity, then from the convergence of smooth (exact) Lagrangians $\cL_n$ to $\cL$ in the Hausdorff topology, one could apply \cite{MR1266111} to obtain that $\cL$ is Lagrangian. To get exactness of $\cL$, one could think of applying \cite{MR4334195}. However, their result is stated for convergence under symplectic homeomorphisms, and it seems that their proof extends to the Hausdorff convergence context only if we additionally assume that the projections $\pi_n\colon \cL_n\to\cL$ induce surjections on $H_1(\cL)$ for large $n$. This additional assumption does not seem to immediately follow from our hypotheses alone, even though the result of our theorem implies that it is true: if $\cL$ is assumed to be exact, then applying \cite{MR3849284} in an exact Weinstein neighbourhood of $\cL$ gives that for sufficiently large $n$, the maps $\pi_n$ are simple homotopy equivalences.
        \item This theorem cannot be improved to a completely local one in dimension $d\geq2$. Indeed, if $\cL$ is the union of the zero section and a smooth $d$-dimensional closed disc $D$ intersecting transversely, then we will see in Theorem \ref{thm:NonUniqueness2D} that there exists $h\colon \cL\to\R$ such that $(\cL,h)\in\overline{\mathcal{LB}(T^*M)}$ without $h$ being a Liouville primitive on any smooth open set of $D$. Even worse, $D$ does not have to be Lagrangian.
    \end{enumerate}
\end{rem}

\paragraph{Flexibility phenomena in higher dimensions.}

After presenting several results concerning uniquely liftable branes, we now turn to a flexibility phenomenon that destroys uniqueness in dimensions greater than or equal to $2$. We have already mentioned that the higher-dimensional analogues of the stratified Lagrangian sets illustrated in Figure \ref{fig:ULLS:Fils} are not uniquely liftable. This lack of uniqueness is explained by the flexibility phenomenon expressed in the following theorem.

\begin{theor} \label{thm:NonUniqueness2D}
	Assume that $d = \dim M \geq 2$. Let $(\mathcal{C},c)$ be in $\overline{\mathcal{LB}(T^*M,\omega)}$, and let $\mathcal{K}$ be a compact connected set containing $\mathcal{C}$. Then, for any continuous map $k : \mathcal{K} \to \R$ whose restriction to $\mathcal{C}$ is $c$, the bare brane $(\mathcal{K},k)$ belongs to $\overline{\mathcal{LB}(T^*M,\omega)}$. 
    
    In particular, if $\mathcal{C}$ belongs to $\mathscr{L}^\exists$, then every compact connected set $\mathcal{K}$ containing $\mathcal{C}$ also belongs to $\mathscr{L}^\exists$. Moreover, if $\mathcal{K}$ strictly contains $\mathcal{C}$, then $\mathcal{K}\notin\mathscr{L}^!$.
\end{theor}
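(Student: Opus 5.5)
Since $\overline{\mathcal{LB}(T^*M,\omega)}$ is closed for $d_B$, it is enough to approximate $(\mathcal{K},k)$ in $d_B$ by Lagrangian branes. We first reduce to a smooth model. Take Lagrangian branes $(\cL_n,h_n)$ with $d_B((\cL_n,h_n),(\mathcal{C},c))\to0$. Fix $\varepsilon>0$. Using compactness and connectedness of $\mathcal{K}$ together with uniform continuity of $k$, we pick a finite $\varepsilon$-net of $\mathcal{K}$ and join its points by finitely many smooth embedded arcs inside the $\varepsilon$-neighbourhood of $\mathcal{K}$. This uses connectedness: $\varepsilon$-chains exist between any two points. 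Along these arcs we prescribe smooth values that stay $\varepsilon$-close to $k$, and every net point near $\mathcal{C}$ is connected to a point of $\cL_n$ whose $h_n$-value is close to $c$ there. The result is a connected "tree" $T_\varepsilon$, consisting of $\cL_n$ together with a finite union of arcs $\gamma_j$ carrying smooth functions $f_j$, which is $\varepsilon$-close in $d_B$ to $(\mathcal{K},k)$. Since $d\ge2$, general position lets us take the arcs pairwise disjoint, meeting $\cL_n$ only at their endpoints.

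The heart of the proof is the following one-arc lemma. Let $\gamma\subset T^*M$ be a smooth embedded arc with one endpoint on an exact Lagrangian $L$ with primitive $g$, and let $f$ be a smooth function on $\gamma$ that matches $g$ at that endpoint. Then there is an exact Lagrangian $L'$, with primitive $g'$, such that $(L',g')$ is $\varepsilon$-close in $d_B$ to $(L\cup\gamma,\, g\cup f)$. To build $L'$ we attach a thin "finger" along $\gamma$. Concretely, we take a Lagrangian tube of the form $S^{d-1}\times[0,1]$ that lies in a small neighbourhood of $\gamma$ and closes off with a disc at the tip, and we connect it to $L$ by a Lagrangian surgery or handle near the base point. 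Any such tube is isotopic to the $d$-dimensional "cylinder" over $\gamma$, and exactness holds because the only new loops are the small meridian $S^{d-1}$. When $d=2$ this meridian is a circle, and its action can be made arbitrarily small by shrinking the tube, or zero after a suitable correction.

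The main obstacle is that the primitive along the finger has to follow the arbitrary prescribed function $f$, whereas $\gamma$ itself is arbitrary (for instance $\gamma$ could be an isotropic curve along which $\int\lambda$ differs from $f$). We would handle this in the convex-integration spirit. Along the finger, the tube is replaced by a Lagrangian that winds rapidly in a small symplectic $2$-plane transverse to $\gamma$, the existence of such a plane being where $d\ge2$ is used. Each small loop contributes an increment $\int\lambda$ that can be tuned freely, positive or negative, while its displacement stays $C^0$-small. Integrating these increments lets the primitive track $f-\int_\gamma\lambda$ up to $\varepsilon$ while the Hausdorff distance stays below $\varepsilon$. We expect this local model, an exact Lagrangian sleeve with prescribed action profile built as a product of a zigzag curve in the transverse plane with a small sphere, to be the technical core, and we would first write it down in a Darboux chart around $\gamma$.

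Applying the lemma successively to the finitely many disjoint arcs, with each finger supported in a tubular neighbourhood disjoint from the others, produces a Lagrangian brane within $O(\varepsilon)$ of $(\mathcal{K},k)$. Letting $\varepsilon\to0$ proves the first claim. If $\mathcal{C}\in\mathscr{L}^\exists$, choose a lift $c$ and extend it continuously to $\mathcal{K}$ by Tietze; this gives $\mathcal{K}\in\mathscr{L}^\exists$. If moreover $\mathcal{K}\supsetneq\mathcal{C}$, pick a point $x\in\mathcal{K}\setminus\mathcal{C}$. Two Tietze extensions of $c$ that differ at $x$ yet agree on $\mathcal{C}$ cannot differ by a constant, because a constant difference would vanish on $\mathcal{C}$ and hence everywhere. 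This gives two distinct classes $[k]$, so $\mathcal{K}\notin\mathscr{L}^!$.
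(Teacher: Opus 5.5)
Your outer reduction is the paper's: approximate $(\mathcal{C},c)$ by a Lagrangian brane, thread arcs through an $\varepsilon$-net of $\Gamma_{(\mathcal{K},k)}$ using that arcs generically miss a submanifold of codimension $d\geq 2$, use closedness of $\overline{\mathcal{LB}(T^*M,\omega)}$, then Tietze and a non-constant correction for the last two claims. Your one-arc lemma is precisely Proposition \ref{prop:fil}, on which the whole theorem rests. (The paper uses a single arc through all the net points, chosen inside the open set where $(x,f(x))$ lies within $\varepsilon$ of $\Gamma_{(\mathcal{K},k)}$, with $f$ a Tietze extension of $k$; this makes the value control you leave vague immediate. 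Also make sure each of your arcs has exactly one endpoint on $\cL_n$.)

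The genuine gap is that this lemma, which contains essentially all the difficulty, is not proved, and the model you sketch for it fails. Taken literally, a product $Z\times\Sigma\subset\C\times\C^{d-1}$ of a zigzag curve with a small sphere requires a closed Lagrangian $\Sigma\cong S^{d-1}$ in $\C^{d-1}$. For $d\geq 3$ none exists: it would be exact, contradicting Gromov's theorem. For $d=2$, a small circle has action $\pm$ its enclosed area, so the sleeve is not exact and cannot be capped by a Lagrangian disc, since Stokes on the cap would force that action to vanish. An ``arbitrarily small'' period is of no use, because branes must be exact. Moreover, in a product the primitive splits as $h_Z+h_\Sigma$, so the zigzag is decoupled from the position along $\gamma$ and cannot produce a prescribed profile. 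Two further points are left open: gluing the sleeve to $L$ by ``surgery'' is not carried out, and a Darboux chart around $\gamma$ alone does not straighten $L$ near the base point.

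The missing idea is to perform the convex integration only on the one-dimensional core and to get the $d$-dimensional finger for free. The paper first straightens $L\cup\gamma$ by an exact symplectomorphism onto $0_L$ union a straight segment in the cotangent fibre over the base point (Lemma \ref{lem:WeinsteinLag+Curve}: a relative Moser argument plus a Mayer--Vietoris check of exactness), and makes $f$ constant near $t=0$. It then replaces the segment by the embedded curve $\sigma_l$ with $p_1=t$, $q_2=\mu_l\approx l^{-1/2}\sin(2\pi l t)$ and $p_2=\nu_l=g_l'/\mu_l'$, where $g_l'$ is cut off where $\mu_l'$ is small. This gives $\sigma_l^*\lambda=dg_l$, with $\sigma_l\to\gamma$ and $g_l\to f$ uniformly. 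This is where the transverse symplectic plane, hence $d\geq 2$, enters, as you anticipated.

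Lemma \ref{lem:WeinsteinLag+Curve} is then applied again, to $L\cup\sigma_l$. In the new model $\lambda_0$ vanishes on both $0_L$ and the fibre segment, so the transported primitive is constant. The finger is then simply $\mathcal{G}(du_i)$ with $u_i(q)=w_i(q_1)\bar\eta(\bar q_1/r_i)$, where $w_i'$ is a spike of height tending to $1$ and $\Vert u_i\Vert_\infty\to 0$. This is automatically an exact Lagrangian, equal to $0_L$ away from the spike, with primitive $u_i\circ\pi$ tending to the constant. Pulling back by the fixed exact symplectomorphism (Proposition \ref{BraneInvariance}) turns that constant back into $g_l\approx f$ on the filament, and a diagonal extraction concludes. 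In short, a thin finger inherits as its primitive the action $\int\lambda$ along its core, so only the core curve needs to zigzag.
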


\begin{rem}
    \begin{enumerate}
        \item In the one-dimensional case, we have seen in Item \ref{prop:UniquenessPrimitive5} of Theorem \ref{thm:UniquenessPrimitiveC0} that the second statement is false: both $\cL$ and $\cL \cup \im(\gamma)$ belong to $\mathscr{L}^!$.
	
	    The first statement is also false in dimension one. Indeed, one can construct compact sets containing an exact Lagrangian submanifold that do not belong to $\mathscr{L}^\exists$.
	    \begin{prop} \label{prop:NonExistence1D}
		    Let $C \subset T^*\mathbb{T}^1$ be a smooth embedded contractible circle tangent to the zero section at $(0,0)$. Then, $\mathcal{K}:=0_{\mathbb{T}^1}\cup C$ does not belong to $\mathscr{L}^\exists$.
	    \end{prop}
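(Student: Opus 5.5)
The plan is to argue by contradiction. Suppose $(\mathcal{K},h)\in\overline{\mathcal{LB}(T^*\mathbb{T}^1,\omega)}$, with Lagrangian branes $(\mathcal{L}_n,h_n)$ converging to it for the brane distance. The contradiction will come from the nonzero area $A:=\oint_C\lambda$ enclosed by $C$, which is nonzero because $C$ is an embedded contractible circle bounding a disc $D$.

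\textbf{Step 1: preliminary facts.}
\begin{itemize}
\item Each component of $\mathcal{L}_n$ is an embedded circle. Each is exact, so none is contractible: a contractible one would bound a disc of positive area, contradicting exactness. Hence every component is essential.
\item Brane convergence means Hausdorff convergence of graphs in $J^1M$. By continuity of $h$ on the compact set $\mathcal{K}$, it gives the following uniform statement. For every $\eta>0$ and $n$ large, $\mathcal{L}_n$ lies in the $\varepsilon_n$-neighbourhood $N_n$ of $\mathcal{K}$, with $\varepsilon_n\to0$. Every $x\in\mathcal{K}$ is $\varepsilon_n$-close to some point of $\mathcal{L}_n$. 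And $|h_n(x_n)-h(x)|\le\eta$ whenever $x_n\in\mathcal{L}_n$ and $x\in\mathcal{K}$ are $\varepsilon_n$-close.
\end{itemize}

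\textbf{Step 2: geometry of $N_n$.} Away from a small ball $B$ around the tangency point $(0,0)$, the set $\mathcal{K}$ is a disjoint union of smooth arcs: the zero section minus a segment, and $C\setminus B$, which is an arc from $a^-$ to $a^+$, the two points where $C$ exits $B$. So $N_n\setminus B$ contains a thin strip $S_n$ around $C\setminus B$, and this strip is separated from the strip around the zero section.

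Fix a point $y\in C\setminus B$ and points $y_n\in\mathcal{L}_n$ close to $y$. Follow the component of $\mathcal{L}_n$ through $y_n$ inside $S_n$ in both directions. It cannot close up inside $S_n$, since the component is essential. So both ends exit $S_n$, necessarily near $a^-$ or $a^+$. If both ends exited on the same side, the sub-arc would be a hairpin. We then define $y$ to be \emph{left-reached} if some sub-arc of $\mathcal{L}_n\cap S_n$ passing near $y$ exits near $a^-$, and \emph{right-reached} if some such sub-arc exits near $a^+$.

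\textbf{Step 3: the area estimate.} Take such a sub-arc $\sigma$ from a point near $a^\pm$ to $y_n$. Close it up with two short segments and the arc of $C$ from $y$ back to $a^\pm$. The result is a loop contained in the strip $S_n$, which deformation retracts onto an arc. Since $\sigma$ is embedded, the loop bounds a region, counted with bounded multiplicity, that lies inside $S_n$. Its area is therefore $O(\varepsilon_n)$. By Stokes,
\[
h_n(y_n)-h_n(\sigma(0))=\int_\sigma\lambda=\int_{C,\,a^\pm\to y}\lambda+o(1),
\]
even though $\sigma$ may wiggle arbitrarily. Shrinking $B$, the points $a^\pm$ tend to $(0,0)$. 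Combined with Step 1, this gives, for left-reached points,
\[
h(y)=h(0,0)+\int_{C,\,(0,0)\to y}^{\mathrm{ccw}}\lambda+o(1),
\]
and for right-reached points the same formula with the clockwise arc.

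\textbf{Step 4: the contradiction.} Every point of $C\setminus B$ is left- or right-reached, by Step 2 together with Hausdorff density. Points near $a^-$ are left-reached and points near $a^+$ are right-reached, because an arc starting near $a^-$ must first pass there. By connectedness of $C\setminus B$, and a limiting argument along subsequences, some point $y$ is both left- and right-reached. The two formulas from Step 3 then differ by $\oint_C\lambda=\pm A\ne0$, up to $o(1)$ errors. This is a contradiction.

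\textbf{Main obstacle.} The delicate step is the area estimate of Step 3. Hausdorff closeness gives no control on $\int\lambda$ along a wiggling curve, so the estimate has to come from embeddedness of $\mathcal{L}_n$ and the thinness of the strip. A secondary difficulty is making the neighbourhood topology near the tangency point rigorous, so that every tracking arc really enters $B$ through $a^\pm$ and not elsewhere.
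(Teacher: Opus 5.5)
Your architecture matches the paper's. First you show that $h$ restricted to the smooth arc $C\setminus\{(0,0)\}$ is a Liouville primitive, which the paper isolates as Proposition~\ref{prop:1DLocalRigidity}. Then continuity of $h$ at $(0,0)$ from both ends of the arc forces $\oint_C\lambda=0$. However, Step~3, which carries the whole argument, rests on a false claim. Embeddedness of $\sigma$ does not bound the multiplicity of your loop. The reason is that $\sigma$ ends at an \emph{interior} point $y_n$ of the strip, and your closing pieces run through the interior of $S_n$: the segment from $y_n$ to $y$, and the arc of $C$ from $y$ to $a^\pm$. There they can cross $\sigma$ arbitrarily often. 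For instance, $\sigma$ may end with an elongated spiral of $k$ turns converging to $y_n$ and filling a piece of the strip of length $\ell$. Your loop then has winding number of order $k$ near $y_n$. Consequently $\int_\sigma\lambda$ differs from the integral along $C$ by an amount of order $k\ell\varepsilon_n$, which is of order $\ell$ when $k\sim\varepsilon_n^{-1}$. Thinness of the strip controls area, not multiplicity. Such spirals are in fact incompatible with brane convergence, but proving that is exactly the content of this step, so it cannot be assumed.

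The missing idea is to choose the sub-arc so that it can be closed along the \emph{boundary} of a thin rectangle, which yields a Jordan curve. Straighten a neighbourhood of a compact sub-arc of $C\setminus\{(0,0)\}$ into a box $R=[a,b]\times[-\delta,\delta]$ with $\mathcal{L}_n\cap R\subset[a,b]\times(-\delta_n,\delta_n)$. The paper uses an exact Weinstein chart, in which the arc lies in the zero section, so $\lambda_0=p\,dq$ is small on nearby horizontal segments. Start from a point of $\mathcal{L}_n$ near $(t,0)$ with first coordinate $t_n$. Follow $\mathcal{L}_n$ until its \emph{first} exit from $R$, which necessarily goes through a vertical side $\{e_n\}\times(-\delta_n,\delta_n)$ with $e_n\in\{a,b\}$. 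Keep only the part after the \emph{last} visit to the segment $\{t_n\}\times(-\delta_n,\delta_n)$. Its interior lies strictly between the lines $q=t_n$ and $q=e_n$. Closing it with the polygonal path along the boundary of $[t_n,e_n]\times[-\delta_n,\delta_n]$ therefore gives a simple closed curve enclosing area at most $2\delta_n|e_n-t_n|$. Stokes then shows the primitive changes by $O(\delta_n)$ along this sub-arc. In the limit, for every $t$, $h(c(t))$ agrees either with the primitive of $c^*\lambda$ normalised at $a$ or with the one normalised at $b$. These two differ by a constant and $h$ is continuous, so connectedness shows $h$ is a primitive on the whole arc. This also repairs Step~4, whose claim that points near $a^-$ are left-reached is unjustified, since a hairpin entering near $a^+$ can reach them. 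The pointwise dichotomy plus connectedness is all you need, after which your monodromy contradiction goes through as in the paper.
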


        \item Theorem \ref{thm:NonUniqueness2D} asserts that every element of $\mathscr{L}^!$ is minimal, with respect to inclusion, among subsets of $\mathscr{L}^\exists$. However, the converse is false: minimality with respect to inclusion does not imply uniqueness. Indeed, in Example \ref{exp:MinimalNonUnique}, we construct a compact set $\mathcal{K}$ which is minimal with respect to inclusion in $\mathscr{L}^\exists$, but does not belong to $\mathscr{L}^!$. Although the construction is given in dimension one, it can be generalized to higher dimensions.

        \begin{prop} \label{prop:NonUniqueness&Minimality}
            There exists an element $\mathcal{K}$ in $\mathscr{L}^\exists$ that is minimal with respect to inclusion in $\mathscr{L}^\exists$ and does not belong to $\mathscr{L}^!$.
        \end{prop}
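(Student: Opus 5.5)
The plan is to build, in dimension one, a compact ``fat theta graph'' $\mathcal{K}\subset T^*\T^1$. We then show two things: $\mathcal{K}$ admits two genuinely different lifts, and no proper compact subset of $\mathcal{K}$ is liftable. The guiding principle is an area bookkeeping. A closed exact Lagrangian in $T^*\T^1$ is an embedded essential curve $\gamma$ with $\int_\gamma \lambda=0$. If such curves brane-converge to $(\mathcal{K},h)$, then, away from small neighbourhoods of the branch points, they run along the arcs of $\mathcal{K}$. Their homology class and their action are then determined, up to small errors, by how many times (with sign) each arc is traversed. Back-and-forth passes (``folds'') contribute neither to the class nor to the action.

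\emph{Construction.} Fix $0<q_1<q_2<1$ and three disjoint embedded arcs $a,b,c$ in $T^*\T^1$, each projecting diffeomorphically onto $[q_1,q_2]$ and meeting only at their common endpoints $x$ over $q_1$ and $y$ over $q_2$. Add a fourth arc $r$ joining $y$ to $x$ and projecting diffeomorphically onto $[q_2,1+q_1]$. Set $\mathcal{K}=a\cup b\cup c\cup r$ and $I_e=\int_e\lambda$ for each arc $e$. We choose the arcs so that
\[
I_r+I_a-I_b+I_c=0,
\]
while no other integer combination $\sum_e u_eI_e$ with $u_r=1$, $u_a+u_b+u_c=1$, supported on a proper subfamily of $\{a,b,c\}$, vanishes. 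For instance, we take $I_r=0$, $I_a=1$, $I_c=\sqrt2$, $I_b=1+\sqrt2$; the required non-vanishing then reduces to the irrationality of the ratios $I_j/(I_j-I_i)$ and to $I_e\neq0$.

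\emph{Two lifts.} We realise the edge paths $r\,a\,b^{-1}c$ and $r\,c\,b^{-1}a$ by embedded essential curves, zig-zag (``Z''-shaped) near $x$ and $y$ and $C^0$-close to $\mathcal{K}$. These curves have action $0$ by the choice above, so they are exact, and each arc is covered exactly once. The Liouville primitives therefore converge to continuous functions $h,h'$ on $\mathcal{K}$. On $r\cup a\cup c$ they agree up to constants. On $b$, however, they differ relative to $r$ by the non-zero shift $I_c-I_a$ (the order in which $a$ and $c$ are visited before $b^{-1}$ changes). Hence $[h]\neq[h']$ and $\mathcal{K}\notin\mathscr{L}^!$.

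\emph{Minimality.} Let $\mathcal{K}'\subsetneq\mathcal{K}$ be compact with $(\mathcal{K}',h'')\in\overline{\mathcal{LB}}$, approximated by exact curves $\gamma_n$.
\begin{enumerate}
\item Every $\gamma_n$ is essential and hence surjects onto $\T^1$. So $\mathcal{K}'\supset r$ and $\mathcal{K}'$ meets every fibre over $[q_1,q_2]$.
\item Since $\mathcal{K}'$ is proper, at least one arc, say $e$, is not entirely contained in $\mathcal{K}'$. The part of $e$ that remains is then a dead end: any passage of $\gamma_n$ along it must return the way it came, i.e.\ is a fold.
\item We decompose $\gamma_n$ into passes between neighbourhoods of $x$ and $y$. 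Let $u_e\in\Z$ be the signed number of full passes along each arc. Then $u_r=1$, $u_a+u_b+u_c=1$ and $u_e=0$. Continuity and boundedness of the limiting primitive force the action contribution of folds and of the small neighbourhoods of $x,y$ to tend to $0$.
\item Exactness then gives $\sum u_fI_f=0$ over a proper subfamily, contradicting the choice of the $I_f$.
\end{enumerate}

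The main obstacle is the third step. We must make rigorous that an arbitrary exact curve Hausdorff-close to $\mathcal{K}'$, whose Legendrian lift is Hausdorff-close to $\Gamma_{(\mathcal{K}',h'')}$, has action equal to $\sum u_fI_f$ up to $o(1)$. We would try to prove this via a tubular neighbourhood of each arc, inside which $\lambda$ is close to $dF$ for a function $F$ constant on fibres of the tubular projection. Along each segment of $\gamma_n$ inside the tube, the change of the primitive is then within $o(1)$ of the change of $F$. Summing over segments and using uniform closeness of the lifted primitives yields the count. Embeddedness of the zig-zag curves realising the two lifts is a routine planar check.
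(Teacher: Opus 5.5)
\textbf{The proposal has a fatal gap: your set $\mathcal{K}$ is not in $\mathscr{L}^\exists$ at all, so the construction fails before minimality comes into play.}

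The arcs $a$ and $b$ are graphs $p=p_a(q)$ and $p=p_b(q)$ over $[q_1,q_2]$ with common endpoints and disjoint interiors. Hence $p_a-p_b$ has constant sign on $(q_1,q_2)$, and
\[
I_a-I_b=\int_{q_1}^{q_2}(p_a-p_b)\,dq\neq 0 .
\]
In other words, the embedded loop $a\cup b$ bounds a disc of positive area.

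Suppose $(\mathcal{K},k)\in\overline{\mathcal{LB}}(T^*\T^1,\omega)$. The one-dimensional local rigidity (Proposition \ref{prop:1DLocalRigidity}) applies to every compact sub-arc of the interiors of $a$ and $b$. So $k$ is a Liouville primitive on each open arc. Continuity of $k$ at $x$ and $y$ then gives $k(y)-k(x)=I_a$ and $k(y)-k(x)=I_b$, a contradiction. This is exactly the mechanism of Proposition \ref{prop:NonExistence1D}.

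You can see the failure directly on your zig-zag curve realising $r\,a\,b^{-1}c$. Let $z_0$ be the value of the primitive at the start near $y$. The curve passes near $x$ twice:
\begin{itemize}
\item turning from $r$ into $a$, with primitive $\approx z_0+I_r$;
\item folding from $b^{-1}$ into $c$, with primitive $\approx z_0+I_r+I_a-I_b$.
\end{itemize}
The Legendrian lifts therefore accumulate on two distinct points over $x$, so the limit is not a partial graph $\Gamma_{(\mathcal{K},h)}$.

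The step ``the Liouville primitives therefore converge to continuous functions'' is where the argument breaks. Covering each arc once makes the primitive converge on each open arc. Brane convergence additionally requires the values at all passages near a vertex to agree, and no choice of the $I_e$ can make $I_a=I_b$.

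Separately, with your numbers $b$ is the top strand, since the vertical order of the arcs matches the order of the $I_e$. An embedded Z-shape needs the reversed pass to be the middle strand, so your curves are not even embeddable. Fixing this does not rescue the construction.

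More generally, a liftable finite graph of smooth arcs in $T^*\T^1$ cannot contain a contractible cycle. Any such cycle bounds positive area, yet the primitive would have to integrate $\lambda$ to zero around it. So no theta-type combinatorics can produce two lifts.

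The paper uses a different mechanism. It takes
\[
\mathcal{K}=\overline{\mathcal{G}(f)}=\mathcal{G}(f)\cup\big(C\times[-1,1]\big),
\]
with $C$ a Smith--Volterra Cantor set of positive measure and $f$ a sine bump on each gap. The vertical segments over $C$ are not smooth arcs, so nothing forces the primitive there. One approximates by smooth $f_{\pm,n}$ converging in $L^1$ to $f_\pm$, where $f_\pm$ equals $f$ off $C$ and $\pm\sigma$ on $C$. This gives two lifts $u_\pm=\int f_\pm$ with $(u_+-u_-)'=2\sigma$ on $C$, which is nonconstant because $\leb(C)>0$.

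Minimality is then immediate. Every liftable subset projects onto $\T^1$, the fibres of $\mathcal{K}$ over $\T^1\setminus C$ are singletons, and $\mathcal{K}$ is the closure of $\mathcal{G}(f)$. A positive-measure non-graph part, on which the primitive is not dictated by integrating $\lambda$ along smooth arcs, is the ingredient your proposal is missing.
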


        More strongly, from the viewpoint of the spectral geometry of the Humilière completion, the same example is constructed so that $\mathcal{K}$ is the $\gamma$-support of an element $\underline{\cL}$ of the Humilière completion, is minimal with respect to inclusion among $\gamma$-supports, and nevertheless does not determine $\underline{\cL}$ uniquely. Thus, minimality of the $\gamma$-support does not imply uniqueness of the corresponding element of the completion. This gives a negative answer to one of the questions listed in \cite[Question 7.7]{viterbo2026supportshumilierecompletiongammacoisotropic}.

		\item  From the viewpoint of bare Lagrangian branes, as soon as such a compact set is strictly enlarged, one can construct two distinct classes $[h]$ on the larger set. The situation is more subtle for $\gamma$-supports: given a compact set $\mathcal{K}$ which is a $\gamma$-support, one may ask whether there exists a unique element of the Humilière completion whose $\gamma$-support is $\mathcal{K}$. This is not directly controlled by the uniqueness of the lift $[h]$. Indeed, a sequence of Lagrangian branes may brane-converge to a compact set $\mathcal{K}$, while the $\gamma$-support of the corresponding limit in the Humilière completion is strictly contained in $\mathcal{K}$. Thus, the bare-brane limit retains more geometric information than the $\gamma$-support alone.

        Hence, one may ask the minimality question for $\gamma$-supports. Assume that a $\gamma$-support $\mathcal{K}$ determines a unique element of the Humilière completion. Is this uniqueness necessarily lost as soon as one passes to a strictly larger $\gamma$-support? Contrary to the case of lifts $[h]$, there is no immediate way to construct two distinct elements of the completion after enlarging the support.

		\begin{quest}
			Let $\mathcal{K}$ be a $\gamma$-support. If $\mathcal{K}$ determines a unique element of the Humilière completion, is $\mathcal{K}$ minimal, with respect to inclusion, among $\gamma$-supports? In particular, does every strictly larger $\gamma$-support fail to determine a unique element?
		\end{quest}
    \end{enumerate}
\end{rem}

\subsection{Proof Sketch and Structure of the Article}

The article is divided into two main parts. The dynamical Part \ref{part:Dynamics} is devoted to the proof of the main Theorem \ref{MainTheorem}, while the more geometric Part \ref{part:Geometry} studies bare Lagrangian branes in order to better understand the gain in regularity provided by the main theorem.\\

In Part \ref{part:Dynamics}, the proof of Theorem \ref{MainTheorem} proceeds as follows. We associate to any bare Lagrangian brane $(\cL,h)$ a weak solution of the Hamilton--Jacobi equation \eqref{HJ}, called a variational solution. These solutions are defined through graph selectors of the associated variational branes $(\cL_t,h_t)$. Their construction relies on Floer homology, introduced in Section \ref{section:Floer}, where we first define graph selectors and variational solutions for regular Lagrangian branes. This section also recalls the spectral distance and the Humilière completion, which will later be used in the rigidity results of Section \ref{Section:regularity}. In Section \ref{section:BraneDist}, we introduce the brane topology and extend graph selectors and variational solutions to bare Lagrangian branes.

The main objective is then to prove that, under the assumptions of Theorem \ref{MainTheorem}, these variational solutions are in fact of class $C^{1,1}$. Since they are defined as graph selectors, this regularity yields (with some additional work) the equality $\cL=\mathcal{G}(du)$ and hence shows that $\cL$ is a Lipschitz graph over the zero section. To obtain this regularity, we use weak KAM theory, recalled in Section \ref{SectionCalib}. Although variational solutions do not necessarily coincide with viscosity solutions, Proposition \ref{Domination} shows that they are viscosity subsolutions, which is sufficient for our purposes. In particular, weak KAM theory provides calibrated curves along which such subsolutions enjoy $C^{1,1}$ regularity. The proof therefore reduces to finding suitable calibrated curves through every point. This is carried out in Section \ref{section:Proof}, which contains the proofs of Theorem \ref{MainTheorem} and Corollary \ref{OptimalityCor}.\\

Part \ref{part:Geometry} is devoted to the geometry of bare Lagrangian branes. We first study uniquely liftable Lagrangian subsets, for which the Birkhoff theorem applies directly, and show that this class contains new low-regularity examples. 

Section \ref{Section:regularity} is dedicated to the study of uniquely liftable Lagrangian subsets, namely to the proofs of Theorems \ref{thm:rigidity} and \ref{thm:UniquenessPrimitiveC0} (with the proof of Item \ref{prop:UniquenessPrimitive5} of Theorem \ref{thm:UniquenessPrimitiveC0} delayed to Section \ref{sec:BirkhoffStratified}). It begins with some preliminaries on microlocal sheaf theory, necessary for the proofs of Theorem \ref{thm:rigidity} and Item \ref{prop:UniquenessPrimitive6} of Theorem \ref{thm:UniquenessPrimitiveC0}.

Finally, Section \ref{sec:BirkhoffStratified} studies liftable Lagrangian subsets without assuming uniqueness of the lift. This is necessary because Theorem \ref{MainThmInvariant} also applies to sets that are not uniquely liftable. We prove Theorem \ref{thm:NonUniqueness2D}, which clarifies the structure of the class of liftable Lagrangian subsets and whose proof uses ideas inspired by convex integration. In particular, this result shows that stratified Lagrangian sets provide a large family of liftable examples. We then establish Theorems \ref{thm:BirkhoffStratifiedLag} and \ref{thm:BirkhoffStratifiedLagC0}.

\paragraph{Acknowledgements:}

Both authors thank Noah Porcelli for answering some of our questions about the works \cite{MR2596633,MR2565051,MR2947545} on exact Lagrangians in cotangent bundles, as well as Claude Viterbo for discussions about the Humilière completion and microlocal sheaf theory. We also thank Marie-Claude Arnaud and Patrick Bernard for their questions, which redirected our attention toward the uniqueness of Liouville primitives and helped shape the subsequent development of the project. We thank the FIM, which funded the visit of S.C. at ETH Zürich, where this collaboration started. I.T. thanks his advisor Sobhan Seyfaddini for his continuous feedback on this project, as well as Jean-Philippe Chassé for his suggestion to look into the rigidity properties of Legendrian submanifolds. I.T. was partially funded by the ERC Starting Grant number 851701. We used LLMs for proofreading, grammar checking, and literature search. All the results were written and proved by the authors.

\ppart{Proof of the General Birkhoff Theorem \ref{MainTheorem}} \label{part:Dynamics}

\section{Floer-Theoretic Variational Solutions} \label{section:Floer}

\subsection{Lagrangian Floer Homology and Spectral Invariants}

In this section, we recall the definition of the Floer homology of a pair of exact Lagrangians, and of the associated spectral invariant.
To avoid orientation issues, we work with $\Z/2\Z$ coefficients. Our setting is similar to that of \cite{MR3860396}; however, since we will always consider at least one compact Lagrangian, we do not need to work with wrapped Floer homology. We will only give a quick overview of the theory; a more detailed exposition of Lagrangian Floer homology and spectral invariants inside a cotangent bundle can be found in \cite{MR3524783}.

\subsubsection{Lagrangian Floer Homology}

Let $(\cL_0,h_0)$, $(\cL_1,h_1)$ be Lagrangian branes in $T^*M$. We assume that at least one of $\cL_0$ and $\cL_1$ is closed (say for instance $\cL_0$), and that they intersect transversely. If $\cL_1$ is non-compact, we also assume that it is conical at infinity and has vanishing Maslov class (we will mainly consider the case of a cotangent fibre, for which the assumptions hold). We define the Floer chain complex as
\[CF_*(\cL_0, \cL_1)=\bigoplus\limits_{x\in\cL_0\cap \cL_1}\Z/2\Z\cdot x.\]
By a theorem of Abouzaid and Kragh \cite{MR3070514}, closed exact Lagrangians in $T^*M$ also have vanishing Maslov class, so the complex carries a relative $\Z$-valued Maslov grading. We denote by $\mu(x,y)$ the relative grading between two intersection points $x$ and $y$.

This vector space can be equipped with a filtration as follows. We define the action of a generator $x\in\cL_0\cap\cL_1$ as $\mathcal A_{\cL_0,\cL_1}(x) = h_0(x)-h_1(x)$, and denote by $CF^{\leq a}_*(\cL_0, \cL_1)$ the subvector space generated by intersection points of action less than or equal to $a\in\R$. Note that the action $\mathcal A_{\cL_0,\cL_1}$ depends on the choice of Liouville primitive on each Lagrangian submanifold, but we decide to omit it from the notation.

Fix a smooth family of $\omega$-compatible almost complex structures $\{J_t\}_{t\in[0,1]}$ on $T^*M$. For $x,y\in \cL_0\cap \cL_1$, we denote by $\widehat {\mathcal M}(x,y)$ the set of $J_t$-holomorphic strips $u:\R\times [0,1]\to T^*M$, with boundary conditions $u(s,0)\in\cL_0$, $u(s,1)\in \cL_1$, and asymptotic to $x$ at $s \to -\infty$ and to $y$ at $s \to +\infty$. The $J_t$-holomorphic condition means that $u$ satisfies the equation
\[d_{(s,t)}u\circ j=J_t\circ d_{(s,t)}u,\]
where $j$ denotes the standard complex structure on the strip $\R_s\times [0,1]_t$ mapping $\partial_s$ to $\partial_t$. This equation is equivalent to $\partial_su+J_t\partial_tu=0$, and therefore such a curve has a non-negative symplectic area
\[\int_{\R \times [0,1]} u^*\omega =\int_{\R \times [0,1]}\vert\partial_tu\vert^2\geq0\]
where $\vert\cdot\vert$ is the norm associated to the Riemannian metric $\omega(\cdot,J_t\cdot)$, and with equality if and only if $u$ is constant. On the other hand, since $\omega=-d\lambda$ and $h_0$, $h_1$ are Liouville primitives on $\cL_0$ and $\cL_1$, we also have by a straightforward computation that the symplectic area of $u$ coincides with the difference of action between its endpoints:

\begin{equation}
\label{eq:energy}
    \int_{\R \times [0,1]} u^*\omega = \mathcal A_{\cL_0,\cL_1}(x)-\mathcal A_{\cL_0,\cL_1}(y).
\end{equation}

Let $\mathcal M(x,y)$ denote the quotient of $\widehat {\mathcal M}(x,y)$ by the $\R$-action by translation $\tau\cdot u = u(\cdot+\tau,\cdot)$.
Then, for a generic $\{J_t\}$, $\mathcal M(x,y)$ is a smooth manifold of dimension $\mu(x,y)-1$. In particular, when $\mu(x,y)=1$, $\mathcal M(x,y)$ is a zero-dimensional manifold. 
Since at least one of the Lagrangians is compact and the intersection is transverse, $\cL_0\cap\cL_1$ is finite. The energy identity therefore gives a uniform energy bound for all Floer strips. By choosing $J_t$ of contact type outside a compact set, and since $\cL_1$ is either compact or conical at infinity, a maximum principle prevents Floer strips from escaping to infinity. Gromov compactness can then be applied. Exactness excludes nonconstant sphere and disc bubbling. Therefore, the zero-dimensional manifold $\mathcal M(x,y)$ is also compact, and thus consists of a finite number of points. Hence we can define the differential $\partial : CF_*(\cL_0,\cL_1)\rightarrow CF_{*-1}(\cL_0,\cL_1)$ on generators by
\begin{equation} \label{eq:bounrdary map}
    \partial x = \sum\limits_{\substack{y\in\cL_0\cap\cL_1\\ \mu(x,y)=1}}\#_2\mathcal M(x,y)\cdot y
\end{equation}
where $\#_2$ denotes the cardinality modulo 2.

This differential satisfies $\partial^2=0$, and therefore gives rise to a well-defined homology denoted $HF_*(\cL_0,\cL_1)$, whose isomorphism class does not depend on the choice of almost complex structure (even though $\partial$ does depend on $\{J_t\}$).

By \eqref{eq:energy}, the differential is action-decreasing, and therefore restricts to a well-defined differential on $CF^{\leq a}_*(\cL_0, \cL_1)$. We denote by $HF^{\leq a}_*(\cL_0,\cL_1)$ its homology.

\subsubsection{Lagrangian Spectral Invariants}
As in the previous paragraph, we consider two Lagrangian branes $(\cL_0,h_0)$ and $(\cL_1,h_1)$ in $T^*M$, such that $\cL_0$ is closed, and $\cL_0$ and $\cL_1$ intersect transversely. Now, we also assume that the homology $HF_*(\cL_0,\cL_1)$ is nontrivial, and we pick a non-zero homology class $\alpha\in HF_*(\cL_0,\cL_1)$. 
For $a\in\R$, denote by $\iota^a$ the inclusion $CF^{\leq a}_*(\cL_0, \cL_1)\hookrightarrow CF_*(\cL_0,\cL_1)$. It induces a map $\iota^a_*$ in homology, so that we can define the spectral invariant
\begin{equation}
    \rho(\alpha,\cL_0,\cL_1)=\inf\{a\in\R, \alpha\in\operatorname{Im}(\iota^a_*)\}.
\end{equation}

We now list some properties of the spectral invariants. Again, more details can be found in \cite{MR3524783}.

\begin{prop}[Spectrality]
    \label{prop:spectrality}
    If $\cL_0$ and $\cL_1$ intersect transversely, then $\rho(\alpha,\cL_0,\cL_1)\in\Spec(\cL_0,\cL_1)\coloneq\{\mathcal A_{\cL_0,\cL_1}(x),x\in \cL_0\cap\cL_1\}$.
\end{prop}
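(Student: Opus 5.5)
The claim follows directly from how $\rho$ is defined, using only that the action spectrum is finite and that $\alpha \neq 0$.

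\emph{The spectrum is finite.} Since $\cL_0$ is closed and meets $\cL_1$ transversely, $\cL_0\cap\cL_1$ is a finite set. Hence $\Spec(\cL_0,\cL_1)$ is a finite set of reals, say $a_1<\dots<a_N$.

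\emph{The filtered complex only changes at the $a_i$.} The subcomplex $CF^{\leq a}_*(\cL_0,\cL_1)$ is spanned by the generators of action at most $a$. It is therefore locally constant in $a$ on each interval $[a_i,a_{i+1})$, on $(-\infty,a_1)$ and on $[a_N,+\infty)$. The same holds for the image of $\iota^a_*$.

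\emph{The infimum is finite.} For $a<a_1$ the subcomplex is zero. Since $\alpha\neq0$, we get $\alpha\notin\operatorname{Im}(\iota^a_*)$. For $a\geq a_N$, $\iota^a$ is the identity, so $\alpha\in\operatorname{Im}(\iota^a_*)$. Hence the set $\{a : \alpha\in\operatorname{Im}(\iota^a_*)\}$ is non-empty and bounded below, and $\rho$ is a real number.

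\emph{Locating $\rho$.} Two facts pin $\rho$ down.
\begin{enumerate}
\item The set $\{a : \alpha\in\operatorname{Im}(\iota^a_*)\}$ is upward closed. If $a\leq b$, then $\iota^a$ factors through $CF^{\leq b}_*$, so $\operatorname{Im}(\iota^a_*)\subset\operatorname{Im}(\iota^b_*)$.
\item Membership in this set is constant on each interval of the partition above, by local constancy.
\end{enumerate}
Together these show the set is exactly $[a_j,+\infty)$ for some $j$. So $\rho(\alpha,\cL_0,\cL_1)=a_j\in\Spec(\cL_0,\cL_1)$, and the infimum is in fact attained.

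There is no real obstacle here. The only input from Floer theory is that $\partial$ decreases action, by \eqref{eq:energy}, so that the filtration by $CF^{\leq a}_*$ is well defined as a filtration by subcomplexes.
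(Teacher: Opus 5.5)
Your proof is correct and complete. Finiteness of $\cL_0\cap\cL_1$, local constancy of $CF^{\leq a}_*$ between consecutive action values, and upward closedness of $\{a \mid \alpha\in\operatorname{Im}(\iota^a_*)\}$ together force this set to be $[a_j,+\infty)$ for some action value $a_j$. The paper gives no proof of its own and defers to Oh's book \cite{MR3524783}, where the transverse case is handled by this same finite filtration argument.
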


\begin{prop}
\label{prop:homology_invariance}
    Let $\varphi$ be a Hamiltonian diffeomorphism of $T^*M$, generated by some Hamiltonian $H:[0,1]\times T^*M\to\R$. For $i\in\{0,1\}$, we equip $\varphi(\cL_i)$ with the Liouville primitive given by
    \[\widetilde h_i(\varphi(x))=h_i(x)+\int_0^1\lambda(X^t_H(\phi^t_H(x)))-H_t(\phi^t_H(x))\;dt.\]
    Then,
    \begin{enumerate}
        \item if $\varphi(\cL_0)$ and $\cL_1$ intersect transversely, there exists a \textbf{continuation map} $\Phi$, which is a chain map $CF_*(\cL_0,\cL_1)\to CF_*(\varphi(\cL_0),\cL_1)$ that induces a natural isomorphism in homology. Moreover, if $H$ is bounded, we have
        \[|\rho(\Phi_*\alpha,\varphi(\cL_0),\cL_1)-\rho(\alpha,\cL_0,\cL_1)|\leq \int_0^1\Vert H_t\Vert_\infty\;dt;\]
        \item there exists a chain isomorphism $\Psi:CF_*(\cL_0,\cL_1)\to CF_*(\varphi(\cL_0),\varphi(\cL_1))$\footnote{Note that if $\cL_1$ is non-compact, $\varphi(\cL_1)$ could fail to be conical at infinity. But transporting everything by $\varphi$ still gives a well-defined Floer complex $CF_*(\varphi(\cL_0),\varphi(\cL_1))$} that preserves the action filtration, and therefore
        \[\rho(\alpha,\cL_0,\cL_1)=\rho(\Psi_*\alpha,\varphi(\cL_0),\varphi(\cL_1)).\]
    \end{enumerate}
\end{prop}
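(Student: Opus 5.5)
The plan treats the two items separately; both come from the standard construction of continuation maps and naturality in Floer theory. The Hamiltonian $H$ may not have compact support, so for the energy estimates I first replace it by a cut-off. Only a compact region containing $\cL_0$, its image $\varphi(\cL_0)$, and all Floer strips matters, and the maximum principle confines the strips to such a region. So I modify $H$ outside a large compact set without changing $\varphi$ near the relevant Lagrangians.

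For item 2, the statement is essentially tautological. The map $\Psi$ sends a generator $x\in\cL_0\cap\cL_1$ to $\varphi(x)\in\varphi(\cL_0)\cap\varphi(\cL_1)$. It is a chain isomorphism once we use the pushed-forward almost complex structures $\varphi_*J_t$, because $u\mapsto\varphi\circ u$ is a bijection between the moduli spaces. The one real check is that the action is preserved, that is, $\widetilde h_0(\varphi(x))-\widetilde h_1(\varphi(x))=h_0(x)-h_1(x)$. The correction integral in the definition of $\widetilde h_i$ depends only on the trajectory of $x$, not on $i$, so it cancels in the difference. Since $\Psi$ preserves the action filtration, the spectral invariants agree. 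I should also check that $\widetilde h_i$ really is a Liouville primitive, which is the usual computation $\frac{d}{dt}(\phi^t_H)^*\lambda = d\big((\phi_H^t)^*(\lambda(X_H)-H_t)\big)$.

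For item 1, I build $\Phi$ by counting index-zero solutions of the moving-boundary Floer equation. Concretely, take strips with $u(s,0)\in\phi_H^{\beta(s)}(\cL_0)$ and $u(s,1)\in\cL_1$, where $\beta$ is a monotone cutoff going from $0$ to $1$. Equivalently, I can pull back by the flow to obtain strips with fixed boundary and a Hamiltonian perturbation term $\beta'(s)H$. Standard gluing and compactness, with exactness ruling out bubbling, show that $\Phi$ is a chain map. Running the reverse homotopy and using a homotopy-of-homotopies argument then gives an inverse in homology, so $\Phi_*$ is a natural isomorphism.

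The key step, and the main obstacle, is the energy identity for continuation strips. For a strip from $x$ to $y$ I expect
\[0\le E(u)=\mathcal A(x)-\mathcal A(y)+\int_{\R}\beta'(s)\,H_{\beta(s)}(u(s,0))\,ds,\]
where the error term comes from the moving boundary and the chosen primitive $\widetilde h_0$. Bounding this term gives $\mathcal A(y)\le\mathcal A(x)+\int_0^1\max H_t\,dt$. Hence $\Phi$ maps $CF^{\le a}$ into $CF^{\le a+\int\max H_t}$, which yields $\rho(\Phi_*\alpha)\le\rho(\alpha)+\int\Vert H_t\Vert_\infty\,dt$. The reverse continuation, generated by $\bar H_t=-H_{1-t}\circ\phi^{1-t}_H$ or equivalently by the inverse homotopy, gives the opposite inequality.

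The delicate points are getting the signs right and matching the boundary term precisely with the definition of $\widetilde h_0$. For this I would first work out the energy identity in the pulled-back picture. There $\varphi(\cL_0)$ is replaced by $\cL_0$, and the equation becomes $\partial_su+J(\partial_tu-\beta(s)X_H)=0$, for which the action-functional computation is standard.
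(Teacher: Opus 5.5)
Your overall strategy is the standard argument the paper relies on: it states this proposition without proof and refers to Oh's book. Item 2 is fine as you wrote it. The one step that is wrong is the sign of the correction term in the energy identity for item 1.

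With the paper's conventions ($\omega=-d\lambda$, $\iota_{X_H}\omega=dH$, $\mathcal A=h_0-h_1$), write $h_0^\tau$ for the primitive transported to $\phi_H^\tau(\cL_0)$. The chain rule gives
\[
\frac{d}{ds}\,h_0^{\beta(s)}\big(u(s,0)\big)=\lambda\big(\partial_s u(s,0)\big)-\beta'(s)\,H_{\beta(s)}\big(u(s,0)\big),
\]
and hence, by Stokes,
\[
0\le E(u)=\mathcal A(x)-\mathcal A(y)-\int_{\R}\beta'(s)\,H_{\beta(s)}\big(u(s,0)\big)\,ds .
\]
The test case $H\equiv c$ confirms this. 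There $\varphi=\mathrm{id}$ and $\widetilde h_0=h_0-c$, and the constant strip at $x$ is a solution with zero energy. Your formula would instead give it energy $2c$. Consequently your one-sided bound $\mathcal A(y)\le\mathcal A(x)+\int_0^1\max H_t\,dt$ is false: take $c<0$. The correct bound is $\mathcal A(y)\le\mathcal A(x)-\int_0^1\min H_t\,dt$. The reverse continuation gives $\mathcal A(x)\le\mathcal A(y)+\int_0^1\max H_t\,dt$. Together,
\[
-\int_0^1\max H_t\,dt\;\le\;\rho(\Phi_*\alpha,\varphi(\cL_0),\cL_1)-\rho(\alpha,\cL_0,\cL_1)\;\le\;-\int_0^1\min H_t\,dt .
\]
So the stated estimate still holds, since $\Vert H_t\Vert_\infty$ dominates both $\max H_t$ and $-\min H_t$.

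There are also two smaller slips.
\begin{itemize}
\item The Hamiltonian $-H_{1-t}\circ\phi_H^{1-t}$ does not generate $\varphi^{-1}$ for general time-dependent $H$. The reversed isotopy $t\mapsto\phi_H^{1-t}\circ\varphi^{-1}$ is generated by $-H_{1-t}$, and $t\mapsto(\phi_H^t)^{-1}$ is generated by $-H_t\circ\phi_H^t$. Both have the same $L^{1,\infty}$ norm as $H$, which is all you need.
\item In the fixed-boundary picture the perturbation is $\beta(s)X_{H_t}$, as in your equation, not ``$\beta'(s)H$''. The factor $\beta'$ only enters the energy identity, which there reads $E(u)=\mathcal A(x)-\mathcal A(y)-\iint\beta'(s)H_t(u(s,t))\,ds\,dt$, with the same sign as above. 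This is the version where boundedness of $H$, or your cut-off, is genuinely used. In the moving-boundary picture the correction term only sees $H$ on the compact sets $\phi_H^\tau(\cL_0)$.
\end{itemize}
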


This enables us to extend the definition of the homology $HF_*(\cL_0,\cL_1)$ and of the spectral invariant $\rho(\alpha,\cL_0,\cL_1)$ to the case when $\cL_0$ and $\cL_1$ do not intersect transversely: consider a sequence $(H^n)_{n\in\N}$ of compactly supported Hamiltonians such that for all $n$, $\phi^1_{H^n}(\cL_0)$ and $\cL_1$ intersect transversely, and such that $\lim\limits_{n\to\infty}\int_0^1\Vert H^n_t\Vert_\infty\;dt=0$. 
Hamiltonian continuation provides natural isomorphisms between the corresponding Floer homology groups $HF_*(\phi^1_{H^n}(\cL_0),\cL_1)$, satisfying the usual composition properties. Thus one may identify these groups with the Hamiltonian-invariant Floer homology $HF_*(\cL_0,\cL_1)$.
For a non-zero homology class $\alpha$ in $HF_*(\cL_0,\cL_1)$, we still denote by $\alpha$ the corresponding class in all the $HF_*(\phi^1_{H^n}(\cL_0),\cL_1)$. Then, by the above Proposition, and since $\lim\limits_{n\to\infty}\int_0^1\Vert H^n_t\Vert_\infty\;dt=0$, we also have that the sequence $\big(\rho(\alpha,\phi^1_{H^n}(\cL_0),\cL_1)\big)_{n\in\N}$ is Cauchy, and therefore we can define $\rho(\alpha,\cL_0,\cL_1)$ as its limit.

Then, the properties of the spectral invariants listed in Proposition \ref{prop:homology_invariance} still hold in this setting:

\begin{prop}
\label{prop:spectral_invariants}
    Let $(\cL_0,h_0)$ and $(\cL_1,h_1)$ be two Lagrangian branes, with $\cL_0$ closed. Assume that $HF_*(\cL_0,\cL_1)$ is nontrivial, and let $\alpha$ be a non-zero homology class.
    Let $\varphi$ be a Hamiltonian diffeomorphism of $T^*M$, generated by some Hamiltonian $H:[0,1]\times T^*M\to\R$. For $i\in\{0,1\}$, we equip $\varphi(\cL_i)$ with the Liouville primitive given by
    \[\widetilde h_i(\varphi(x))=h_i(x)+\int_0^1\lambda(X^t_H(\phi^t_H(x)))-H_t(\phi^t_H(x))\;dt.\]
    Then,
    \begin{enumerate}
        \item ($L^{1,\infty}$-Lipschitz) if $H$ is bounded, we have
        \[|\rho(\alpha,\varphi(\cL_0),\cL_1)-\rho(\alpha,\cL_0,\cL_1)|\leq \int_0^1\Vert H_t\Vert_\infty\;dt;\]
        \item (Invariance) $\rho(\alpha,\cL_0,\cL_1)=\rho(\alpha,\varphi(\cL_0),\varphi(\cL_1))$,
    \end{enumerate}
    where we still denote by $\alpha$ its own image under continuation isomorphisms.
\end{prop}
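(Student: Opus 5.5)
We reduce both items to the transverse case of Proposition \ref{prop:homology_invariance} by a limiting argument. Fix a sequence of compactly supported Hamiltonians $H^n$ with $\int_0^1\Vert H^n_t\Vert_\infty\,dt\to 0$ such that $\phi^1_{H^n}(\cL_0)\pitchfork\cL_1$; by definition, $\rho(\alpha,\cL_0,\cL_1)=\lim_n\rho(\alpha,\phi^1_{H^n}(\cL_0),\cL_1)$.

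\textbf{$L^{1,\infty}$-Lipschitz property.} Choose a second sequence $K^n$ of small compactly supported Hamiltonians with $\phi^1_{K^n}\varphi(\cL_0)\pitchfork\cL_1$ and $\int\Vert K^n_t\Vert_\infty\to0$. The Lagrangian $\phi^1_{K^n}\varphi(\cL_0)$ is obtained from $\phi^1_{H^n}(\cL_0)$ by the Hamiltonian diffeomorphism $\phi^1_{K^n}\circ\varphi\circ(\phi^1_{H^n})^{-1}$. We generate it by concatenating (after reparametrizing time) the flows of $-H^n$ run backwards, then $H$, then $K^n$. For a concatenation of reparametrized flows, the $L^{1,\infty}$ norm is the sum of the norms of the pieces. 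The primitives transported along this path agree with the primitive $\widetilde h_0$ of the statement, up to terms whose size is controlled by the $L^{1,\infty}$ norms of $H^n$ and $K^n$; this follows from the cocycle property of the action formula under composition. Applying the transverse estimate of Proposition \ref{prop:homology_invariance}(1) then gives
\[|\rho(\alpha,\phi^1_{K^n}\varphi(\cL_0),\cL_1)-\rho(\alpha,\phi^1_{H^n}(\cL_0),\cL_1)|\le \int\Vert H_t\Vert_\infty+\int\Vert H^n_t\Vert_\infty+\int\Vert K^n_t\Vert_\infty .\]
Letting $n\to\infty$ yields the claim. We must also check that $\alpha$ is transported consistently: continuation maps compose, so the classes match under the identifications.

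\textbf{Invariance.} Conjugate: $\varphi\phi^1_{H^n}(\cL_0)=\phi^1_{H^n\circ\varphi^{-1}}(\varphi\cL_0)$, where $H^n\circ\varphi^{-1}$ is again compactly supported with the same sup norms, and transversality is preserved by $\varphi$. Proposition \ref{prop:homology_invariance}(2) gives equality of spectral invariants at each $n$, with the filtration-preserving isomorphism $\Psi$. Passing to the limit (both sides are computed with admissible approximating sequences) gives equality. The point to verify is that the primitive transported by $\varphi$ on $\varphi\phi^1_{H^n}(\cL_0)$ coincides with the one obtained by first transporting along $H^n$ and then $\varphi$, or along $\varphi$ and then $H^n\circ\varphi^{-1}$. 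This again follows from the composition rule for $\widetilde h$ under concatenated flows.

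\textbf{Main obstacle.} The main difficulty is non-compactness. When $\cL_1$ is a non-compact fibre-type Lagrangian and $H$ is unbounded (as for Tonelli flows), we must ensure that the Floer theory is well defined and that the limits exist. We handle this by cutting off $H$ outside a large compact set containing the relevant images of $\cL_0$, which works since $\cL_0$ is compact. For invariance, we use the footnoted convention that $CF_*(\varphi\cL_0,\varphi\cL_1)$ is defined by transport, so the equality holds essentially by definition once the primitive bookkeeping is checked.
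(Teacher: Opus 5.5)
Your proposal is correct and is essentially the argument the paper has in mind: the paper gives no separate proof, and simply asserts after defining $\rho$ as a limit over transverse perturbations that the transverse properties of Proposition~\ref{prop:homology_invariance} persist. Passing those estimates to the limit along admissible sequences ($H^n$, $K^n$, and $H^n\circ\varphi^{-1}$), as you do, is exactly that argument. One simplification: your primitive bookkeeping is exact, not merely correct up to errors controlled by $\int_0^1\Vert H^n_t\Vert_\infty\,dt$ and $\int_0^1\Vert K^n_t\Vert_\infty\,dt$. Transported primitives are additive under concatenation, reparametrization and time-reversal. In the invariance step, the two transports onto $\varphi\phi^1_{H^n}(\cL_0)$ come from two primitives of the same exact form $(\varphi\phi^1_{H^n})^*\lambda-\lambda$ on the connected manifold $T^*M$, so they differ by a constant, and this constant vanishes at any point outside the compact support of $H^n$.
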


\subsubsection{Product Structure and Triangle Inequality}

Consider three Lagrangian branes $(\cL_0,h_0)$, $(\cL_1,h_1)$, and $(\cL_2,h_2)$, with at least two of them closed, and the third one conical at infinity if it is not compact. Assume the pairwise intersections between the Lagrangian submanifolds are transverse, so that the Floer complexes $CF_*(\cL_0,\cL_1)$, $CF_*(\cL_1,\cL_2)$, and $CF_*(\cL_0,\cL_2)$ are well-defined (for some generic choice of a smooth family of almost complex structures).
Then, there exists a chain map $CF_*(\cL_0,\cL_1)\otimes CF_*(\cL_1,\cL_2)\to CF_*(\cL_0,\cL_2)$ defined by counting ``rigid'' pseudo-holomorphic triangles between generators of the three complexes, with Lagrangian boundary conditions. This chain map may depend on the choice of almost complex structures, but different choices give chain-homotopic maps, so that they descend to a well-defined associative product map $*:HF_*(\cL_0,\cL_1)\otimes HF_*(\cL_1,\cL_2)\to HF_*(\cL_0,\cL_2)$. Using the Hamiltonian invariance of Floer homology, this product is still well-defined without the transversality assumption.

The spectral invariants behave well with respect to this product structure:

\begin{prop}[Triangle inequality]\label{prop:TriangleIneq}
    Let $(\cL_0,h_0)$, $(\cL_1,h_1)$, and $(\cL_2,h_2)$ be three Lagrangian branes in $T^*M$, with at least two of them closed.
    Assume that there exist non-zero homology classes $\alpha\in HF_*(\cL_0,\cL_1)$ and $\beta\in HF_*(\cL_1,\cL_2)$, whose product $\alpha*\beta\in HF_*(\cL_0,\cL_2)$ is also non-zero. Then,
    \[\rho(\alpha*\beta,\cL_0,\cL_2)\leq\rho(\alpha,\cL_0,\cL_1)+\rho(\beta,\cL_1,\cL_2).\]
\end{prop}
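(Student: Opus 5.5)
We argue first at chain level in the transverse case and then pass to the general case by perturbation. Assume $\cL_0\pitchfork\cL_1$, $\cL_1\pitchfork\cL_2$ and $\cL_0\pitchfork\cL_2$, and let $\mu:CF_*(\cL_0,\cL_1)\otimes CF_*(\cL_1,\cL_2)\to CF_*(\cL_0,\cL_2)$ be the triangle product. Its coefficients count rigid pseudo-holomorphic triangles $u$ with boundary on $\cL_0,\cL_1,\cL_2$ and corners $x\in\cL_0\cap\cL_1$, $y\in\cL_1\cap\cL_2$, $z\in\cL_0\cap\cL_2$.

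The key computation is the analogue of \eqref{eq:energy} for triangles. Since $\omega=-d\lambda$, Stokes' theorem expresses the area as a boundary integral of $\lambda$. On each boundary arc we have $\lambda|_{\cL_i}=dh_i$, so the integral telescopes into differences of the $h_i$ at the corners. With consistent conventions this gives
\[
0\le \int u^*\omega=\mathcal A_{\cL_0,\cL_1}(x)+\mathcal A_{\cL_1,\cL_2}(y)-\mathcal A_{\cL_0,\cL_2}(z),
\]
because $(h_0-h_1)(x)+(h_1-h_2)(y)-(h_0-h_2)(z)$ is exactly the sum of the corner contributions. Hence $\mu$ maps $CF^{\le a}\otimes CF^{\le b}$ into $CF^{\le a+b}$.

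Now fix $\varepsilon>0$ and choose cycles $c\in CF^{\le \rho(\alpha)+\varepsilon}$ representing $\alpha$ and $c'\in CF^{\le\rho(\beta)+\varepsilon}$ representing $\beta$. Then $\mu(c\otimes c')$ is a cycle of action at most $\rho(\alpha)+\rho(\beta)+2\varepsilon$. It represents $\alpha*\beta$, which is non-zero by hypothesis. Letting $\varepsilon\to0$ gives the inequality in the transverse case.

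For the general case, perturb the branes by small compactly supported Hamiltonians $H^n$ with $\int_0^1\|H^n_t\|_\infty\,dt\to0$ until all three pairwise intersections are transverse. The primitives are transported by the formula of Proposition \ref{prop:spectral_invariants}. The product commutes with continuation isomorphisms, since it is defined through Hamiltonian invariance. By the $L^{1,\infty}$-Lipschitz property of Proposition \ref{prop:spectral_invariants}, each of the three spectral invariants moves by at most the Hofer length of the corresponding perturbation, which tends to $0$. Passing to the limit in the transverse inequality yields the claim.

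The main obstacle is compactness of the moduli spaces of triangles when one of the Lagrangians is non-compact. We handle it, as for the differential, by taking the almost complex structures of contact type at infinity, which gives a maximum principle, and by using the a priori energy bound supplied by the action identity above. Bubbling is excluded by exactness. A second point needing care is the sign convention in the action identity: one must check, for the chosen orientation of the triangle's boundary, that the output corner enters with a minus sign.
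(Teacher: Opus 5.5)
Your argument is correct and is the standard proof. Traversing the triangle's boundary as $x\to\cL_0\to z\to\cL_2\to y\to\cL_1\to x$, which is the ordering forced by the strip convention $u(s,0)\in\cL_0$ behind \eqref{eq:energy}, Stokes gives exactly $\int u^*\omega=\mathcal A_{\cL_0,\cL_1}(x)+\mathcal A_{\cL_1,\cL_2}(y)-\mathcal A_{\cL_0,\cL_2}(z)$. So the output corner does enter with a minus sign, the product is filtration-compatible, and the non-transverse case follows from invariance plus the $L^{1,\infty}$-Lipschitz property. For the pair $(\cL_0,\cL_2)$, when both ends are perturbed the shift is bounded by the sum of the two Hofer lengths, which still tends to $0$.

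The paper does not prove this proposition itself; it quotes it as a standard property of Lagrangian spectral invariants, referring to \cite{MR3524783}, and the standard proof in the literature is the one you give.
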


We shall use the following consequence of deep results on exact Lagrangians in cotangent bundles by \cite{MR2596633,MR2565051,MR2947545}:

\begin{theo}\label{Thm:equivZeroSec}
    Let $\cL_0$ and $\cL_1$ be closed exact Lagrangian submanifolds in $T^*M$. Then,
    \begin{enumerate}
        \item \label{item:fiber} the Floer homology of $\cL_0$ and a cotangent fibre is 1-dimensional: \[\forall q\in M, HF_*(\cL_0,T^*_qM)\cong\Z/2\Z;\]
        \item \label{item:zeroSec} the Floer homology of $\cL_0$ and $\cL_1$ is isomorphic to the Morse (or singular) homology of $M$ with coefficients in $\Z/2\Z$: \[HF_*(\cL_0,\cL_1)\cong H_*(M;\Z/2\Z).\]
    \end{enumerate}
\end{theo}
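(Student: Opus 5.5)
This statement is a consequence of the nearby Lagrangian results of Abouzaid, Kragh and Fukaya--Seidel--Smith \cite{MR2596633,MR2565051,MR2947545}, so the plan is to deduce it from them rather than reprove them. The central input is that the inclusion of a closed exact $\cL_0$ followed by the projection $\pi_M|_{\cL_0}:\cL_0\to M$ is a homotopy equivalence (indeed a simple homotopy equivalence). Moreover, $\cL_0$ is spin and has vanishing Maslov class \cite{MR3070514}, so Floer homology is well defined with $\Z/2\Z$ coefficients, as already used above. The second ingredient, which is also the deep part, is that in the Fukaya category of $T^*M$ (or its wrapped version), $\cL_0$ is quasi-isomorphic to the zero section $0_M$, possibly twisted by a local system. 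With $\Z/2\Z$ coefficients, and since $\pi_1$-actions are controlled, the twist does not affect ranks. This is Abouzaid's result that the cotangent fibre generates the wrapped Fukaya category, combined with Kragh's proof that the resulting module is invertible.

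For Item \ref{item:fiber}, I would use the generation statement. The wrapped Floer cohomology $HW(T^*_qM,T^*_qM)$ is $H_*(\Omega_q M)$, and the Yoneda module of $\cL_0$ evaluated at the fibre is $HF_*(\cL_0,T^*_qM)$. Kragh and Abouzaid show that this module over $C_*(\Omega_qM)$ is, up to a shift and a local system, the module corresponding to $0_M$. For $0_M$ the answer is immediate: $0_M\pitchfork T^*_qM$ at the single point $(q,0)$, so the complex has one generator and $HF_*(0_M,T^*_qM)\cong\Z/2\Z$. The identification of modules then gives the same rank for $\cL_0$. A more hands-on alternative is a Floer-theoretic degree argument. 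One would show that $HF_*(\cL_0,T^*_qM)$ is independent of $q$, by Hamiltonian isotopy of the fibres, and that its Euler characteristic is the degree of $\pi_M|_{\cL_0}$, which is $\pm1$ for homotopy equivalences. However, the Euler characteristic bounds the rank only from below. Rank exactly one really requires the quasi-isomorphism with $0_M$, and I would cite it.

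For Item \ref{item:zeroSec}, the same quasi-isomorphism $\cL_i\simeq 0_M$ in the Fukaya category gives
\[HF_*(\cL_0,\cL_1)\cong HF_*(0_M,0_M)\cong H_*(M;\Z/2\Z),\]
up to grading shift. The last isomorphism is Floer's theorem: one perturbs $0_M$ to $\mathcal{G}(df)$ for a Morse function $f$, and for small $f$ the strips correspond to gradient lines. Since local systems of rank one over $\Z/2\Z$ are trivial up to isomorphism after the Kragh--Abouzaid analysis, no twist appears.

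The main obstacle is the quasi-isomorphism $\cL\simeq 0_M$ itself, which is the deep theorem. In the paper I would state it as a black box from \cite{MR2596633,MR2565051,MR2947545} and present only the reductions above. I would also check compatibility of the gradings, which is where the vanishing Maslov class enters.
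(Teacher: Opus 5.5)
Your reduction matches the paper: it gives no proof of its own and imports this statement as a black-box consequence of \cite{MR2596633,MR2565051,MR2947545} (with vanishing of the Maslov class from \cite{MR3070514}), which is exactly the input you isolate. One aside is wrong but harmless: $\cL_0$ need not be spin, since $\pi_M|_{\cL_0}$ is a homotopy equivalence and Wu's formula gives $w_2(\cL_0)=(\pi_M|_{\cL_0})^*w_2(M)$, so the zero section of $T^*\C P^2$ is not spin; this is irrelevant over $\Z/2\Z$, where moreover every rank-one local system is trivial simply because $\mathrm{GL}_1(\Z/2\Z)=\{1\}$, so no further analysis is needed to remove the twist.
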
 

In particular, since $M$ is $\Z/2\Z$-orientable, it admits a $\Z/2\Z$-fundamental class $\mu\in H_*(M;\Z/2\Z)$. We denote by $\mu_{\cL_0,\cL_1}$ the corresponding class in $HF_*(\cL_0,\cL_1)$. When the Lagrangians $\cL_0$ and $\cL_1$ are clear from the context, we will simply denote it $\mu$. It has the following property:

\begin{prop}
    \label{prop:unit}
    Let $\cL_0$, $\cL_1$, $\cL_2$, $\cL_3$ be exact Lagrangian submanifolds in $T^*M$, with $\cL_0$, $\cL_1$ and $\cL_2$ closed. Then, 
        \begin{enumerate}
            \item the maps $HF_*(\cL_1,\cL_3)\to HF_*(\cL_0,\cL_3)$ and $HF_*(\cL_3,\cL_0)\to HF_*(\cL_3,\cL_1)$ given by $\alpha\to\mu_{\cL_0,\cL_1}*\alpha$ and $\alpha\to\alpha*\mu_{\cL_0,\cL_1}$ respectively are isomorphisms;
            \item the isomorphisms above are coherent:
            \[\mu_{\cL_0,\cL_1}*\mu_{\cL_1,\cL_2}=\mu_{\cL_0,\cL_2};\]
            \item when $\cL_0=\cL_1$, the maps above coincide with the identity maps.
        \end{enumerate}
\end{prop}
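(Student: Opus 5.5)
The plan is to derive all three items from the structure of Floer homology with closed exact Lagrangians, using Theorem~\ref{Thm:equivZeroSec} together with the associativity of the product~$*$. Three concrete inputs are needed. The first is the Abouzaid--Kragh / Fukaya--Seidel--Smith theorem that every closed exact $\cL$ is quasi-isomorphic to the zero section in the compact Fukaya category of $T^*M$, i.e.\ nearby-Lagrangian equivalence. The second is that $HF_*(\cL,\cL)\cong H_*(\cL;\Z/2\Z)\cong H_*(M;\Z/2\Z)$ is a unital ring, the unit being the fundamental class (PSS identification with the quantum, here classical, intersection product on $\cL$). The third is that composition with an isomorphism in a category is an isomorphism.

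\emph{Step 1: identification of $\mu_{\cL_0,\cL_1}$ as a categorical isomorphism.} For closed exact $\cL_0,\cL_1$, the isomorphism $HF_*(\cL_0,\cL_1)\cong H_*(M)$ of Theorem~\ref{Thm:equivZeroSec} is induced by a quasi-isomorphism. I would fix it by first choosing isomorphism classes $e_i\in HF_*(\cL_i,0_M)$, then setting $\mu_{\cL_0,\cL_1}:=e_0*\bar e_1$, where $\bar e_1\in HF_*(0_M,\cL_1)$ is the inverse of $e_1$. Under $HF_*(\cL_0,\cL_1)\cong HF_*(0_M,0_M)\cong H_*(M)$, this element is the image of the fundamental class. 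Since $\Z/2\Z$-coefficients and $\dim H_d(M;\Z/2\Z)=1$, the nonzero top-degree class is unique; so provided the grading is normalized so that $\mu$ sits in the top degree, $\mu_{\cL_0,\cL_1}$ is the unique nonzero element of the top-degree part and does not depend on the choices of $e_i$. I expect this step to be the main obstacle: showing that the top-degree class of $HF_*(\cL_0,\cL_1)$ really is invertible. The plan is to reduce via the quasi-equivalence to $\cL_0=\cL_1=0_M$, where $HF_*(0_M,0_M)$ is the Morse homology ring of $M$ and the fundamental class is its unit by PSS.

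\emph{Step 2: the three items.} Item~3 follows because for $\cL_0=\cL_1$ the element $\mu_{\cL_0,\cL_0}$ is the unit of $HF_*(\cL_0,\cL_0)$, and the unit acts as the identity on the modules $HF_*(\cL_0,\cL_3)$ and $HF_*(\cL_3,\cL_0)$ by the standard degeneration argument for triangles with two boundary components on the same Lagrangian (equivalently, by Hamiltonian invariance and PSS). For Item~2, $\mu_{\cL_0,\cL_1}*\mu_{\cL_1,\cL_2}=e_0*\bar e_1*e_1*\bar e_2=e_0*\bar e_2=\mu_{\cL_0,\cL_2}$, using associativity and Item~3 applied to $\bar e_1*e_1=\mathbf 1_{0_M}$. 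Item~1 then follows: the map $\alpha\mapsto\mu_{\cL_0,\cL_1}*\alpha$ has inverse $\alpha\mapsto\mu_{\cL_1,\cL_0}*\alpha$, since by associativity and Items~2 and~3, $\mu_{\cL_1,\cL_0}*\mu_{\cL_0,\cL_1}=\mu_{\cL_1,\cL_1}$, which acts as the identity. The right-multiplication map is handled symmetrically.

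Here $\cL_3$ is allowed to be non-compact (e.g.\ a cotangent fibre). The triangle products are still defined because $\cL_0$, $\cL_1$, $\cL_2$ are closed and $\cL_3$ is conical at infinity, so the maximum principle gives compactness of the relevant moduli spaces, exactly as in the construction of the product above.
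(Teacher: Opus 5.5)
The paper gives no proof of this proposition; it simply records it as a standard consequence of the results of Fukaya--Seidel--Smith, Nadler and Abouzaid that it cites, with Kragh and Abouzaid supplying the vanishing Maslov class. Your argument is correct and is precisely the standard derivation behind that citation: you write $\mu_{\cL_0,\cL_1}$ as the composite $e_0*\bar e_1$ of quasi-isomorphisms with the zero section, then deduce the three items from unitality and associativity. The one point to state explicitly is that the $e_i$ must be taken homogeneous (degree zero after grading the $\cL_i$). Otherwise $e_0*\bar e_1$ need not lie in the top degree, and your independence-of-choices claim fails: for $\cL_0=\cL_1=0_M$ one could take $e_1=\mathbf 1$ and $e_0=\mathbf 1+a$ with $a\in HF_*(0_M,0_M)$ of lower degree, which is still invertible.
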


\subsubsection{Spectral Distances, Humili\`ere Completion and $\gamma$-Supports} \label{section:SpectralHumiliere}

We may now introduce the spectral distance, initially defined by Viterbo in \cite{MR1157321} using generating functions, then by Schwarz and Oh \cite{MR1755825,MR2103018} using Floer homology in the Hamiltonian case, and by Leclercq--Zapolsky \cite{MR2383268,MR3850107} in the Lagrangian case.
\begin{defi}[Spectral distance for closed exact Lagrangians] \label{def:SpectralDist}
    Let $\cL_0$, $\cL_1$ be two closed exact Lagrangian submanifolds. We define the spectral distance (or $\gamma$-distance) between $\cL_0$ and $\cL_1$ as
    \[\gamma(\cL_0,\cL_1)=\rho(\mu_{\cL_0,\cL_1},\cL_0,\cL_1)+\rho(\mu_{\cL_1,\cL_0},\cL_1,\cL_0).\]
\end{defi}

\begin{rem}
    \begin{enumerate}
        \item In principle, we need to specify a choice of Liouville primitive for the Lagrangian submanifolds to compute their spectral invariant. However, we claim that $\gamma(\cL_0,\cL_1)$ does not depend on the choice of primitives. Indeed, all primitives differ by a constant. Shifting the primitive of $\cL_0$ by a constant will shift $\rho(\mu,\cL_0,\cL_1)$ by the same constant, and $\rho(\mu,\cL_1,\cL_0)$ by its opposite, therefore it will leave $\gamma(\cL_0,\cL_1)$ unchanged.
        \item In the literature, the spectral norm is often defined as a difference between the spectral invariants associated to the fundamental class and to the point class. Using the duality property of spectral invariants, one can show the two definitions coincide.
    \end{enumerate}
\end{rem}

There also exists a version of this distance for Lagrangian branes, also due to Viterbo:

\begin{defi}[Spectral distance for closed Lagrangian branes] \label{def:SpectralDistBrane}
    Let $(\cL_0,h_0)$, $(\cL_1,h_1)$ be two closed Lagrangian branes. We define the spectral distance between $(\cL_0,h_0)$ and $(\cL_1,h_1)$ as
    \[c((\cL_0,h_0),(\cL_1,h_1))=\max(0,\rho(\mu_{\cL_0,\cL_1},\cL_0,\cL_1))+\max(0,\rho(\mu_{\cL_1,\cL_0},\cL_1,\cL_0)).\]
\end{defi}

\begin{prop} \label{prop:GammaDistance}
    The spectral distances $\gamma$ and $c$ are distances on the set $\mathscr{L}(T^*M)$ of closed exact Lagrangian submanifolds of $T^*M$, and on the set of Lagrangian branes $\mathcal{LB}(T^*M,\omega)$ respectively.
\end{prop}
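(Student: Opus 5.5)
We need to prove that $\gamma$ is a distance on $\mathscr{L}(T^*M)$ and that $c$ is a distance on $\mathcal{LB}(T^*M,\omega)$. For each we check four properties: non-negativity, symmetry, the triangle inequality, and non-degeneracy. Symmetry is immediate from the formulas, since both are symmetric in the two arguments.

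\textbf{Triangle inequality.} This follows from Proposition \ref{prop:TriangleIneq} combined with the coherence $\mu_{\cL_0,\cL_1}*\mu_{\cL_1,\cL_2}=\mu_{\cL_0,\cL_2}$ of Proposition \ref{prop:unit}. These give
\[\rho(\mu,\cL_0,\cL_2)\le \rho(\mu,\cL_0,\cL_1)+\rho(\mu,\cL_1,\cL_2),\]
and the same inequality holds in the reverse order. Adding the two yields the triangle inequality for $\gamma$. For $c$, we use $\max(0,a+b)\le \max(0,a)+\max(0,b)$.

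\textbf{Non-negativity.} Apply the triangle inequality with $\cL_2=\cL_0$. Since $\mu_{\cL_0,\cL_0}$ is the unit, it remains to show $\rho(\mu,\cL_0,\cL_0)=0$. For this, perturb $\cL_0$ by a small Hamiltonian into the graph of $df$ in a Weinstein neighbourhood, and use the standard identification of the fundamental class spectral invariant with Morse minimax values of $f$. These tend to $0$ by the $L^{1,\infty}$-Lipschitz property. Hence
\[\gamma(\cL_0,\cL_1)\ge \rho(\mu,\cL_0,\cL_0)=0,\]
and $c\ge0$ holds trivially.

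\textbf{Non-degeneracy.} This is the main obstacle. Suppose $\gamma(\cL_0,\cL_1)=0$. Normalise the primitives so that $\rho(\mu,\cL_0,\cL_1)=0$; then $\rho(\mu,\cL_1,\cL_0)=0$ as well. We argue by contradiction and assume there is a point $x\in\cL_1\setminus\cL_0$. Take a small ball $B$ around $x$ disjoint from $\cL_0$, and a Hamiltonian $H$ supported in $B$ that displaces $\cL_1\cap B$ locally. We then invoke the known energy–capacity type estimate: the Lagrangian spectral norm is bounded below by a positive constant, depending on the ball, for Lagrangians not contained in $\cL_0$ near $x$. Concretely, following Leclercq--Zapolsky/Humilière, we compare the spectral invariants of $\cL_1$ and of $\phi_H^1(\cL_1)$ relative to $\cL_0$. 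By invariance, together with the fact that $\phi_H^1(\cL_0)=\cL_0$, these are equal. On the other hand, we use spectral invariants with respect to a local fibre/point class to detect a positive gap. An alternative, in the spirit of the paper, is to cite the nondegeneracy results from \cite{MR3850107,MR1157321} directly, since this is the genuinely nontrivial input. Once $\gamma(\cL_0,\cL_1)=0$ is known to imply $\cL_0=\cL_1$, we turn to $c$. If $c=0$, then both $\rho$'s are $\le0$ and their sum is $\ge0$, so both vanish and $\cL_0=\cL_1$. Then $\rho(\mu,\cL_0,\cL_0)$ computed with primitives $h_0,h_1$ equals the constant $h_0-h_1$; since it vanishes, $h_0=h_1$.
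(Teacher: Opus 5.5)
The paper does not prove this proposition; it records it as known from the cited literature. Your treatment of symmetry, the triangle inequality (Proposition \ref{prop:TriangleIneq} with the coherence in Proposition \ref{prop:unit}), non-negativity via $\rho(\mu,\cL_0,\cL_0)=0$, and the deduction of non-degeneracy of $c$ from that of $\gamma$ is correct, and it uses only properties the paper lists. For $\rho(\mu,\cL_0,\cL_0)=0$ you do not need the Morse identification. Spectrality after a small perturbation, as in the proof of Proposition \ref{BraneGammaProp}, already forces the value into a spectrum that shrinks to $\{0\}$.

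The genuine gap is non-degeneracy of $\gamma$, which is the only non-formal part of the statement.

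Your concrete sketch does not close. For $H$ supported in a ball $B\ni x$ disjoint from $\cL_0$, invariance gives $\rho(\mu,\cL_0,\phi^1_H(\cL_1))=\rho(\mu,\cL_0,\cL_1)$, and likewise in the other order, \emph{always}. So no gap can be detected relative to $\cL_0$. The ``local fibre/point class'' step is left unspecified, and the energy--capacity bound you invoke is essentially the conclusion itself.

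The fallback citation is not enough as stated either. The non-degeneracy results in \cite{MR1157321} and \cite{MR2383268,MR3850107} are formulated for Lagrangians in a single Hamiltonian orbit; for Viterbo, this is the orbit of the zero section. Two elements of $\mathscr{L}(T^*M)$ are not known to be Hamiltonian isotopic in general --- that is the nearby Lagrangian conjecture.

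The missing link is to feed your invariance observation into the triangle inequality. Since $\phi=\phi^1_H$ is the identity near $\cL_0$, it fixes $\cL_0$ together with its primitive, so
\[
\gamma(\cL_1,\phi(\cL_1))\le\gamma(\cL_1,\cL_0)+\gamma(\phi(\cL_0),\phi(\cL_1))=2\gamma(\cL_0,\cL_1).
\]
Choose $H$ with $\phi(\cL_1)\neq\cL_1$, which is possible since $x\in\cL_1\cap B$. Then $\gamma(\cL_0,\cL_1)=0$ contradicts non-degeneracy of the Lagrangian spectral norm on the Hamiltonian orbit of $\cL_1$. That is exactly what the cited Floer-theoretic results provide, and it is where an energy--capacity argument genuinely enters.

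Alternatively, you can use the paper's own sheaf-theoretic tools. Suppose $c((\cL_0,h_0),(\cL_1,h_1))=0$.
\begin{enumerate}
\item Item 1 of Theorem \ref{thm:lag_quantization} gives interleaving distance $0$ between the two quantizations.
\item Proposition \ref{prop:singular_support_is_lsc}, applied to a constant sequence in both directions, gives equal singular supports.
\item Item 2 then yields $\Gamma_{(\cL_0,-h_0)}=\Gamma_{(\cL_1,-h_1)}$. This is the same mechanism as in Proposition \ref{prop:SStopologicalBrane}.
\end{enumerate}
Non-degeneracy of $\gamma$ then follows by normalising the primitives so that $\rho(\mu,\cL_0,\cL_1)=0$, which forces $c=0$.
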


In the remaining part of this section, we define the notions of Humili\`ere completion, $\gamma$-supports, $\gamma$-coisotropic sets, following \cite{viterbo2026supportshumilierecompletiongammacoisotropic}, and present a quick overview of some of their basic properties. These notions and results will only be used in the proofs of Item \ref{prop:UniquenessPrimitive6} of Theorem \ref{thm:UniquenessPrimitiveC0} (in Section \ref{sec:proof for sympeo}), and of Theorem \ref{thm:rigidity} (in Section \ref{sec:rigidity}).

We denote by $\widehat{\mathscr{L}}(T^*M)$ and $\widehat{\mathcal{LB}}(T^*M)$ the completions of the spaces $(\mathscr{L}(T^*M),\gamma)$ and $(\mathcal{LB}(T^*M,\omega),c)$ respectively. The space $\widehat{\mathscr{L}}(T^*M)$ was originally introduced by Vincent Humili\`ere in his PhD thesis \cite{humil2008} and in \cite{MR2415347}, and is therefore called ``Humili\`ere completion''.

Note that for smooth branes, we have the identity
\[\gamma(\cL_0,\cL_1)=\inf_{a\in\R}c((\cL_0,h_0),(\cL_1,h_1+a)).\]
It implies that the quotient map $\mathcal{LB}(T^*M,\omega)\to\mathfrak{LB}(T^*M,\omega)\cong\mathscr{L}(T^*M)$ induces a natural surjective map $\widehat{\mathcal{LB}}(T^*M)\to \widehat{\mathscr{L}}(T^*M)$, satisfying that two preimages of an element of $\widehat{\mathscr{L}}(T^*M)$ only differ by a constant action shift.

We now define the $\gamma$-support of an element of the completion, following \cite{viterbo2026supportshumilierecompletiongammacoisotropic}.

\begin{defi}[$\gamma$-support, \cite{viterbo2026supportshumilierecompletiongammacoisotropic}]
    Let $\underline \cL$ be an element of the Humili\`ere completion $\widehat{\mathscr{L}}(T^*M)$. Its $\gamma$-support $\operatorname{\gamma-supp}(\underline \cL)$ consists of all points $x\in T^*M$ such that for any neighbourhood $U$ of $x$, there exists a Hamiltonian diffeomorphism $\varphi$ compactly supported in $U$ such that $\varphi(\underline \cL)\neq \underline \cL$.
    
    The $\gamma$-support of an element $\underline \cL\in\widehat{\mathcal{LB}}(T^*M)$ is the $\gamma$-support of its image under the natural map $\widehat{\mathcal{LB}}(T^*M)\to \widehat{\mathscr{L}}(T^*M)$.
\end{defi}

\begin{rem}
    Note that exact symplectomorphisms preserve the $\gamma$-distance. Therefore, they map Cauchy sequences to Cauchy sequences and act naturally on the Humili\`ere completion.
\end{rem}

$\gamma$-supports have the following properties, established in \cite{viterbo2026supportshumilierecompletiongammacoisotropic}:

\begin{prop}[\cite{viterbo2026supportshumilierecompletiongammacoisotropic}]\label{prop:gammaSupports}
    Let $(\cL_n)$ be a sequence of closed exact Lagrangian submanifolds converging in the spectral topology to an element $\underline \cL$ of the Humili\`ere completion $\widehat{\mathscr{L}}(T^*M)$. Then,
    \begin{enumerate}
        \item $\operatorname{\gamma-supp}(\underline \cL)$ is closed;
        \item ($\gamma$-coisotropicity) $\operatorname{\gamma-supp}(\underline \cL)$ is $\gamma$-coisotropic (see Definition \ref{def:gammaCoisotropic} below);
        \item (semi-continuity) $\operatorname{\gamma-supp}(\underline \cL)\subset \liminf_n \cL_n=\{x\in T^*M\mid\exists(x_n)_{n\geq1},x_n\in\cL_n,\lim_n x_n=x\}$.
    \end{enumerate}
\end{prop}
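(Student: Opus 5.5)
The three assertions are of different depth, and I would treat them in increasing order of difficulty. Since they are established in \cite{viterbo2026supportshumilierecompletiongammacoisotropic}, the plan is to reproduce the arguments in our notation. The only structural input needed throughout is the following: a compactly supported Hamiltonian diffeomorphism $\varphi$ is exact and preserves $\gamma$. Hence it acts by isometries on $\widehat{\mathscr{L}}(T^*M)$, and in particular
\[
\varphi(\underline\cL)=\lim_n \varphi(\cL_n)
\]
in the $\gamma$-topology.

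\emph{Closedness.} I would show that the complement is open, directly from the definition. If $x\notin\operatorname{\gamma-supp}(\underline\cL)$, there is an open neighbourhood $U$ of $x$ such that every Hamiltonian diffeomorphism compactly supported in $U$ fixes $\underline\cL$. Now take any $y\in U$. The set $U$ is itself a neighbourhood of $y$, and every Hamiltonian compactly supported in it fixes $\underline\cL$. Hence $y\notin\operatorname{\gamma-supp}(\underline\cL)$, so $U$ lies in the complement.

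\emph{Semi-continuity.} Let $x\notin\liminf_n\cL_n$. Then there exist an open neighbourhood $U$ of $x$ and a subsequence $(n_j)$ with $\cL_{n_j}\cap U=\emptyset$. Let $\varphi$ be compactly supported in $U$. Then $\varphi$ is the identity on each $\cL_{n_j}$, so $\varphi(\cL_{n_j})=\cL_{n_j}$. Passing to the limit with the isometric action described above gives
\[
\varphi(\underline\cL)=\lim_j\varphi(\cL_{n_j})=\lim_j\cL_{n_j}=\underline\cL.
\]
Therefore $x\notin\operatorname{\gamma-supp}(\underline\cL)$. The one point to check is that a subsequence of a $\gamma$-Cauchy sequence has the same limit in the completion, which is immediate.

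\emph{$\gamma$-coisotropicity.} This is the main obstacle, and it depends on Definition \ref{def:gammaCoisotropic}. Viterbo's notion asks that, near each point of the set, Hamiltonians localized in small balls around the point cannot move the object by an arbitrarily small $\gamma$-amount relative to their displacement. I would argue by contradiction. Suppose $x\in\operatorname{\gamma-supp}(\underline\cL)$ fails the condition. Then there are balls $B(x,r)$ in which every compactly supported Hamiltonian $\varphi$ moves the approximants $\cL_n$ by a $\gamma$-amount that is uniformly small in $n$.

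The key estimate I would aim for is
\[
\gamma(\varphi(\cL_n),\cL_n)\le\varepsilon(r)\to0 \qquad (r\to0).
\]
To get it, I would combine two ingredients:
\begin{itemize}
\item the $L^{1,\infty}$-Lipschitz property of Proposition \ref{prop:spectral_invariants};
\item the triangle inequality of Proposition \ref{prop:TriangleIneq}, applied with the fundamental classes $\mu$ of Proposition \ref{prop:unit}.
\end{itemize}
Together these should localize the variation of $\rho(\mu,\varphi(\cL_n),\cL_n)$ to the behaviour of $\cL_n$ inside $B(x,r)$. Passing to the limit in $n$ would give $\gamma(\varphi(\underline\cL),\underline\cL)\le\varepsilon$ for every $\varphi$ supported in $B(x,r)$.

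I would then use the group property to conclude. Composing many small-displacement Hamiltonians, every Hamiltonian supported in a smaller ball should fix $\underline\cL$. That contradicts $x\in\operatorname{\gamma-supp}(\underline\cL)$.

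If the direct Floer estimate proves too delicate, I would instead rely on \cite{viterbo2026supportshumilierecompletiongammacoisotropic}, or on the sheaf-theoretic characterization of $\gamma$-supports as singular supports. That characterization is used later in the paper, and by the involutivity theorem of Kashiwara--Schapira it directly yields coisotropicity.
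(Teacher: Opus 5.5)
Your arguments for closedness and for semi-continuity are correct. The paper gives no proof of its own and simply quotes \cite{viterbo2026supportshumilierecompletiongammacoisotropic}, and these are the standard short arguments.

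The $\gamma$-coisotropicity item, however, has a genuine gap, and it starts with a misreading of Definition~\ref{def:gammaCoisotropic}. That definition is a property of the \emph{set} $V=\operatorname{\gamma-supp}(\underline\cL)$: a Hamiltonian diffeomorphism supported in $B(x,\varepsilon)$ that displaces $V$ from $B(x,\eta)$ must have spectral norm $\gamma(\varphi)>\delta(\eta)$. Its failure at $x$ therefore means that, for some $\eta$, there are $\varphi_k$ supported in $B(x,\varepsilon)$ with $\varphi_k(V)\cap B(x,\eta)=\emptyset$ and $\gamma(\varphi_k)\to0$. It does not mean that all Hamiltonians in small balls move the $\cL_n$ by a uniformly small amount.

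This misreading breaks the argument in three places.
\begin{enumerate}
\item Your key estimate $\gamma(\varphi(\cL_n),\cL_n)\le\varepsilon(r)$, whatever its justification, never involves displacing $V$ from $B(x,\eta)$. So it cannot distinguish points where coisotropicity fails from points where it holds, and it cannot produce a contradiction.
\item The composition step also fails. Smallness bounds on $\gamma(\psi(\underline\cL),\underline\cL)$ only get worse under composition, and they never force the exact equality $\psi(\underline\cL)=\underline\cL$.
\item Your sheaf fallback does not repair this. Kashiwara--Schapira involutivity is cone-coisotropicity, which is a different notion. You would need the Guillermou--Viterbo theorem that singular supports are $\gamma$-coisotropic \cite{MR4768582}, and then an argument passing from $SS(F)\subset T^*(M\times\R)$ to the reduced support in $T^*M$.
\end{enumerate}

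The missing idea is a conjugation trick: exact invariance of the \emph{displaced} object yields exact invariance of $\underline\cL$. You need three facts.
\begin{enumerate}
\item Equivariance: $\operatorname{\gamma-supp}(\varphi(\underline\cL))=\varphi(\operatorname{\gamma-supp}(\underline\cL))$. This follows directly from the definition, using $\psi\mapsto\varphi\psi\varphi^{-1}$.
\item Locality: if $\psi$ is compactly supported in an open set disjoint from $\operatorname{\gamma-supp}(\underline\cL')$, then $\psi(\underline\cL')=\underline\cL'$. Each point of $\operatorname{supp}\psi$ has a neighbourhood in which every Hamiltonian fixes $\underline\cL'$. Fragmentation then writes $\psi$ as a finite composition of Hamiltonians, each supported in one of these neighbourhoods.
\item The bound $\gamma(\varphi(\underline\cL),\underline\cL)\le\gamma(\varphi)$. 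This is obtained from the smooth case by passing to the limit.
\end{enumerate}

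Now take the $\varphi_k$ above and any $\psi$ supported in $B(x,\eta)$. The first two facts give $\psi\varphi_k(\underline\cL)=\varphi_k(\underline\cL)$. Using the invariance of $\gamma$ under $\psi$ and the third fact, you get
\[
\gamma(\psi(\underline\cL),\underline\cL)\le\gamma\bigl(\psi(\underline\cL),\psi\varphi_k(\underline\cL)\bigr)+\gamma(\varphi_k(\underline\cL),\underline\cL)=2\gamma(\varphi_k(\underline\cL),\underline\cL)\le 2\gamma(\varphi_k)\longrightarrow 0 .
\]
Hence every $\psi$ supported in $B(x,\eta)$ fixes $\underline\cL$, which contradicts $x\in\operatorname{\gamma-supp}(\underline\cL)$.
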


We will not make use of the definition of $\gamma$-coisotropic sets in this article, but we give it for the sake of completeness:

\begin{defi}[$\gamma$-coisotropicity, \cite{viterbo2026supportshumilierecompletiongammacoisotropic}]\label{def:gammaCoisotropic}
    A subset $V\subset T^*M$ is $\gamma$-coisotropic at $x\in V$ if there exists $\varepsilon>0$ such that for any $0<\eta<\varepsilon$, there exists $\delta(\eta)>0$ such that for every Hamiltonian diffeomorphism $\varphi$ supported in $B(x,\varepsilon)$, if $\varphi(V)\cap B(x,\eta)=\emptyset$ then $\gamma(\varphi)>\delta(\eta)$ (where $\gamma$ denotes the spectral norm of Hamiltonian diffeomorphisms).

    $V$ is $\gamma$-coisotropic if it is non-empty and $\gamma$-coisotropic at each $x\in V$.
\end{defi}

The only property of $\gamma$-coisotropic sets we will use is the following:

\begin{prop}[\cite{viterbo2026supportshumilierecompletiongammacoisotropic}]\label{prop:smoothGammaCoisotropic}
    A $C^1$ submanifold $V\subset T^*M$ is $\gamma$-coisotropic if and only if it is coisotropic in the usual sense.
\end{prop}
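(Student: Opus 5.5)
The two implications use different tools. \emph{Non-coisotropic $\Rightarrow$ not $\gamma$-coisotropic} is a local displacement by a Hamiltonian of small oscillation. \emph{Coisotropic $\Rightarrow$ $\gamma$-coisotropic} is a rigidity statement, which I would reduce to a linear model plus a lower bound on spectral invariants. Both directions are local, so I work in a Darboux chart around $x\in V$ identified with a ball in $\R^{2d}$. I use throughout the standard bound $\gamma(\phi_H^1)\leq \int_0^1\osc(H_t)\,dt$ for compactly supported $H$.

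\textbf{Non-coisotropic $\Rightarrow$ not $\gamma$-coisotropic.} Suppose $(T_xV)^\omega\not\subset T_xV$ and pick $v\in (T_xV)^\omega\setminus T_xV$. Let $\ell$ be the linear function with $X_\ell=v$, i.e.\ $d\ell=\omega(v,\cdot)$. Then $d\ell$ vanishes on $T_xV$ and $d\ell(v)=0$.
\begin{enumerate}
\item Since $V$ is $C^1$, on a small ball $B(x,\varepsilon_0)$ the submanifold $V$ is a graph over $T_xV$ with small slope, and the flow of $\ell$ (translation by $v$) moves $V\cap B(x,\eta)$ transversally off itself.
\item Take $H=\chi\cdot f(\ell)$, where $\chi$ is a cutoff equal to $1$ on $B(x,\varepsilon_0/2)$ and supported in $B(x,\varepsilon_0)$. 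Choose $f$ with $\osc f\leq\varepsilon$ and $f'\geq C/\varepsilon$ on $\{|\ell|\leq\varepsilon^2\}$.
\item Points of $V$ near $x$ have $|\ell|=o(|z-x|)$, so on a ball of size comparable to $\eta$ they lie in the thin layer where $f'$ is large. There they are translated along $v$ at speed $\gtrsim 1/\varepsilon$.
\item As long as they stay inside the region where $\chi=1$, the flow preserves the levels of $\ell$, so they stay in the layer. Hence within time $1$ they leave $B(x,\eta)$ for $\eta\ll\varepsilon_0$.
\end{enumerate}
I must also check that points of $V$ from outside $B(x,\eta)$ are not pushed \emph{into} $B(x,\eta)$. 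This is handled by the same monotone translation argument, together with choosing $\eta$ small relative to $\varepsilon_0$ so that only $V$ near $x$ matters. We get $\gamma(\phi^1_H)\leq 2\varepsilon\to0$ while $\phi^1_H(V)\cap B(x,\eta)=\emptyset$, so no $\delta(\eta)>0$ exists and $V$ is not $\gamma$-coisotropic at $x$.

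\textbf{Coisotropic $\Rightarrow$ $\gamma$-coisotropic.}
\begin{enumerate}
\item Choose a linear Lagrangian $\Lambda$ with $(T_xV)^\omega\subset\Lambda\subset T_xV$, together with linear symplectic coordinates in which $T_xV=\{p_{k+1}=\dots=p_d=0\}$ and $\Lambda=\{p=0\}$.
\item Since $V$ is a $C^1$ graph over $T_xV$ with slope tending to $0$ at $x$, for every small parameter $c$ the Lagrangian translate $\Lambda_c=\Lambda+(0,c')$ meets $V$ inside $B(x,\eta/2)$. Here $c'$ ranges over the $p_1,\dots,p_k$ directions, which are tangent to $T_xV$.
\item I then compactify the picture: replace $\Lambda_c\cap B(x,\varepsilon)$ by a closed exact Lagrangian, e.g.\ a zero section $0_{\T^d}$ in a Weinstein chart $T^*\T^d\supset B(x,\varepsilon)$ scaled to size $\eta$.
\item Hypothetically, if $\phi$ is supported in $B(x,\varepsilon)$ with $\phi(V)\cap B(x,\eta)=\emptyset$, then $\phi^{-1}(B(x,\eta))$ avoids $V$. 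I would like to deduce that $\phi^{-1}$ displaces a definite piece of $\Lambda_c$ from a region it must meet.
\item That would give $\gamma(\phi)\geq \gamma(\phi(\Lambda_c),\Lambda_c)$ bounded below by the Lagrangian spectral estimate of Proposition~\ref{prop:spectral_invariants} combined with a Chekanov-type energy–capacity bound, of order $\eta^2$.
\end{enumerate}

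The main obstacle is that steps 4--5 are not yet an argument. The hypothesis $\phi(V)\cap B(x,\eta)=\emptyset$ constrains $V$, not any $\Lambda_c$; one needs a mechanism turning the avoidance of $V$ into the displacement of a Lagrangian with quantitative control. Nothing forces $\phi^{-1}(B(x,\eta))\cap\Lambda_c=\emptyset$, so no lower bound on $\gamma(\phi(\Lambda_c),\Lambda_c)$ follows from the hypothesis as stated. My first attempt would be to view $V$ locally as a family of isotropic leaves $\Lambda_c\cap V$ parametrised by $c$. I would then use coisotropic reduction to compare $\gamma(\phi)$ with the spectral norm of $\phi$ acting on the product $\Lambda_c\times\overline{\Lambda_c}$ in $T^*\T^d\times\overline{T^*\T^d}$, in the spirit of the $C^0$-rigidity proofs for coisotropics of Humilière--Leclercq--Seyfaddini. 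Failing that, I would cite \cite{viterbo2026supportshumilierecompletiongammacoisotropic} for this direction, since the paper only uses the statement as a black box.
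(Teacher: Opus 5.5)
\textbf{The non-coisotropic half has a quantifier gap.} To negate $\gamma$-coisotropicity at $x$ you need, for each support radius $\varepsilon_0$, one \emph{fixed} $\eta$ for which $\gamma(\varphi)$ can be made arbitrarily small among $\varphi$ supported in $B(x,\varepsilon_0)$ with $\varphi(V)\cap B(x,\eta)=\emptyset$. Your step 3 places $V\cap B(x,\eta)$ inside the layer $\{|\ell|\leq\varepsilon^2\}$, which requires $\sup_{V\cap B(x,\eta)}|\ell|\leq\varepsilon^2$. Unless $V$ lies in the hyperplane $\{\ell=0\}$ near $x$, the left side is a fixed positive number (it is only $o(\eta)$), so sending $\varepsilon\to0$ forces $\eta\to0$.

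No other profile $f$ rescues a linear $\ell$. The values of $\ell$ on $V\cap B(x,\eta/2)$ form an interval of fixed length $w>0$, and $\osc f\leq\varepsilon$ forces $|f'|\leq\varepsilon/w$ somewhere on it. So some point of $V\cap B(x,\eta/2)$ is translated by at most $\varepsilon|v|/w$ and stays in $B(x,\eta)$. At best this family shows that the displacement energy is $o(\eta^2)$, not that it is zero.

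The missing idea is to compose the steep profile with a function vanishing \emph{identically} on $V$. Suppose $\ell|_V\equiv0$, $d\ell_x=\omega(v,\cdot)$, $f(0)=0$, $f'(0)=A$ and $\Vert f\Vert_\infty\leq\varepsilon$. Then $X_{\chi f(\ell)}=A\chi X_\ell$ on $\{\ell=0\}\supset V$. Hence $V$ is swept along $X_\ell$-orbits to the edge of $\operatorname{supp}\chi$ at cost $\leq 2\varepsilon$, for any $A$ and any $\eta$.

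Since $V$ is only $C^1$ and $H$ must be smooth, you have to do this approximately. Write $V$ as the graph of $g\colon T_xV\to N$ over a complement $N\ni v$, take smooth $g_k\to g$ in $C^1$, and set $\ell_k(y+n)=\omega(v,n-g_k(y))$. Then:
\begin{itemize}
\item $d\ell_k$ is $C^0$-close to $\omega(v,\cdot)$;
\item $\ell_k$ is conserved where $\chi=1$;
\item $\delta_k=\sup_{V\cap B(x,\varepsilon_0)}|\ell_k|\to0$ at fixed $\eta$.
\end{itemize}
So $f'\geq\varepsilon/(2\delta_k)\to\infty$ on $[-\delta_k,\delta_k]$ is compatible with $\osc f\leq\varepsilon$. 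The bookkeeping in the cut-off region is then as in your sketch.

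This half is the one that matters here. The paper does not prove the proposition; it imports it from \cite{viterbo2026supportshumilierecompletiongammacoisotropic}. The only implication it uses, namely that a $C^1$ open piece of a $\gamma$-coisotropic set is coisotropic (proof of Theorem~\ref{thm:rigidity}), is the contrapositive of this half.

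\textbf{The coisotropic half is, as you say, not an argument yet.} You identify the right obstruction: $\Lambda_c\not\subset V$, so the fact that $\varphi(V)$ avoids $B(x,\eta)$ says nothing about $\varphi(\Lambda_c)$. What works, at least for smooth $V$, is a Lagrangian \emph{inside} $V$ combined with rigidity of Lagrangians, rather than an energy--capacity bound for translates.

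The coisotropic normal form gives a chart in which $V=\{p_{k+1}=\dots=p_d=0\}\supset\{p=0\}\ni x$. After shrinking $\varepsilon$, this local piece equals $\widehat L\cap B(x,\varepsilon)$ for a closed exact Lagrangian $\widehat L$, for instance a suitable Hamiltonian image of $0_M$. The hypothesis then gives $\varphi(\widehat L)\cap B(x,\eta)=\emptyset$. Moreover $\gamma(\varphi)\geq\gamma(\varphi(\widehat L),\widehat L)$ is uniformly positive. Otherwise $\varphi_n(\widehat L)\to\widehat L$ in $\gamma$, and the semi-continuity in Proposition~\ref{prop:gammaSupports}, together with $\operatorname{\gamma-supp}(\widehat L)=\widehat L$, would give $x\in\liminf_n\varphi_n(\widehat L)$, which is absurd.

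For a merely $C^1$ coisotropic $V$ there is no normal form, and $(TV)^\omega$ is only a continuous distribution. It is then unclear that $V$ contains any Lagrangian at all. That case needs an idea beyond your sketch, or the citation.
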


\subsection{Graph Selectors and Variational Solutions}  \label{section:GS&VarSol}

Let $(\cL,h)$ be a compact Lagrangian brane in $T^*M$. For $q$ in $M$, the Liouville form $\lambda$ vanishes on the cotangent fibre $T^*_qM$, so that the zero function is a Liouville primitive on $T^*_qM$.

By Theorem \ref{Thm:equivZeroSec}, the Floer homology of the pair $(\cL,h)$, $(T^*_qM,0)$ is one-dimensional:
\[HF_*(\cL,T^*_qM)\cong \Z/2\Z.\]
We denote by $\alpha$ its single generator.

 Let $(\cL_t,h_t)$ be the variational brane associated to $(\cL,h)$. By the invariance of Floer homology under Hamiltonian diffeomorphisms, we also have $HF_*(\cL_t,T^*_qM)\cong \alpha\cdot \Z/2\Z$. Therefore, we can define:

 \begin{defi}[Variational solution]
 	\begin{enumerate}
 		\item The \textit{graph selector} associated to a compact Lagrangian brane $(\cL,h)$ is $u: M\to\R$ given by
     \[u(q)=\rho(\alpha,\cL,T^*_qM).\] 
 		\item  The \textit{variational solution} associated to a compact Lagrangian brane $(\cL,h)$ is $u:\R\times M\to\R$ given by
     \[u(t,q)=\rho(\alpha,\cL_t,T^*_qM).\]
 	\end{enumerate}
 \end{defi}
 
We would like to emphasize that this notion of variational solution differs from the classical notion of variational solutions that has already been studied \cite{MR1604386,unknown,MR3957150}, and which solve a Cauchy problem for the Hamilton--Jacobi equation with a fixed initial scalar condition. The difference between the classical notion and the one we introduce is explained in Remark \ref{rem:VariationalSolutions}.   

\begin{prop} \label{GSTranslation}
	Let $(\mathcal{L},h)$ be a Lagrangian brane with associated variational solution $u$. Then, for any real constant $c \in \mathbb{R}$, the variational solution $u_c$ associated to the brane $(\mathcal{L},h+c)$ is $u_c = u+c$.
\end{prop}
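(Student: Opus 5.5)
The plan is to follow how a shift of the primitive propagates through the construction. First, the variational brane of $(\cL,h+c)$ is $(\cL_t,h_t+c)$. In formula \eqref{LagsImagesFormula} the integral term $\int_0^t (x_\tau^*\lambda - H(\tau,x_\tau))\,d\tau$ depends only on the trajectory $x_\tau=\phi_H^\tau(x)$, not on the primitive, so replacing $h$ by $h+c$ shifts $h_t$ by $c$ for every $t$. It therefore suffices to fix $t$ and prove that, for each $q\in M$,
\[\rho(\alpha,\cL_t,T^*_qM)\ \text{computed with primitive } h_t+c \ =\ \rho(\alpha,\cL_t,T^*_qM)\ \text{computed with } h_t,\ \text{plus } c.\]

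Next I would treat the transverse case. Assume $\cL_t\pitchfork T^*_qM$. The generators of $CF_*(\cL_t,T^*_qM)$ do not depend on the primitive. The differential counts $J_t$-holomorphic strips with given boundary conditions, so it does not depend on the primitive either; nor does the generator $\alpha$ of the one-dimensional homology given by Theorem \ref{Thm:equivZeroSec}. Only the action changes: $\mathcal A(x)=h_t(x)-0$ becomes $h_t(x)+c$. Hence $CF^{\leq a}_*$ for $h_t$ equals $CF^{\leq a+c}_*$ for $h_t+c$, the maps $\iota^a_*$ correspond, and the infimum defining $\rho$ shifts by exactly $c$.

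For the non-transverse case, I would recall that $\rho$ is defined as the limit of $\rho(\alpha,\phi^1_{H^n}(\cL_t),T^*_qM)$, where the transported primitive is $\widetilde h(\phi^1_{H^n}(x))=h(x)+\int_0^1(\lambda(X_{H^n})-H^n)\,dt$. This transported primitive is again shifted by exactly $c$ when $h$ is. Each approximating invariant therefore shifts by $c$ by the transverse case, and so does the limit. Continuation maps act on homology independently of the primitive, so $\alpha$ corresponds to itself throughout.

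There is no real obstacle in this argument. The only point requiring care is confirming that the identification of $\alpha$ and the continuation isomorphisms are independent of the choice of primitive. Since these are defined purely by counting holomorphic curves, this holds, and $u_c(t,q)=u(t,q)+c$ for all $(t,q)$.
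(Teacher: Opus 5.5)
Your proposal is correct and follows the same route as the paper: the paper's proof is the single observation that shifting the Liouville primitive by $c$ shifts the action filtration on the Floer complex by $c$. Your extra steps, checking that $h_t$ shifts by $c$ for every $t$ and passing the shift through the limit in the non-transverse case, simply make that observation explicit.
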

\begin{proof}
    This is a straightforward consequence of the fact that shifting the Liouville primitive by a constant only shifts the filtration on the Floer complex by the same constant.
\end{proof}

The convergence of graph selectors and variational solutions is controlled by the spectral distance.
\begin{prop} \label{Prop:GS<Gamma}
	Let $(\mathcal{L}_0,h_0)$ and $(\mathcal{L}_1,h_1)$ be two compact Lagrangian branes with associated variational solutions $u_0$ and $u_1 : \mathbb{R} \times M \to \mathbb{R}$. Then, we have
	\begin{equation}
		\osc ( u_1 - u_0 ) \leq \gamma(\cL_0,\cL_1).
	\end{equation}
\end{prop}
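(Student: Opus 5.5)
The idea is to compare $u_1(t,q)-u_0(t,q)$ at fixed $(t,q)$ using the triangle inequality (Proposition \ref{prop:TriangleIneq}) and the unit property of $\mu$ (Proposition \ref{prop:unit}). Then we check that the resulting bounds are invariant in $t$ because the spectral distance is preserved by the Hamiltonian flow.

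\emph{Fixed time.} First fix $t$ and $q$, and write $\alpha_i$ for the generator of $HF_*(\cL_{i,t},T^*_qM)\cong\Z/2\Z$. By Proposition \ref{prop:unit}, the map $\beta\mapsto\mu_{\cL_{0,t},\cL_{1,t}}*\beta$ is an isomorphism $HF_*(\cL_{1,t},T^*_qM)\to HF_*(\cL_{0,t},T^*_qM)$. Both groups are one-dimensional, so it sends $\alpha_1$ to $\alpha_0$. The triangle inequality then gives
\[u_0(t,q)\le \rho(\mu_{\cL_{0,t},\cL_{1,t}},\cL_{0,t},\cL_{1,t})+u_1(t,q).\]
Symmetrically,
\[u_1(t,q)\le \rho(\mu_{\cL_{1,t},\cL_{0,t}},\cL_{1,t},\cL_{0,t})+u_0(t,q).\]
Write $A_t=\rho(\mu_{\cL_{1,t},\cL_{0,t}},\cL_{1,t},\cL_{0,t})$ and $B_t=\rho(\mu_{\cL_{0,t},\cL_{1,t}},\cL_{0,t},\cL_{1,t})$, computed with the primitives $h_{0,t},h_{1,t}$. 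The two inequalities say that $-B_t\le u_1(t,q)-u_0(t,q)\le A_t$ for all $q$.

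\emph{Independence of $t$.} Both $(\cL_{i,t},h_{i,t})$ are images of $(\cL_i,h_i)$ under the same $\phi_H^t$, with the primitive transport formula of Proposition \ref{prop:spectral_invariants}. Invariance therefore gives $A_t=A_0$ and $B_t=B_0$. One should check that continuation identifies $\mu$ with $\mu$; this holds by naturality of the isomorphism in Theorem \ref{Thm:equivZeroSec}. Since $H$ is Tonelli and only complete rather than bounded, one would justify invariance by cutting off $H$ outside a large compact set containing the relevant trajectories. This does not change $\phi_H^t$ on the compact Lagrangians, nor the primitives.

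\emph{Conclusion.} Hence $-B_0\le u_1-u_0\le A_0$ on all of $\R\times M$, and so
\[\osc(u_1-u_0)\le A_0+B_0=\gamma(\cL_0,\cL_1).\]
This quantity does not depend on the choice of primitives, consistent with Proposition \ref{GSTranslation}. The main point needing care is the identification $\mu*\alpha_1=\alpha_0$ together with its compatibility with the time-$t$ transport. Once these are granted, the argument is formal.
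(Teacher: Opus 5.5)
Your proposal is correct and follows the same route as the paper. At fixed $(t,q)$ you use the unit isomorphism and the triangle inequality to get $-\rho(\mu,\cL_{0,t},\cL_{1,t})\le u_1-u_0\le\rho(\mu,\cL_{1,t},\cL_{0,t})$, and then invariance under $\phi^t_H$ removes the time dependence; your remarks on why $\mu*\alpha_1=\alpha_0$, on compatibility of $\mu$ with transport, and on cutting off $H$ are details the paper leaves implicit.
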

\begin{proof}
	Let $(t,q)$ be a point in $\mathbb{R} \times M$. Using successively the isomorphism property of $\mu$ in Proposition \ref{prop:unit}, the triangle inequality in Proposition \ref{prop:TriangleIneq}, and the invariance property of Proposition \ref{prop:spectral_invariants}, we obtain
	\begin{align*}
		u_0(t,q) = \rho(\alpha, \mathcal{L}_{0,t},T^*_qM) &= \rho(\mu * \alpha, \mathcal{L}_{0,t},T^*_qM) \\
		&\leq \rho(\mu, \mathcal{L}_{0,t}, \mathcal{L}_{1,t}) + \rho(\alpha, \mathcal{L}_{1,t},T^*_qM) \\
		&= \rho(\mu, \mathcal{L}_0, \mathcal{L}_1) + u_1(t,q)
	\end{align*}
	Symmetrically, we have 
	\begin{align*}
		u_1(t,q) \leq \rho(\mu, \mathcal{L}_1, \mathcal{L}_0) + u_0(t,q)
	\end{align*}
	which yields
	\begin{equation} \label{formula:GSSpectralBound}
		-\rho(\mu, \mathcal{L}_0, \mathcal{L}_1) \leq u_1(t,q) - u_0(t,q) \leq \rho(\mu, \mathcal{L}_1, \mathcal{L}_0)
	\end{equation}
	and
	\begin{align*}
		\osc (u_1 - u_0 ) \leq \rho(\mu, \mathcal{L}_0, \mathcal{L}_1) + \rho(\mu, \mathcal{L}_1, \mathcal{L}_0) = \gamma( \cL_0,\cL_1)
	\end{align*}
\end{proof}     

The next statement was already proven in \cite{Birkhoff} for variational solutions defined with generating functions, following Sikorav and Chaperon \cite{MR1094198}. In the Floer case, Amorim--Oh--dos Santos prove it only for graph selectors when there is no time-dependence (\cite[Theorem 4.6]{MR3860396}). We adapt their proof to the time-dependent case.
\begin{prop} \label{Graphext}
	Let $u: \mathbb{R} \times M \to \mathbb{R}$ be a variational solution associated to a compact Lagrangian brane $(\mathcal{L},h)$. Then
	\begin{enumerate}
		\item \label{GraphextItem1} $u$ is locally Lipschitz on $\mathbb{R} \times M$.
		\item \label{GraphextItem2} There exists an open subset $\mathcal{U} \subset \mathbb{R} \times M$ of full Lebesgue measure such that $u$ is as regular as $h_t$ on $\mathcal{U}$ and for all $(t,q) \in \mathcal{U}$,
		\begin{equation} \label{eq:Graphext}
			\big(q, d_qu(t,q)\big) \in \mathcal{L}_t \quad , \quad u(t,q) = h_t(q, d_qu(t,q)) \quad \text{and} \quad \partial_tu + H(t,q,d_qu) = 0
		\end{equation}
	\end{enumerate}
\end{prop}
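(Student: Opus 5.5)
The plan is to reduce everything to the time-independent graph selector, following the scheme of \cite[Theorem 4.6]{MR3860396}, and then add the time variable through the Hamiltonian invariance of spectral invariants. First fix $t$ and set $u_t = u(t,\cdot)$, which is the graph selector of the compact Lagrangian brane $(\cL_t,h_t)$. We have $HF_*(\cL_t,T^*_qM)\cong\Z/2\Z$ by Theorem \ref{Thm:equivZeroSec}, and we may assume transversality after a small perturbation. Spectrality (Proposition \ref{prop:spectrality}) then gives $u_t(q)=h_t(x)$ for some $x\in\cL_t\cap T^*_qM$.

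For Lipschitz regularity in $q$, move the fibre: $T^*_{q'}M=\psi(T^*_qM)$, where $\psi$ is the cotangent lift of a diffeomorphism isotopic to the identity sending $q$ to $q'$ and supported near a geodesic from $q$ to $q'$. Cut this Hamiltonian off outside a compact set containing $\cL_t$; this is harmless, since only a neighbourhood of $\cL_t$ matters. The Hamiltonian is then $p(X)$ with $\|X\|\le C d(q,q')$, and its sup-norm on the relevant compact set is bounded by $C\,d(q,q')\sup_{\cL_t}\|p\|$. Combining the $L^{1,\infty}$-Lipschitz and invariance properties of Proposition \ref{prop:spectral_invariants} (applied with the roles of the two Lagrangians swapped), this yields $|u_t(q)-u_t(q')|\le C\,d(q,q')$, with $C$ locally uniform in $t$ because $\bigcup_{|t|\le T}\cL_t$ is compact.

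For Lipschitz regularity in $t$, write $\cL_{t'}=\phi_H^{t,t'}(\cL_t)$, with primitives matching the formula of Proposition \ref{prop:spectral_invariants}. Cutting $H$ off near the compact set $\bigcup_{|s|\le T}\cL_s$ and using the $L^{1,\infty}$ estimate gives $|u(t',q)-u(t,q)|\le |t'-t|\,\sup|H|$ on that set. Together with the spatial estimate, this proves Item \ref{GraphextItem1}.

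For Item \ref{GraphextItem2}, consider the set $\mathcal{U}$ of those $(t,q)$ such that $\cL_t\pitchfork T^*_qM$. Its complement is the image of the critical values of the projection $(t,x)\mapsto(t,\pi(\phi^t_H(x)))$ from $\R\times\cL$ to $\R\times M$, taken over the compact set $\cL$. By Sard this image has measure zero, and it is closed by properness, so $\mathcal{U}$ is open of full measure. Near any point of $\mathcal{U}$, the finitely many intersection points $x_i(t,q)\in\cL_t\cap T^*_qM$ depend smoothly (as regularly as $h_t$) on $(t,q)$ by the implicit function theorem. The spectral values $h_t(x_i(t,q))$ form finitely many smooth functions, and $u$ is a continuous selection among them.

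The main obstacle is to show that $u$ coincides locally with a single branch $h_t\circ x_i$ on an open dense subset of full measure, rather than merely choosing between branches pointwise. My approach is to shrink $\mathcal{U}$ further by removing the closed set where two distinct branches have equal values and equal differentials. On the remaining set, either the branch values are distinct, so that continuity plus spectrality forces $u$ to follow one branch locally, or they cross transversally, which happens along a hypersurface of measure zero that I also remove. Showing that this leftover set is negligible requires a genericity argument; I would argue that on the set where two branches agree to first order, $u$ is differentiable with the common differential anyway, so that set causes no harm for \eqref{eq:Graphext}.

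Once $u$ equals $h_t(x_i(t,q))$ locally, the identities follow by differentiation. Since $dh_t=\lambda$ on $\cL_t$ and $\lambda$ vanishes on fibres, we get $d_qu=p(x_i)$, that is, $(q,d_qu)\in\cL_t$. Differentiating formula \eqref{LagsImagesFormula} along the flow in $t$, while correcting by the fibre displacement, gives $\partial_t u=-H(t,q,d_qu)$.
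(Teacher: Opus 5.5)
Your Item~1 is essentially the paper's argument. For time you cut off $\phi_H^{t,t'}$; for space you use the cotangent lift $p\cdot X(q)$ and transfer it onto the compact $\cL_t$ by invariance. The only bookkeeping you skip is a Gr\"onwall bound on $\|p\|$ over the region swept by $(\phi^\tau_G)^{-1}(\cL_t)$, which changes your constant but not the estimate.

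For Item~2 you work directly with the family $\cL_t\subset T^*M$ instead of the paper's Lagrangian suspension $\widetilde\cL\subset T^*(\R\times M)$. That is a legitimate alternative. Your computation of $\partial_t u$ is correct once you use that $\partial_t x_i(t,q)$ is vertical, so that $\lambda(\partial_t x_i)=0$. The suspension simply packages this computation into the single statement $du\in\widetilde\cL$.

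The gap is at the step you yourself call the main obstacle. You remove ``the set where two distinct branches have equal values and equal differentials'', appeal to an unspecified genericity argument, and fall back on the claim that $u$ is at least differentiable there. This does not prove the statement. You need an \emph{open} set of \emph{full measure} on which $u$ is as regular as $h_t$, and pointwise differentiability on a possibly fat closed set gives neither.

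The missing observation is that this set is empty, by the very identity you use at the end. For fixed $t$, $q\mapsto x_i(t,q)$ is a local section of $\cL_t$ and $dh_t=\lambda|_{\cL_t}$, so $d_q\big(h_t\circ x_i\big)(t,q)=p(x_i(t,q))$. Two distinct points of $\cL_t\cap T^*_qM$ have distinct $p$-coordinates, so two distinct branches never agree to first order. Consequently, for $i\neq j$ the function $h_t\circ x_i-h_t\circ x_j$ has nowhere-vanishing differential, and its zero set is locally a $C^1$ hypersurface. Hence the set of transverse points $(t,q)$ at which $h_t$ fails to be injective on $\cL_t\cap T^*_qM$ is closed in the transversality set and Lebesgue-null.

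Removing this set gives exactly the paper's $\mathcal U$: transversality plus injectivity of $\widetilde h$ on the fibre intersection, as in \cite[Proposition 4.2]{MR3860396}. The open--closed argument on a connected neighbourhood, where all branch values are pairwise distinct, then shows that $u$ coincides with a single branch, and \eqref{eq:Graphext} follows. No genericity argument is needed.
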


This proposition justifies the terminology of graph selectors, as we can see that on an open dense subset of the base, a graph selector $u$ of a Lagrangian submanifold $\mathcal{L}$ satisfies $du \in \mathcal{L}$. When generating functions can be used, the variational solution can also be viewed as the graph selector of an extended submanifold $\mathscr{L}$ of $\mathcal{L}$ in $T^*(\mathbb{R} \times M)$, as in \cite{MR3674224,Birkhoff}. It also justifies the terminology of variational solutions, as these are solutions of the Hamilton--Jacobi equation on the same dense open subset $\mathcal{U}$.

\begin{proof}

    \ref{GraphextItem1}. Let $I\subset \R$ be a compact interval. We show that $u$ is Lipschitz on $I\times M$. Let $(t_1,q_1),(t_2,q_2)\in I\times M$. We have
    \begin{equation}
    \label{eq:Lipschitz1}
         |u(t_1,q_1)-u(t_2,q_2)| \leq |u(t_1,q_1)-u(t_2,q_1)|+|u(t_2,q_1)-u(t_2,q_2)|.
    \end{equation}
    The idea of the proof will be to write $\cL_{t_2}$ and $T^*_{q_2}M$ as the image of $\cL_{t_1}$ and $T^*_{q_1}M$ under some Hamiltonian diffeomorphisms, and to use the $L^{1,\infty}$-Lipschitz property of Proposition \ref{prop:spectral_invariants} to control each term of \eqref{eq:Lipschitz1}.

    For the first term, we observe that $\cL_{t_2}= \phi_H^{t_1,t_2} (\cL_{t_1})= \phi^{t_2}_H\circ (\phi^{t_1}_H)^{-1}(\cL_{t_1})$.
    The Hamiltonian diffeomorphism $\phi_H^{t_1,t_2}$ is generated by the Hamiltonian $\widehat H:[0,1]\times T^*M\to \R$ given by
    \[\widehat H(t,x)=(t_2-t_1)H(t_1+(t_2-t_1)t,x).\]
    Consider a cut-off $\widetilde H$ that coincides with $\widehat H$ on $[0,1]\times D^*_RM$ where $D^*_RM$ contains $\bigcup_{t\in I}\cL_t$, and that vanishes away from $[0,1] \times D^*_{R+1}M$. Then, $\phi_H^{t_1,t_2} (\cL_{t_1}) = \phi^1_{\widetilde H}(\cL_{t_1})$, and therefore, by the $L^{1,\infty}$-Lipschitz property of spectral invariants (Proposition \ref{prop:spectral_invariants}), we get
    \begin{align}
        |u(t_1,q_1)-u(t_2,q_1)|&=|\rho(\alpha,\cL_{t_1},T^*_{q_1}M)-\rho(\alpha,\phi^1_{\widetilde H}(\cL_{t_1}),T^*_{q_1}M)|\\
        &\leq \int_0^1\Vert\widetilde H_t\Vert_\infty \;dt\\
        \label{eq:Lipschitz2}&\leq |t_1-t_2|\cdot \max\limits_{I\times D^*_{R+1}M}|H|
    \end{align}

    As for the second term of (\ref{eq:Lipschitz1}), we consider a minimizing geodesic $c:[0,1]\to M$ from $q_1$ to $q_2$. Then, we claim that we can build a time-dependent vector field $X_t$ on $M$ satisfying that for all times $t\in[0,1]$,  $X_t(c(t))=c'(t)$, $\max\limits_{q\in M}|X_t(q)|\leq \ell\coloneq d(q_1,q_2)$, and $\max\limits_{q\in M}|d_qX_t|\leq C$, where the constant $C>0$ depends only on the geometry of $M$. Indeed, fix some $r>0$ smaller than the injectivity radius of $M$. Then, it is enough to construct a smooth family of vector fields $X_t$ supported in the ball of radius $r$ around $c(t)$, satisfying $X_t(c(t))=c'(t)$, $\vert X_t\vert\leq \ell$, and $\vert dX_t\vert\leq C_{M,r}\ell\leq C_{M,r} \operatorname{diam}(M)=C$, for some constant $C_{M,r}$ depending only on $M$ and $r$. Such a family is obtained in exponential coordinates centred at $c(t)$ by extending $c'(t)$ by radial parallel transport and multiplying by a fixed radial cutoff supported in $B(c(t),r)$. Since $M$ is compact, the exponential maps, their inverses, and radial parallel transport have uniformly bounded first derivatives on vectors of norm at most $r$, and the bound on $\vert dX\vert$ follows.

    Let $G_\tau$ be the time-dependent Hamiltonian on $[0,1]\times T^*M$ given by $G_\tau(q,p)=p.X_\tau(q)$. Then, $\partial_pG_\tau(q,p)=X_\tau(q)$ and therefore $\phi^\tau_G(T^*_{q_1}M)=T^*_{c(\tau)}M$. In particular, $\phi^1_G(T^*_{q_1}M)=T^*_{q_2}M$. For all $\tau\in[0,1]$, we have $(\phi^\tau_G)^{-1}=\phi^\tau_{K}$ where $K$ is given by $K_\tau(q,p)=-G_\tau(\phi^\tau_G(q,p))$. Moreover, the bound on $|dX|$ implies, using Gr\"onwall's inequality, that for all $\tau\in[0,1]$, $D^*_RM\subset \phi^\tau_{G}(D^*_{R'}M)$, where $R'=e^CR$, and thus $\phi^\tau_{K}(D^*_RM)\subset D^*_{R'}M$.
    Therefore, if we define a cut-off $\widetilde K_\tau(q,p)\coloneq \chi\left(|p|-R'\right)K_\tau(q,p)$, where $\chi\colon \R\to[0,1]$ satisfies $\chi(s) = 1$ for $s\leq 0$ and $\chi(s)=0$ for $s\geq 1$, then we have that $\phi^1_{\widetilde K}$ is compactly supported inside $D^*_{R'+1}M$, and coincides with $(\phi^1_G)^{-1}$ inside $D^*_RM$. In particular, for all $t\in I$, we have $(\phi^1_G)^{-1}(\cL_t)=\phi^1_{\widetilde K}(\cL_t)$. Moreover, for all $\tau \in[0,1]$, we have 
    \begin{equation*}
        \Vert \widetilde K_\tau\Vert_\infty\leq \Vert G_\tau\Vert_\infty^{\phi^\tau_G(D^*_{R'+1}M)}\leq e^C(R'+1)\max\vert X_\tau\vert\leq e^C(R'+1)\cdot d(q_1,q_2)
    \end{equation*}
    where we used again Gr\"onwall's inequality and the properties of $X_\tau$. Now, using the invariance and the $L^{1,\infty}$-Lipschitz properties of Proposition \ref{prop:spectral_invariants}, we have
    \begin{align*}
        |u(t_2,q_1)-u(t_2,q_2)| &= |\rho(\alpha,\cL_{t_2},T^*_{q_1}M)-\rho(\alpha,\cL_{t_2},\phi^1_G(T^*_{q_1}M))|\\
        &= |\rho(\alpha,\cL_{t_2},T^*_{q_1}M)-\rho(\alpha,(\phi^1_G)^{-1}(\cL_{t_2}),T^*_{q_1}M)|\\
        &= |\rho(\alpha,\cL_{t_2},T^*_{q_1}M)-\rho(\alpha,\phi^1_{\widetilde K}(\cL_{t_2}),T^*_{q_1}M)|\\
        &\leq \int_0^1\Vert\widetilde K_t\Vert_\infty\;dt\\
        &\leq e^C(R'+1)\cdot d(q_1,q_2).
    \end{align*}

This implies, together with (\ref{eq:Lipschitz1}) and (\ref{eq:Lipschitz2}), that $u$ is Lipschitz on $I\times M$.\\

\ref{GraphextItem2}. Consider the Lagrangian suspension
\[\widetilde \cL\coloneq\big\{(t,-H(t,x),x)|t\in\R, x\in\cL_t\big\}\subset T^*(\R\times M)\cong T^*\R\times T^*M.\]
We have that $\widetilde \cL=\{(t,-H(t,\phi^t_H(x)),\phi^t_H(x))|t\in\R,x\in \cL\}$ is the image of the exact Lagrangian submanifold $0_{T^*\R}\times \cL$ under the Hamiltonian diffeomorphism generated by the Hamiltonian $\widetilde H:[0,1]\times T^*\R\times T^*M\to \R$ given by $\widetilde H(\tau,t,E,x)=tH_{t\tau}(x)$. Therefore, $\widetilde\cL$ is itself an exact Lagrangian submanifold, and can be equipped with the Liouville primitive $\widetilde h:\widetilde{\cL}\to \R$ given by $\widetilde h(t,-H(t,x),x)=h_t(x)$.

Fix $(t,q)\in\R\times M$. We would like to compare $u(t,q)$ with the spectral invariant of the pair $(\widetilde \cL,T^*_{(t,q)})$. However, the manifold $\R \times M$ is not compact, so it is not clear whether such a spectral invariant is well-defined. Instead, we will only look at the spectrum of the pair
\[\Spec(\widetilde \cL,T^*_{(t,q)}(\R\times M))=\big\{\mathcal A_{\widetilde \cL,T^*_{(t,q)}(\R\times M)}(y),y\in \widetilde \cL\cap T^*_{(t,q)}(\R\times M)\big\}=\big\{\widetilde h(y),y\in \widetilde \cL\cap T^*_{(t,q)}(\R\times M)\big\}.\]
We observe that for all $(t,q)\in\R\times M$, the map $x\mapsto (t,-H(t,x),x)$ induces a one-to-one correspondence between $\cL_t\cap T^*_qM$ and $\widetilde \cL\cap T^*_{(t,q)}(\R\times M)$, and therefore
\begin{align*}
\Spec(\widetilde \cL,T^*_{(t,q)}(\R\times M))&=\big\{\widetilde h(t,-H(t,x),x),x\in\cL_t\cap T^*_qM\big\}\\
&=\big\{h_t(x),x\in\cL_t\cap T^*_qM\big\}\\
&= \Spec(\cL_t,T^*_qM)
\end{align*}
and this set is finite whenever the intersection is transverse.

Having noticed this, the rest of the proof is identical to that of \cite[Theorem 4.6]{MR3860396}, replacing $\cL\subset T^*M$ by $\widetilde \cL\subset T^*(\R\times M)$ (all the arguments are local and do not use the compactness of $M$). Therefore, we will skip some details and only sketch the main ideas.

Consider the set
\[\mathcal U\coloneq \big\{(t,q)\in \R\times M \; \big| \;\text{$T^*_{(t,q)}(\R\times M)$ intersects $\widetilde \cL$ transversely and $\widetilde h|_{\widetilde \cL\cap T^*_{(t,q)}(\R\times M)}$ is injective}\big\}.\]
Denote by $\pi:T^*(\R\times M)\to \R\times M$ the projection to the base. Using Sard's theorem and the fact that $\widetilde h$ is a Liouville primitive, one can show as in \cite[Proposition 4.2]{MR3860396} that $\mathcal U$ is an open subset of full Lebesgue measure, and that for all $(t,q)\in \mathcal U$, there exists an open connected neighbourhood $U_{(t,q)}\subset \mathcal U$ of $(t,q)$ such that $\pi^{-1}(U_{(t,q)})\cap \widetilde \cL$ decomposes as a finite disjoint union $\pi^{-1}(U_{(t,q)})\cap \widetilde \cL=\coprod\limits_{i=1}^{k_{(t,q)}} V_i$ where the $V_i$ are open subsets of $\widetilde \cL$, diffeomorphic to $U_{(t,q)}$ via $\pi|_{V_i}$.

Fix some $(t,q)$ in $\mathcal U$. Since $T^*_{(t,q)}(\R\times M)$ intersects $\widetilde \cL$ transversely, we also have that $T^*_qM$ intersects $\cL_t$ transversely, and by spectrality (Proposition \ref{prop:spectrality}), $u(t,q)\in\Spec(\cL_t,T^*_qM)=\Spec(\widetilde \cL,T^*_{(t,q)}(\R\times M))$. Therefore, there exists $y_{(t,q)}\in \widetilde \cL\cap T^*_{(t,q)}(\R\times M)$ such that $u(t,q)=\widetilde h(y_{(t,q)})$. Now consider a neighbourhood $U_{(t,q)}$ as above. Since $y_{(t,q)}\in \pi^{-1}(U_{(t,q)})\cap \widetilde \cL$, there exists an index $i$ such that $y_{(t,q)}\in V_i$. For $(t',q')\in U_{(t,q)}$, we define $y(t',q')=\pi|_{V_i}^{-1}(t',q')$, and consider the subset $U^=_{(t,q)}=\{(t',q')\in U_{(t,q)}|u(t',q')=\widetilde h(y(t',q'))\}$.
This set is non-empty since it contains $(t,q)$. It is also closed in $U_{(t,q)}$ since $u$ and $\widetilde h\circ y$  are continuous ($u$ is locally Lipschitz by \ref{GraphextItem1}.). Using the injectivity of $\widetilde h$ on $\widetilde \cL\cap T^*_{(t',q')}(\R\times M)$ and the finiteness of the spectrum for $(t',q')$ in $\mathcal U$, we also have that $U^=_{(t,q)}$ is open. Thus, $U^=_{(t,q)}=U_{(t,q)}$ since $U_{(t,q)}$ is connected, and therefore $u(t',q')=\widetilde h(y(t',q'))$ for all $(t',q')\in U_{(t,q)}$.
Therefore, $u$ has the same regularity as $\widetilde h$ (since $y$ is smooth) on $U_{(t,q)}$, and therefore the same regularity as $h_t$ on $\mathcal U$. Using the fact that $\widetilde h$ is a Liouville primitive, we get that
\begin{equation}\label{eq:graph}
    ((t,q),d_{(t,q)}u)=y_{(t,q)}\in \widetilde\cL\cap T^*_{(t,q)}(\R\times M).
\end{equation}
Writing $y_{(t,q)}$ as $(t,-H(t,x_{(t,q)}),x_{(t,q)})\in T^*\R\times T^*M$, with $x_{(t,q)}\in\cL_t$, (\ref{eq:graph}) implies that
\begin{equation}
	\big(q, d_qu(t,q)\big)=x_{(t,q)} \in \mathcal{L}_t \quad \text{and} \quad \partial_tu + H(t,q,d_qu) = 0.
\end{equation}
Finally, $u(t,q)=\widetilde h(y_{(t,q)})=h_t(x_{(t,q)})=h_t(q, d_qu(t,q))$.
\end{proof}

\begin{rem} \label{rem:VariationalSolutions}
	Let us explain the difference between the notion of variational solution introduced in this article and the classical notion of variational solution present in the literature. 
	
	In \cite{MR1604386,unknown}, variational solutions are first defined for the Cauchy problem
	\begin{equation}
		\begin{cases}
			\partial_tu + H(t,q,d_qu) = 0 \\
			u(0,\cdot) = u_0
		\end{cases}
	\end{equation}
	with Lipschitz initial data $u_0$. In this setting, the classical variational solution $u : [0,+\infty) \times M \to \mathbb{R}$ is first defined for $C^1$ initial data using graph selectors, in the same way as we have done. It is then observed that the map $R : u_0 \mapsto u(t,\cdot) = R^t u_0$ extends to Lipschitz initial data in the uniform topology.\\
	
	Joukovskaia's theorem \cite{jouko1994} states that, in the case of Tonelli Hamiltonians, the classical variational solutions $u : [0,+\infty) \times M \to \mathbb{R}$ generated by the operator $R^t$ coincide with the viscosity solutions of the Hamilton--Jacobi equation, introduced by Crandall--Lions \cite{MR0690039} and later developed by A. Fathi \cite{MR1451248} in the Tonelli setting.\\
	
	In this framework, the initial condition is a scalar function, whereas in our definition of variational solution, the solution depends on the entire Lagrangian brane $(\mathcal{L},h)$ at time $0$, and not only on its graph selector $u_0$. The solutions constructed in this way are no longer guaranteed to be viscosity solutions, which is one of the main difficulties of this article.\\
	
	If one wishes to relate the classical notion to the one introduced here, the correspondence is as follows: given a Lipschitz initial condition $u_0$, the classical variational solution $u : [0,+\infty) \times M \to \mathbb{R}$ generated by the operator $R^t$ coincides with our variational solution associated to the initial Lagrangian brane $(\mathcal{L},h)$, where $\mathcal{L} = \mathcal{PG}(du_0)$ is the pseudograph associated to $du_0$, i.e. the fibrewise convex hull of the closure of the graph $\mathcal{G}(du_0)$. Note that even though $(\mathcal{PG}(du_0), u_0 \circ \pi_M)$, where $\pi_M : T^*M \to M$ is the canonical projection, is not a smooth Lagrangian brane, it is nevertheless a bare Lagrangian brane, since it can be approximated by graphs of differentials of smooth maps \cite{MR2231385}. We associate to it a variational solution in Subsection \ref{SectionGSBC}.
	
	Below, we illustrate an example where viscosity solutions and variational solutions do not coincide.
	\begin{figure}[h]
		\centering
		\begin{subfigure}{0.35\textwidth}
			\centering
			\includegraphics[width=\textwidth]{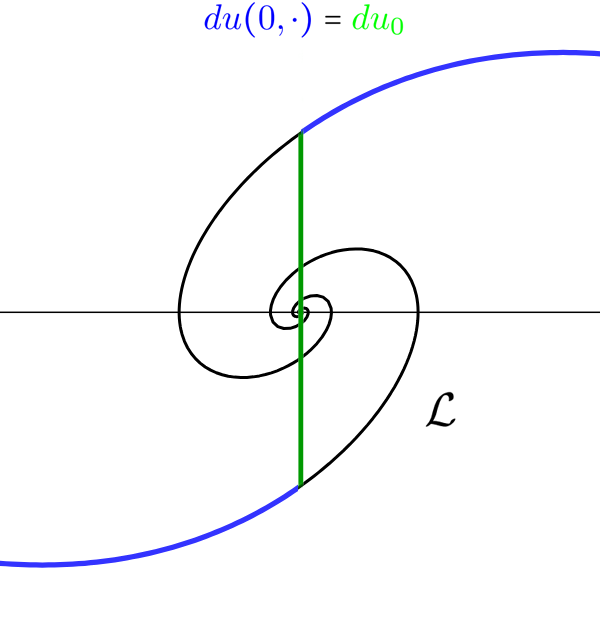}
			\caption{Initial Data}
		\end{subfigure}
		\begin{subfigure}{0.375\textwidth}
			\centering
			\includegraphics[width=\textwidth]{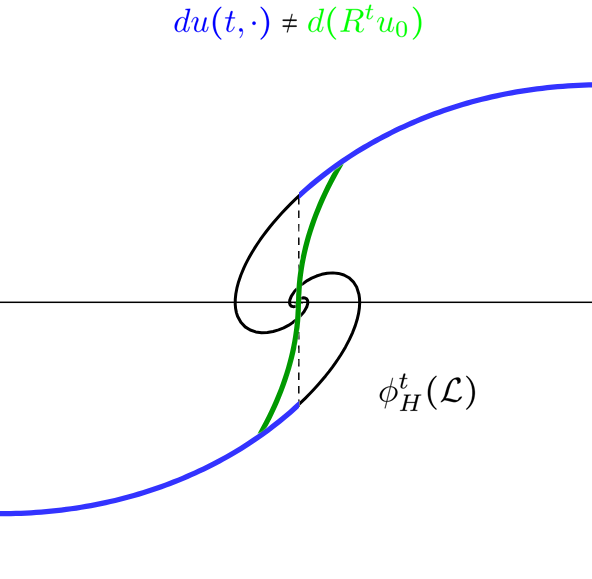}
			\caption{Solutions at time $t$}
		\end{subfigure}
		\caption{A case where variational and viscosity solutions are distinct}
	\end{figure}
	
	Another subtlety is that the variational solutions $u : \mathbb{R} \times M \to \mathbb{R}$ that we define correspond, when the initial condition is a pseudograph, to viscosity solutions only for positive times $t \geq 0$, whereas the evolution for negative times is different, and intuitively corresponds to the symmetry between what Albert Fathi calls positive and negative weak KAM solutions.
\end{rem}

\section{Calibration of Variational Solutions} \label{SectionCalib}

In this section, we introduce ideas from weak KAM theory that allow us to detect the regularity of viscosity solutions. Even though we have seen in Remark \ref{rem:VariationalSolutions} that viscosity and variational solutions do differ, we will show in Proposition \ref{Domination} that the latter are dominated (or, equivalently in the viscosity terminology, viscosity subsolutions). Hence, some regularity properties still apply to variational solutions, which we will use in the proof of the main theorem.\\

Tonelli Hamiltonians are the right setting to have a correspondence between Hamiltonian and Lagrangian dynamics. This is precisely the framework of Fathi's weak KAM theory.

\begin{defi} \label{Tonelli}
	 A $C^2$ time-periodic Hamiltonian $H(t,q,p) : \mathbb{T}^1 \times T^*M \to \mathbb{R}$ is called \textit{Tonelli} if it satisfies the following classical hypotheses:
	\begin{itemize}[label=--]
		\item (\textit{Strict convexity}) $\partial_{pp}H(t,q,p)>0$ for all $(t,q,p) \in \mathbb{T}^1 \times T^*M$.
		\item (\textit{Superlinearity}) $\frac{|H(t,q,p)|}{\Vert p\Vert } \to \infty$ as $\Vert p\Vert  \to \infty$ for each $(t,q) \in \mathbb{T}^1 \times M$.
		\item (\textit{Completeness}) The Hamiltonian vector field $X_H$ and hence its flow $\phi_H^{s,t}$ are complete in the sense that the flow curves are defined for all times $t \in \mathbb{R}$.
	\end{itemize}
\end{defi}

In this case, we can associate to the Tonelli Hamiltonian its convex conjugate known as the \textit{Lagrangian} $L : \T^1 \times TM \to \R$ defined as
\begin{equation} \label{HamLag}
	L(t,q,v) = \sup_{p \in T^*_qM} \{ p.v - H(t,q,p) \}\quad \text{and} \quad H(t,q,p) = \sup_{v \in T_qM} \{ p.v - L(t,q,v) \} 
\end{equation}

From this convexity relation, we deduce Fenchel's inequality.
\begin{equation} \label{Fenchel}
	p.v \leq L(t,q,v) + H(t,q,p) \quad \text{with equality if and only if } (t,q,p) = \leg(t,q,v)
\end{equation}
where the Legendre transform $\leg$ is defined in (\ref{Legendrian}).

The Lagrangian dynamics is obtained from a variational principle, that is, by minimizing the action $A_L(\gamma) = \int L(\tau, \gamma(\tau), \dot{\gamma}(\tau)) \; d\tau$ among curves with fixed endpoints. This gives rise to the Lagrangian flow $ \phi_L^{s,t} $, which is conjugate to the Hamiltonian flow $\phi_H^{s,t}$ via the Legendre map $ \leg : \mathbb{T}^1 \times TM \to \mathbb{T}^1 \times T^*M $, a diffeomorphism defined by
\begin{equation} \label{Legendrian}
	\leg (t,q,v) = (t,q,\partial_v L(t,q,v)) \quad \text{with inverse } \leg^{-1}(t,q,p) = (t,q,\partial_pH(t,q,p))
\end{equation}

\begin{rem} \label{LagsImagesLag}
	In the case of a Tonelli Hamiltonian, the Hamiltonian action of an orbit $x_\tau=\phi_H^\tau(x)$ is equal to its Lagrangian action. Hence, the definition of the variational bare brane $(\cL_t,h_t)$ in \eqref{LagsImagesFormula} becomes
	\begin{equation}
		h_t(x_t) = h (x) + \int_0^t L(\tau,q_\tau,\dot{q}_\tau) \;d\tau \quad \text{where} \quad x_\tau=(q_\tau,p_\tau) = \phi_H^\tau (x).
	\end{equation}
\end{rem}

\subsection{Domination}

\begin{defi}
	Let $u : \mathbb{R} \times M \to \mathbb{R}$ be a variational solution associated to a Lagrangian brane and let $\gamma : [a,b] \to M$ be a $C^1$ curve on $M$. The \textit{defect of calibration} of $\gamma$ by $u$ is defined as
	\begin{equation}
		\delta(u,\gamma,[a,b]) = \int_a^b L(s,\gamma(s),\dot{\gamma}(s)) \; ds - [u(b,\gamma(b)) - u(a,\gamma(a))]
	\end{equation}
\end{defi}

\begin{prop} \label{Domination}
	For a variational solution $u$ associated to a Lagrangian brane, the defect of calibration $\delta$ is non-negative. We say that the map $u : \mathbb{R} \times M \to \mathbb{R}$ is \textit{dominated} by $L$.
\end{prop}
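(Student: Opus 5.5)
Our plan is to show that for every $C^1$ curve $\gamma:[a,b]\to M$,
\[
u(b,\gamma(b))-u(a,\gamma(a))\leq \int_a^b L(s,\gamma(s),\dot\gamma(s))\,ds,
\]
using the a.e.\ description of $u$ from Proposition \ref{Graphext} together with Fenchel's inequality \eqref{Fenchel}.

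\textbf{Step 1: the smooth case.} Suppose first that the curve $s\mapsto(s,\gamma(s))$ stays in the full-measure open set $\mathcal U$ for almost every $s$. On $\mathcal U$, Proposition \ref{Graphext} says that $u$ is smooth, $\partial_t u=-H(t,q,d_qu)$, and $(q,d_qu)\in\cL_t$. Hence for a.e.\ $s$,
\[
\frac{d}{ds}u(s,\gamma(s))=\partial_tu+d_qu\cdot\dot\gamma=-H(s,\gamma,d_qu)+d_qu\cdot\dot\gamma\leq L(s,\gamma,\dot\gamma),
\]
where the last inequality is Fenchel's. Since $u$ is locally Lipschitz (Item \ref{GraphextItem1} of Proposition \ref{Graphext}), $s\mapsto u(s,\gamma(s))$ is absolutely continuous. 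Integrating the inequality then gives the claim for such curves.

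\textbf{Step 2: general curves, by perturbation.} A general $C^1$ curve may spend positive time in the null set $(\R\times M)\setminus\mathcal U$, which is where the main difficulty lies. I would handle it with a Fubini-type argument. Embed $\gamma$ into a family of curves $\gamma_y(s)=\exp_{\gamma(s)}(Y(s,y))$ with parameter $y$ in a small ball $B\subset\R^N$. The vector field $Y$ is chosen, using finitely many vector fields spanning $TM$ along $\gamma$, so that the map $(s,y)\mapsto(s,\gamma_y(s))$ is a submersion onto a neighbourhood of the graph of $\gamma$. Since the complement of $\mathcal U$ has measure zero, its preimage under this submersion has measure zero in $[a,b]\times B$. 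By Fubini, for a.e.\ $y$ the curve $(s,\gamma_y(s))$ lies in $\mathcal U$ for a.e.\ $s$. Step 1 then applies to $\gamma_y$ for such $y$.

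\textbf{Step 3: passage to the limit.} Choose $y_n\to0$ among these good parameters. Then $\gamma_{y_n}\to\gamma$ in $C^1$, because $Y$ is smooth with $Y(s,0)=0$. The endpoint values converge because $u$ is continuous. The actions converge because $L$ is continuous and the curves $(\gamma_{y_n},\dot\gamma_{y_n})$ stay in a compact subset of $TM$. Passing to the limit yields $\delta(u,\gamma,[a,b])\geq0$.

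The one technical point to check is the a.e.\ derivative formula in Step 1. It relies on the differentiability of $u$ at points of $\mathcal U$ and on the chain rule for a Lipschitz function composed with a $C^1$ curve. Both are fine here, because $u$ is genuinely differentiable at every point of the open set $\mathcal U$.
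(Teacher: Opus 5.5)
Your proof is correct and follows the same route as the paper. You integrate the Fenchel inequality along curves whose graph lies in $\mathcal U$ for a.e.\ time, then reach arbitrary $C^1$ curves by $C^1$-approximation and continuity of $u$ and $L$. The only difference is that you prove the perturbation step yourself via a submersion/Fubini argument, whereas the paper cites Lemma \ref{Pertrub} from Arnaud--Venturelli; your argument works, with one small correction. The field $Y(s,y)=\sum_i y_iX_i(\gamma(s))$ is only $C^1$ in $s$, not smooth, but writing $\gamma_y(s)=G(\gamma(s),y)$ with $G(q,y)=\exp_q\big(\sum_i y_iX_i(q)\big)$ smooth still gives $\gamma_y\to\gamma$ in $C^1$.
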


\begin{proof}
	Let $\gamma : [a,b] \to M$ be a $C^1$ curve on $M$ such that for almost every time $t \in [a,b]$, $(t,\gamma(t))$ belongs to the dense open set $\mathcal{U}$ defined in Proposition \ref{Graphext}.
	\begin{align*}
		u(b,\gamma(b)) - u(a,\gamma(a)) & = \int_a^b du(s,\gamma(s)).(1,\dot{\gamma}(s)) \;ds = \int_a^b \Big( \partial_t u(s,\gamma(s)) + d_q u (s,\gamma(s)).\dot{\gamma}(s) \Big) \;ds \\
		& \leq \int_a^b \Big( \partial_t u(s,\gamma(s)) + H\big(s,\gamma(s),d_q u (s,\gamma(s))\big) + L\big(s,\gamma(s),\dot{\gamma}(s)\big) \Big) \;ds \\
		& = \int_a^b 0 + L(s,\gamma(s),\dot{\gamma}(s)) \; ds 
	\end{align*}
	where in the second line we used the Fenchel inequality \eqref{Fenchel}. Then, we get
	\begin{equation*}
		\delta(u,\gamma,[a,b]) = \int_a^b L(s,\gamma(s),\dot{\gamma}(s)) \; ds - [u(b,\gamma(b)) - u(a,\gamma(a))] \geq 0.
	\end{equation*}
	The general configuration case is given by the perturbation process described in Lemma \ref{Pertrub}.
\end{proof}

\begin{lem}[\cite{MR3674224}] \label{Pertrub}
	Let $\gamma : [a,b] \to M$ be a $C^1$ curve and let $\mathcal{U}$ be a full Lebesgue measure set of $\mathcal{M} = \mathbb{R} \times M$. Then there exists a sequence of $C^1$ curves $\gamma_k : [a,b] \to M$ such that
	\begin{enumerate}[label=\roman*.]
		\item For almost all $t \in [a,b]$, $(t, \gamma_k(t)) \in \mathcal{U}$.
		\item The sequence $(\gamma_k)_k$ converges to $\gamma$ in the $C^1$-topology.
	\end{enumerate}
\end{lem}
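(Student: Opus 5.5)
The plan is a transversality/Fubini argument: embed $\gamma$ in a finite-dimensional family of $C^1$ curves $\gamma_s$, $s$ in a small ball of $\R^N$, such that the evaluation map $(t,s)\mapsto (t,\gamma_s(t))$ is a submersion. The preimage of the null set $\mathcal{N}:=(\R\times M)\setminus\mathcal{U}$ is then null, and Fubini gives almost every parameter $s$ as good.

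\emph{Step 1 (the family).} Since $M$ is compact, fix finitely many smooth vector fields $V_1,\dots,V_N$ on $M$ whose values span $T_qM$ at every $q\in M$; for instance, use coordinate vector fields of a finite atlas multiplied by cutoffs. Let $\psi^{r}_i$ denote the flow of $V_i$. For $s=(s_1,\dots,s_N)\in\R^N$ set
\[
\Psi(t,s)=\psi^{s_1}_1\circ\cdots\circ\psi^{s_N}_N(\gamma(t)),\qquad G(t,s)=(t,\Psi(t,s))\in\R\times M .
\]
The map $(s,q)\mapsto \psi^{s_1}_1\circ\cdots\circ\psi^{s_N}_N(q)$ is smooth, so $\Psi$ is $C^1$ on $[a,b]\times\R^N$. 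Moreover, $\partial_t\Psi(t,s)=d\big(\psi^{s_1}_1\circ\cdots\circ\psi^{s_N}_N\big)_{\gamma(t)}\dot\gamma(t)$ is jointly continuous in $(t,s)$.

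\emph{Step 2 (submersion).} At $s=0$ we have $\partial_{s_i}\Psi(t,0)=V_i(\gamma(t))$, and these vectors span $T_{\gamma(t)}M$. Surjectivity of $d_s\Psi$ is an open condition and $[a,b]$ is compact, so there is $\varepsilon>0$ such that $d_s\Psi(t,s)$ is onto for all $(t,s)\in[a,b]\times B_\varepsilon$. Hence $dG$ is onto there, because its $t$-component is the identity on the $\R$ factor. So $G$ is a $C^1$ submersion from $[a,b]\times B_\varepsilon$ (extended slightly past the endpoints) to $\R\times M$.

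\emph{Step 3 (null preimage and Fubini).} A $C^1$ submersion is locally a projection after a $C^1$ change of coordinates, by the implicit function theorem. Preimages of Lebesgue-null sets under it are therefore null: in the local product coordinates the preimage is $\mathcal N'\times\R^{N-d}$ with $\mathcal N'$ null, and $C^1$ diffeomorphisms preserve null sets. Hence $G^{-1}(\mathcal N)$ is null in $[a,b]\times B_\varepsilon$. By Fubini, for almost every $s\in B_\varepsilon$ the slice $\{t\in[a,b] : (t,\Psi(t,s))\notin\mathcal U\}$ is null.

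\emph{Step 4 (conclusion).} Choose such good parameters $s_k\to0$ and set $\gamma_k:=\Psi(\cdot,s_k)$. Item i.\ holds by Step 3. Item ii.\ holds because $\Psi$ and $\partial_t\Psi$ are uniformly continuous on the compact set $[a,b]\times\overline{B_{\varepsilon/2}}$ and $\Psi(\cdot,0)=\gamma$, which gives $\gamma_k\to\gamma$ in $C^1$.

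The only delicate point is Step 2/3: that the evaluation map is a genuine submersion, and that submersions pull back null sets to null sets. Both are handled as above, so no real obstacle is expected.
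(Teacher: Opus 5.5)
Your argument is correct. The paper itself gives no proof of this lemma. It is imported from Arnaud--Venturelli \cite{MR3674224} and used only in the proof of Proposition~\ref{Domination}, so there is no internal argument to compare yours with.

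Your transversality/Fubini proof is self-contained and every step checks out:
\begin{enumerate}
\item The global family $\Psi(t,s)=\psi^{s_1}_1\circ\cdots\circ\psi^{s_N}_N(\gamma(t))$, built from flows of finitely many spanning vector fields, makes $G(t,s)=(t,\Psi(t,s))$ a $C^1$ submersion for $|s|<\varepsilon$.
\item The $C^1$ rank theorem makes $G$ locally a projection, so a countable cover shows $G^{-1}(\mathcal N)$ is null.
\item Fubini, together with density of the good parameters, yields $s_k\to 0$.
\item Uniform continuity of $\Psi$ and $\partial_t\Psi$ on a compact set gives $C^1$ convergence.
\end{enumerate}

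A more hands-on alternative works in charts with translations $\gamma(t)+v$. There $(t,v)\mapsto(t,\gamma(t)+v)$ is a measure-preserving shear, so Fubini applies without any normal form, but the perturbations must then be glued across charts. Your global family avoids that gluing, at the modest cost of the submersion normal form and a countable cover.

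Two points of housekeeping are worth making explicit.
\begin{enumerate}
\item To apply the rank theorem on an open set, extend $\gamma$ to a $C^1$ curve on $(a-\delta,b+\delta)$. Alternatively, work on $(a,b)\times B_\varepsilon$, since the endpoints are null.
\item If $\mathcal N$ is not Borel, apply Fubini to a Borel null set $\mathcal N_B\supset\mathcal N$. Completeness of Lebesgue measure then gives the conclusion for the slices of $G^{-1}(\mathcal N)$. In the paper's application $\mathcal U$ is open, so $\mathcal N$ is closed and this issue does not arise.
\end{enumerate}
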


\begin{prop} \label{DominatedSubsolution}
	Let $u : \mathbb{R} \times M \to \mathbb{R}$ be a locally Lipschitz function that is dominated by $L$. Then, $u$ is a very weak subsolution of the Hamilton--Jacobi equation, i.e., at every point $(t,q)$ where $u$ is differentiable, we have
	\begin{equation}
		\partial_t u + H(t,q,d_qu) \leq 0
	\end{equation}
\end{prop}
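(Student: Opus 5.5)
Fix a point $(t_0,q_0)$ where $u$ is differentiable, and write $(e,p)=(\partial_t u(t_0,q_0),d_qu(t_0,q_0))$. The idea is to test the domination inequality against short straight curves issued from $q_0$ with an arbitrary velocity, differentiate as the time interval shrinks, and then optimize over the velocity using the Legendre duality \eqref{HamLag}.

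\textbf{Step 1: test curves.} Fix $v\in T_{q_0}M$ and $\varepsilon>0$ small. Let $\gamma_\varepsilon:[t_0,t_0+\varepsilon]\to M$ be the $C^1$ curve $\gamma_\varepsilon(s)=\exp_{q_0}((s-t_0)v)$, or the straight line in a chart around $q_0$. Domination gives $\delta(u,\gamma_\varepsilon,[t_0,t_0+\varepsilon])\geq 0$, that is,
\[
u(t_0+\varepsilon,\gamma_\varepsilon(t_0+\varepsilon))-u(t_0,q_0)\leq \int_{t_0}^{t_0+\varepsilon} L(s,\gamma_\varepsilon(s),\dot\gamma_\varepsilon(s))\,ds .
\]

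\textbf{Step 2: divide by $\varepsilon$ and let $\varepsilon\to0$.} Since $u$ is differentiable at $(t_0,q_0)$ and $\gamma_\varepsilon(t_0+\varepsilon)=q_0+\varepsilon v+o(\varepsilon)$ in coordinates, the left-hand side is $\varepsilon(e+p.v)+o(\varepsilon)$. The integrand on the right is continuous in $s$, because $L$ is continuous and the curve is smooth, so the right-hand side divided by $\varepsilon$ tends to $L(t_0,q_0,v)$. We obtain
\[
e+p.v\leq L(t_0,q_0,v)\quad\text{for all } v\in T_{q_0}M .
\]

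\textbf{Step 3: take the supremum over $v$.} Rewriting the inequality as $e+\big(p.v-L(t_0,q_0,v)\big)\leq 0$ and taking the supremum over $v$, the second expression in \eqref{HamLag} gives $e+H(t_0,q_0,p)\leq 0$. This is exactly the claimed inequality $\partial_t u+H(t,q,d_qu)\leq 0$. Note that $L$ is finite everywhere because $H$ is superlinear.

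\textbf{Main obstacle.} The only delicate point is justifying the limit in Step 2. This needs pointwise differentiability of $u$ at the fixed point, which is assumed, together with a first-order expansion of the endpoint of $\gamma_\varepsilon$ in a chart; the local Lipschitz bound on $u$ absorbs the $o(\varepsilon)$ error in the endpoint. Everything else is a direct consequence of Fenchel duality, so I expect the argument to be short.
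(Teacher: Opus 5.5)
Your proposal is correct and follows the paper's argument. The paper likewise tests the domination inequality on a $C^1$ curve with $(\gamma(t),\dot\gamma(t))=(q,v)$, divides by the length of the time interval and lets it shrink to obtain $\partial_t u + d_qu.v \le L(t,q,v)$, and then takes the supremum over $v$ using \eqref{HamLag}. The appeal to the Lipschitz bound in Step 2 is superfluous: differentiability of $u$ at the point, combined with the chain rule along $s\mapsto(s,\gamma(s))$, already absorbs the $o(\varepsilon)$ error in the endpoint.
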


\begin{rem}
	In fact, Proposition \ref{DominatedSubsolution} is an equivalence: a locally Lipschitz function $u$ is dominated by $L$ if and only if it is a very weak subsolution of the Hamilton--Jacobi equation.
\end{rem}

\begin{proof}
	Let $(t,q)$ be a point of differentiability of $u$. Fix $v \in T_qM$, and let $\gamma : [t,t+1] \to M$ be a smooth curve such that $(\gamma(t),\dot{\gamma}(t))=(q,v)$. By the domination inequality, for every $s \in (t,t+1]$, we have
	\begin{align*}
		\frac{u(s,\gamma(s))-u(t,\gamma(t))}{s-t} \leq \frac{1}{s-t} \int_t^s L(\tau,\gamma(\tau),\dot{\gamma}(\tau)) \; d\tau
	\end{align*}
	Letting $s \to t$ yields
	\begin{equation*}
		\frac{d}{dt}\big( u(t,\gamma(t)) \big) = \partial_t u(t,q) + d_qu(t,q).v \leq L(t,q,v)
	\end{equation*}
	Taking the supremum over $v \in T_qM$ and using the conjugation formula \eqref{HamLag}, we obtain the desired inequality 
	\begin{align*}
		\partial_t u(t,q) + H(t,q,d_qu(t,q)) \leq 0
	\end{align*}
\end{proof}

\subsection{Calibration}

We can pay closer attention to curves that achieve equality in this domination, or in other words curves having vanishing calibration defect.

\begin{defi}
	Let $u : \mathbb{R} \times M \to \mathbb{R}$ be a dominated map. A $C^1$ curve $\gamma : I \to M$ on $M$ is \textit{calibrated} by $u$ if for all $[a,b] \subset I$, $\delta(u,\gamma,[a,b]) = 0$.
\end{defi}

\begin{rem}
	\begin{enumerate}
		\item We know from Proposition \ref{Domination} that for all times $t<t'$ and points $q$ and $q'$ of $M$,
		\begin{equation}
			u(t',q') - u(t,q) \leq \inf \left\{ \int_t^{t'} L(\tau,\gamma(\tau),\dot{\gamma}(\tau)) \; d\tau \; \left| \;
			\begin{matrix}
				\gamma : & [t,t'] \to M \\
				& t \mapsto q \\
				& t' \mapsto q'
				\end{matrix} \right.	\right\}
		\end{equation}
		Hence, if $\gamma : [a,b] \to M$ is $u$-calibrated, then it is minimizing for the Lagrangian $L$.
		\item For dominated maps, the non-negativity of the defect of calibration implies that this defect is non-decreasing with respect to the size of the interval. As a consequence, if a curve $\gamma$ is calibrated on an interval $[a,b]$, then it is calibrated on every subinterval $[s,t] \subset [a,b]$.
	\end{enumerate}
	
\end{rem}

We now state the theorem from weak KAM theory that detects the regularity of dominated functions along their calibrated curves.

\begin{theo}[\cite{fathi2008weak}] \label{Fathi}
	\begin{enumerate}
		\item If $\gamma : I \to M$ is a $u$-calibrated curve, then for every time $t$ in the interior of $I$, $u$ is differentiable at $(t, \gamma(t))$ and
		\begin{equation}
			d_qu(t,\gamma(t)) = \partial_vL \big(t,\gamma(t), \dot{\gamma}(t) \big) \quad \text{and} \quad \partial_t u(t,\gamma(t)) + H(t,\gamma(t),d_qu(t,\gamma(t))) =0
		\end{equation}

		\item Let $A_{\varepsilon,u}$ be the set of points $(t,q) \in \mathbb{R} \times M$ such that there exists a curve $\gamma_{t,q} : (t-\varepsilon, t+\varepsilon) \to M$ with $\gamma_{t,q}(t) = q$ which is $u$-calibrated. Then, the map 
		\begin{align*}
			A_{\varepsilon,u} &\longrightarrow T^*(\mathbb{R} \times M) \\
			(t,q) &\longmapsto du (t,q) = \big(t,\partial_tu(t,q), q, d_qu(t,q)\big)
		\end{align*}
		is locally Lipschitz. 
	\end{enumerate}		
\end{theo}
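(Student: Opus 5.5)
We prove the two items in turn. Throughout we use only that $u$ is dominated (Proposition \ref{Domination}) and locally Lipschitz, together with the standard properties of Tonelli minimizers: calibrated curves minimize the action, so they are $C^2$ extremals.

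\textbf{Item 1.} Fix $t$ in the interior of $I$ and set $q=\gamma(t)$, $v=\dot\gamma(t)$, $p=\partial_vL(t,q,v)$. The goal is to trap $u$ near $(t,q)$ between two $C^1$ functions that agree at $(t,q)$ and have the same differential there.

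For the upper bound, take a point $(s,q')$ with $s$ close to $t$ and $s<t+\varepsilon$. Join $(t-\varepsilon,\gamma(t-\varepsilon))$ to $(s,q')$ by a small deformation $\gamma_{s,q'}$ of $\gamma$; for instance, add to $\gamma$ a correction localised in time and depending smoothly on $(s,q')$. Domination on $[t-\varepsilon,s]$, combined with calibration of $\gamma$ on $[t-\varepsilon,t]$, gives
\[u(s,q')\le u(t,q)+\Big(\int_{t-\varepsilon}^{s}L(\gamma_{s,q'})-\int_{t-\varepsilon}^{t}L(\gamma)\Big)=:u(t,q)+\Phi^+(s,q').\]
Here $\Phi^+$ is $C^1$ and vanishes at $(t,q)$. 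By the first variation formula its differential there is $(-H(t,q,p),\,p)$, since $L(t,q,v)-p\cdot v=-H(t,q,p)$ by Fenchel equality.

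For the lower bound, argue symmetrically using the forward piece $[t,t+\varepsilon]$: domination from $(s,q')$ to $(t+\varepsilon,\gamma(t+\varepsilon))$ gives $u(s,q')\ge u(t,q)+\Phi^-(s,q')$, with $\Phi^-$ a $C^1$ function having the same value and differential at $(t,q)$.

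Since $\Phi^-\le u-u(t,q)\le\Phi^+$ and the two bounds touch to first order, $u$ is differentiable at $(t,q)$ with $du(t,q)=(-H(t,q,p),\,p)$. This is exactly the claim $d_qu=\partial_vL$ and $\partial_tu+H=0$.

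\textbf{Item 2.} The plan is to upgrade these one-sided bounds to \emph{uniform} $C^{1,1}$ bounds, following Fathi's proof. Fix a compact subset of $A_{\varepsilon,u}$. The calibrated curves through its points minimize the action on intervals of length $2\varepsilon$ with endpoints in a compact set, so Tonelli a priori compactness bounds their velocities uniformly. Consequently the extremals, and hence the comparison functions $\Phi^\pm$ built from them (with a fixed, uniform choice of the correcting deformation), have second derivatives bounded by a constant $K$.

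This gives a uniform two-sided estimate: at each point $z=(t,q)$ of the compact set and each nearby $z'$,
\[|u(z')-u(z)-du(z)(z'-z)|\le K|z'-z|^2.\]
Writing this inequality at $z_1$ and at $z_2$, evaluating at a point $w$ at distance of order $|z_1-z_2|$ from both, and comparing the two expansions yields $|du(z_1)-du(z_2)|\le C|z_1-z_2|$. This is Fathi's standard lemma that a uniform quadratic pinching implies a Lipschitz differential.

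\textbf{Main obstacle.} The delicate point is the uniformity in Item 2. One must construct the variations $\gamma_{s,q'}$ with $C^2$ control that is uniform over all calibrated curves, which requires the velocity bound from Tonelli minimizers. One must also handle the time dependence, which is done by working in local charts of $\mathbb{R}\times M$. I would first settle the uniform velocity bound via superlinearity and the Lipschitz bound on $u$ coming from Proposition \ref{Graphext}, then construct the local deformations in coordinates.
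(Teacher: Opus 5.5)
Your proposal is correct and is essentially the standard argument from Fathi's book, which the paper cites without reproducing. You squeeze $u$ between comparison functions obtained by varying the calibrated curve on $[t-\varepsilon,t]$ and on $[t,t+\varepsilon]$, and read off the common differential $(-H(t,q,p),p)$ from the first variation formula; you then get the Lipschitz property from a uniform quadratic pinching together with the lemma that such pinching forces a Lipschitz differential. One bookkeeping correction: your velocity bound via the Lipschitz constant of $u$ needs $u$ to be locally Lipschitz, and for the bare-brane subsolutions to which the paper applies the theorem this comes from Proposition \ref{GSSubsolution} rather than Proposition \ref{Graphext}; for a general dominated $u$, use instead the Tonelli a priori compactness of minimizers, which you also mention.
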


\section{The Brane Distance} \label{section:BraneDist}

In this section, we study the brane distance introduced in Definition \ref{BraneDef}. We prove several convergence properties, as well as its behaviour under symplectomorphisms and variational bare branes. We conclude by extending the notion of variational solutions to bare Lagrangian branes.

\subsection{Basic Properties}

The formula of $d_B : \mathcal{B}_c(T^*M) \times \mathcal{B}_c(T^*M) \to \mathbb{R}$ on the set of bare branes is written as follows. For $(\mathcal{K},h),(\mathcal{K}',h')\in \mathcal{B}_c(T^*M)$ and $x\in \mathcal{K}$, we first set
\begin{equation} \label{eq:BraneDistanceFormula0}
    \delta_{(\mathcal{K},h)}(x,(\mathcal{K}',h')) \coloneq \inf_{x' \in \mathcal{K}'} \Big\{ | h(x) - h'(x') | + d(x,x') \Big\}
\end{equation}
and
\begin{equation}
    \delta_{(\mathcal{K},h)}(\mathcal{K}',h') \coloneq \sup_{x \in \mathcal{K}} \delta_{(\mathcal{K},h)}(x,(\mathcal{K}',h')) = \sup_{x \in \mathcal{K}} \inf_{x' \in \mathcal{K}'} \Big\{ | h(x) - h'(x') | + d(x,x') \Big\}.
\end{equation}
Finally, we have
\begin{equation} \label{BraneDistanceFormula}
	\begin{split}
		d_B \big((\mathcal{K},h),(\mathcal{K}',h')\big)  &= \max  \Big\{ \delta_{(\mathcal{K},h)}(\mathcal{K}',h'), \delta_{(\mathcal{K}',h')}(\mathcal{K},h) \Big\}
	\end{split}
\end{equation}

When the maps $h$ and $h'$ are clear from the context, we may, for simplicity, write $d_B(\mathcal{K},\mathcal{K}')$, $\delta_\mathcal{K}(\mathcal{K}')$ and $\delta_\mathcal{K}(x,\mathcal{K}')$ instead of $d_B\big((\mathcal{K},h),(\mathcal{K}',h')\big)$, $\delta_{(\mathcal{K},h)}(\mathcal{K}',h')$ and $\delta_{(\mathcal{K},h)}(x,(\mathcal{K}',h'))$.

We recall that $\mathfrak{B}_c(T^*M)$ denotes the quotient of $\mathcal{B}_c(T^*M)$ by the action of $\R$ by addition of a constant, and that $d_B$ induces a map $d_{\mathfrak{B}}:\mathfrak{B}_c(T^*M)\times \mathfrak{B}_c(T^*M)\to \R$ defined by (\ref{distance on quotient}).

\begin{prop}
	The brane distance $d_{\mathfrak{B}}$ defines a metric on $\mathfrak{B}_c(T^*M)$.
\end{prop}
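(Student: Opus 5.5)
The plan is to check the metric axioms directly from the definition \eqref{distance on quotient}. The first step is to see that $d_{\mathfrak{B}}$ is well defined on classes. For constants $a,b$, shifting $h$ by $a$ and $h'$ by $b$ changes the infimum over $c$ only by reparametrising $c\mapsto c+a-b$. This uses the identity
\[
d_B((\mathcal K,h+c),(\mathcal K',h'+b)) = d_B((\mathcal K,h+c-b),(\mathcal K',h')),
\]
which holds because translating both graphs vertically by $-b$ is an isometry of $(J^1M,d_{J^1M})$. The same translation invariance shows the quantity is symmetric: $\inf_c d_B((\mathcal K,h+c),(\mathcal K',h')) = \inf_c d_B((\mathcal K',h'-c),(\mathcal K,h))$. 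It is finite and non-negative because the graphs are non-empty and compact, so $d_H$ is finite.

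For the triangle inequality, I take three classes $[h],[h'],[h'']$ and any $c_1,c_2$. By translation invariance and the triangle inequality for $d_B$,
\[
d_B((\mathcal K,h+c_1+c_2),(\mathcal K'',h'')) \le d_B((\mathcal K,h+c_1+c_2),(\mathcal K',h'+c_2)) + d_B((\mathcal K',h'+c_2),(\mathcal K'',h'')).
\]
The first term on the right equals $d_B((\mathcal K,h+c_1),(\mathcal K',h'))$. Taking infima over $c_1$ and then over $c_2$ gives the inequality.

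The only real point, and the step I expect to be the main obstacle, is definiteness: $d_{\mathfrak{B}}=0$ should imply $\mathcal K=\mathcal K'$ and $h'=h+c$. The difficulty is that the infimum over $c\in\R$ need not a priori be attained. I will show it is attained by restricting $c$ to a compact set. Since $h$ and $h'$ are continuous on compact sets, they are bounded. The estimate
\[
d_B((\mathcal K,h+c),(\mathcal K',h')) \ge |c| - \|h\|_\infty - \|h'\|_\infty,
\]
which follows from \eqref{eq:BraneDistanceFormula0}, lets me restrict the infimum to a compact interval of $c$. The map $c\mapsto d_B((\mathcal K,h+c),(\mathcal K',h'))$ is $1$-Lipschitz, hence continuous, so the infimum is attained at some $c_0$. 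Then $d_B((\mathcal K,h+c_0),(\mathcal K',h'))=0$. Because $d_B$ is a genuine metric (it is the Hausdorff distance between the closed graphs, and the graph determines the brane), this gives $\Gamma_{(\mathcal K,h+c_0)}=\Gamma_{(\mathcal K',h')}$. Projecting to $T^*M$ yields $\mathcal K=\mathcal K'$ and $h'=h+c_0$, that is, $[h']=[h]$.
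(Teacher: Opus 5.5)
Your proof is correct and essentially matches the paper's. Definiteness rests on the same attainment argument, namely the coercivity bound $|c|-\Vert h\Vert_\infty-\Vert h'\Vert_\infty$ together with continuity in $c$, which is exactly the paper's Lemma \ref{BraneModuloLem}. The only small differences are that you prove the triangle inequality directly with arbitrary $c_1,c_2$ and translation invariance instead of using the attained minimizers, and that you spell out well-definedness on classes and symmetry, which the paper treats as immediate.
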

\begin{proof}
	Non-negativity and symmetry are immediate from the fact that $d_B$ is a metric. Finiteness comes from the finiteness of $d_B$. In order to show definiteness and the triangle inequality, we need the following lemma.
	\begin{lem} \label{BraneModuloLem}
		Let $(\mathcal{K},[h])$ and $(\mathcal{K}',[h'])$ be two compact bare branes in $\mathfrak{B}_c(T^*M)$. Then, the infimum in $d_{\mathfrak{B}} \big((\mathcal{K},[h]),(\mathcal{K}',[h'])\big)$ is realized i.e. there exists a real constant $c \in \mathbb{R}$ such that 
		\begin{equation}
			d_{\mathfrak{B}} \big((\mathcal{K},[h]),(\mathcal{K}',[h'])\big) = d_B \big((\mathcal{K},h+c),(\mathcal{K}',h')\big)
		\end{equation}
	\end{lem}
	\begin{proof}
		 We prove that 
		\begin{align*}
			\lim_{|c| \to +\infty} d_B \big((\mathcal{K},h+c),(\mathcal{K}',h')\big) = + \infty
		\end{align*}		
		Indeed, for every pair of points $(x,x')$ in $\mathcal{K} \times \mathcal{K}'$, we have
		\begin{align*}
			| h(x)+c - h'(x') | + d(x,x') &\geq |c| - | h(x) - h'(x')| + d(x,x') \geq |c| - |h(x)| - |h'(x')| + 0 \\
			&\geq |c| - \Vert h \Vert_\infty - \Vert h' \Vert_\infty  \longrightarrow	+ \infty \quad \text{as } |c| \to +\infty
		\end{align*}
		where the bound is uniform on the pair $(x,x')$. This yields the claimed limit. Hence, by continuity of the map $c \mapsto d_B \big((\mathcal{K},h+c),(\mathcal{K}',h')\big)$, its minimum is realized at some finite real constant $c \in  \mathbb{R}$.
	\end{proof}
	
	\textit{Definiteness.} Assume that $d_{\mathfrak{B}} \big((\mathcal{K},[h]),(\mathcal{K}',[h'])\big)=0$. Then, there exists a constant $c \in \mathbb{R}$ such that $d_B \big((\mathcal{K},h+c),(\mathcal{K}',h')\big)=0$. Thus, by definiteness of the distance $d_B$, we obtain the equalities $\mathcal{K} = \mathcal{K}'$ and $h+c = h'$ so that $[h] = [h']$.
	
	\textit{Triangle Inequality.} Consider three compact bare branes $(\mathcal{K},[h])$, $(\mathcal{K}',[h'])$ and $(\mathcal{K}'',[h''])$ in $\mathfrak{B}_c(T^*M)$. Let $c$ and $c''$ be two real constants realizing the infima in $d_{\mathfrak{B}} \big((\mathcal{K},[h]),(\mathcal{K}',[h'])\big)$ and $d_{\mathfrak{B}} \big((\mathcal{K}',[h']),(\mathcal{K}'',[h''])\big)$ as follows
	\begin{align*}
		d_{\mathfrak{B}} \big((\mathcal{K},[h]),(\mathcal{K}',[h'])\big) &= 	d_B \big((\mathcal{K},h+c),(\mathcal{K}',h')\big)	\\
		d_{\mathfrak{B}} \big((\mathcal{K}',[h']),(\mathcal{K}'',[h''])\big) &= d_B \big((\mathcal{K}',h'),(\mathcal{K}'',h''+c'')\big)
	\end{align*}
	Then, using the triangle inequality on $d_B$, we obtain
	\begin{align*}
		d_{\mathfrak{B}} \big((\mathcal{K},[h]),(\mathcal{K}'',[h''])\big) &\leq d_B \big((\mathcal{K},h+c),(\mathcal{K}'',h''+c'')\big) \\
		&\leq d_B \big((\mathcal{K},h+c),(\mathcal{K}',h')\big) + d_B \big((\mathcal{K}',h'),(\mathcal{K}'',h''+c'') \big) \\
		&= d_{\mathfrak{B}} \big((\mathcal{K},[h]),(\mathcal{K}',[h'])\big) + d_{\mathfrak{B}} \big((\mathcal{K}',[h']),(\mathcal{K}'',[h''])\big)
	\end{align*}
\end{proof}

We list some properties of the brane distance.
\begin{prop} \label{BraneHausdorffProp}
	Let $(\mathcal{K}^n,h^n)$ and $(\mathcal{K},h)$ be compact bare branes of $\mathcal{B}_c(T^*M)$.
    If $(\mathcal{K}^n,h^n)$ brane-converges to $(\mathcal{K},h)$, then $\mathcal{K}^n$ converges to $\mathcal{K}$ in the Hausdorff topology. More precisely, if we denote by $d_H$ the Hausdorff distance on the compact sets of $T^*M$, then we have the inequality $d_H \leq d_B$.
\end{prop}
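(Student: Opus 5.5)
The inequality $d_H(\mathcal{K},\mathcal{K}')\leq d_B\big((\mathcal{K},h),(\mathcal{K}',h')\big)$ for any two compact bare branes immediately gives the convergence statement, so the plan is to prove this pointwise comparison directly from the explicit formula \eqref{BraneDistanceFormula}. The idea is that the brane distance only adds the non-negative term $|h(x)-h'(x')|$ to the Riemannian distance $d(x,x')$. Dropping this term can therefore only decrease every quantity in the formula.

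First, fix $x\in\mathcal{K}$. For every $x'\in\mathcal{K}'$ we have $d(x,x')\leq |h(x)-h'(x')|+d(x,x')$. Taking the infimum over $x'\in\mathcal{K}'$ gives $d(x,\mathcal{K}')\leq \delta_{(\mathcal{K},h)}(x,(\mathcal{K}',h'))$, with $\delta$ as in \eqref{eq:BraneDistanceFormula0}. Taking the supremum over $x\in\mathcal{K}$ then gives $\sup_{x\in\mathcal{K}}d(x,\mathcal{K}')\leq \delta_{(\mathcal{K},h)}(\mathcal{K}',h')$. Exchanging the roles of the two branes gives the symmetric bound $\sup_{x'\in\mathcal{K}'}d(x',\mathcal{K})\leq \delta_{(\mathcal{K}',h')}(\mathcal{K},h)$.

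Taking the maximum of the two bounds and comparing with \eqref{Hausdorff} and \eqref{BraneDistanceFormula} yields $d_H(\mathcal{K},\mathcal{K}')\leq d_B\big((\mathcal{K},h),(\mathcal{K}',h')\big)$. Applying this to $(\mathcal{K}^n,h^n)$ and $(\mathcal{K},h)$, the hypothesis $d_B\to 0$ forces $d_H(\mathcal{K}^n,\mathcal{K})\to0$.

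There is essentially no obstacle here. The one point to watch is that the metric on $J^1M$ in \eqref{Formula:BraneDistanceHausdorff} is the sum $d(x,x')+|z-z'|$, not, for example, a maximum, so that the Hausdorff distance of the graphs is exactly the expression in \eqref{BraneDistanceFormula}. With that formula the comparison above is literally termwise.
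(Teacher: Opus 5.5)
Your proof is correct and follows the paper's argument. Like the paper, you drop the non-negative term $|h(x)-h'(x')|$ inside the infimum defining $\delta_{(\mathcal{K},h)}$, take the supremum over $x\in\mathcal{K}$, symmetrize, and compare with the definition of $d_H$.
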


\begin{proof}
This is due to the inequality
	\begin{align*}
		\sup_{x \in \mathcal{K}} \inf_{x' \in \mathcal{K}'} d(x,x') \leq \sup_{x \in \mathcal{K}} \inf_{x' \in \mathcal{K}'} \Big\{ |h(x) - h'(x')| + d(x,x') \Big\} = \delta_{(\mathcal{K},h)}(\mathcal{K}',h')
	\end{align*}
	and to the Definition \eqref{Hausdorff} of the Hausdorff distance.	
\end{proof}

A useful pointwise version of the previous proposition is the following
\begin{prop} \label{BranePointwiseConv}
	Let $(\mathcal{K}^n,h^n)$ be a sequence of bare branes that brane-converges to $(\mathcal{K},h)$ in $B_c(T^*M)$. If $x^n \in \mathcal{K}^n$ is a sequence of points converging to a point $x$ of $\mathcal{K}$, then $\lim_n h^n(x^n) = h(x)$.
\end{prop}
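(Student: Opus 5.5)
The argument is a direct $\varepsilon$-bookkeeping using the formula \eqref{BraneDistanceFormula} for $d_B$ together with the continuity of $h$ on the compact set $\mathcal{K}$; no deeper input is needed. Set $\varepsilon_n = d_B\big((\mathcal{K}^n,h^n),(\mathcal{K},h)\big) \to 0$. Since $x^n \in \mathcal{K}^n$, we have
\[
\delta_{(\mathcal{K}^n,h^n)}\big(x^n,(\mathcal{K},h)\big) \leq \delta_{(\mathcal{K}^n,h^n)}(\mathcal{K},h) \leq \varepsilon_n .
\]
Hence, by the definition \eqref{eq:BraneDistanceFormula0} as an infimum, for each $n$ there is a point $y^n \in \mathcal{K}$ with
\[
|h^n(x^n) - h(y^n)| + d(x^n,y^n) \leq 2\varepsilon_n .
\]
Here I use $2\varepsilon_n$ rather than $\varepsilon_n$ to avoid assuming the infimum is attained, although it is attained by compactness of $\mathcal{K}$ and continuity of $h$. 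When $\varepsilon_n = 0$, take the tolerance to be $1/n$ instead.

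From this inequality, $d(x^n,y^n) \leq 2\varepsilon_n \to 0$. Since $x^n \to x$, it follows that $y^n \to x$ in $T^*M$. All the $y^n$ lie in $\mathcal{K}$, which is closed and contains $x$, and $h$ is continuous on $\mathcal{K}$. Therefore $h(y^n) \to h(x)$.

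It then remains to write
\[
|h^n(x^n) - h(x)| \leq |h^n(x^n) - h(y^n)| + |h(y^n) - h(x)| \leq 2\varepsilon_n + |h(y^n)-h(x)| \longrightarrow 0 ,
\]
which gives $\lim_n h^n(x^n) = h(x)$.

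The only point requiring care is to use the one-sided quantity $\delta_{(\mathcal{K}^n,h^n)}(\mathcal{K},h)$, that is, the supremum over the points of $\mathcal{K}^n$ rather than over those of $\mathcal{K}$. This is the direction that lets us move from the given points $x^n$ on the approximating branes to nearby points of the limit set, where the continuity of $h$ can be applied. The hypothesis that $x$ belongs to $\mathcal{K}$ is needed only so that $h(x)$ makes sense; it also follows automatically from $y^n \to x$ and the closedness of $\mathcal{K}$. I do not expect a genuine obstacle here.
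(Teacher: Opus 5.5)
Your proof is correct and follows the same route as the paper: choose $y^n \in \mathcal{K}$ that (nearly) realizes the infimum in $\delta_{(\mathcal{K}^n,h^n)}(x^n,(\mathcal{K},h))$, deduce $y^n \to x$, and conclude by continuity of $h$ and the triangle inequality. The only difference is that the paper takes $y^n$ to attain the infimum exactly, which is legitimate by compactness of $\mathcal{K}$, whereas your approximate choice avoids needing that.
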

\begin{proof}
	Let $y_n$ be a point in $\mathcal{K}$ realizing the infimum in $\delta_{(\mathcal{K}^n,h^n)}(x^n,(\mathcal{K},h))$. Then
	\begin{align*}
		|h^n(x^n) - h(x) | \leq |h^n(x^n) - h(y_n)| + |h(y_n) - h(x)| \leq \delta_{(\mathcal{K}^n,h^n)}(\mathcal{K},h) + |h(y_n) - h(x)|
	\end{align*}
	Moreover, we have
	\begin{align*}
		d(y_n,x) \leq d(y_n,x^n) + d(x^n,x) \leq \delta_{(\mathcal{K}^n,h^n)}(\mathcal{K},h) + d(x^n,x) \longrightarrow 0 \quad \text{as } n \to +\infty
	\end{align*}
	Hence, by continuity of the map $h$, we get that $\lim_n h(y_n) = h(x)$ so that $\lim_n h^n(x^n) = h(x)$.
\end{proof}

\begin{prop} \label{prop:Connectedness}
	Every bare Lagrangian brane $(\cL,h)$ in $\overline{\mathcal{LB}}(T^*M,\omega)$ is connected.
\end{prop}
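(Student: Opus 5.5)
The plan is to prove connectedness of the graph $\Gamma_{(\cL,h)}$ in $J^1M$ and then project. The projection $J^1M\to T^*M$ is continuous and maps $\Gamma_{(\cL,h)}$ onto $\cL$, so it suffices to show that the graph is connected. Let $(\cL_n,h_n)$ be a sequence of compact Lagrangian branes brane-converging to $(\cL,h)$. By Definition \ref{BraneDef}, this means that $\Gamma_{(\cL_n,h_n)}\to\Gamma_{(\cL,h)}$ in the Hausdorff topology of $J^1M$.

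The first step is to check that each $\Gamma_{(\cL_n,h_n)}$ is connected. Here $\cL_n$ is a closed exact Lagrangian submanifold of $T^*M$, and I would use that it is connected. This could be taken as part of the convention that Lagrangian branes are connected. Alternatively, it follows from Theorem \ref{Thm:equivZeroSec}: $HF_*(\cL_n,T^*_qM)\cong\Z/2\Z$ is one-dimensional, while a disjoint union of two closed exact Lagrangians would give a Floer homology that is the direct sum of two nonzero groups, since each component is itself a closed exact Lagrangian with nonzero Floer homology with the fibre. The graph $\Gamma_{(\cL_n,h_n)}$ is the image of $\cL_n$ under the continuous map $x\mapsto(x,h_n(x))$, so it is connected.

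The second step is the standard fact that a Hausdorff limit of connected compact sets in a metric space is connected. Suppose $\Gamma=\Gamma_{(\cL,h)}=A\sqcup B$ with $A,B$ nonempty, disjoint and closed. Both are compact, so $\delta=d(A,B)>0$. For large $n$, $\Gamma_n=\Gamma_{(\cL_n,h_n)}$ lies in the $\delta/3$-neighbourhood of $\Gamma$, which is the disjoint union of the open $\delta/3$-neighbourhoods of $A$ and of $B$. Since $\Gamma$ also lies in the $\delta/3$-neighbourhood of $\Gamma_n$, the set $\Gamma_n$ meets both of these neighbourhoods. This contradicts the connectedness of $\Gamma_n$. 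Compactness of $\Gamma$ holds because $\cL$ is compact and $h$ is continuous.

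The main obstacle is the connectedness of the approximating $\cL_n$, which depends on how the paper defines Lagrangian submanifolds. I would first try the Floer-homology argument based on Theorem \ref{Thm:equivZeroSec}. If that argument needs care, the fallback is to restrict to a component: connected components of closed exact Lagrangians are again closed exact Lagrangians, and Theorem \ref{Thm:equivZeroSec} then forces uniqueness of the component.
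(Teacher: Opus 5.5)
Your overall structure matches the paper's. You show each approximating $\cL_n$ is connected, then use that a Hausdorff limit of connected compact sets is connected. Running the limit argument on the graphs in $J^1M$ and projecting is equivalent to the paper's route, which applies it directly to the supports in $T^*M$ via Proposition \ref{BraneHausdorffProp}. Your limit argument itself is correct.

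The step you flag is where the proposal needs repair.

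\emph{The convention option.} The paper's definition of a Lagrangian submanifold does not include connectedness; this proposition is where connectedness is established. So you cannot take it as a convention.

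\emph{Your main Floer argument is circular in substance.} By the additivity over components that you invoke, $HF_*(\cL_n,T^*_qM)\cong(\Z/2\Z)^k$, where $k$ is the number of components of $\cL_n$. So asserting that this group is one-dimensional for a possibly disconnected $\cL_n$ is the same as asserting that $\cL_n$ is connected. The results behind Theorem \ref{Thm:equivZeroSec} are formulated for connected Lagrangians. Applying its first item only to the individual components gives $k$ nonzero summands but no contradiction.

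\emph{The non-circular argument, which is the paper's.} Use the second item of Theorem \ref{Thm:equivZeroSec}, i.e.\ the exact Lagrangian intersection theorem. Two distinct components $\cL'$ and $\cL''$ of $\cL_n$ are connected closed exact Lagrangians. Hence $HF_*(\cL',\cL'')\cong H_*(M;\Z/2\Z)\neq 0$, which forces $\cL'\cap\cL''\neq\emptyset$ and contradicts their disjointness. Your fallback (restricting to components and letting the theorem force uniqueness) is correct only if it means this argument.
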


\begin{proof}
	Let $(\cL_n,h_n)$ be a sequence of compact Lagrangian branes that brane-converges to $(\cL,h)$. Each $\cL_n$ is connected. Indeed, otherwise two of its connected components would be disjoint compact exact Lagrangian submanifolds of $T^*M$, contradicting the exact Lagrangian intersection theorem \cite{MR2565051} that asserts that two distinct exact Lagrangian submanifolds do intersect. Since we know from Proposition \ref{BraneHausdorffProp} that brane-convergence implies Hausdorff convergence of the supports, we deduce that $\cL$ is connected.
\end{proof}

\subsection{Operations on Branes}

\subsubsection{Image under an Exact Symplectomorphism}

\begin{prop} \label{BraneInvariance}
	Let $(\mathcal{K}^n,h^n)$ and $(\mathcal{K},h)$ be compact bare branes in $\mathcal{B}_c(T^*M)$, and let $\varphi \in \sympex (T^*M,\lambda)$ be an exact symplectic diffeomorphism. Let $f : T^*M \to \R$ be such that $\varphi^* \lambda - \lambda = df$. Let $l^n : \varphi(\mathcal{K}^n) \to \mathbb{R}$ and $l : \varphi(\mathcal{K}) \to \mathbb{R}$ be the maps defined by
	\begin{equation}
		l^n = (h^n + f) \circ \varphi^{-1} \quad \text{and} \quad l = (h + f) \circ \varphi^{-1} 
	\end{equation}
    
    \begin{enumerate}
        \item \label{item:BraneInvariance1} If the sequence $(\mathcal{K}^n,h^n)$ brane-converges to $(\mathcal{K},h)$ in $\mathcal{B}_c(T^*M)$, then the sequence $(\varphi(\mathcal{K}^n),l^n)$ brane-converges to $(\varphi(\mathcal{K}),l)$. The same also holds in $\mathfrak{B}_c(T^*M)$.
        \item \label{item:BraneInvariance2} If $(\mathcal{K},h)$ is a Lagrangian brane, then $(\varphi(\mathcal{K}),l)$ is a Lagrangian brane.
        \item \label{item:BraneInvariance3} If $(\mathcal{K},h)$ is a bare Lagrangian brane, then $(\varphi(\mathcal{K}),l)$ is a bare Lagrangian brane.
        
    \end{enumerate}          
\end{prop}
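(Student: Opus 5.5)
The plan is to prove the three items in order. Item \ref{item:BraneInvariance3} follows directly from the first two. Throughout, I write $\Phi : J^1M \to J^1M$ for the map $\Phi(x,z) = (\varphi(x), z + f(x))$, so that $\Gamma_{(\varphi(\mathcal{K}),l)} = \Phi(\Gamma_{(\mathcal{K},h)})$. Indeed, a point $(x,h(x))$ is sent to $(\varphi(x), h(x)+f(x)) = (\varphi(x), l(\varphi(x)))$. Brane convergence is Hausdorff convergence of these graphs in $J^1M$, so everything reduces to how $\Phi$ acts on compact subsets of $J^1M$.

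For Item \ref{item:BraneInvariance1}, suppose $(\mathcal{K}^n,h^n)\to(\mathcal{K},h)$. By Proposition \ref{BraneHausdorffProp}, the sets $\mathcal{K}^n$ converge to $\mathcal{K}$ in the Hausdorff sense. Hence for $n$ large all of them lie in a fixed compact neighbourhood $C$ of $\mathcal{K}$, for instance the closed $1$-neighbourhood, which is compact because the metric on $T^*M$ is induced by a Riemannian metric on the closed manifold $M$. On $C$, the map $\varphi$ is uniformly continuous (in fact Lipschitz) and $f$ is uniformly continuous, as both are $C^1$. So $\Phi$ is uniformly continuous on $C\times\R$ for the metric $d_{J^1M}$, with modulus $\omega_C$ say, since $d_{J^1M}(\Phi(x,z),\Phi(x',z')) \leq d(\varphi(x),\varphi(x')) + |f(x)-f(x')| + |z-z'|$. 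A uniformly continuous map sends Hausdorff-close compact sets to Hausdorff-close compact sets: each point of $\Phi(A)$ lies within $\omega_C(d_H(A,B))$ of $\Phi(B)$, and vice versa. Applying this with $A = \Gamma_{(\mathcal{K}^n,h^n)}$ and $B = \Gamma_{(\mathcal{K},h)}$ gives $d_B\big((\varphi(\mathcal{K}^n),l^n),(\varphi(\mathcal{K}),l)\big)\to 0$. For the quotient $\mathfrak{B}_c$, I would use Lemma \ref{BraneModuloLem} to pick constants $c_n$ with $d_B((\mathcal{K}^n,h^n+c_n),(\mathcal{K},h))\to0$, and observe that $\Phi$ commutes with shifts in $z$: the image of $h^n+c_n$ is $l^n+c_n$. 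Applying the previous argument to $(\mathcal{K}^n,h^n+c_n)$ then gives the result. Continuity of $l$ holds because $\varphi^{-1}$ is continuous.

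For Item \ref{item:BraneInvariance2}, $\varphi(\mathcal{K})$ is a compact Lagrangian submanifold since $\varphi$ is a symplectic diffeomorphism. For exactness, let $\iota : \mathcal{K}\to T^*M$ be the inclusion, so that $\iota^*\lambda = dh$. Then on $\mathcal{K}$,
\[(\varphi|_{\mathcal{K}})^*(\lambda|_{\varphi(\mathcal{K})}) = \iota^*\varphi^*\lambda = \iota^*\lambda + d(f|_{\mathcal{K}}) = d(h+f)|_{\mathcal{K}}.\]
Pushing forward by $\varphi^{-1}$ gives $\lambda|_{\varphi(\mathcal{K})} = dl$. So $(\varphi(\mathcal{K}),l)$ is a Lagrangian brane.

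For Item \ref{item:BraneInvariance3}, take Lagrangian branes $(\mathcal{L}_n,h_n)$ brane-converging to $(\mathcal{K},h)$. By Item \ref{item:BraneInvariance2}, their images $(\varphi(\mathcal{L}_n),(h_n+f)\circ\varphi^{-1})$ are Lagrangian branes, and by Item \ref{item:BraneInvariance1} they brane-converge to $(\varphi(\mathcal{K}),l)$. The only step needing care is the uniformity in Item \ref{item:BraneInvariance1}: it requires the preliminary step of confining all $\mathcal{K}^n$ to one compact set, because $\varphi$ and $f$ need not be uniformly continuous on all of $T^*M$. Everything else is formal.
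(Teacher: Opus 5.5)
Your proposal is correct and follows the paper's proof essentially step by step. It uses the same lift $\Phi(x,z)=(\varphi(x),z+f(x))$ carrying $\Gamma_{(\mathcal{K}^n,h^n)}$ onto $\Gamma_{(\varphi(\mathcal{K}^n),l^n)}$, the same shift constants $c_n$ for the quotient $\mathfrak{B}_c(T^*M)$, and deduces Item 3 from Items 1 and 2. The differences are cosmetic: you check exactness in Item 2 by a direct pullback computation where the paper says $\Phi$ is a contactomorphism of $\lambda - dz$ and hence preserves Legendrian lifts, and you spell out the restriction to a fixed compact set needed for uniform continuity in Item 1, which the paper leaves implicit.
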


\begin{proof}
    \ref{item:BraneInvariance1}. Consider the lift $\Phi\colon J^1M\to J^1M$ of $\varphi$ to $J^1M=T^*M\times\R_z$ given by
    \[\Phi(x,z)=(\varphi(x),z+f(x)).\]
    It is a diffeomorphism that maps $\Gamma_{(\mathcal{K}^n,h^n)}$ to $\Gamma_{(\varphi(\mathcal{K}^n),l^n)}$. By brane-convergence, $\Gamma_{(\mathcal{K}^n,h^n)}$ converges in the Hausdorff topology to $\Gamma_{(\mathcal{K},h)}$, and therefore $\Phi(\Gamma_{(\mathcal{K}^n,h^n)})=\Gamma_{(\varphi(\mathcal{K}^n),l^n)}$ converges to $\Phi(\Gamma_{(\mathcal{K},h)})=\Gamma_{(\varphi(\mathcal{K}),l)}$. Hence, the sequence $(\varphi(\mathcal{K}^n),l^n)$ brane-converges to $(\varphi(\mathcal{K}),l)$.

    For convergence in $\mathfrak{B}_c(T^*M)$, one needs to see that if $(\mathcal{K}^n,[h^n])$ brane-converges to $(\mathcal{K},[h])$, then there exists a sequence of constants $c_n \in \mathbb{R}$ such that $(\mathcal{K}^n,h^n+c_n)$ brane-converges to $(\mathcal{K},h)$. Hence, by the preceding case, we get that $(\varphi(\mathcal{K}^n),l^n+c_n)$ brane-converges to $(\varphi(\mathcal{K}),l)$, and consequently $(\varphi(\mathcal{K}^n),[l^n])$ brane-converges to $(\varphi(\mathcal{K}),[l])$ in $\mathfrak{B}_c(T^*M)$.
    
    \ref{item:BraneInvariance2}. If the $(\mathcal{K}^n,h^n)$ are Lagrangian branes, then since $\varphi$ is a symplectomorphism, the $\varphi(\mathcal{K}^n)$ are Lagrangian submanifolds. We also have that the graphs $\Gamma_{(\mathcal{K}^n,h^n)}$ are Legendrians for the standard contact structure on $J^1M$ given by the contact form $\alpha = \lambda - dz$. But since $\varphi$ is exact, its lift $\Phi$ is a contactomorphism:
    \begin{align*}
        \Phi^*\alpha&=\varphi^*\lambda - d(z+f)\\
        &= \lambda + df - dz -df\\
        &= \alpha.
    \end{align*}
    Therefore, it maps Legendrians to Legendrians. The Legendrian condition for the graph of $l^n$ over the Lagrangian $\varphi(\mathcal{K}^n)$ exactly tells us that $l^n$ is a Liouville primitive. Therefore, $(\varphi(\mathcal{K}^n),l^n)$ is a Lagrangian brane.

    \ref{item:BraneInvariance3}. This is a straightforward consequence of \ref{item:BraneInvariance1} and \ref{item:BraneInvariance2}.   
\end{proof}

\subsubsection{Variational Branes}

Recall from Definition \ref{LagsImages} the notion of variational bare branes.

\begin{prop} \label{LagsImagesPrimitive}
	If $(\mathcal{L},h)\in \mathcal{LB}(T^*M,\omega)$ is a Lagrangian brane, then for all times $t \in \mathbb{R}$, $(\mathcal{L}_t, h_t) \in \mathcal{LB}(T^*M,\omega)$ is also a Lagrangian brane. 
\end{prop}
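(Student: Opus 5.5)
The approach is to show that $\phi_H^t$ is an exact symplectomorphism of $T^*M$ whose primitive is given by the action integral, and then apply Item \ref{item:BraneInvariance2} of Proposition \ref{BraneInvariance}. Since $\phi_H^t$ is a symplectomorphism, $\mathcal{L}_t=\phi_H^t(\mathcal{L})$ is a compact Lagrangian submanifold. It then remains to check that $h_t$, as defined in \eqref{LagsImagesFormula}, is a Liouville primitive on it, that is, $\lambda|_{\mathcal{L}_t}=dh_t$.

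The key step is the classical Cartan-type identity. Set
\[f_t(x)=\int_0^t\big(\lambda(X_H^\tau(x_\tau))-H(\tau,x_\tau)\big)\,d\tau, \qquad x_\tau=\phi_H^\tau(x).\]
We claim that $(\phi_H^t)^*\lambda-\lambda=df_t$. To see this, differentiate in $t$ and use Cartan's formula:
\[\frac{d}{dt}(\phi_H^t)^*\lambda=(\phi_H^t)^*\mathcal{L}_{X_H^t}\lambda=(\phi_H^t)^*\big(d\iota_{X_H^t}\lambda+\iota_{X_H^t}d\lambda\big).\]
Since $d\lambda=-\omega$ and $\iota_{X_H^t}\omega=dH_t$, the right-hand side equals $(\phi_H^t)^*d(\lambda(X_H^t)-H_t)$. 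Integrating from $0$ to $t$ gives the claim. Note that $x_\tau^*\lambda$ in \eqref{LagsImagesFormula} is exactly $\lambda(\dot x_\tau)=\lambda(X_H^\tau(x_\tau))$, so $h_t=(h+f_t)\circ(\phi_H^t)^{-1}$. This is precisely the map $l$ of Proposition \ref{BraneInvariance} with $\varphi=\phi_H^t$ and $f=f_t$.

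The only technical point is that $\phi_H^t$ is a diffeomorphism of all of $T^*M$. For a Tonelli Hamiltonian this holds by the completeness assumption. Alternatively, one can cut off $H$ outside a large compact set containing $\bigcup_{\tau\in[0,t]}\mathcal{L}_\tau$, which leaves both $\mathcal{L}_t$ and $h_t$ unchanged. With either justification, Item \ref{item:BraneInvariance2} of Proposition \ref{BraneInvariance} yields that $(\mathcal{L}_t,h_t)$ is a Lagrangian brane.

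A direct check is also possible and avoids the exactness formalism. For a tangent vector $\xi\in T_x\mathcal{L}$, differentiate $h_t(\phi_H^t(x))$ and compare $d h_t(d\phi_H^t\xi)$ with $\lambda(d\phi_H^t\xi)$. The derivative in $t$ of their difference vanishes by the same Cartan computation, and at $t=0$ the difference vanishes because $dh=\lambda|_{\mathcal{L}}$. I expect no real obstacle here; the only care needed is the sign convention $\omega=-d\lambda$, $\iota_{X_H}\omega=dH$.
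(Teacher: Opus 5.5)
Your proof is correct and follows the paper's argument essentially step for step: the paper also derives $(\phi_H^t)^*\lambda-\lambda=df_t$ from Cartan's formula with $f_t(x)=\int_0^t\big(x_\tau^*\lambda-H(\tau,x_\tau)\big)\,d\tau$, rewrites \eqref{LagsImagesFormula} as $h_t=(h+f_t)\circ(\phi_H^t)^{-1}$, and concludes with Item \ref{item:BraneInvariance2} of Proposition \ref{BraneInvariance}. Your cut-off remark and the alternative direct check are harmless extras, since the paper simply uses that $\phi_H^t$ is a globally defined Hamiltonian diffeomorphism.
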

\begin{proof}
	Fix a time $t \in \mathbb{R}$. Let us define explicitly a map $f_t : T^*M \to \R$ such that $(\phi_H^t)^*\lambda - \lambda = df_t$. If we denote by $L$ the Lie derivative, then we have $\frac{d}{dt}(\phi_H^t)^*\lambda = (\phi_H^t)^*(L_{X^t_H}\lambda)$. The Cartan formula yields
	\begin{align*}
		L_{X^t_H}\lambda = \iota_{X_H^t}d\lambda + d(\iota_{X^t_H}\lambda) = -\iota_{X_H^t} \omega+ d(\lambda(X^t_H))=-dH_t + d(\lambda(X^t_H)) = d \big( \lambda(X^t_H) - H_t \big)
	\end{align*}
	where $X_H^t$ denotes the Hamiltonian vector field, and $H_t = H(t,\cdot,\cdot)$. Hence, by integrating $\frac{d}{dt}(\phi_H^t)^*\lambda$ between $0$ and $t$, we get
	\begin{align} \label{eq:HamiltonianExactForm}
		(\phi_H^t)^*\lambda - \lambda = \int_0^t \frac{d}{d\tau}(\phi_H^\tau)^*\lambda \; d\tau = d \left( \int_0^t (\phi_H^\tau)^* \big(\lambda(X^\tau_H) - H_\tau\big) \; d\tau \right)
	\end{align}
	At a point $x=(q,p)$ in $T^*M$ of orbit $x_\tau =(q_\tau,p_\tau) = \phi_H^\tau(x)$, we have
	\begin{align*}
		\lambda(X^t_H)(x) = p.\partial_pH(t,q,p) \quad \text{and} \quad (\phi_H^t)^* \big(\lambda(X^t_H)\big)(x) = p_\tau.\partial_pH(t,q_t,p_t) = p_t.\dot{q}_t = x_t^*\lambda
	\end{align*}
	so that if $f_t : T^*M \to \R$ denotes the map
	\begin{equation}
		f_t(x) = \int_0^t \big( x_\tau^* \lambda - H(\tau, x_\tau) \big) \;d\tau
	\end{equation}
	Then, we have $(\phi_H^t)^*\lambda - \lambda = df_t$. Furthermore, the identity \eqref{LagsImagesFormula} can be written as $h_t = (h +f_t) \circ (\phi_H^t)^{-1}$. Therefore, since $\phi_H^t \in \ham (T^*M,\omega)$, Proposition \ref{BraneInvariance} yields the result.
\end{proof}

\begin{prop} \label{ConvTimeProp}
	Let $(\mathcal{L}^n,h^n)$ be a sequence of bare branes that brane-converges to a bare brane $(\mathcal{L},h)$. Then for all times $t \in \mathbb{R}$, the sequence of bare branes $(\mathcal{L}_t^n,h_t^n)$ brane-converges to the bare brane $(\mathcal{L}_t,h_t)$. In particular, if $(\mathcal{L},h)$ belongs to $\overline{\mathcal{LB}(T^*M, \omega)}$, then for all times $t \in \R$, $(\mathcal{L}_t,h_t)$ belongs to $\overline{\mathcal{LB}(T^*M, \omega)}$.
\end{prop}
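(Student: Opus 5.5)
The plan is to reduce the statement to Proposition \ref{BraneInvariance}, using the computation in the proof of Proposition \ref{LagsImagesPrimitive}. Fix $t\in\R$. That proof produced the $C^1$ function
\[f_t(x)=\int_0^t\big(x_\tau^*\lambda-H(\tau,x_\tau)\big)\,d\tau\]
with $(\phi_H^t)^*\lambda-\lambda=df_t$. Definition \ref{LagsImages} then reads $h_t=(h+f_t)\circ(\phi_H^t)^{-1}$ and likewise $h^n_t=(h^n+f_t)\circ(\phi_H^t)^{-1}$, with the same $f_t$ for every $n$. The derivation of $f_t$ and of $df_t$ is pointwise along orbits, so it uses no property of the bare brane: the formula holds for arbitrary compact bare branes, not only Lagrangian ones.

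One subtlety is that $\phi_H^t$ is a Hamiltonian diffeomorphism of $T^*M$, hence an exact symplectomorphism, but the Tonelli flow is in general not compactly supported. Proposition \ref{BraneInvariance}, however, only requires $\varphi\in\sympex(T^*M,\lambda)$ together with a global $f$, and we have both. Completeness of the flow, part of the Tonelli assumption, guarantees that $\phi_H^t$ is a globally defined diffeomorphism.

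Next, I apply Item \ref{item:BraneInvariance1} of Proposition \ref{BraneInvariance} with $\varphi=\phi_H^t$ and $f=f_t$. It gives that $(\phi_H^t(\mathcal{L}^n),h^n_t)=(\mathcal{L}^n_t,h^n_t)$ brane-converges to $(\mathcal{L}_t,h_t)$. For completeness I will recall why the underlying argument is valid with non-compactly supported $\varphi$. The lift $\Phi(x,z)=(\varphi(x),z+f(x))$ is a homeomorphism of $J^1M$. The graphs $\Gamma_{(\mathcal{L}^n,h^n)}$ eventually lie in a fixed compact neighbourhood $C$ of $\Gamma_{(\mathcal{L},h)}$, by Hausdorff convergence. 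On $C$ the map $\Phi$ is uniformly continuous, and that is enough to transport Hausdorff convergence. This is the only step requiring any care, and it is mild.

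For the ``in particular'' statement, I take $(\mathcal{L}^n,h^n)\in\mathcal{LB}(T^*M,\omega)$ brane-converging to $(\mathcal{L},h)$. By Proposition \ref{LagsImagesPrimitive}, each $(\mathcal{L}^n_t,h^n_t)$ is a Lagrangian brane, and by the first part these brane-converge to $(\mathcal{L}_t,h_t)$. Hence $(\mathcal{L}_t,h_t)\in\overline{\mathcal{LB}(T^*M,\omega)}$.
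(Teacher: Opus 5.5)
Your proposal is correct and takes the same approach as the paper, whose proof is the one-line combination of Proposition \ref{BraneInvariance} (with $\varphi=\phi_H^t$ and $f=f_t$ from the proof of Proposition \ref{LagsImagesPrimitive}) and Proposition \ref{LagsImagesPrimitive}. Your extra remark also fills in a detail the paper leaves implicit: the lift $\Phi$ is uniformly continuous on a compact neighbourhood of the graphs, which is what lets it transport Hausdorff convergence even though $\phi_H^t$ is not compactly supported.
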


\begin{proof}
	This is a direct application of Propositions \ref{BraneInvariance} and \ref{LagsImagesPrimitive}.
\end{proof}

\subsection{Variational Solutions for Bare Lagrangian Branes} \label{SectionGSBC}

Recall from Subsection \ref{section:GS&VarSol} the definitions of graph selectors and variational solutions. In this section, we extend these notions from Lagrangian branes to bare Lagrangian branes.

\begin{prop} \label{BraneGammaProp}
	Let $(\mathcal{L}^n,h^n)$ be a sequence of Lagrangian branes that brane-converges to a bare Lagrangian brane $(\mathcal{L},h)$ in $\overline{\mathcal{LB}(T^*M,\omega)}$. Then the sequences of spectral invariants $\big(\rho(\mu, \mathcal{L}^{n+p},\mathcal{L}^n)\big)_n$ and $\big(\rho(\mu, \mathcal{L}^n,\mathcal{L}^{n+p})\big)_n$ converge to $0$ uniformly on $p\geq 0$. In particular, the sequence $\mathcal{L}^n$ (resp. $(\cL^n,h^n)$) is Cauchy for the spectral topology induced by the $\gamma$-distance (Definition \ref{def:SpectralDist}) (resp. $c$-distance, see Definition \ref{def:SpectralDistBrane}).
\end{prop}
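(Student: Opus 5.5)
The plan is to bound every action value of the pair directly, using spectrality together with the brane-convergence estimate. There is no need to control the Floer complex itself.

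Set $\varepsilon_m := d_B\big((\cL^m,h^m),(\cL,h)\big) \to 0$ and $\eta_n := \sup_{m\ge n}\varepsilon_m$, which also tends to $0$. Let $\omega_h$ be a modulus of continuity of $h$ on the compact set $\cL$.

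\textbf{Step 1: actions at common points are small.} Let $x\in \cL^{n+p}\cap\cL^n$. By the formula \eqref{BraneDistanceFormula} for $d_B$ and compactness of $\cL$, there are points $y,y'\in\cL$ such that
\[
|h^n(x)-h(y)|+d(x,y)\le\varepsilon_n
\quad\text{and}\quad
|h^{n+p}(x)-h(y')|+d(x,y')\le\varepsilon_{n+p}.
\]
Then $d(y,y')\le 2\eta_n$, which gives
\[
|\mathcal A_{\cL^{n+p},\cL^n}(x)| = |h^{n+p}(x)-h^n(x)| \le 2\eta_n+\omega_h(2\eta_n).
\]
The right-hand side does not depend on $p$ or on $x$. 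The same bound holds for $\mathcal A_{\cL^n,\cL^{n+p}}(x)$, which is just the negative.

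\textbf{Step 2: spectrality.} We want $\rho(\mu,\cL^{n+p},\cL^n)$ to lie in $\{h^{n+p}(x)-h^n(x) : x\in\cL^{n+p}\cap\cL^n\}$ even when the intersection is not transverse. This is where some care is needed, and I expect it to be the main technical point, though not a deep one. I would argue by perturbation, following the definition of $\rho$ in the degenerate case.
\begin{enumerate}
\item Take compactly supported $H^k$ with $\int_0^1\Vert H^k_t\Vert_\infty\,dt\to0$ and $\phi^1_{H^k}\to\mathrm{id}$ in $C^1$. Equip $\phi^1_{H^k}(\cL^{n+p})$ with the transported primitive from Proposition \ref{prop:spectral_invariants}.
\item By Proposition \ref{prop:spectrality}, $\rho(\mu,\phi^1_{H^k}(\cL^{n+p}),\cL^n)$ is the action of some intersection point $x_k$.
\item The transported primitives converge uniformly to $h^{n+p}\circ(\phi^1_{H^k})^{-1}$ up to $o(1)$, because the correction term $\int_0^1 \big(\lambda(X_{H^k}) - H^k\big)\,dt$ tends to $0$ once $H^k\to0$ in $C^1$.
\item By compactness, a subsequence $x_k\to x\in\cL^{n+p}\cap\cL^n$, and the actions converge to $\mathcal A(x)$.
\item The spectral invariants converge to $\rho(\mu,\cL^{n+p},\cL^n)$ by the $L^{1,\infty}$-Lipschitz property.
\end{enumerate}
Hence $\rho(\mu,\cL^{n+p},\cL^n)$ equals an action value at a genuine intersection point. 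The intersection is nonempty, since $HF\ne0$ by Theorem \ref{Thm:equivZeroSec}.

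\textbf{Step 3: conclusion.} Combining Steps 1 and 2 gives
\[
|\rho(\mu,\cL^{n+p},\cL^n)|\le 2\eta_n+\omega_h(2\eta_n)
\]
for all $p\ge0$, and symmetrically for $\rho(\mu,\cL^n,\cL^{n+p})$. This is the claimed uniform convergence to $0$. Summing the two invariants (Definition \ref{def:SpectralDist}) gives $\gamma(\cL^n,\cL^{n+p})\to0$ uniformly in $p$. Summing their positive parts (Definition \ref{def:SpectralDistBrane}) gives $c\big((\cL^n,h^n),(\cL^{n+p},h^{n+p})\big)\to0$ uniformly in $p$. So both sequences are Cauchy.
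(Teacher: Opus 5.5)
Your proposal is correct and follows essentially the same route as the paper. Both arguments use spectrality to reduce the problem to bounding $|h^{n+p}(x)-h^n(x)|$ at points $x\in\cL^{n+p}\cap\cL^n$, extending spectrality to the non-transverse case by a small Hamiltonian perturbation together with the $L^{1,\infty}$-Lipschitz property, and both bound that difference by passing through nearby points of $\cL$ and using the modulus of continuity of $h$. The only difference is that your Step 2 spells out, via the compactness and subsequence argument, the perturbation step the paper dispatches in one sentence.
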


\begin{proof}
	Since $\cL^{n+p}$ and $\cL^n$ are closed exact Lagrangian submanifolds, by Theorem \ref{Thm:equivZeroSec} their Floer homology is isomorphic to the homology of $M$ with coefficients in $\Z/2\Z$. Let $\mu$ denote its fundamental class. We know from the spectrality property (Proposition \ref{prop:spectrality}), that if $\cL^{n+p}$ and $\cL^n$ intersect transversely, then
	\begin{equation} \label{DemBraneGamma1}
		| \rho(\mu, \mathcal{L}^{n+p},\mathcal{L}^n) | \leq \sup_{x^n \in \mathcal{L}^{n+p} \cap \mathcal{L}^n} \Big\{ |h^{n+p}(x^n) - h^n(x^n)| \Big\}.
	\end{equation}
    By the $L^{1,\infty}$-Lipschitz property (Proposition \ref{prop:spectral_invariants}), this still holds if we do not assume transversality (take a $C^2$-small Hamiltonian perturbation to make them transverse; both the spectral invariants and the primitives vary continuously with respect to $C^2$-small perturbations).
    
     Let $\omega_h : \mathbb{R}_{\geq 0} \to \mathbb{R}_{\geq 0}$ be the continuity modulus of the uniformly continuous map $h : \mathcal{L} \to \mathbb{R}$. This means that $\omega_h$ is non-decreasing, that $\lim_{t \to 0} \omega_h(t) = 0$ and for every $x$ and $y$ in $\mathcal{L}$, $|h(x) - h(y)| \leq \omega_h(d(x,y))$. Recall the definition \eqref{eq:BraneDistanceFormula0} of $\delta_{\mathcal{L}^n}(x,\mathcal{L})$. Hence, we can take a point $x$ in $\mathcal{L}$ such that
    \begin{equation*}
		\delta_{\mathcal{L}^n}(x^n,\mathcal{L}) = |h^n(x^n) - h(x)| + d(x^n,x) 
	\end{equation*}
	Similarly, we take another point $x'$ in $\mathcal{L}$ such that
	\begin{equation*}
		\delta_{\mathcal{L}^{n+p}}(x^n,\mathcal{L}) = |h^{n+p}(x^n) - h(x')| + d(x^n,x')
	\end{equation*}
	Gathering the inequalities, we obtain
	\begin{equation} \label{DemBraneGamma2}
		\begin{split}
			|h^{n+p}(x^n) - h^n(x^n)| &\leq |h^{n+p}(x^n) - h(x')| + |h(x') - h(x)| + |h(x) - h^n(x^n)| \\
			&\leq \delta_{\mathcal{L}^n}(x^n,\mathcal{L}) + \delta_{\mathcal{L}^{n+p}}(x^n,\mathcal{L}) + |h(x') - h(x)| \\
			&\leq d_B(\mathcal{L}^n, \mathcal{L}) + d_B(\mathcal{L}^{n+p}, \mathcal{L}) + \omega_h( d(x',x))\\
		\end{split}
	\end{equation}
	Moreover, 
	\begin{align*}
		d(x,x') \leq d(x,x^n) + d(x^n,x') \leq \delta_{\mathcal{L}^n}(x^n,\mathcal{L}) + \delta_{\mathcal{L}^{n+p}}(x^n,\mathcal{L}) \leq d_B(\mathcal{L}^n, \mathcal{L}) + d_B(\mathcal{L}^{n+p}, \mathcal{L})
	\end{align*}
	Thus, we deduce from \eqref{DemBraneGamma1} and \eqref{DemBraneGamma2} the inequalities
	\begin{align} \label{inequality:SpectralDomination}
		| \rho(\mu, \mathcal{L}^{n+p},\mathcal{L}^n) | \leq d_B(\mathcal{L}^n, \mathcal{L}) + d_B(\mathcal{L}^{n+p}, \mathcal{L}) + \omega_h \big( d_B(\mathcal{L}^n, \mathcal{L}) + d_B(\mathcal{L}^{n+p}, \mathcal{L}) \big)
	\end{align}
	and the same inequality for $|\rho(\mu, \mathcal{L}^{n},\mathcal{L}^{n+p})|$, yielding that these sequences are Cauchy.
	
	For the spectral distance, we have
	\begin{equation}\label{DemBraneGamma3}
		\begin{split}
			\gamma( \mathcal{L}^{n+p}, \mathcal{L}^n) &= \rho(\mu, \mathcal{L}^{n+p},\mathcal{L}^n) + \rho(\mu, \mathcal{L}^{n},\mathcal{L}^{n+p}) \\
			&\leq 2d_B(\mathcal{L}^n, \mathcal{L}) + 2d_B(\mathcal{L}^{n+p}, \mathcal{L}) + 2\omega_h \big( d_B(\mathcal{L}^n, \mathcal{L}) + d_B(\mathcal{L}^{n+p}, \mathcal{L}) \big)
		\end{split}
	\end{equation}
	and the sequence $\mathcal{L}^n$ is Cauchy for the $\gamma$-distance. Similarly, $(\cL^n,h^n)$ is Cauchy for the $c$-distance.
\end{proof}

This allows us to define the graph selectors and the variational solutions associated to bare Lagrangian branes in $\overline{\mathfrak{LB}(T^*M,\omega)}$, as intended in the corollary below.

\begin{cor} \label{GSConv}
	Let $H : \mathbb{T}^1 \times T^*M \to \mathbb{R}$ be a Hamiltonian, and let $(\mathcal{L}^n,h^n) \in \mathcal{LB}(T^*M,\omega)$ be a sequence of Lagrangian branes that brane-converges to a bare Lagrangian brane $(\mathcal{L},h) \in \overline{\mathfrak{LB}(T^*M,\omega)}$. Let $u^n : M \to \mathbb{R}$ be graph selectors associated to $(\mathcal{L}^n,h^n)$. Then the sequence $u^n$ converges uniformly to a continuous function $u : M \to \mathbb{R}$. More generally, the variational solutions $u^n : \R \times M \to \R$ converge uniformly to a continuous function $u : \R \times M \to \R$.
	
	Moreover, these limit graph selectors and variational solutions $u$ are independent of the converging sequence of Lagrangian branes $(\mathcal{L}^n,h^n)$.
\end{cor}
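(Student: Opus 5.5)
The plan is to show that the sequence $(u^n)$ is uniformly Cauchy and then compare any two approximating sequences through the same estimate. The main tool is inequality \eqref{formula:GSSpectralBound} from the proof of Proposition \ref{Prop:GS<Gamma}. For two Lagrangian branes $(\cL^n,h^n)$ and $(\cL^{m},h^{m})$, and for every $(t,q)\in\R\times M$, it gives
\[
-\rho(\mu,\cL^n,\cL^{m})\leq u^{m}(t,q)-u^n(t,q)\leq \rho(\mu,\cL^{m},\cL^n).
\]
This is a genuine two-sided sup-norm bound with the primitives fixed, not only an oscillation bound. The same inequality holds for graph selectors, which are the case $t=0$. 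Hence
\[
\Vert u^{n+p}-u^n\Vert_\infty\leq \max\big(|\rho(\mu,\cL^{n+p},\cL^n)|,|\rho(\mu,\cL^n,\cL^{n+p})|\big).
\]

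By Proposition \ref{BraneGammaProp}, both spectral invariants tend to $0$ uniformly in $p\geq 0$. In fact \eqref{inequality:SpectralDomination} gives the explicit bound $d_B(\cL^n,\cL)+d_B(\cL^{n+p},\cL)+\omega_h(\cdots)$. So $(u^n)$ is uniformly Cauchy on $\R\times M$, and it converges uniformly to some $u$. By Proposition \ref{Graphext} each $u^n$ is locally Lipschitz, hence continuous, and a uniform limit of continuous functions is continuous. If the statement is read with brane convergence in $\mathfrak{B}_c$, I first fix representatives: as in the proof of Proposition \ref{BraneInvariance}, choose constants $c_n$ so that $(\cL^n,h^n+c_n)\to(\cL,h)$ in $\mathcal{B}_c$, and use Proposition \ref{GSTranslation} to track the shifts.

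For independence, take a second sequence $(\cL'^n,h'^n)$ brane-converging to the same $(\cL,h)$, with limit $u'$. The key step is to rerun the estimate of Proposition \ref{BraneGammaProp} with the pair $(\cL'^n,\cL^n)$ in place of $(\cL^{n+p},\cL^n)$. That proof used only three facts:
\begin{enumerate}
\item spectrality, bounding $|\rho(\mu,\cL'^n,\cL^n)|$ by $\sup|h'^n-h^n|$ over the intersection points;
\item the choice of nearby points $x,x'\in\cL$ nearly realizing $\delta_{\cL^n}(y,\cL)$ and $\delta_{\cL'^n}(y,\cL)$ at an intersection point $y$;
\item the continuity modulus $\omega_h$ of $h$.
\end{enumerate}
None of these needs the two branes to belong to the same sequence. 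This yields
\[
|\rho(\mu,\cL'^n,\cL^n)|,\ |\rho(\mu,\cL^n,\cL'^n)|\leq d_B(\cL^n,\cL)+d_B(\cL'^n,\cL)+\omega_h\big(d_B(\cL^n,\cL)+d_B(\cL'^n,\cL)\big)\to 0.
\]
Combined with \eqref{formula:GSSpectralBound}, this gives $\Vert u'^n-u^n\Vert_\infty\to0$, hence $u=u'$. An alternative is to interleave the two sequences into one brane-convergent sequence and apply the Cauchy property already established, which gives the same conclusion with no new estimate.

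The main point of care is the transversality remark inside Proposition \ref{BraneGammaProp}: the spectral bound is first proved for transverse pairs and then extended by a small perturbation using the $L^{1,\infty}$-Lipschitz property. This extension should be checked to hold uniformly over all the pairs considered. Apart from that, the argument is routine bookkeeping.
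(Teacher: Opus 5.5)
Your proposal is correct and follows the paper's proof essentially verbatim. The two-sided bound \eqref{formula:GSSpectralBound} combined with Proposition \ref{BraneGammaProp} gives the uniform Cauchy property. Applying \eqref{inequality:SpectralDomination} to the $n$-th terms of two approximating sequences gives independence, and your interleaving alternative also works. The transversality point you flag needs no uniformity, because the bound of Proposition \ref{BraneGammaProp} is obtained separately for each fixed pair by a limiting argument.
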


\begin{proof}
	From Proposition \ref{Prop:GS<Gamma}, we already know that $\osc(u^{n+p}-u^n) \leq \gamma(\mathcal{L}^{n+p},\mathcal{L}^n)$. However, in our case, since the Liouville primitives $h^n$ converge (and not merely up to an additive constant), we can obtain a bound on $\Vert u^{n+p}-u^n \Vert_\infty^{\mathbb{R} \times M}$. Indeed, we have seen in \eqref{formula:GSSpectralBound} that for all $(t,q)\in\R\times M$,
    \begin{align*}
		-\rho(\mu,\cL^n,\cL^{n+p})\leq u^{n+p}(t,q) - u^n(t,q) \leq \rho(\mu,\cL^{n+p},\cL^n).
	\end{align*}
	so that
	\begin{align*}
		|u^{n+p}(t,q) - u^n(t,q)| \leq |\rho(\mu,\cL^{n+p},\cL^n)| + |\rho(\mu,\cL^n,\cL^{n+p})|
	\end{align*}
	Thus, we infer from Proposition \ref{BraneGammaProp} that the sequence $\Vert u^{n+p}- u^n \Vert_\infty^{\mathbb{R} \times M} \to 0$ uniformly on $p \geq 0$ and the sequence $u^n$ is Cauchy. By completeness of $C^0([-T,T] \times M, \R)$ for all times $T>0$, we conclude that the sequence $u^n : \mathbb{R} \times M \to \mathbb{R}$ does converge to a continuous limit $u : \mathbb{R} \times M \to \mathbb{R}$.
	
	For the independence of the converging sequence, consider a sequence of Lagrangian branes $(\cL_n,h_n)$ that brane-converges to $(\cL,h)$ and denote the corresponding variational solutions by $u_n$. Then, the inequality \eqref{inequality:SpectralDomination} applied to $\cL^n$ and $\cL_n$ yields
    \begin{equation*}
    		\Vert u^n - u_n \Vert_\infty^{\mathbb{R} \times M} \leq 2d_B(\mathcal{L}^n, \mathcal{L}) + 2d_B(\mathcal{L}_n, \mathcal{L}) + 2\omega_h \big( d_B(\mathcal{L}^n, \mathcal{L}) + d_B(\mathcal{L}_n, \mathcal{L}) \big) \longrightarrow 0 \quad \text{as } n\to +\infty,
    \end{equation*}
	and we conclude that their limits are equal.
\end{proof}

This corollary allows the following definition.
\begin{defi}
	Let $(\cL,h)$ be a bare Lagrangian brane in $\overline{\mathfrak{LB}(T^*M,\omega)}$. 
	\begin{enumerate}
		 \item The limit $u : M \to \mathbb{R}$ of any sequence of graph selectors of Lagrangian branes that brane-converge to $(\mathcal{L},h)$ is the \textit{graph selector} of $(\mathcal{L},h)$.
		 \item The limit $u : \R \times M \to \mathbb{R}$ of any sequence of variational solutions associated to Lagrangian branes that brane-converge to $(\mathcal{L},h)$ is the \textit{variational (sub)solution} associated to $(\mathcal{L},h)$.
	\end{enumerate}
\end{defi}

At the end of this subsection, in Remark \ref{rem:GSGamma-supp}, we describe a connection between this definition and the Humilière completion.\\

For now, let us present some properties of these variational subsolutions. As in the case of variational solutions associated to Lagrangian branes, these limiting variational subsolutions are also viscosity subsolutions, and in particular they are locally Lipschitz.

\begin{prop} \label{GSSubsolution}
    Let $(\cL,h) \in \overline{\mathcal{LB}(T^*M,\omega)}$ be a bare Lagrangian brane. For every sequence of exact Lagrangian branes $(\cL^n,h^n)$ that brane-converges to $(\cL,h)$, the associated variational solutions $u^n : \mathbb{R} \times M \to \mathbb{R}$ are equi-Lipschitz on $I \times M$ for every compact interval $I \subset \mathbb{R}$.

    In particular, the variational solution $u : \mathbb{R} \times M \to \mathbb{R}$ associated to $(\cL,h)$ is a locally Lipschitz very weak subsolution of the Hamilton--Jacobi equation \eqref{HJ}.
\end{prop}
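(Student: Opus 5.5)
The argument is to reread the proof of Item \ref{GraphextItem1} of Proposition \ref{Graphext} and check that its Lipschitz constants depend only on data controlled uniformly along the sequence. In that proof, the constant on $I\times M$ came from two pieces. The time increment was bounded by $|t_1-t_2|\max_{I\times D^*_{R+1}M}|H|$, where $R$ is such that $D^*_RM$ contains $\bigcup_{t\in I}\cL_t$. The space increment was bounded by $e^C(R'+1)\,d(q_1,q_2)$ with $R'=e^CR$, where $C$ depends only on the geometry of $M$. So the only quantity depending on the brane is the radius $R$, and it suffices to find a single $R$ with $\bigcup_{t\in I}\cL^n_t\subset D^*_RM$ for all $n$.

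To get such an $R$, I use brane convergence. By Proposition \ref{BraneHausdorffProp}, $\cL^n\to\cL$ in the Hausdorff topology, and $\cL$ is compact. Hence all $\cL^n$, for $n$ large, lie in a fixed compact set $K=\{x : d(x,\cL)\le 1\}$. The finitely many remaining branes are compact, so after enlarging $K$ we may take $\cL^n\subset K$ for all $n$. The flow map $(t,x)\mapsto\phi_H^t(x)$ is continuous, and the flow is complete since $H$ is Tonelli. Therefore $\phi_H(I\times K)$ is compact and contained in some $D^*_RM$, which gives $\cL^n_t\subset D^*_RM$ for all $t\in I$ and all $n$. With this $R$, the estimates \eqref{eq:Lipschitz1}, \eqref{eq:Lipschitz2} and the cut-off construction for the spatial term go through verbatim. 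Both estimates only used $\cL_t\subset D^*_RM$, together with the Lipschitz and invariance properties of Proposition \ref{prop:spectral_invariants}. This gives a Lipschitz constant $C_I$ independent of $n$, so the $u^n$ are equi-Lipschitz on $I\times M$.

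For the consequences, Corollary \ref{GSConv} gives $u^n\to u$ uniformly on $I\times M$. A uniform limit of $C_I$-Lipschitz functions is $C_I$-Lipschitz, so $u$ is locally Lipschitz. For domination, fix a $C^1$ curve $\gamma:[a,b]\to M$. Proposition \ref{Domination} gives, for each $n$,
\[u^n(b,\gamma(b))-u^n(a,\gamma(a))\le\int_a^b L(s,\gamma(s),\dot\gamma(s))\,ds.\]
The inequality passes to the pointwise limit, so $\delta(u,\gamma,[a,b])\ge0$ and $u$ is dominated by $L$. Proposition \ref{DominatedSubsolution} then shows that $u$ is a very weak subsolution of \eqref{HJ}.

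The main point needing care is the uniform bound on $R$. It relies on Hausdorff convergence of the supports and on compactness of the flow image over the compact time interval $I$. Everything else is a transcription of earlier proofs, once one notes that the constant $C$ in the spatial estimate is purely geometric.
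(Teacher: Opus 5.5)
Your proof is correct. It differs from the paper's only in how you get the equi-Lipschitz bound. The compact flow-image, the passage to the limit and the domination argument are the same as in the paper.

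\textbf{The paper's route.} The paper does not reopen the Floer-theoretic estimate of Item \ref{GraphextItem1} of Proposition \ref{Graphext}. It invokes Item \ref{GraphextItem2} instead. Almost everywhere on $[-T,T]\times M$ one has $(q,d_qu^n(t,q))\in\cL^n_t\subset K_T$ and $\partial_tu^n+H(t,q,d_qu^n)=0$. This gives directly
\[
\Vert d_qu^n\Vert\le\max_{K_T}\Vert p\Vert \quad\text{and}\quad |\partial_tu^n|\le\max_{[-T,T]\times K_T}|H|,
\]
where $K_T$ is the same compact flow-image you construct. The paper then uses that a locally Lipschitz function whose differential is a.e. bounded is Lipschitz with that bound.

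\textbf{Your route.} You track the constants through the proof of Item \ref{GraphextItem1} and observe that they depend on the brane only through the radius $R$. This is valid:
\begin{itemize}
\item the time estimate uses only $\bigcup_{t\in I}\cL_t\subset D^*_RM$;
\item the spatial estimate uses only that containment, the Gr\"onwall factor $e^C$ and the purely geometric bounds on the vector fields $X_t$.
\end{itemize}

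\textbf{Comparison.} Your argument needs only the $L^{1,\infty}$-Lipschitz and invariance properties of spectral invariants, not the a.e.\ graph-selector property. In exchange, it rests on inspecting a proof rather than applying a stated result. It also produces cruder constants, involving the cut-off radius $R+1$ and factors like $e^{2C}$. The paper's argument gives the intrinsic bounds, namely the fibre size of the Lagrangians and the size of $H$ on them, at the price of invoking the a.e.\ graph property.
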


\begin{rem}
	One can show that the variational solution $u$ is in fact a viscosity subsolution of the Hamilton--Jacobi equation \eqref{HJ}.
\end{rem}

\begin{proof}
	Let $(\cL^n,h^n)$ be a sequence of exact Lagrangian branes that brane-converges to $(\cL,h)$, and let $u^n : \R \times M \to \R$ be the associated variational solutions. By Corollary \ref{GSConv}, the sequence $u^n$ converges uniformly to the variational solution $u$ associated to $(\cL,h)$.

	We first show that the sequence $u^n$ is equi-Lipschitz on every compact time interval. Fix $T>0$. Since brane-convergence implies Hausdorff convergence of the supports, there exists a compact subset $K \subset T^*M$ containing all the Lagrangian submanifolds $\cL^n$. We set
	\begin{equation*}
		K_T := \left\{ \phi_H^t(x) \; \middle| \; x\in K,\ |t|\leq T \right\}
	\end{equation*}
	The set $K_T$ is compact and, for every $n$ and $|t|\leq T$,
	\begin{equation*}
		\cL_t^n = \phi_H^t(\cL^n) \subset K_T
	\end{equation*}
	By Proposition \ref{Graphext}, for almost every $(t,q)\in[-T,T]\times M$, we have
	\begin{equation*}
		(q,d_qu^n(t,q))\in\cL_t^n \quad \text{and} \quad \partial_tu^n(t,q) +H(t,q,d_qu^n(t,q)) =0
	\end{equation*}
	Hence, for almost every $(t,q)\in[-T,T]\times M$,
	\begin{equation*}
		\Vert d_qu^n(t,q)\Vert\leq P_T:= \max_{(q,p)\in K_T}\Vert p\Vert \quad \text{and} \quad |\partial_tu^n(t,q)|\leq C_T := \max_{(t,q,p)\in [-T,T] \times K_T} |H(t,q,p)|
	\end{equation*}
	uniformly in $n$. Since the maps $u^n$ are locally Lipschitz, these bounds on their differentials imply that the sequence $u^n$ is equi-Lipschitz on $[-T,T]\times M$. By uniform convergence of $u^n$ to $u$, we deduce that $u$ is locally Lipschitz on $\R\times M$. 
	
	We now prove the subsolution property. By Proposition \ref{Domination}, every $u^n$ is dominated by $L$. Thus, for every $a<b$ and every $C^1$ curve $\gamma:[a,b]\to M$, we have
	\begin{equation*}
		u^n(b,\gamma(b))-u^n(a,\gamma(a)) \leq \int_a^b L(\tau,\gamma(\tau),\dot{\gamma}(\tau)) \; d\tau
	\end{equation*}
	Passing to the limit $n\to+\infty$ yields
	\begin{equation*}
		u(b,\gamma(b))-u(a,\gamma(a)) \leq \int_a^b L(\tau,\gamma(\tau),\dot{\gamma}(\tau)) \; d\tau
	\end{equation*}
	Hence, $u$ is also dominated by $L$. By Proposition \ref{DominatedSubsolution}, at every point of differentiability $(t,q)$ of $u$, we have
	\begin{equation*}
		\partial_tu(t,q) + H(t,q,d_qu(t,q)) \leq 0
	\end{equation*}
	Therefore, $u$ is a locally Lipschitz very weak subsolution of the Hamilton--Jacobi equation \eqref{HJ}.
\end{proof}

We now state the analogue of Proposition \ref{Graphext} for these variational subsolutions. This proposition will not be used as such, but some of its components will be used in the proof of Lemma \ref{SpectralityLemma}. We include it for the sake of completeness.

\begin{prop}
	Let $(\cL,h)$ be a bare Lagrangian brane in $\overline{\mathcal{LB}(T^*M,\omega)}$ with associated variational subsolution $u$. Then there exists a dense set of full measure $\mathcal{V}$ of $\mathbb{R} \times M$ such that $u$ is differentiable in $\mathcal{V}$ and for all $(t,q) \in \mathcal{V}$, 
	\begin{equation} \label{eq:Graphext2}
		\big(q, d_qu(t,q)\big) \in \conv (\mathcal{L}_t\cap T^*_qM) \quad \text{and} \quad \partial_tu + H(t,q,d_qu) \leq 0
	\end{equation}
    where $\conv$ denotes the convex hull.
\end{prop}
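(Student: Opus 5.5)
We want a full-measure dense set $\mathcal V$ on which $u$ is differentiable, $d_qu(t,q)$ lies in the convex hull of the fibre $\cL_t\cap T^*_qM$, and the subsolution inequality holds. The plan is to take $\mathcal V$ to be the set of differentiability points of $u$. Since $u$ is locally Lipschitz by Proposition \ref{GSSubsolution}, Rademacher's theorem shows that $\mathcal V$ has full Lebesgue measure, hence is dense. At every point of $\mathcal V$, Proposition \ref{GSSubsolution} (through Proposition \ref{DominatedSubsolution}) already gives $\partial_tu+H(t,q,d_qu)\leq 0$. So only the convex hull inclusion remains.

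For the inclusion, fix $(t,q)\in\mathcal V$ and an approximating sequence $(\cL^n,h^n)$ with variational solutions $u^n\to u$ uniformly on compact sets. By Proposition \ref{GSSubsolution} the $u^n$ are equi-Lipschitz near $(t,q)$. By Proposition \ref{Graphext}, $d_qu^n(s,q')\in\cL^n_s\cap T^*_{q'}M$ for almost every $(s,q')$.

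We pass this membership to the limit through mollified difference quotients. Fix $v\in T_qM$, $\varepsilon>0$ and a small ball $B_\varepsilon$ around $(t,q)$. Work in a chart, and by Lemma \ref{Pertrub} or Fubini choose short segments in direction $v$ that almost surely meet the full-measure set. Then
\[
\frac{u^n(t,q+\varepsilon v)-u^n(t,q)}{\varepsilon}
\]
is, up to an averaging over nearby parallel segments, an average of $d_qu^n\cdot v$ over points of $B_\varepsilon$. Each such value lies in $\{p\cdot v : p\in\cL^n_s\cap T^*_{q'}M,\ (s,q')\in B_\varepsilon\}$. Hence the averaged quotient is at most $\sup\{p\cdot v: (q',p)\in \cL^n_s,\ (s,q')\in B_\varepsilon\}$.

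Now let $n\to\infty$. By Proposition \ref{ConvTimeProp} and Proposition \ref{BraneHausdorffProp}, $\cL^n_s\to\cL_s$ in the Hausdorff topology, uniformly for $s$ near $t$ by continuity of the flow. The averaged quotient of $u^n$ converges to that of $u$. Next let $\varepsilon\to0$. Differentiability of $u$ at $(t,q)$ sends the averaged quotient to $d_qu(t,q)\cdot v$, and upper semicontinuity of the compact fibres gives
\[
d_qu(t,q)\cdot v\le \max\{p\cdot v: p\in \cL_t\cap T^*_qM\}.
\]
This holds for every $v$, so the support function of $d_qu(t,q)$ is dominated by that of the compact fibre. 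By Hahn--Banach, $d_qu(t,q)\in\conv(\cL_t\cap T^*_qM)$.

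The main obstacle is the averaging step. A one-dimensional difference quotient of $u^n$ along a single segment may run entirely outside the good set $\mathcal U_n$ of Proposition \ref{Graphext}. To handle this, we average over a small transversal family of segments, using Fubini and the full measure of $\mathcal U_n$. We also need the upper semicontinuity of $(s,q')\mapsto \cL_s\cap T^*_{q'}M$, which follows from compactness of $\bigcup_{|s-t|\le1}\cL_s$ and the uniform Hausdorff convergence.
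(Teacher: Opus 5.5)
Your proposal is correct, but it obtains the convex-hull inclusion by a different route from the paper. The paper takes $\mathcal V=\mathcal V'\cap\bigcap_n\mathcal U^n$ and invokes the Clarke/Bernard--dos Santos Lemma~\ref{lem:BdS} for the full space-time differentials of the equi-Lipschitz $u^n$, sampled on the good set $\bigcap_n\mathcal U^n$. Every limit of $du^n(s_n,q_n)$ with $(s_n,q_n)\to(t,q)$ lies in $\{(E,p)\mid (q,p)\in\cL_t,\ E+H(t,q,p)=0\}$. Projecting the resulting convex-hull inclusion onto the $p$-component gives the first claim, and \eqref{eq:SpectralityLemmaDem5} gives $\partial_tu+H\le 0$, so both conclusions come from one lemma.

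You separate the two conclusions. You read the inequality off from domination (Proposition~\ref{GSSubsolution}) at every differentiability point. You prove the inclusion by hand through support functions, using averaged difference quotients over a space-time ball, Fubini, Hausdorff convergence of $\cL^n_s$ uniformly in $s$ together with compactness of $\bigcup_{|s-t|\le 1}\cL_s$, and separation for the compact fibre. This is in effect a self-contained proof of the half of Clarke's lemma needed here, so it avoids the external citation. Taking $\mathcal V$ to be all differentiability points of $u$ is also fine; the intersection with $\bigcap_n\mathcal U^n$ in the paper is not needed for the conclusion.

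What the paper's formulation buys is brevity and reuse. Lemma~\ref{lem:BdS} puts $du(t,q)$ in the convex hull of the set of limits of good gradients, which is exactly what the extremal-point argument of Lemma~\ref{SpectralityLemma} needs. Your argument yields the same if you let the direction range over $T_{(t,q)}(\R\times M)$ and bound by the gradients $du^n$ themselves rather than by points of $\cL^n_s$.

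When writing it up, state two details explicitly. First, the averaging radius must be comparable to $\varepsilon$; this is what lets differentiability at the single point $(t,q)$ send the averaged quotient to $d_qu(t,q)\cdot v$. Second, the segments sweep a ball of radius about $\varepsilon(1+|v|)$, so the supremum should be taken over that ball.
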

\begin{proof}
	Let $(\mathcal{L}^n,h^n)$ be a sequence of Lagrangian branes that brane-converges to $(\mathcal{L},h)$. Let $u^n$ be the associated variational solutions, and set $\mathcal{U}^n$ to be the open dense sets obtained by Proposition \ref{Graphext}. We know from Proposition \ref{GSSubsolution} that $u$ is Lipschitz, so that there exists a dense full measure set $\mathcal{V}'$ where $u$ is differentiable. Then, we set $\mathcal{V} =\mathcal{V}' \cap ( \bigcap_{n \geq 0} \mathcal{U}^n )$. The identities in \eqref{eq:Graphext2} are direct consequences of Lemma \ref{lem:BdS} and the identity \eqref{eq:SpectralityLemmaDem5}.
\end{proof}

This proposition explains why the term \textit{variational subsolution} may be more appropriate than that of \textit{variational solution} for bare Lagrangian branes. Moreover, the spectrality relation $h_t(x,d_q u) = u(t,q)$ stated in \eqref{eq:Graphext} no longer holds, which creates an additional difficulty in the proof of the main theorem. However, Lemma \ref{SpectralityLemma} allows one to recover this spectrality along calibrated curves.

\begin{rem} \label{rem:GSGamma-supp}
	The notion of graph selector extends naturally to the Humilière completion (see \ref{section:SpectralHumiliere}). Indeed, the estimate $\osc( u_{\cL_1}-u_{\cL_2})\leq \gamma(\cL_1,\cL_2)$ shows that the graph-selector map is continuous with respect to the spectral distance. Hence, if a sequence $\cL_n$ converges to an element $\underline{\cL}$ of the Humilière completion, the corresponding graph selectors $u_{\cL_n}$ converge uniformly, up to additive constants, to a function depending only on $\underline{\cL}$, which defines the graph selector of $\underline{\cL}$. The same interpretation applies to the variational (sub)solutions considered here, and the variational (sub)solution associated to a bare Lagrangian brane coincides with the variational solution associated to the corresponding element of the Humilière completion.
\end{rem}

\section{Proof of the Main Theorem} \label{section:Proof}

In this section, we prove the main Theorem \ref{MainTheorem}. We adopt the notations of its statement.\\

Let $(\mathcal{L},[h])$ be a bare Lagrangian brane in $\overline{\mathfrak{LB}(T^*M,\omega)}$ with associated variational subsolution $u : \R \times M \to \R$. By Lemma \ref{BraneModuloLem}, there exists a sequence $(\mathcal{L}^k,h^k)$ of Lagrangian branes in $\mathcal{LB}(T^*M,\omega)$ that brane-converges to $(\mathcal{L},h)$. We denote the associated variational solutions by $u^k: \mathbb{R} \times M \to \mathbb{R}$. We have shown in Proposition \ref{GSSubsolution} that $u$ and $u^k$ are locally equi-Lipschitz subsolutions of the Hamilton--Jacobi equation \eqref{HJ}.\\

We will see in Lemma \ref{lem:RealtoInteger} that the sequences of times $t_k$ and $s_k$ can be taken to be integers. Hence, we may assume that there exist two increasing integer sequences $n_k$ and $m_k \to +\infty$ such that the sequences $(\mathcal{L}_{n_k},[h_{n_k}])$ and $(\mathcal{L}_{-m_k},[h_{-m_k}])$ brane-converge, respectively, to two bare branes $(\mathcal{L}_\omega,[h_\omega])$ and $(\mathcal{L}_\alpha,[h_\alpha])$ in $\mathfrak{B}_c(T^*M)$. Hence, by the same Lemma \ref{BraneModuloLem}, there exist two sequences $c_{n_k}$ and $c_{-m_k}$ of real constants such that $(\mathcal{L}_{n_k},h_{n_k}+c_{n_k})$ and $(\mathcal{L}_{-m_k},h_{-m_k}+c_{-m_k})$ brane-converge, respectively, to $(\mathcal{L}_\omega,h_\omega)$ and $(\mathcal{L}_\alpha,h_\alpha)$.\\

Our goal is to prove that the variational subsolution $u$ is $C^{1,1}$ so that $\cL_t \subset \mathcal{G}(d_q u(t,\cdot))$. This inclusion then implies the equality $\cL_t = \mathcal{G}(d_q u(t,\cdot))$, showing that $\cL_t$ is a Lipschitz graph over the base $M$. Our main tool will be Theorem \ref{Fathi} on the regularity of dominated maps along calibrated curves, which, as we have seen, applies to $u$. We will look for tame calibrated curves for $u$.

\subsection{Reduction to Integer Times}

\begin{lem} \label{lem:RealtoInteger}
	Under the hypotheses of Theorem \ref{MainTheorem}, there exist two increasing sequences $m_k$ and $n_k$ of positive integers and two bare branes $(\mathcal{L}^0_\omega,[h^0_\omega])$ and $(\mathcal{L}^0_\alpha,[h^0_\alpha])$ in $\mathcal{B}_c(T^*M)$ such that $(\mathcal{L}_{n_k},[h_{n_k}])$ and $(\mathcal{L}_{-m_k},[h_{-m_k}])$ brane-converge, respectively, to $(\mathcal{L}^0_\omega,[h^0_\omega])$ and $(\mathcal{L}^0_\alpha,[h^0_\alpha])$.
\end{lem}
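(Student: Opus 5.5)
The plan is to write each real time as an integer plus a fractional part and pass to a subsequence along which the fractional parts converge. Take $t_k\to+\infty$ and write $t_k = n_k + \tau_k$ with $n_k=\lfloor t_k\rfloor$ and $\tau_k\in[0,1)$. Extracting a subsequence, we may assume $\tau_k\to\tau\in[0,1]$ and that $n_k$ is increasing and positive. Since $H$ is $1$-periodic in time, we have $\phi_H^{t_k} = \phi_H^{n_k,n_k+\tau_k}\circ\phi_H^{n_k} = \phi_H^{\tau_k}\circ\phi_H^{n_k}$. Hence $\mathcal{L}_{n_k} = (\phi_H^{\tau_k})^{-1}(\mathcal{L}_{t_k})$. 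Along orbits, the variational primitive transforms as
\[ h_{t_k} = \big(h_{n_k} + f^{(k)}_{\tau_k}\big)\circ(\phi_H^{\tau_k})^{-1}, \]
where $f^{(k)}_{\tau}(x) = \int_0^{\tau}\big(x_s^*\lambda - H(s,x_s)\big)\,ds$ with $x_s=\phi_H^s(x)$. Here periodicity is used again: starting at time $n_k$ is the same as starting at time $0$. Equivalently,
\[ h_{n_k} = h_{t_k}\circ\phi_H^{\tau_k} - f_{\tau_k}, \]
where $f_\tau$ is the function of Proposition \ref{LagsImagesPrimitive}. Passing to the quotient by constants is harmless throughout.

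Next I use the jet-bundle lift from the proof of Proposition \ref{BraneInvariance}. Set $\Psi_\tau(x,z) = \big(\phi_H^{-\tau}(x),\, z - f_\tau(\phi_H^{-\tau}(x))\big)$, so that $\Gamma_{(\mathcal{L}_{n_k},h_{n_k}+c_{t_k})} = \Psi_{\tau_k}\big(\Gamma_{(\mathcal{L}_{t_k},h_{t_k}+c_{t_k})}\big)$. Here the constants $c_{t_k}$ are chosen by Lemma \ref{BraneModuloLem} so that $(\mathcal{L}_{t_k},h_{t_k}+c_{t_k})$ brane-converges to $(\mathcal{L}_\omega,h_\omega)$. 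The candidate limit is then
\[ (\mathcal{L}^0_\omega,h^0_\omega) := \big(\phi_H^{-\tau}(\mathcal{L}_\omega),\ (h_\omega - f_\tau\circ\phi_H^{-\tau})\circ\phi_H^{\tau}\big), \]
that is, the graph $\Psi_\tau(\Gamma_{(\mathcal{L}_\omega,h_\omega)})$. This graph is compact and is a partial graph because $\Psi_\tau$ is a homeomorphism of $J^1M$ commuting with the projection structure. To show convergence, I split
\[ d_H\big(\Psi_{\tau_k}(\Gamma_k),\Psi_\tau(\Gamma_\omega)\big) \le d_H\big(\Psi_{\tau_k}(\Gamma_k),\Psi_{\tau}(\Gamma_k)\big) + d_H\big(\Psi_{\tau}(\Gamma_k),\Psi_\tau(\Gamma_\omega)\big). \]
The second term tends to $0$ because $\Psi_\tau$ is uniformly continuous on a compact neighbourhood containing all the graphs. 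The first term is bounded by $\sup_{K}d(\Psi_{\tau_k},\Psi_\tau)$ on a fixed compact set $K\subset J^1M$ containing all $\Gamma_k$; the $\Gamma_k$ are uniformly bounded since they Hausdorff-converge. This supremum tends to $0$ because $(\tau,x)\mapsto\phi_H^{-\tau}(x)$ and $(\tau,x)\mapsto f_\tau(x)$ are jointly continuous, hence uniformly continuous on $[0,1]\times K$.

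The negative-time sequence is handled identically. Write $s_k = -m_k + \sigma_k$ with $m_k = -\lfloor s_k\rfloor$ and $\sigma_k\in[0,1)$. Then $\mathcal{L}_{-m_k} = \phi_H^{-\sigma_k}(\mathcal{L}_{s_k})$, with the same formula for the primitives, again by periodicity. Extracting a subsequence makes $m_k$ increasing and positive.

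The only genuinely delicate point is the bookkeeping of the primitive under time-periodicity. One must check that the action integral from time $n_k$ to $n_k+\tau_k$ equals $f_{\tau_k}$ evaluated at the point of $\mathcal{L}_{n_k}$, which holds because $H(s+n_k,\cdot)=H(s,\cdot)$. One must also check that the limit is still a partial graph, so that it lies in $\mathcal{B}_c(T^*M)$; this follows from $\Psi_\tau$ being the lift of a diffeomorphism. Everything else is uniform continuity on compacts.
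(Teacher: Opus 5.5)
Your proof is correct and is essentially the paper's argument. You extract a convergent fractional part, use $1$-periodicity of $H$ to write $(\cL_{\lfloor t_k\rfloor},h_{\lfloor t_k\rfloor})$ as the action-corrected pullback of $(\cL_{t_k},h_{t_k})$ by the time-$\{t_k\}$ flow, and conclude by uniform continuity of $(\tau,x)\mapsto(\phi_H^{\tau}(x),f_\tau(x))$ on compact sets. Your comparison of $\Psi_{\tau_k}$ with $\Psi_\tau$ is just a repackaging of the paper's comparison of $\cL_{t_k}$ with $\cL_{\lfloor t_k\rfloor+T}$, followed by Proposition~\ref{BraneInvariance} for the fixed map $(\phi_H^T)^{-1}$.

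One notational caveat: with the convention $\phi_H^t=\phi_H^{0,t}$ and a time-dependent $H$, your $\phi_H^{-\tau}$ must be read as $(\phi_H^{\tau})^{-1}=\phi_H^{\tau,0}$, not as $\phi_H^{0,-\tau}$.
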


\begin{proof}
	We only prove the result for $\cL_\omega$. For all real $t \in \R$, $\lfloor t \rfloor$ denotes the integer part of $t$ and $\{t\} \coloneq t - \lfloor t \rfloor \in [0,1)$ denotes its fractional part. Up to extraction, we can assume that $\lim_k \{t_k\} = T \in [0,1]$, and we set $\cL^0_\omega = (\phi_H^T)^{-1}(\cL_\omega)$ and $h^0_\omega : \cL^0_\omega \to \R$ defined by
	\begin{align*}
		h^0_\omega(x_0) = h_\omega(x_T) - \int_0^T \big( x_\tau^* \lambda - H(\tau, x_\tau) \big) \;d\tau
	\end{align*}
	where $x_0$ belongs to $\cL^0_\omega$ and $x_\tau = \phi_H^\tau(x_0)$. We will show that $(\cL_{\lfloor t_k \rfloor},[h_{\lfloor t_k \rfloor}])$ brane-converges to $(\mathcal{L}^0_\omega,[h^0_\omega])$. By Proposition \ref{BraneInvariance} applied to $\varphi = (\phi_H^T)^{\pm1}$, this is equivalent to $(\cL_{\lfloor t_k \rfloor+T},[h_{\lfloor t_k \rfloor+T}])$ being brane-convergent to $(\mathcal{L}_\omega,[h_\omega])$. We have
	\begin{align*}
		d_B\big((\cL_{\lfloor t_k \rfloor+T},[h_{\lfloor t_k \rfloor+T}]),(\mathcal{L}_\omega,[h_\omega])\big) \leq  d_B\big((\cL_{\lfloor t_k \rfloor+T},[h_{\lfloor t_k \rfloor+T}]),(\cL_{ t_k },[h_{t_k}])\big) + d_B\big((\cL_{ t_k },[h_{t_k}]),(\mathcal{L}_\omega,[h_\omega])\big)
	\end{align*}
	Moreover, since $\lim_k d_B\big((\cL_{ t_k },[h_{t_k}]),(\mathcal{L}_\omega,[h_\omega])\big)=0$, it suffices to show that 
	\begin{align*}
		\lim_k d_B\big((\cL_{\lfloor t_k \rfloor+T},[h_{\lfloor t_k \rfloor+T}]),(\cL_{ t_k },[h_{t_k}])\big) = 0
	\end{align*}
	Let $K$ be a compact neighbourhood of $\cL_\omega$ for the Hausdorff topology, so that for large $k$, $\cL_{t_k}$ is included in $K$. Thus, since $\lim_k \{t_k\} = T$, we deduce that the restriction of $\phi_H^{\{t_k\},T}$ to $K$ uniformly converges to the identity. We obtain 
	\begin{align*}
		\sup_{x_{t_k} \in \cL_{t_k}} d \big( \phi_H^{\{t_k\},T}(x_{t_k}), x_{t_k}  \big) \leq d\left(\phi_{H}^{\{t_k\},T}\big|_K, \mathrm{Id}_K \right) \longrightarrow 0 \quad \text{as } k \to +\infty
	\end{align*}
	Moreover, if $x_{t_k}$ is a point of $\cL_{t_k}$ and $x_\tau = (q_\tau,p_\tau) = \phi_H^{t_k,\tau}(x_{t_k})$, then we have $x_{\lfloor t_k \rfloor+T}$ belongs to $\cL_{\lfloor t_k \rfloor+T}$ and
	\begin{align*}
		h_{\lfloor t_k \rfloor+T}(x_{\lfloor t_k \rfloor+T}) - h_{t_k}(x_{t_k}) &= \int_{t_k}^{\lfloor t_k \rfloor+T} \big( x_\tau^*\lambda - H(\tau,x_\tau) \big) \; d\tau =  \int_{t_k}^{\lfloor t_k \rfloor+T} L(\tau, q_\tau, \dot{q}_\tau) \; d\tau  \\
		&= \int_{t_k}^{\lfloor t_k \rfloor+T} L(\tau, q_\tau, \partial_pH(\tau,x_\tau) ) \; d\tau
	\end{align*}
	where we have used Remark \ref{LagsImagesLag} and the Legendre map $\leg$ defined in \eqref{Legendrian}. We consider the compact set
	\begin{align*}
		\phi_H^{[0,1],[-1,1]}(K) \coloneq \left\{ \phi_H^{s,t}(x) \; | \; x \in K, \; s \in [0,1], \; t \in [-1,1]  \right\}
	\end{align*}
	and we set $C>0$ to be larger than the maximum of the map $(\tau,q,p) \mapsto |L(\tau, q, \partial_pH(\tau,q,p))|$ on $\T^1 \times \phi_H^{[0,1],[-1,1]}(K)$. We have for all time $\tau$ between $t_k$ and $\lfloor t_k \rfloor+T$
	\begin{align*}
		x_\tau \in \cL_\tau = \phi_H^{t_k,\tau}(\cL_{t_k}) = \phi_H^{\{t_k\},\tau-\lfloor t_k \rfloor}(\cL_{t_k}) \subset \phi_H^{\{t_k\},\tau-\lfloor t_k \rfloor}(K) \subset \phi_H^{[0,1],[-1,1]}(K)
	\end{align*}
	for large $t_k$. Hence, we deduce that 
	\begin{align*}
		\left|h_{\lfloor t_k \rfloor+T}(x_{\lfloor t_k \rfloor+T}) - h_{t_k}(x_{t_k})\right| \leq C|\lfloor t_k \rfloor+T - t_k| = C|T - \{ t_k \}| \longrightarrow 0 \quad \text{as } k \to \infty
	\end{align*}
	By definition of the brane distance $d_B$, we conclude that 
	\begin{align*}
		d_B\big((\cL_{\lfloor t_k \rfloor+T},[h_{\lfloor t_k \rfloor+T}]),(\cL_{ t_k },[h_{t_k}])\big) &\leq \sup_{x_{t_k} \in \cL_{t_k}} d \big( \phi_H^{\{t_k\},T}(x_{t_k}), x_{t_k}  \big)  + \sup_{x_{t_k} \in \cL_{t_k}} \left\{ \left|h_{\lfloor t_k \rfloor+T}(x_{\lfloor t_k \rfloor+T}) - h_{t_k}(x_{t_k})\right| \right\} \\
		&\leq d\left(\phi_{H}^{\{t_k\},T}\big|_K, \mathrm{Id}_K \right) + C|T - \{ t_k \}| \longrightarrow 0 \quad \text{as } k \to +\infty
	\end{align*}
\end{proof}

\subsection{Approximation} \label{ApproximationSection}

The set of bare Lagrangian branes $\overline{\mathfrak{LB}(T^*M,\omega)}$ is closed in the brane topology and invariant under the variational operation on branes defined in Definition \ref{LagsImages}. This would automatically imply that the bare branes $(\mathcal{L}_\omega,[h_\omega])$ and $(\mathcal{L}_\alpha,[h_\alpha])$ belong to $\overline{\mathfrak{LB}(T^*M,\omega)}$. However, we will need a slightly stronger statement in the following proposition.

\begin{prop} \label{ConvApproxProp}
	Let $K \subset T^*M$ be a compact neighbourhood of $\cL$ that contains every $\cL^n$. For every integer $k \geq 1$, set $I_k = \{n_k, n_{k+1}, -m_k,-m_{k+1} \}$ and let $\kappa_k \geq 1$ be a real number such that for all $i \in I_k$ and $x$, $y$ in $K$,
	\begin{equation*}
		d ( \phi_H^{0,i}(x),\phi_H^{0,i}(y) ) \leq \kappa_k d(x,y)
	\end{equation*}
	There exists an increasing sequence of positive integers $(j_k)_{k\in \N}$ such that 
	\begin{equation}
		\lim_k \max_{i \in I_k} d_B \big( (\cL^{j_k}_i,h^{j_k}_i), (\cL_i,h_i) \big) =0 \quad \text{and} \quad \lim_k \kappa_k d_B \big( (\cL^{j_k},h^{j_k}), (\cL,h) \big) =0
	\end{equation}
	
	In particular, 
	\[\lim\limits_{k\to\infty}(\mathcal{L}^{j_k}_{n_k},h^{j_k}_{n_k}+c_{n_k})=\lim\limits_{k\to\infty}(\mathcal{L}^{j_k}_{n_{k+1}},h^{j_k}_{n_{k+1}}+c_{n_{k+1}})=(\mathcal{L}_\omega, h_\omega)\] and \[\lim\limits_{k \to \infty}(\mathcal{L}^{j_k}_{-m_k},h^{j_k}_{-m_k}+c_{-m_k})=\lim\limits_{k \to \infty}(\mathcal{L}^{j_k}_{-m_{k+1}},h^{j_k}_{-m_{k+1}}+c_{-m_{k+1}})=(\mathcal{L}_\alpha, h_\alpha)\] in the brane topology.
\end{prop}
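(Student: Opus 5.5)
This is a diagonal extraction. For each fixed $k$, the set $I_k$ is finite and $\kappa_k$ is a fixed finite number. The only dependence on $n$ is through $(\cL^n,h^n)\to(\cL,h)$. So it suffices to show two things: for each fixed $i\in\Z$ we have $d_B\big((\cL^n_i,h^n_i),(\cL_i,h_i)\big)\to 0$ as $n\to\infty$, and $d_B\big((\cL^n,h^n),(\cL,h)\big)\to0$.

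\textbf{Step 1 (fixed-time convergence).} The first fact is exactly Proposition \ref{ConvTimeProp} applied at time $t=i$. Concretely, $\phi_H^i$ is an exact symplectomorphism with $(\phi_H^i)^*\lambda-\lambda=df_i$ (Proposition \ref{LagsImagesPrimitive}), and $h^n_i=(h^n+f_i)\circ(\phi^i_H)^{-1}$. Proposition \ref{BraneInvariance} then gives brane convergence. The second fact is the hypothesis.

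\textbf{Step 2 (diagonal choice of $j_k$).} We choose $j_k$ inductively. Given $j_{k-1}$, pick $j_k>j_{k-1}$ large enough that both
\[
\max_{i\in I_k} d_B\big((\cL^{j_k}_i,h^{j_k}_i),(\cL_i,h_i)\big)\le \tfrac1k
\qquad\text{and}\qquad
\kappa_k\, d_B\big((\cL^{j_k},h^{j_k}),(\cL,h)\big)\le\tfrac1k .
\]
This is possible by Step 1, since $I_k$ has four elements and $\kappa_k<\infty$ is fixed before $j_k$ is chosen. Letting $k\to\infty$ gives the two limits. Note that the definition of $\kappa_k$ does not depend on $n$, because $K$ contains every $\cL^n$. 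The Lipschitz constant is used only in the quantitative second condition, which is achieved simply by taking $j_k$ larger.

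\textbf{Step 3 (the ``in particular'').} We use the triangle inequality for $d_B$. Adding a constant $c$ to both primitives leaves $d_B$ unchanged. Hence
\[
d_B\big((\cL^{j_k}_{n_k},h^{j_k}_{n_k}+c_{n_k}),(\cL_\omega,h_\omega)\big)\le d_B\big((\cL^{j_k}_{n_k},h^{j_k}_{n_k}),(\cL_{n_k},h_{n_k})\big)+d_B\big((\cL_{n_k},h_{n_k}+c_{n_k}),(\cL_\omega,h_\omega)\big).
\]
The first term tends to $0$ by Step 2 since $n_k\in I_k$. The second term tends to $0$ by the choice of the constants $c_{n_k}$ (Lemma \ref{BraneModuloLem}).

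For the index $n_{k+1}$, the same argument works. We have $n_{k+1}\in I_k$. Also, $(\cL_{n_{k+1}},h_{n_{k+1}}+c_{n_{k+1}})\to(\cL_\omega,h_\omega)$, because it is a shifted tail of the original convergent sequence. The $\alpha$ case is identical with $-m_k$ and $-m_{k+1}$.

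There is no real obstacle. The only point to watch is that the order of quantifiers permits choosing $j_k$ after $I_k$ and $\kappa_k$ are fixed. This is the case, because everything indexed by $k$ is determined by the fixed sequences $n_k$ and $m_k$, independently of the approximating sequence.
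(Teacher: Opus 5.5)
Your proposal is correct and follows the paper's proof essentially verbatim. For each $k$ you apply Proposition \ref{ConvTimeProp} on the finite set $I_k$ with $\kappa_k$ fixed, choose $j_k>j_{k-1}$ recursively so that both quantities are at most $1/k$, and deduce the ``in particular'' from the triangle inequality for $d_B$ and the convergence of $(\cL_{n_k},h_{n_k}+c_{n_k})$ and $(\cL_{-m_k},h_{-m_k}+c_{-m_k})$; your remarks on shift-invariance of $d_B$ and on handling the $n_{k+1}$ and $-m_{k+1}$ terms (since $n_{k+1},-m_{k+1}\in I_k$ and shifted tails still converge) only make explicit what the paper leaves implicit.
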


\begin{proof}
	Fix $k\geq1$. For every $i\in I_k$, Proposition \ref{ConvTimeProp} asserts that the sequence $(\cL^n_i,h^n_i)$ brane-converges to $(\cL_i,h_i)$. Since $I_k$ is finite, we deduce that
	\begin{equation*}
		\lim_{n\to\infty} \max_{i\in I_k} d_B\big( (\cL^n_i,h^n_i),(\cL_i,h_i) \big)=0
	\end{equation*}
	Moreover, $\kappa_k$ is fixed, and since $(\cL^n,h^n)$ brane-converges to $(\cL,h)$,
	\begin{equation*}
		\lim_{n\to\infty} \kappa_k d_B\big( (\cL^n,h^n),(\cL,h) \big)=0
	\end{equation*}

	We construct the sequence $(j_k)_k$ recursively. Set $j_0=0$. For every $k\geq1$, choose $j_k>j_{k-1}$ large enough so that
	\begin{equation*}
		\max_{i\in I_k} d_B\big( (\cL^{j_k}_i,h^{j_k}_i),(\cL_i,h_i)\big) \leq \frac{1}{k} \quad \text{and} \quad \kappa_k d_B\big( (\cL^{j_k},h^{j_k}),(\cL,h) \big) \leq \frac{1}{k}
	\end{equation*}
	The two desired limits follow directly. Now, using the triangle inequality and the fact that $(\mathcal{L}_{n_k},h_{n_k}+c_{n_k})$ and $(\mathcal{L}_{-m_k},h_{-m_k}+c_{-m_k})$ brane-converge, respectively, to $(\mathcal{L}_\omega,h_\omega)$ and $(\mathcal{L}_\alpha,h_\alpha)$, we find that the sequence $(j_k)$ satisfies the proposition.
\end{proof}

In what follows, we replace the sequence $(\mathcal L^k,h^k)$ with its subsequence $(\mathcal L^{j_k},h^{j_k})$, so that we now have
\begin{equation} \label{ConvApproxHyp}
	\begin{split}
		(\mathcal{L}_\omega, h_\omega)&=\lim\limits_{k\to\infty}(\mathcal{L}^{k}_{n_k},h^{k}_{n_k}+c_{n_k})=\lim\limits_{k\to\infty}(\mathcal{L}^{k}_{n_{k+1}},h^{k}_{n_{k+1}}+c_{n_{k+1}}), \\
		(\mathcal{L}_\alpha, h_\alpha)&=\lim\limits_{k \to \infty}(\mathcal{L}^{k}_{-m_k},h^{k}_{-m_k}+c_{-m_k})=\lim\limits_{k \to \infty}(\mathcal{L}^{k}_{-m_{k+1}},h^{k}_{-m_{k+1}}+c_{-m_{k+1}}) \\
        \text{and} \quad & \lim\limits_{k \to \infty} \kappa_k d_B \big( (\cL^k,h^k), (\cL,h) \big) =0
	\end{split}
\end{equation}

This allows us to define variational solutions $u_\alpha$ and $u_\omega$ associated to the limit bare branes $(\mathcal{L}_\alpha, h_\alpha)$ and $(\mathcal{L}_\omega, h_\omega)$. Moreover, if we denote by $u^k_\tau = u^k(\cdot+\tau, \cdot)$ the variational solution associated to $(\mathcal{L}^k_\tau,h^k_\tau)$ where $\tau \in \Z$, then, following Corollary \ref{GSConv}, the variational solutions $u^k_{n_k}+c_{n_k}$ and $u^k_{-m_k}+c_{-m_k}$ converge uniformly on $\mathbb{R} \times M$ to the variational solutions $u_\omega$ and $u_\alpha$, respectively. This reads as
\begin{equation}
	\lim_k \Vert u^k_{n_k} - u_\omega +c_{n_k} \Vert_\infty^{\mathbb{R} \times M}  = 0 \quad \text{and} \quad \lim_k \Vert u^k_{-m_k} - u_\alpha  +c_{-m_k}\Vert_\infty^{\mathbb{R} \times M} = 0
\end{equation}

\subsection{Study of Limit Points} \label{LPsubsection}

We begin by searching for suitable calibrated curves for the limiting variational solutions $u_\omega$ and $u_\alpha$.\\

Let $x=(q,p)$ be a point of $\mathcal{L}$ with orbit $x_t = (q_t,p_t) = \phi_H^t(x)$ under the Hamiltonian flow. We know from Proposition \ref{BraneHausdorffProp} that brane convergence implies Hausdorff convergence, and hence the sets $\mathcal{L}_{n_k}$ converge in the Hausdorff topology to the bounded compact set $\mathcal{L}_\omega$. Hence, the sequence $x_{n_k}$ admits a limit point $x_\omega=(q_\omega, p_\omega)$ that belongs to $\mathcal{L}_\omega$. We denote its orbit by $x_{\omega,t} =(q_{\omega,t}, p_{\omega,t}) = \phi_H^t(x_\omega)$. We can assume, up to extraction and to reapplication of Proposition \ref{ConvApproxProp}, that 
\begin{equation} \label{Extract}
	x_{n_k} \longrightarrow x_\omega \quad \text{and} \quad n_{k+1}-n_k \longrightarrow +\infty \quad \text{as } k \to \infty
\end{equation}

Similarly, let $x_\alpha=(q_\alpha, p_\alpha) \in \mathcal{L}_\alpha$ be a limit point of the sequence $x_{-m_k}$. $x_{\alpha,t} =(q_{\alpha,t}, p_{\alpha,t}) = \phi_H^t(x_\alpha)$. We assume, up to extraction, that 
\begin{equation} \label{Extract2}
	x_{-m_k} \longrightarrow x_\alpha \quad \text{and} \quad m_{k+1}-m_k \longrightarrow +\infty \quad \text{as } k \to \infty
\end{equation}

We have defined the variational solutions $u$, $u_\alpha$ and $u_\omega$. We will use the notation $u_t := u(\cdot + t,\cdot)$.

\begin{prop} \label{CalibLimit}
	The curves $q_{\alpha,t}$ and $q_{\omega,t}$ are respectively calibrated by $u_\alpha$ and $u_\omega$.
\end{prop}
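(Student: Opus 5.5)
The plan is a monotonicity argument along a single Hamiltonian orbit, in the spirit of \cite{Birkhoff}. The key quantity is the gap between the primitive and the variational subsolution along the orbit. I would first prove that the original curve $q_t$ has calibration defects tending to zero on the long windows $[n_k+a,n_k+b]$. Then I would pass to the limit in these windows to obtain exact calibration of $q_{\omega,t}$ by $u_\omega$. The case of $q_{\alpha,t}$ is symmetric, using the windows $[-m_{k+1},-m_k]$.

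\emph{Step 1: a monotone quantity.} Set
\[
D(t)=h_t(x_t)-u(t,q_t),\qquad x_t=\phi_H^t(x).
\]
By Remark \ref{LagsImagesLag}, $h_b(x_b)-h_a(x_a)=\int_a^b L(\tau,q_\tau,\dot q_\tau)\,d\tau$ for $a<b$. Hence
\[
D(b)-D(a)=\delta(u,q,[a,b]).
\]
Since $u$ is dominated by $L$ (Proposition \ref{GSSubsolution}), the defect is non-negative, so $D$ is non-decreasing.

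\emph{Step 2: $D$ is bounded.} For a smooth brane, spectrality (Proposition \ref{prop:spectrality}, extended to non-transverse pairs by the $L^{1,\infty}$-Lipschitz property) gives $\min h^j_t\le u^j(t,q)\le \max h^j_t$. Passing to the brane limit with Corollary \ref{GSConv}, I expect
\[
u(t,q)\in\bigl[\min_{\cL_t}h_t,\ \max_{\cL_t}h_t\bigr],\qquad\text{hence}\qquad |D(t)|\le \osc(h_t).
\]
At the times $n_k$ and $-m_k$, $\osc(h_{n_k})\to\osc(h_\omega)$ and $\osc(h_{-m_k})\to \osc(h_\alpha)$, because oscillation is invariant under adding constants and brane convergence controls it. Combined with monotonicity, $D$ is bounded on all of $\R$, so it has finite limits at $\pm\infty$. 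Therefore
\[
\delta\bigl(u,q,[n_k,n_{k+1}]\bigr)=D(n_{k+1})-D(n_k)\to 0 .
\]
Since the defect increases with the interval, and $n_{k+1}-n_k\to\infty$ by \eqref{Extract}, we get for any fixed $a<b$ and large $k$:
\[
\delta\bigl(u,q,[n_k+a,n_k+b]\bigr)\to 0 .
\]

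\emph{Step 3: passage to the limit (main obstacle).} Rewrite the defect as $\delta(u_{n_k}+c_{n_k},\,q_{n_k+\cdot},\,[a,b])$, using time-periodicity of $L$ since $n_k$ is an integer; adding the constant $c_{n_k}$ does not change it. The curves $q_{n_k+t}$ converge in $C^1([a,b])$ to $q_{\omega,t}$, because $x_{n_k}\to x_\omega$ and the flow is continuous; hence the action integrals converge. The delicate point is showing $u_{n_k}+c_{n_k}\to u_\omega$ locally uniformly. Here $u_\omega$ is defined through the diagonal approximants $(\cL^k_{n_k},h^k_{n_k}+c_{n_k})$, whereas $u_{n_k}$ is itself a limit over $j$. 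I would first try to run Steps 1–2 directly for $u^k$ and an orbit $x^k\in\cL^k$ with $d(x^k,x)\le d_B(\cL^k,\cL)$. The control $\kappa_k d_B\to0$ from \eqref{ConvApproxHyp} then keeps $x^k_{n_k}, x^k_{n_{k+1}}$ close to $x_{n_k}, x_{n_{k+1}}$. Proposition \ref{BranePointwiseConv} ensures the primitive values match in the limit, and the estimate \eqref{inequality:SpectralDomination} gives $\Vert u^k_{n_k}+c_{n_k}-u_\omega\Vert_\infty\to0$. The endpoint values at $n_k$ and $n_{k+1}$ are controlled by the two limits of \eqref{ConvApproxHyp}, both equal to $(\cL_\omega,h_\omega)$. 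This makes $D^k(n_{k+1})-D^k(n_k)$ asymptotically equal to $D(n_{k+1})-D(n_k)\to0$. Finally, letting $k\to\infty$ in $\delta(u^k_{n_k}+c_{n_k},q^k_{n_k+\cdot},[a,b])\to0$ gives $\delta(u_\omega,q_\omega,[a,b])=0$ for every $a<b$, so $q_{\omega,t}$ is $u_\omega$-calibrated.
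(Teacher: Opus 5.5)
Your Steps 1 and 2 are correct. $D$ is non-decreasing by domination of $u$. The bound $\min_{\cL_t}h_t\le u(t,\cdot)\le\max_{\cL_t}h_t$, together with $\osc(h_{n_k})\to\osc(h_\omega)$, makes $D$ bounded above at $+\infty$, so its increments over windows tending to $+\infty$ vanish.

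The trouble is Step 3. The convergence you call delicate is in fact immediate. The estimate in Corollary~\ref{GSConv} is uniform in $t\in\R$, because the spectral invariants $\rho(\mu,\cdot,\cdot)$ are flow-invariant. Hence $\Vert u^k-u\Vert_\infty^{\R\times M}\to0$, and
\[
\Vert u_{n_k}+c_{n_k}-u_\omega\Vert_\infty^{\R\times M}\le \Vert u-u^k\Vert_\infty^{\R\times M}+\Vert u^k_{n_k}+c_{n_k}-u_\omega\Vert_\infty^{\R\times M}\longrightarrow 0 .
\]
The second term is handled by Corollary~\ref{GSConv} and Proposition~\ref{GSTranslation} applied to the diagonal branes $(\cL^k_{n_k},h^k_{n_k}+c_{n_k})$. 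With this, your main route closes by passing to the limit in $\delta(u,q,[n_k+a,n_k+b])\to0$, and you can stop there.

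The fallback you actually write is a different argument, namely the paper's, and in it Steps 1--2 play no role. $D^k(n_k)$ and $D^k(n_{k+1})$ both converge to $h_\omega(x_\omega)-u_\omega(0,q_\omega)$ by \eqref{ConvApproxHyp}, Proposition~\ref{BranePointwiseConv} and Corollary~\ref{GSConv}, so their difference tends to $0$ directly. Your claim that this difference is asymptotically equal to $D(n_{k+1})-D(n_k)$ is not needed. It would also require comparing $u^k$ and $u$ at the times $n_k$, which is exactly the time-uniform convergence above.

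In addition, in the fallback the window $[n_k,n_{k+1}]$ only dominates $[n_k+a,n_k+b]$ when $a\ge0$. But you need calibration on a neighbourhood of $t=0$, since Lemma~\ref{SpectralityLemma} applies Theorem~\ref{Fathi} at time $0$. The paper handles this with the shifted windows $[n_k+a,n_{k+1}+a]$ and Proposition~\ref{ConvTimeProp}.

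Once repaired, your route genuinely differs from the paper's. The paper kills the defect over $[n_k+a,n_{k+1}+a]$ because both endpoints of the window see the same limit brane $(\cL_\omega,h_\omega)$. This needs the diagonal approximation of Proposition~\ref{ConvApproxProp}: both limits in \eqref{ConvApproxHyp}, and the $\kappa_k$-control ensuring that a single approximating orbit has $x^k_{n_k}$ and $x^k_{n_{k+1}}$ both close to $x_\omega$. It uses no a priori bound relating $u$ and $h$.

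Your argument uses that bound instead, turning $D$ into a bounded monotone function. It works with the true orbit of $x\in\cL$, never matches two different times of the sequence, and needs neither $n_{k+1}-n_k\to\infty$ nor the $\kappa_k$-control. The price is the time-uniform continuity of variational solutions noted above. The remark after Lemma~\ref{SpectralityLemma} mentions arguing directly with $u$, but still by endpoint matching rather than monotone convergence.
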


\begin{proof}
	We only prove the calibration for $q_{\omega,t}$. The case of $q_{\alpha,t}$ is done analogously. Let $a<b$ be two times. We need to prove that the defect of calibration $\delta(u_\omega, q_\omega,[a,b])$ is null.
	
	Let $x^k = (q^k,p^k)$ be a point of the approximating Lagrangian submanifold $\mathcal{L}^k$ such that $d(x^k,x) \leq d_H(\mathcal{L}^k, \mathcal{L})$. We set $x^k_t = (q^k_t,p^k_t) = \phi_H^t(x^k)$. Then, by Proposition \ref{ConvApproxProp}, we have for $i_k = n_k$ or $n_{k+1}$,
    \begin{align}
        d(x^k_{i_k},x_\omega) &\leq d(x^k_{i_k},x_{i_k}) + d(x_{i_k},x_\omega) \\
        &\leq \kappa_{k} d(x^k,x) + d(x_{i_k},x_\omega) \\
        &\leq \kappa_{k} d_B(\mathcal{L}^k, \mathcal{L}) + d(x_{i_k},x_\omega) \longrightarrow 0 \quad \text{as } k \to +\infty
    \end{align}
    Hence, we obtain
    \begin{equation} \label{eq:LimitPoints}
        \lim_k x^k_{n_k} =  \lim_k x^k_{n_{k+1}} = x_\omega
    \end{equation}

    The following lemma links the defect of the calibration of $q_\omega$ to that of $q^k_{n_k}$.
	\begin{lem}\label{LemDeltaConv}
		We have
		\begin{equation*}
			\delta(u_\omega, q_\omega,[a,b]) = \lim\limits_k  \delta(u^k_{n_k}, q^k_{n_k},[a,b])
		\end{equation*}
	\end{lem}
	\begin{proof}
		We are studying the limit of 
		\begin{equation*}
			\delta(u^k_{n_k}, q^k_{n_k},[a,b]) = \int_a^b L(\tau, q^k_{n_k+\tau}, \dot{q}^k_{n_k+\tau}) \; d\tau - \big[ u^k_{n_k}(b, q^k_{n_k}(b)) - u^k_{n_k}(a, q^k_{n_k}(a)) \big]
		\end{equation*}
		We look at the limit of the curve $q^k_{n_k} : [a,b] \to M$. We have seen in \eqref{eq:LimitPoints} that the sequence $x^k_{n_k}$ converges to $x_\omega$. This implies that the curve $x^k_{n_k} : [a,b] \to T^*M$ uniformly converges to the curve $x_\omega : [a,b] \to T^*M$. Since $\leg^{-1}(x^k_{n_k}) = (q^k_{n_k},\dot{q}^k_{n_k})$ and $\leg^{-1}(x_\omega) = (q_\omega,\dot{q}_\omega)$, we obtain the convergence of the Lagrangian actions
		\begin{equation} \label{LemDeltaConvDem1}
			\lim_{k \to + \infty} \int_a^b L(\tau, q^k_{n_k+\tau}, \dot{q}^k_{n_k+\tau}) \; d\tau  = \int_a^b L(\tau, q_{\omega,\tau}, \dot{q}_{\omega,\tau}) \; d\tau 
		\end{equation}
		
		Additionally, we know from Corollary \ref{GSConv} that the variational solutions $u^k_{n_k}+c_{n_k}$ converge uniformly on $\mathbb{R} \times M$ to the variational solution $u_\omega$. Hence, the pointwise convergence $\lim_k q^k_{n_k}(a) = q_\omega(a)$ and $\lim_k q^k_{n_k}(b) = q_\omega(b)$ yield the limit
		\begin{equation} \label{LemDeltaConvDem2}
			\begin{split}
				\lim_{k\to +\infty} \big[ u^k_{n_k}(b, q^k_{n_k}(b)) - u^k_{n_k}(a, q^k_{n_k}(a)) \big] &= \lim_{k\to +\infty} \big[ (u^k_{n_k}(b, q^k_{n_k}(b)) + c_{n_k}) - (u^k_{n_k}(a, q^k_{n_k}(a)) + c_{n_k}) \big] \\
				&= u_\omega(b, q_\omega(b)) - u_\omega(a, q_\omega(a))
			\end{split}
		\end{equation}
		
		Gathering \eqref{LemDeltaConvDem1} and \eqref{LemDeltaConvDem2} gives the desired limit
		\begin{align*}
			\lim_{k \to +\infty} \delta(u^k_{n_k}, q^k_{n_k},[a,b]) &= \lim_{k \to +\infty} \left( \int_a^b L(\tau, q^k_{n_k+\tau}, \dot{q}^k_{n_k+\tau}) \; d\tau - \big[ u^k_{n_k}(b, q^k_{n_k}(b)) - u^k_{n_k}(a, q^k_{n_k}(a)) \big] \right) \\
			&= \int_a^b L(\tau, q_{\omega,\tau}, \dot{q}_{\omega,\tau}) \; d\tau - \big[ u_\omega(b, q_\omega(b)) - u_\omega(a, q_\omega(a)) \big]\\
			&= \delta(u_\omega, q_\omega,[a,b])
		\end{align*}
	\end{proof}
	
	Now set $b_k = a + n_{k+1}-n_k$. From assumption \eqref{Extract}, we have the inclusion $[a,b] \subset [a,b_k]$ for large $k$. Thus, the lemma and the positivity of the defect of calibration (Proposition \ref{Domination}) lead to
	\begin{equation} \label{CalibLimDem0}
		0 \leq \delta(u_\omega, q_{\omega},[a,b]) = \lim\limits_{k \to +\infty} \delta(u^k_{n_k}, q^k_{n_k},[a,b]) \leq \liminf_{k \to +\infty} \delta(u^k_{n_k}, q^k_{n_k},[a,b_k])
	\end{equation}
	Let us evaluate
	\begin{align*}
		\delta(u^k_{n_k}, q^k_{n_k},[a,b_k]) &= \int_a^{b_k} L(\tau,q^k_{n_k}(\tau),\dot{q}^k_{n_k}(\tau)) \; d\tau - \big[u^k_{n_k}(b_k,q^k_{n_k}(b_k)) - u^k_{n_k}(a,q^k_{n_k}(a)) \big]
	\end{align*}
	
	We know from the Definition \ref{LagsImages} of variational bare branes and from Remark \ref{LagsImagesLag} that
	\begin{align*}
		\int_a^{b_k} L(\tau,q^k_{n_k}(\tau),\dot{q}^k_{n_k}(\tau)) \; d\tau &= h^k_{n_k+b_k}(x^k_{n_k}(b_k)) - h^k_{n_k+a}(x^k_{n_k}(a)) \\
		&= h^k_{n_{k+1}+a}(x^k_{n_{k+1}}(a)) - h^k_{n_k+a}(x^k_{n_k}(a))
	\end{align*}
	and we get
	\begin{equation} \label{CalibLimDem1}
		\begin{split}
			\delta(u^k_{n_k}, q^k_{n_k},[a,b_k]) &= \big[ h^k_{n_{k+1}+a}(x^k_{n_{k+1}}(a)) - h^k_{n_k+a}(x^k_{n_k}(a)) \big] - \big[u^k_{n_k}(b_k,q^k_{n_k}(b_k)) - u^k_{n_k}(a,q^k_{n_k}(a)) \big] \\
			&=\big[ h^k_{n_{k+1}+a}(x^k_{n_{k+1}}(a)) - h^k_{n_k+a}(x^k_{n_k}(a)) \big] - \big[u^k_{n_{k+1}}(a,q^k_{n_{k+1}}(a)) - u^k_{n_k}(a,q^k_{n_k}(a)) \big] \\
			&= \Big[ \big( h^k_{n_{k+1}+a}(x^k_{n_{k+1}}(a)) + c_{n_{k+1}} \big) - \big( h^k_{n_k+a}(x^k_{n_k}(a)) + c_{n_k} \big) \Big] \\
			&- \Big[ \big( u^k_{n_{k+1}}(a,q^k_{n_{k+1}}(a)) + c_{n_{k+1}} \big) - \big( u^k_{n_k}(a,q^k_{n_k}(a)) + c_{n_k} \big) \Big]
		\end{split}	
	\end{equation}
	
	Recall from \eqref{ConvApproxHyp} that we have brane convergence of the branes $(\mathcal{L}^k_{n_{k+1}},h^k_{n_{k+1}} + c_{n_{k+1}})$ and $(\mathcal{L}^k_{n_k},h^k_{n_k}+ c_{n_k})$ towards $(\mathcal{L}_\omega,h_\omega)$. Hence, using Proposition \ref{ConvTimeProp}, we also have brane convergence of the branes $(\mathcal{L}^k_{n_{k+1}+a},h^k_{n_{k+1}+a}+ c_{n_{k+1}})$ and $(\mathcal{L}^k_{n_k+a},h^k_{n_k+a}+c_{n_k})$ towards $(\mathcal{L}_{\omega,a},h_{\omega,a})$. Moreover, we have seen from \eqref{eq:LimitPoints} that the points $x^k_{n_k}$ and $x^k_{n_{k+1}}$ converge to $x_\omega$, which implies the convergence of the points $x^k_{n_k}(a)$ and $x^k_{n_{k+1}}(a)$ towards $x_\omega(a)$. Thus, we infer from Proposition \ref{BranePointwiseConv} the limits
	\begin{equation}\label{CalibLimDem2}
		\lim_{k\to +\infty} h^k_{n_{k+1}+a}(x^k_{n_{k+1}}(a)) + c_{n_{k+1}} = h_{\omega,a}(x_\omega(a)) \quad \text{and} \quad \lim_{k\to +\infty} h^k_{n_k+a}(x^k_{n_k}(a)) + c_{n_k} = h_{\omega,a}(x_\omega(a))
	\end{equation}
	
	Additionally, the brane convergences mentioned above, together with Proposition \ref{GSTranslation} and Corollary \ref{GSConv}, imply that the variational solutions $u^k_{n_{k+1}}+c_{n_{k+1}}$ and $u^k_{n_k}+ c_{n_k}$ both converge uniformly on $\mathbb{R} \times M$ to the variational solution $u_\omega$. This gives the limits
	\begin{equation}\label{CalibLimDem3}
		\lim_{k \to +\infty } u^k_{n_{k+1}}(a,q^k_{n_{k+1}}(a)) + c_{n_{k+1}} = u_\omega(a,q_\omega(a)) \quad \text{and} \quad \lim_{k \to +\infty } u^k_{n_k}(a,q^k_{n_k}(a)) + c_{n_k} = u_\omega(a,q_\omega(a))
	\end{equation}
	
	Gathering \eqref{CalibLimDem1}, \eqref{CalibLimDem2} and \eqref{CalibLimDem3}, we obtain the desired limit
	\begin{equation}
		\lim_{k \to +\infty} \delta(u^k_{n_k}, q^k_{n_k},[a,b_k]) = \big[ h_{\omega,a}(x_\omega(a)) - h_{\omega,a}(x_\omega(a)) \big] - \big[ u_\omega(a,q_\omega(a)) - u_\omega(a,q_\omega(a)) \big] = 0
	\end{equation}
	Therefore, the inequalities in \eqref{CalibLimDem0} become
	\begin{align*}
		0 \leq \delta(u_\omega, q_{\omega},[a,b]) \leq \liminf_{k \to +\infty} \delta(u^k_{n_k}, q^k_{n_k},[a,b_k]) = 0
	\end{align*}
	and the defect of calibration $\delta(u_\omega, q_{\omega},[a,b])=0$ vanishes.
\end{proof}

\subsection{Study of all Points} \label{APsubsection}

We now give suitable calibrated curves for the variational solution $u$ associated to $(\mathcal{L},h)$.

\begin{prop} \label{CalibTout}
	Let $x = (q,p)$ be a point of $\mathcal{L}$ with orbit $x_\tau = (q_\tau,p_\tau) = \phi_H^\tau(x)$. Then the curve $q_\tau$ is calibrated by the variational solution $u$ of $(\mathcal{L},h)$.
\end{prop}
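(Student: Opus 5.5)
The plan is to run the argument of Proposition \ref{CalibLimit} once more, but over the two-sided window $[-m_k,n_k]$ rather than $[a,b_k]$. The endpoint terms will then be controlled by the $\omega$-limit and the $\alpha$-limit respectively.

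Fix $a<b$. As before, pick $x^k\in\cL^k$ with $d(x^k,x)\le d_H(\cL^k,\cL)$ and set $x^k_\tau=\phi_H^\tau(x^k)$. By the Lipschitz constants $\kappa_k$ and \eqref{ConvApproxHyp}, exactly as in \eqref{eq:LimitPoints}, we get $x^k_{n_k}\to x_\omega$ and $x^k_{-m_k}\to x_\alpha$ (using \eqref{Extract} and \eqref{Extract2}). The analogue of Lemma \ref{LemDeltaConv} then gives $\delta(u,q,[a,b])=\lim_k\delta(u^k,q^k,[a,b])$. This uses uniform convergence $u^k\to u$ on compact time sets (Corollary \ref{GSConv}) and $C^1$ convergence of $q^k\to q$ on $[a,b]$. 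Since each $u^k$ is dominated (Proposition \ref{Domination}), the defect is monotone in the interval, so for large $k$
\[0\le \delta(u,q,[a,b])\le \liminf_k \delta(u^k,q^k,[-m_k,n_k]).\]

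Next I compute this defect through Remark \ref{LagsImagesLag}. The action of $q^k$ on $[-m_k,n_k]$ equals $h^k_{n_k}(x^k_{n_k})-h^k_{-m_k}(x^k_{-m_k})$. Adding and subtracting the constants $c_{n_k}$ and $c_{-m_k}$, the defect becomes
\[\Big[\big(h^k_{n_k}(x^k_{n_k})+c_{n_k}\big)-\big(u^k_{n_k}(0,q^k_{n_k})+c_{n_k}\big)\Big]-\Big[\big(h^k_{-m_k}(x^k_{-m_k})+c_{-m_k}\big)-\big(u^k_{-m_k}(0,q^k_{-m_k})+c_{-m_k}\big)\Big].\]
By Proposition \ref{BranePointwiseConv}, \eqref{ConvApproxHyp} and Corollary \ref{GSConv}, this converges to
\[\big[h_\omega(x_\omega)-u_\omega(0,q_\omega)\big]-\big[h_\alpha(x_\alpha)-u_\alpha(0,q_\alpha)\big].\]
The proof is therefore reduced to showing that each bracket vanishes. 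In other words, we need spectrality $u_\omega(0,q_\omega)=h_\omega(x_\omega)$ (and likewise for $\alpha$) at the endpoint of an orbit calibrated by the limiting subsolution. Proposition \ref{CalibLimit} supplies this calibration.

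This spectrality statement is the anticipated Lemma \ref{SpectralityLemma}, and I expect it to be the main obstacle. For bare branes, the identity $u=h_t(q,d_qu)$ of Proposition \ref{Graphext} is lost. The approach I would try first is the following.
\begin{enumerate}
\item Since $q_\omega$ is calibrated, Theorem \ref{Fathi} gives differentiability of $u_\omega$ along it, with $d_qu_\omega(t,q_{\omega,t})=p_{\omega,t}$. It also gives equality $\partial_tu_\omega+H=0$ there.
\item Approximate $(t,q_{\omega,t})$ by points of $\bigcap_k\mathcal U^k$, where $u^k=h^k_t(q,d_qu^k)$ holds exactly. Limits of the covectors $d_qu^k$ lie in $\cL_{\omega,t}\cap T^*_qM$.
\item Using the Lemma \ref{lem:BdS}-type convex-hull statement, $p_{\omega,t}$ is a convex combination of such limit covectors. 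Equality in the Hamilton--Jacobi equation together with strict convexity of $H$ forces the combination to be trivial, i.e.\ all these covectors equal $p_{\omega,t}$.
\item Passing to the limit in $u^k=h^k_t(\cdot)$ with Proposition \ref{BranePointwiseConv} then yields $u_\omega(t,q_{\omega,t})=h_{\omega,t}(x_{\omega,t})$.
\end{enumerate}
Once this holds for both $\omega$ and $\alpha$, the liminf above is $0$. Hence $\delta(u,q,[a,b])=0$ for all $a<b$, which proves Proposition \ref{CalibTout}.
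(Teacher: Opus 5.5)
Your proposal is correct and follows the paper's proof essentially step by step. It uses the same reduction via domination to the window $[-m_k,n_k]$, the same evaluation of the defect through the shifted primitives and shifted variational solutions, and the same proof of Lemma~\ref{SpectralityLemma} via Theorem~\ref{Fathi}, Lemma~\ref{lem:BdS} and strict convexity of $H$. Two points you leave implicit: the convex-hull step must be run on the full space-time differential $(\partial_tu,d_qu)$, since it is the equality $\partial_tu+H=0$ that makes it an extreme point of $\{E+H\le 0\}$; and the approximating points sit at times $t^k\to t$, which the paper handles by flowing back to the fixed time and bounding the action on $[0,t^k]$.
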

	
\begin{proof}
	Let $a<b$ be two real times. We will estimate the defect of calibration $\delta(u,q_\tau,[a,b])$. Let $x^k = (q^k,p^k)$ be a sequence of points in $\mathcal{L}^k$ converging to $x \in \mathcal{L}$ such that $d(x^k,x) \leq d_B(\mathcal{L}^k,\mathcal{L})$. Then, the brane convergence of $(\mathcal{L}^k,h^k)$ towards $(\mathcal{L},h)$ implies as in Lemma \ref{LemDeltaConv} that
	\begin{equation} \label{CalibToutDem4}
		0 \leq \delta(u,q_\tau,[a,b]) \leq \liminf_{k \to + \infty} \delta(u^k,q^k_\tau,[a,b])
	\end{equation} 
	
	Let $x_\alpha=(q_\alpha,p_\alpha)\in \mathcal{L}_\alpha$ and $x_\omega=(q_\omega,p_\omega) \in \mathcal{L}_\omega$ be respective limit points of the sequences $x_{-m_k}$ and $x_{n_k}$. These points exist due to the compactness of $\mathcal{L}_\alpha$ and $\mathcal{L}_\omega$ and the Hausdorff convergences $\lim_k d_H(\mathcal{L}_{-m_k},\mathcal{L}_\alpha) = 0$ and $\lim_k d_H(\mathcal{L}_{n_k},\mathcal{L}_\omega) = 0$. For $k$ large enough, we have the inclusion $[a,b] \subset [-m_k,n_k]$, hence by domination of the variational solutions $u^k$, we get
	\begin{equation} \label{CalibToutDem5}
		\liminf_k \delta(u^k,q^k_\tau,[a,b]) \leq \liminf_{k \to + \infty} \delta(u^k,q^k_\tau,[-m_k,n_k])
	\end{equation}
	A computation gives
	\begin{equation} \label{CalibToutDem1}
		\begin{split}
			\delta(u^k,q^k_\tau,[-m_k,n_k]) &= \int_{-m_k}^{n_k} L(\tau, q^k(\tau), \dot{q}^k(\tau)) \; d\tau - \big[ u^k(n_k, q^k(n_k)) -  u^k(-m_k, q^k(-m_k)) \big] \\
			&= \big[ h^k_{n_k}(x^k_{n_k}) - h^k_{-m_k}(x^k_{-m_k}) \big] - \big[ u^k_{n_k}(0,q^k_{n_k}) -  u^k_{-m_k}(0,q^k_{-m_k}) \big] \\	
			&= \big[ h^k_{n_k}(x^k_{n_k}) - u^k_{n_k}(0,q^k_{n_k}) \big] - \big[ h^k_{-m_k}(x^k_{-m_k}) -  u^k_{-m_k}(0,q^k_{-m_k}) \big] \\	
			&= \Big[ \big( h^k_{n_k}(x^k_{n_k}) + c_{n_k} \big) - \big(u^k_{n_k}(0,q^k_{n_k}) + c_{n_k}\big) \Big] \\
			&- \big[ \big(h^k_{-m_k}(x^k_{-m_k}) + c_{-m_k} \big) -  \big(u^k_{-m_k}(0,q^k_{-m_k}) + c_{-m_k} \big) \big]
		\end{split}
	\end{equation}	
	where we used in the first line the Definition \ref{LagsImages} of variational branes. Analogously to \eqref{eq:LimitPoints}, the points $x^k_{n_k}$ converge to $x_\omega$, and similarly, the points $x^k_{-m_k}$ converge to $x_\alpha$. Since we know from Proposition \ref{ConvApproxProp} that $(\mathcal{L}^k_{n_k},h^k_{n_k}+c_{n_k})$ and $(\mathcal{L}^k_{-m_k},h^k_{-m_k}+c_{-m_k})$ brane-converge respectively to $(\mathcal{L}_\omega,h_\omega)$ and $(\mathcal{L}_\alpha,h_\alpha)$, Proposition \ref{BranePointwiseConv} gives the limits
	\begin{equation} \label{CalibToutDem2}
		\lim_{k \to +\infty} h^k_{n_k}(x^k_{n_k}) + c_{n_k}  = h_\omega(x_\omega) \quad \text{and} \quad \lim_{k \to +\infty} h^k_{-m_k}(x^k_{-m_k}) + c_{-m_k} = h_\alpha(x_\alpha)
	\end{equation} 
	and Corollary \ref{GSConv} gives the limits
	\begin{equation} \label{CalibToutDem3}
		\lim_{k \to +\infty} u^k_{n_k}(0,q^k_{n_k}) + c_{n_k} = u_\omega(0,q_\omega) \quad \text{and} \quad \lim_{k \to +\infty} u^k_{-m_k}(0,q^k_{-m_k}) + c_{-m_k} = u_\alpha(0,q_\alpha)
	\end{equation}
	Gathering \eqref{CalibToutDem1}, \eqref{CalibToutDem2} and \eqref{CalibToutDem3}, we obtain the limit
	\begin{align*}
		\lim_{k \to +\infty} \delta(u^k,q^k_\tau,[-m_k,n_k]) = \big[ h_\omega(x_\omega) - u_\omega(0,q_\omega) \big] - \big[ h_\alpha(x_\alpha) - u_\alpha(0,q_\alpha) \big]
	\end{align*}
	which vanishes by the following lemma
	\begin{lem} \label{SpectralityLemma}
		We have
		\begin{equation}
			u_\alpha(0,q_\alpha) = h_\alpha(x_\alpha) \quad \text{and} \quad u_\omega(0,q_\omega) = h_\omega(x_\omega)
		\end{equation}
	\end{lem}
	
	We conclude by combining \eqref{CalibToutDem4} and \eqref{CalibToutDem5}, which yields
	\begin{equation}
		0 \leq \delta(u,q_\tau,[a,b]) \leq \liminf_{k \to + \infty} \delta(u^k,q^k_\tau,[-m_k,n_k]) = 0
	\end{equation}
	so that the defect of calibration $\delta(u,q_\tau,[a,b])=0$ vanishes.
\end{proof}
	
\begin{proof}[Proof of Lemma \ref{SpectralityLemma}]
	We only prove the equality for $x_\alpha= (q_\alpha, p_\alpha)$. We know from Proposition \ref{CalibLimit} that $q_\alpha(\tau)$ is $u_\alpha$-calibrated. Then we can apply Theorem \ref{Fathi} at time zero to get that $u_\alpha$ is differentiable at $(0,q_\alpha)$ and 
	\begin{equation} \label{SpectralityLemmaDem1}
		x_\alpha=(q_\alpha,p_\alpha) = (q_\alpha, d_q u_\alpha(0,q_\alpha)) \in \mathcal{L}_\alpha \quad \text{and} \quad  \partial_t u_\alpha(0,q_\alpha) + H(0,q_\alpha,d_q u_\alpha(0,q_\alpha)) =0 
	\end{equation}
	We still need to prove the spectral equality $u_\alpha(0,q_\alpha) = h_\alpha(x_\alpha)$, which is satisfied by graph selectors of Lagrangian branes according to Proposition \ref{Graphext}. To obtain this equality, we use the approximating sequence of Lagrangian branes $(\mathcal{L}^k_{-m_k}, \tilde{h}^k_{-m_k} := h^k_{-m_k} + c_{-m_k})$ which brane-converges to $(\mathcal{L}_\alpha, h_\alpha)$. The associated variational solutions are $\tilde{u}^k_{-m_k} := u^k_{-m_k} + c_{-m_k}$. By Proposition \ref{Graphext}, there exists an open dense subset $\mathcal{U}^k \subset \mathbb{R} \times M$ of full Lebesgue measure where $\tilde{u}^k_{-m_k}$ is $C^1$ and  for all $(t,q) \in \mathcal{U}^k$, we have
	\begin{equation} \label{SpectralityLemmaDem2}
        \begin{split}
		      (q, d_q \tilde{u}^k_{-m_k}(t,q)) \in \mathcal{L}^k_{-m_k+t}, \quad & \partial_t \tilde{u}^k_{-m_k} + H(t,q,d_q \tilde{u}^k_{-m_k}) =0 ,\\ 
            \text{and} & \quad \tilde{h}^k_{-m_k+t}\big(q, d_q \tilde{u}^k_{-m_k}(t,q)\big) = \tilde{u}^k_{-m_k}(t,q)
        \end{split}
	\end{equation}
	Set $\mathcal{U} := \bigcap_{k \geq 0} \mathcal{U}^k$ which is still a dense set of full Lebesgue measure in $\mathbb{R} \times M$. We claim that there exists a sequence $(t^k,q^k) \in \mathcal{U}$ converging to $(0,q_\alpha)$ such that $d_q \tilde{u}^k_{-m_k}(t^k,q^k)$ converges to $d_qu_\alpha(0,q_\alpha)$. This is true due to the fibrewise strict convexity of the Tonelli Hamiltonian $H$ and to the following lemma by Bernard and dos Santos \cite{MR2860422}, which extends an earlier result of Clarke \cite{MR2346451,MR0709590}.
	
	\begin{lem} \label{lem:BdS}
		Let $U \subset \mathbb{R}^d$ be an open domain and let $f_k : U \to \mathbb{R}$ be an equi-Lipschitz sequence of functions converging uniformly to a Lipschitz function $f : U \to \mathbb{R}$. Let $U_0 \subset U$ be a set of full Lebesgue measure where the maps $f_k$ are differentiable. For each $q \in U$, consider the following sets 
		\begin{equation}
			\begin{split}
				K^{U_0}_{f_k}(q) &:= \big\{ \text{limit points of } \big(df_k(q_k)\big)_{k \geq 0} \;  \big| \; q_k \in U_0, \; \lim\limits_k q_k = q  \big\} \subset T^*_qU \\
				C^{U_0}_{f_k}(q) &:= \conv \left(K^{U_0}_{f_k}(q)\right)
			\end{split}
		\end{equation}
		where $\conv$ stands for the convex hull. Then, whenever $f$ is differentiable at a point $q \in U$, we have $df(q) \in C^{U_0}_{f_k}(q)$.
	\end{lem}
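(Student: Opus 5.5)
This is Clarke's theorem on generalized gradients, in a sequential form. I would prove it by duality, using the directional derivative along a fixed direction and Fubini-type averaging over a small cube.

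Fix $q\in U$ where $f$ is differentiable. The set $K:=K^{U_0}_{f_k}(q)$ is non-empty and compact, because the $df_k$ are uniformly bounded by the common Lipschitz constant $\ell$; hence $C:=C^{U_0}_{f_k}(q)$ is a compact convex set. Suppose for contradiction that $df(q)\notin C$. By Hahn--Banach there is a unit vector $v\in\mathbb{R}^d$ and $\eta>0$ such that
\[df(q).v \;\geq\; \max_{p\in C} p.v + 3\eta.\]
By the definition of $K$ and compactness, there exist $r>0$ and $k_0$ such that for all $k\geq k_0$ and all $y\in U_0\cap B(q,r)$ we have $df_k(y).v\leq \max_C p.v+\eta$. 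Otherwise we could extract $q_k\to q$ in $U_0$ with $df_k(q_k).v$ bounded below by $\max_C p.v+\eta$, and a limit point would lie in $K$ and violate this bound.

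Next, integrate along lines in direction $v$. For a small $s>0$ and $y$ in a small ball $B(q,\rho)$ with $\rho+s<r$, $f_k$ is Lipschitz, so $t\mapsto f_k(y+tv)$ is absolutely continuous. Since $U_0$ has full measure, for almost every $y$ (Fubini along the lines parallel to $v$) the point $y+tv$ lies in $U_0$ for almost every $t\in[0,s]$. At such points the derivative equals $df_k(y+tv).v$. Hence for almost every $y\in B(q,\rho)$,
\[f_k(y+sv)-f_k(y)\leq s\big(\max_C p.v+\eta\big),\]
and by continuity this holds for every $y\in B(q,\rho)$. Let $k\to\infty$ using uniform convergence, then put $y=q$:
\[f(q+sv)-f(q)\leq s\big(\max_C p.v+\eta\big).\]
Divide by $s$ and let $s\to0^+$; differentiability of $f$ at $q$ gives $df(q).v\leq\max_C p.v+\eta$, which contradicts the choice of $v$.

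The main obstacle is the measure-theoretic step: the segment $[q,q+sv]$ itself may lie entirely outside $U_0$. This is why the inequality is first obtained for almost every nearby starting point $y$ via Fubini, and only then extended to $y=q$ by continuity of $f_k$. Note that the uniform neighbourhood $B(q,r)$ from the compactness argument must be independent of $k$ for this to work, which is exactly what the contradiction extraction provides. The argument is local, so it works on any open domain $U$.
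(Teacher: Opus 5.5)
Your proof is correct, and it takes a different route from the paper only in that the paper does not prove this lemma at all. The paper imports it from Bernard--dos Santos \cite{MR2860422} as an extension of Clarke's theorem, so your argument is a self-contained replacement for a citation. It is the standard mechanism behind Clarke's gradient formula, run for a sequence $f_k$ instead of a single function: strict separation of $df(q)$ from the convex hull, then Fubini along lines parallel to $v$ to get around the fact that the single segment $[q,q+sv]$ may miss $U_0$.

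Your argument in fact yields more than the statement. The uniform-neighbourhood step and the integration step work for every unit vector $v$ and every $\eta>0$. Moreover, the limiting inequality $f(y+sv)-f(y)\leq s(\max_{p\in C}p\cdot v+\eta)$ holds for all $y$ near $q$, not only for $y=q$. Hence Clarke's generalized directional derivative $f^\circ(q;v)=\limsup_{y\to q,\,s\to 0^+}(f(y+sv)-f(y))/s$ is at most $\max_{p\in C}p\cdot v$. So the whole Clarke generalized gradient of $f$ at $q$ lies in $C^{U_0}_{f_k}(q)$, and differentiability of $f$ at $q$ is used only to know that $df(q)$ belongs to that set.

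One step needs more than you wrote. Uniform boundedness of the $df_k$ only makes $K$ bounded, but you also need $K$ closed, so that $C$ is closed; otherwise the separation argument only gives $df(q)\in\overline{C}$. Closedness follows by a diagonal argument:
\begin{enumerate}
\item Take $p_j\in K$ with $p_j\to p$.
\item Pick indices $k_1<k_2<\cdots$ and points $y_j\in U_0$ with $|y_j-q|<1/j$ and $|df_{k_j}(y_j)-p_j|<1/j$, and set $q_{k_j}=y_j$.
\item Fill the remaining indices with arbitrary points of $U_0$ tending to $q$, which is possible since $U_0$ is dense.
\end{enumerate}

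The same filling-in is implicit in your contradiction argument for the uniform neighbourhood. Negating the claim only produces a subsequence of indices $k_j\to\infty$ with points $y_j\to q$, and you must complete it to a full sequence $(q_k)_k$ to match the definition of $K$. Finally, choose $r$ with $B(q,r)\subset U$, and apply Fubini to a Borel null set containing $U\setminus U_0$.
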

	
	Recall from Proposition \ref{GSSubsolution} that all the variational solutions $\tilde{u}^k_{-m_k}$ are locally equi-Lipschitz. Hence, we can apply the lemma to these functions, on an open chart $U$ around $(0,q_\alpha)$ in $\mathbb{R} \times M$, and for the full Lebesgue measure set $\mathcal{U}_0 = \mathcal{U} \cap U$. We obtain that $du_\alpha(0,q_\alpha) = \big(\partial_t u_\alpha(0,q_\alpha), d_qu_\alpha(0,q_\alpha) \big)$ belongs to the set $C^{\mathcal{U}_0}_{\tilde{u}^k_{-m_k}}(0,q_\alpha)$.
	In order to prove that $du_\alpha(0,q_\alpha) \in K^{\mathcal{U}_0}_{\tilde{u}^k_{-m_k}}(0,q_\alpha)$, we introduce the following Hamiltonian $\mathscr{H} : T^*\mathbb{R} \times T^*M \to \mathbb{R}$ defined by
	\begin{equation}
		\mathscr{H}(\tau,E,q,p) = E + H( \tau, q, p)
	\end{equation}
	so that the Hamilton--Jacobi equations in \eqref{SpectralityLemmaDem1} and \eqref{SpectralityLemmaDem2} translate into
	\begin{equation} \label{SpectralityLemmaDem4}
		\big( 0, \partial_t u_\alpha(0,q_\alpha), q_\alpha, d_q u_\alpha(0,q_\alpha) \big) \in \{ \mathscr{H}_{(0,q_\alpha)} = 0 \} :=  \{\mathscr{H}=0\} \cap T^*_{(0,q_\alpha)}( \mathbb{R} \times M )
	\end{equation}
	and for all $(t,q) \in \mathcal{U}$, 
	\begin{equation}
		\big( t, \partial_t \tilde{u}^k_{-m_k}(t,q), q, d_q \tilde{u}^k_{-m_k}(t,q) \big) \in \{ \mathscr{H}_{(t,q)} = 0 \} :=  \{\mathscr{H}=0\} \cap T^*_{(t,q)}( \mathbb{R} \times M )
	\end{equation}
	Hence, we deduce by continuity of $\mathscr{H}$ that 
	\begin{equation} \label{SpectralityLemmaDem3}
		K^{\mathcal{U}_0}_{\tilde{u}^k_{-m_k}}(0,q_\alpha) \subset \{\mathscr{H}_{(0,q_\alpha)}=0\} \quad \text{and} \quad C^{\mathcal{U}_0}_{\tilde{u}^k_{-m_k}}(0,q_\alpha) \subset \conv \{\mathscr{H}_{(0,q_\alpha)}=0\}
	\end{equation}
	Moreover, we have
	\begin{align*}
		\{\mathscr{H}_{(0,q_\alpha)} \leq 0\} = \left\{ (E,p) \in T^*_{(0,q_\alpha)}( \mathbb{R} \times M ) \; | \;  H(0,q_\alpha,p ) \leq -E \right\}
	\end{align*}
	which corresponds, up to the symmetry $E \mapsto -E$, to the epigraph of the strictly convex Hamiltonian $H$ restricted to the fibre $T^*_{(0,q_\alpha)}( \mathbb{R} \times M )$. Thus, this set is strictly convex with boundary (or equivalently, extremal points) equal to
	\begin{align*}
		\partial \{\mathscr{H}_{(0,q_\alpha)} \leq 0\} = \left\{ (E,p) \in T^*_{(0,q_\alpha)}( \mathbb{R} \times M ) \; | \;  H(0,q_\alpha,p ) = -E \right\} =  \{\mathscr{H}_{(0,q_\alpha)} = 0\}
	\end{align*}
	Thus, we deduce that 
	\begin{equation} \label{eq:SpectralityLemmaDem5}
		\conv \{\mathscr{H}_{(0,q_\alpha)}=0\} = \{\mathscr{H}_{(0,q_\alpha)} \leq 0\}
	\end{equation}
	The second inclusion in \eqref{SpectralityLemmaDem3} becomes
	\begin{equation*}
		du_\alpha(0,q_\alpha) = \big(\partial_t u_\alpha(0,q_\alpha), d_qu_\alpha(0,q_\alpha) \big) \in C^{\mathcal{U}_0}_{\tilde{u}^k_{-m_k}}(0,q_\alpha) \subset \{\mathscr{H}_{(0,q_\alpha)}\leq 0\}
	\end{equation*}
	However, we have seen in \eqref{SpectralityLemmaDem4} that $du_\alpha(0,q_\alpha)$ belongs to the boundary $\partial \{\mathscr{H}_{(0,q_\alpha)} \leq 0\}=\{\mathscr{H}_{(0,q_\alpha)} = 0\}$ of the strictly convex set $\{\mathscr{H}_{(0,q_\alpha)} \leq 0\}$. Hence, it is an extremal point of $\{\mathscr{H}_{(0,q_\alpha)} \leq 0\}$, and by the inclusion \eqref{SpectralityLemmaDem3}, we infer that $du_\alpha(0,q_\alpha)$ is an extremal point of $C^{\mathcal{U}_0}_{\tilde{u}^k_{-m_k}}(0,q_\alpha)$. Therefore, we get that $du_\alpha(0,q_\alpha)$ belongs to $K^{\mathcal{U}_0}_{\tilde{u}^k_{-m_k}}(0,q_\alpha)$ and by definition of this set, there exists, up to extraction, a sequence $(t^k,q^k) \in \mathcal{U}_0$ converging to $(0,q_\alpha)$ such that $d\tilde{u}^k_{-m_k}(t^k,q^k)$ converges to $du_\alpha(0,q_\alpha)$. \\
	
	We now deal with the proof of the desired spectral equality $u_\alpha(0,q_\alpha) = h_\alpha(x_\alpha)$. Set $x^k_{-m_k+t^k} = (q^k, d_q\tilde{u}^k_{-m_k}(t^k,q^k))$ with associated curve $x^k_{-m_k+ t^k +\tau}$. We have
	\begin{align*}
		d(x^k_{-m_k},x_\alpha) &\leq d(x^k_{-m_k},x^k_{-m_k+t^k}) + d(x^k_{-m_k+t^k},x_\alpha) \\
		&= d( (\phi_H^{t^k})^{-1}(x^k_{-m_k+t^k}),x^k_{-m_k+t^k}) + d(x^k_{-m_k+t^k},x_\alpha)
	\end{align*}
	We know that $\lim_k x^k_{-m_k+t^k} = x_\alpha$ and $\lim_k t^k =0$. Moreover, the Hamiltonian flow $\phi_H$ is uniformly continuous on a compact neighbourhood of $(0,x_\alpha) \in \mathbb{R} \times T^*M$, hence $\lim_k (\phi_H^{t^k})^{-1}(x^k_{-m_k+t^k}) = x_\alpha$. Then, the inequality above shows that the sequence of points $x^k_{-m_k}$ converges to $x_\alpha$. Furthermore, we have
	\begin{align*}
		\big| \tilde{h}^k_{-m_k+t^k}(x^k_{-m_k+t^k}) - h_\alpha(x_\alpha) \big| &\leq  \big| \tilde{h}^k_{-m_k+t^k}(x^k_{-m_k+t^k}) - \tilde{h}^k_{-m_k}(x^k_{-m_k}) \big| + \big| \tilde{h}^k_{-m_k}(x^k_{-m_k}) - h_\alpha(x_\alpha) \big| \\
		&\leq \left| \int_0^{t^k} L(\tau,q^k_{-m_k+\tau}, \dot{q}^k_{-m_k+\tau} ) \; d\tau \right| + \big| \tilde{h}^k_{-m_k}(x^k_{-m_k}) - h_\alpha(x_\alpha) \big|
	\end{align*}
	By the convergence of $x^k_{-m_k}$ towards $x_\alpha$, the curve $q^k_{-m_k+\tau}$ converges in the $C^1$ topology on compact sets towards the curve $q_\alpha(\tau)$. Thus, the maps $\tau \mapsto L(\tau,q^k_{-m_k+\tau}, \dot{q}^k_{-m_k+\tau} )$ converge uniformly on compact sets to $\tau \mapsto L(\tau,q_\alpha(\tau), \dot{q}_{\alpha}(\tau ))$, and we obtain the limit 
	\begin{align*}
		\lim_{k\to +\infty} \int_0^{t^k} L(\tau,q^k_{-m_k+\tau}, \dot{q}^k_{-m_k+\tau} ) \; d\tau =0
	\end{align*}
	Moreover, by brane convergence of $(\mathcal{L}^k_{-m_k}, \tilde{h}^k_{-m_k})$ towards $(\mathcal{L}_\alpha,h_\alpha)$, and using Proposition \ref{BranePointwiseConv}, we get $\lim_k \tilde{h}^k_{-m_k}(x^k_{-m_k}) = h_\alpha(x_\alpha)$. Consequently, we deduce the limit
	\begin{equation} \label{SpectralityLemmaDem5}
		\lim_{k \to +\infty} \tilde{h}^k_{-m_k+t^k}(x^k_{-m_k+t^k}) = h_\alpha(x_\alpha) 
	\end{equation}
	
	Furthermore, Proposition \ref{GSConv} yields the convergence of $\tilde{u}^k_{-m_k}$ towards $u_\alpha$ uniformly on $\mathbb{R} \times M$. Hence, we get the limit
	\begin{equation} \label{SpectralityLemmaDem6}
		\lim_{k \to +\infty} \tilde{u}^k_{-m_k}(t^k,q^k) = u_\alpha(0,q_\alpha) 
	\end{equation}
	Therefore, by gathering \eqref{SpectralityLemmaDem2}, \eqref{SpectralityLemmaDem5} and \eqref{SpectralityLemmaDem6}, we conclude the desired equality $h_\alpha(x_\alpha) = u_\alpha(0,q_\alpha)$.
\end{proof}
	
\begin{rem}
	The proof of the calibration at limit points in Proposition \ref{CalibLimit} does not require the use of the approximating Lagrangian branes $(\mathcal{L}^k_{n_k},h^k_{n_k})$. Indeed, it is possible to argue directly by working with the variational solution $u$ associated with $(\mathcal{L},h)$ and to show that the sequence $u(n_k,\cdot) + c_{n_k}$ converges uniformly to a map $u_\omega$, which can be identified as the variational solution associated with $(\mathcal{L}_\omega,h_\omega)$. Using the same arguments as in the original proof, applied to $u$ instead of $u^k$, one then obtains the calibration of limit points.
	
	This also applies to the proof of Proposition \ref{CalibTout}, in order to establish that
	\begin{equation*}
		0 \leq \delta(u,q_\tau,[a,b]) \leq \big[ h_\omega(x_\omega) - u_\omega(0,q_\omega) \big] - \big[ h_\alpha(x_\alpha) - u_\alpha(0,q_\alpha) \big]
	\end{equation*}		
	However, the approximation by Lagrangian branes done in Subsection \ref{ApproximationSection} is crucially needed in order to obtain the spectrality lemma \ref{SpectralityLemma}.
\end{rem}

\subsection{Proofs of the Main Result}

\subsubsection{Proof of Theorem \ref{MainTheorem}}

\begin{proof}[Proof of Theorem \ref{MainTheorem}]
	Let $t \in \mathbb{R}$ be a fixed time. Let $x_t = (q_t,p_t)$ be a point of $\mathcal{L}_t$ with associated Hamiltonian orbit $x_\tau = (q_\tau,p_\tau)$. We know from Proposition \ref{CalibTout} that $q_\tau$ is calibrated by $u$. Hence, we infer from Theorem \ref{Fathi} that $u$ is differentiable at $(t,q_t)$ and $x_t = (q_t,p_t) = (q_t, d_qu(t,q_t))$. Thus, if we denote by $\mathcal{G}(du_t)$ the graph of $du_t=d_qu(t,\cdot)$ in $T^*M$, we get the inclusion $\mathcal{L}_t \subset \mathcal{G}(du_t)$. Since the projection $\mathcal{L}_t \to M$ is onto, we conclude that $\mathcal{L}_t = \mathcal{G}(d_qu_t)$ is a graph over $0_M$.
	
	In order to obtain the Lipschitz regularity, and using the notation of Theorem \ref{Fathi}, we proved that for all $\varepsilon >0$, we have  $A_{\varepsilon,u} = \mathbb{R} \times M$. Hence, $u$ is locally $C^{1,1}$ and $du$ is locally Lipschitz on $\mathbb{R} \times M$. The fact that $\mathcal{L}_t = \mathcal{G}(du_t)$ yields that it is a Lipschitz graph over $0_M$.
\end{proof}

\subsubsection{Proof of Corollary \ref{OptimalityCor}}

\begin{proof}[Proof of Corollary \ref{OptimalityCor}]
	The only non-trivial part is to prove the brane convergence in the general case. The rest follows directly from Theorem \ref{TheoremViscosityAsymptotic}.
	
	We keep the same notation as in the preceding proofs and subsections. We have shown that $\mathcal{L}_t = \mathcal{G}(d_qu_t)$, where $u : \mathbb{R} \times M \to \mathbb{R}$ is a locally $C^{1,1}$ solution of the Hamilton--Jacobi equation \eqref{HJ}. Let $\alpha_0$ be the \Mane critical value defined in \eqref{HJAlpha} and set $\tilde{u}(t,q) = u(t,q) + \alpha_0.t$ which is a locally $C^{1,1}$ solution of \eqref{HJAlpha}. Hence, Theorem \ref{TheoremViscosityAsymptotic} yields two increasing sequences $n_k$ and $m_k$ of positive integers such that both $\tilde{u}(n_k,\cdot)$ and $\tilde{u}(-m_k,\cdot)$ converge uniformly to $\tilde{u}_0 = \tilde{u}(0,\cdot)$ and both $\mathcal{L}_{n_k} = \mathcal{G}(d_qu_{n_k})$ and $\mathcal{L}_{-m_k} = \mathcal{G}(d_qu_{-m_k})$ converge to $\mathcal{L} = \mathcal{G}(d_qu_0)$ in the Hausdorff topology. In particular, both $\tilde{u}(n_k,\cdot)$ and $\tilde{u}(-m_k,\cdot)$ converge to $\tilde{u}_0 = \tilde{u}(0,\cdot)$ in the $C^1$ topology.

	Set $\tilde{h}_t = h_t + \alpha_0.t$, so that the variational solution $\tilde{u}$ of $\eqref{HJAlpha}$ is associated to the variational bare brane $(\mathcal{L}_t, \tilde{h}_t)$. We need to prove that the sequences $(\mathcal{L}_{n_k}, \tilde{h}_{n_k})$ and $(\mathcal{L}_{-m_k}, \tilde{h}_{-m_k})$ brane-converge to $(\mathcal{L}, \tilde{h}_0)$. We only prove the first convergence.
	
	Let $x_t \in \mathcal{L}_t$ with associated curve $x_\tau = (q_\tau,p_\tau)$ under the Hamiltonian flow. Proposition \ref{CalibTout} shows that the curve $q_\tau$ is calibrated by $u$, and the proof of Lemma \ref{SpectralityLemma} yields the spectrality equality $h_t(x_t) = u_t(q_t)$ so that $\tilde{h}_t(x_t) = \tilde{u}_t(q_t)$.
    
    Therefore, we have for any $t$ that the graph of $\tilde{h}_{t}$ coincides with the $1$-jets of $\tilde{u}_t$:
    \begin{equation}\label{eq:graph1jet}
        \Gamma_{(\cL_{t},\tilde{h}_{t})}=\{(q,d_q\tilde u_{t},\tilde{u}_{t}(q)),q\in M\}.
    \end{equation}
    Since $\tilde{u}_{n_k}$ converges to $\tilde{u}_0$ in the $C^1$-topology, the $1$-jet of the $\tilde{u}_{n_k}$ converge to that of $\tilde{u}_{0}$ in the Hausdorff topology. Then, by (\ref{eq:graph1jet}), $(\mathcal{L}_{n_k}, \tilde{h}_{n_k})$ brane-converges towards $(\mathcal{L}, \tilde{h}_0)$. Since $[\tilde{h}_{n_k}] = [h_{n_k}]$, we conclude that $(\mathcal{L}_{n_k}, [h_{n_k}])$ brane-converges to $(\mathcal{L}, [h])$ in $\mathfrak{B}_c(T^*M)$.

\end{proof}

\ppart{Geometrical Study of Bare Lagrangian Branes} \label{part:Geometry}

\section{Uniquely Liftable Lagrangian Subsets}\label{Section:regularity}

\subsection{Preliminaries on Microlocal Sheaf Theory}

Our proofs of Theorem \ref{thm:rigidity} and of Item \ref{prop:UniquenessPrimitive6} of Theorem \ref{thm:UniquenessPrimitiveC0} rely on several results from microlocal sheaf theory in symplectic geometry. In this section, we introduce all the notations and known results from this theory that we will use. We will avoid entering into the details of each definition or statement, but we encourage the interested reader to read the books \cite{MR1299726} and \cite{MR4211770} to find a more extensive exposition. 

Let $N$ be a manifold. We introduce the following notations and definitions:
\begin{enumerate}[label=-]
    \item We denote by $D(N)$ the (unbounded) derived category of sheaves of $\Z/2\Z$-vector spaces on $N$. As usual, we refer to objects of $D(N)$ simply as sheaves.
    \item A sheaf $F\in D(N)$ admits a \textbf{singular support} (or \textbf{micro-support}), denoted $SS(F)$, which is the subset of $T^*N$ defined as the closure of the set of cotangent directions in which ``\textit{$F$ does not propagate}'' (which means broadly that the sheaf cohomology changes locally in this cotangent direction). The singular support is \textbf{conical}, i.e. stable under multiplication by positive constants in the fibres.
    \item For $N=M\times\R$ We denote by $D_{\{\tau>0\}}(N)$ the subcategory of $D(N)$ whose objects are sheaves $F$ satisfying $SS(F)\setminus 0_{M\times\R}\subset \{\tau>0\}$, where $\tau$ denotes the fibre coordinate in $T^*\R$.
    \item By \cite[Theorem 6.5.4]{MR1299726} singular supports are also \textbf{involutive}. The definition of involutivity in this context involves contingent and paratingent cones, hence involutivity is also sometimes referred to as \textbf{cone-coisotropicity} (for instance in \cite{MR4768582}) to avoid confusion with other notions of involutivity. We will adopt the same terminology here. We will not need the explicit definition of cone-coisotropic subsets, but only the following property that follows from \cite[Proposition 6.5.2]{MR1299726}:
\end{enumerate}

\begin{prop}\label{prop:cone-coiso}
    Let $S$ be cone-coisotropic and closed in an open subset $U\subset T^*N$, and let $H\colon [0,1]\times U\to\R$ be a Hamiltonian vanishing on $S$ at every time $t\in[0,1]$ and whose flow is $C^1$. Then, for every point $x$ in $S$, the trajectory $\phi^t_H(x)$ remains in $S$ as long as it is well-defined in $U$.
\end{prop}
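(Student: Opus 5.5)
The statement to prove is Proposition \ref{prop:cone-coiso}: trajectories of a Hamiltonian vanishing on a closed cone-coisotropic set $S$ stay in $S$. The plan is to deduce it from \cite[Proposition 6.5.2]{MR1299726}. That result says that a closed subset is involutive iff it is invariant under flows of Hamiltonians vanishing on it. Equivalently, the Hamiltonian vector field of such a function is tangent to $S$ in the sense of the contingent cone: for every $x\in S$, $X_{H_t}(x)\in C_x(S)$. The proof will have two steps. First, a pointwise tangency statement. Second, an ODE invariance argument of Nagumo--Bony--Brezis type, which upgrades tangency of a $C^1$ (hence locally Lipschitz) vector field to invariance of the closed set.

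\textbf{Step 1: tangency.} Fix $t$ and $x\in S$. Since $H_t$ vanishes on $S$ and is $C^1$, the differential $dH_t(x)$ vanishes on the contingent cone $C_x(S)$: for a sequence $x_n\to x$ in $S$ with $(x_n-x)/\varepsilon_n\to v$, we have $H_t(x_n)=H_t(x)=0$, so $dH_t(x)v=0$. In the same way, $dH_t(x)$ vanishes on the paratingent cone $N_x(S)$ (chords of pairs of points of $S$ near $x$). Thus $dH_t(x)$ lies in the polar of $N_x(S)$. By definition, involutivity of $S$ at $x$ says that for every $\theta$ with $\theta\perp N_x(S)$ (in the symplectic-dual sense used by Kashiwara--Schapira), the vector $H_\theta=\omega^{-1}(\theta)$ belongs to $C_x(S)$. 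Applying this with $\theta=dH_t(x)$ gives $X_{H_t}(x)\in C_x(S)$. This is exactly the infinitesimal content of \cite[Proposition 6.5.2]{MR1299726}, which I would invoke directly, checking only the sign and convention matching between $\omega=-d\lambda$ and their $H_\theta$.

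\textbf{Step 2: invariance.} Consider the time-dependent vector field $X_{H_t}$ on $U$. It is continuous in $t$ and $C^1$ in space because the flow is $C^1$, so it is locally Lipschitz uniformly in $t$. The closed subset $S$ of $U$ satisfies the subtangential condition $X_{H_t}(x)\in C_x(S)$ for all $t$ and all $x\in S$. The Nagumo/Bony--Brezis viability theorem (uniqueness follows from the Lipschitz property) then states that the solution starting at $x\in S$ remains in $S$ for as long as it exists in $U$. If one prefers a self-contained argument: let $\delta(s)=\operatorname{dist}(\phi^s_H(x),S)$, computed in a chart on a compact piece of the trajectory. Using tangency at a nearest point $y\in S$ and the Lipschitz bound on $X_{H_s}$, one obtains $\limsup_{h\to0^+}\big(\delta(s+h)-\delta(s)\big)/h\le C\delta(s)$. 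Gronwall's lemma with $\delta(0)=0$ then gives $\delta\equiv0$. Closedness of $S$ in $U$ yields that the trajectory lies in $S$. Time-dependence is handled by applying the autonomous argument to $\tilde S=[0,1]\times S$ and the suspended field $(1,X_{H_t})$, or by running the estimate directly with $t$-uniform constants.

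\textbf{Main obstacle.} The delicate point is Step 1: matching the precise definition of involutivity in \cite{MR1299726}, stated via the cones $C_x(S)$ and $N_x(S)$ and the map $\theta\mapsto H_\theta$, to the statement that Hamiltonians vanishing on $S$ have tangent vector fields. In particular, one must confirm that their definition uses the paratingent cone where I expect it, so that $dH_t(x)$ is indeed admissible. Once tangency holds, Step 2 is standard viability theory. The only remaining care is that the distance estimate uses nearest points $y\neq x$, which requires tangency at every point of $S$, not just along the trajectory.
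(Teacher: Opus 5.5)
Your Step 1 is fine. It is exactly the infinitesimal content of \cite[Proposition 6.5.2]{MR1299726}, which is all the paper invokes for this statement; it gives no separate proof. The sign convention is also harmless: the annihilator of the paratingent cone is a linear subspace, so both $\pm X_{H_t}(x)$ lie in $C_x(S)$.

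The gap is in Step 2, where you claim that $X_{H_t}$ is $C^1$ in space, hence locally Lipschitz, because the flow is $C^1$. That implication is false. On $T^*\R$, take $H(q,p)=p+f'(q)$ with $f\in C^2$ and $f''$ not Lipschitz. Its Hamiltonian vector field $X_H=(1,-f''(q))$ is merely continuous, while its flow $(q,p)\mapsto\bigl(q+t,\,p-f'(q+t)+f'(q)\bigr)$ is $C^1$.

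This is not a side case. The hypothesis ``whose flow is $C^1$'' is chosen because, in the proof of Theorem \ref{thm:rigidity}, the proposition is applied to $H^j_t=\widetilde G^j_t\circ\varphi_j$ with $\varphi_j$ only a $C^1$ symplectic chart. There $X_{H^j_t}=d\varphi_j^{-1}\,X_{\widetilde G^j_t}\circ\varphi_j$ is only continuous. For such fields, neither the Lipschitz form of the Bony--Brezis theorem you quote nor your Gronwall estimate on $\delta(s)$ is available. The Gronwall step uses $|X_{H_s}(\phi^s_H(x))-X_{H_s}(y)|\le C|\phi^s_H(x)-y|$, which fails here.

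The repair is to separate existence from uniqueness.

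\emph{Existence.} Nagumo's viability theorem needs only a continuous field that is tangent, in the contingent-cone sense, to a closed set at every point. Apply it to the suspended field $(1,X_{H_t})$ and the closed set $\R\times S$, extending $H$ constantly in time outside $[0,1]$; Step 1 supplies the tangency. It yields some integral curve through $x$ that stays in $S$ for a short time.

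\emph{Uniqueness.} This should come from the $C^1$ flow, not from a Lipschitz bound. Differentiating $\phi^{t,0}_H\circ\phi^{0,t}_H=\mathrm{id}$ gives $\partial_t\phi^{t,0}_H(w)=-D\phi^{t,0}_H(w)\,X_{H_t}(w)$. Hence any integral curve $y$ satisfies $\frac{d}{dt}\phi^{t,0}_H(y(t))=0$, that is, $y(t)=\phi^{0,t}_H(y(0))$.

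So the viable curve is the flow line. An open--closed argument on $\{t : \phi^t_H(x)\in S\}$, using that $S$ is closed in $U$, then gives the conclusion for as long as the trajectory stays in $U$.
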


Suppose now that $N$ is of the form $M\times \R$, where $M$ is a closed manifold.
Then, $D_{\{\tau>0\}}(M\times \R)$ can be equipped with an \textbf{interleaving-type pseudo-distance $d$}. By \cite[Proposition 6.26]{MR4768582}, singular supports are lower semi-continuous with respect to this topology:
\begin{prop}[Lower semi-continuity of singular supports \cite{MR4768582}]\label{prop:singular_support_is_lsc}
    Let $(F_n)_{n\geq0}$ be a sequence of sheaves in $D_{\{\tau>0\}}(M\times\R)$ converging in the interleaving topology to a sheaf $F$. Then, we have
    \[SS(F)\subset \liminf_{n\to\infty}SS(F_n).\]
\end{prop}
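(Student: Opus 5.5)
The plan is to argue by contraposition. Fix $p=(x_0,t_0;\xi_0,\tau_0)\notin \liminf_n SS(F_n)$. Then there is an open neighbourhood $U$ of $p$ in $T^*(M\times\R)$ and a subsequence, still denoted $(F_n)$, with $SS(F_n)\cap U=\emptyset$ for all $n$. We must show $p\notin SS(F)$.

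Two reductions come first. Since $SS(F)$ is closed and conical, we may shrink $U$ to a small conical neighbourhood of the ray through $p$. If $\tau_0\le 0$ and $p$ lies off the zero section, then $p\notin SS(F)$ already, because $F\in D_{\{\tau>0\}}$. If $p$ lies on the zero section, then points of $SS(F)$ near $p$ lie either on the zero section or in $\{\tau>0\}$, so it suffices to treat the case $\tau_0>0$. By conicity we may then normalise $\tau_0=1$.

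We use the microlocal test for singular supports (\cite[Prop.~5.1.1]{MR1299726}). The point $p\notin SS(F)$ holds as soon as, for every point $y$ near $(x_0,t_0)$ and every $C^1$ function $\varphi$ with $\varphi(y)=0$ and $d\varphi(y)$ near $p$, we have $(R\Gamma_{\{\varphi\ge0\}}F)_y=0$. Equivalently, the restriction maps
\[R\Gamma(B\cap\{\varphi<s\};F)\to R\Gamma(B\cap\{\varphi<s'\};F),\qquad s'<s,\]
are isomorphisms for small balls $B$ and parameters $s,s'$ near $0$. For such test data we write $V_G(s)$ for these local sections of a sheaf $G$. The translation $T_c$ in the $\R$-direction moves test functions whose differential is close to $p$ (with $\tau\approx1$) by roughly $c$ in the level parameter. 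Hence $V_{T_cG}(s)\cong V_G(s\pm c)$, up to shrinking $U$ by an amount controlled by $c$.

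Now use the interleaving. If $d(F,F_n)\le\varepsilon_n\to0$, there are morphisms $F\to T_{\varepsilon_n}F_n\to T_{2\varepsilon_n}F$ whose composite is the canonical map $\tau_{2\varepsilon_n}$, and symmetrically. Since $SS(F_n)\cap U=\emptyset$, the microlocal Morse lemma makes $V_{F_n}(s)$ independent of $s$ in a range of parameters that is uniform in $n$. Applying $V$ to the factorisation, the "jump" of $V_F$ across a level shift $[s',s]$ factors through the jump of $V_{F_n}$ across a shifted interval, which is zero. This should give: every class in the cone of $V_F(s)\to V_F(s')$ is killed after shifting by $2\varepsilon_n$, i.e.\ it becomes zero in the cone of $V_F(s+2\varepsilon_n)\to V_F(s'+2\varepsilon_n)$.

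The main obstacle is upgrading ``killed by $\tau_{2\varepsilon}$ for every $\varepsilon>0$'' to actual vanishing. The $\R$-translation direction is exactly the direction in which $F$ is not yet known to propagate, so one cannot simply let $\varepsilon\to0$.

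My first attempt is a persistence-module argument in the level parameter. Consider the two-parameter family $W(s,s')=\mathrm{Cone}\big(V_F(s)\to V_F(s')\big)$. The previous step shows that all its structure maps over shifts of size $2\varepsilon_n$ vanish. Combining this with the symmetric interleaving $F_n\to T_{\varepsilon_n}F$ should show that $W$ is $\varepsilon_n$-interleaved with the zero module, locally uniformly in $(s,s')$. For sheaves on $\R$, such local $\varepsilon$-triviality for all $\varepsilon$ forces vanishing, by the standard fact that a module of interleaving distance $0$ from $0$ is ephemeral. Ephemeral local cohomology of a sheaf is zero once one takes the limit over open neighbourhoods. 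This is the step where the precise form of the interleaving distance of \cite{MR4768582} has to be used.
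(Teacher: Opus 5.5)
The paper does not prove this statement; it quotes it from \cite[Proposition~6.26]{MR4768582}. Your overall strategy (contraposition, a microlocal test, transferring constancy through the interleavings) is reasonable. However, the two steps that carry the content are not established.

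The first gap is the decisive step, which rests on a principle that is false for sheaves. Since $B\cap\{\varphi<s\}=\bigcup_{s'<s}B\cap\{\varphi<s'\}$, one has $V(s)\simeq\operatorname{holim}_{s'\uparrow s}V(s')$. By the Milnor sequence, a class killed by every positive shift therefore only lies in the subgroup $\lim^1_{s'\uparrow s}H^{k-1}V(s')$ of $H^kV(s)$, and this need not vanish.

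For instance, take $k=\Z/2\Z$ and $G=\bigoplus_{j\ge1}k_{[t_0-2,\,t_0-1/j)}$ on $\R$. This sheaf lies in $D_{\{\tau>0\}}$, and even in Tamarkin's category. One has $R\Gamma((-\infty,t_0-1/m);G)\simeq\bigoplus_{j\ge m}k$ in degree $0$, with the inclusions as transition maps. Hence $H^1((-\infty,t_0);G)\simeq\lim^1_m\bigoplus_{j\ge m}k\neq0$, while $H^1((-\infty,t_0-\eta);G)=0$ for every $\eta>0$, and indeed $(t_0;dt)\in SS(G)$. This does not contradict the proposition, because here the cokernel part is far from ephemeral. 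It does show what your argument is missing: a Mittag-Leffler input.

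Work with kernels and cokernels on cohomology, since cones do not form a functorial persistence module. The interleaving with the $F_n$, whose modules are constant, gives that $\operatorname{Im}\big(H^jV_F(b)\to H^jV_F(c)\big)$ is independent of $b>c$ in the relevant range. This kills the $\lim^1$-terms. The Milnor sequence then upgrades ephemerality of the kernels to injectivity of the restriction maps, and surjectivity follows from uniqueness of lifts. In the model case $M=\mathrm{pt}$, with test sets $(-\infty,s)$, this closes the argument; your proposal contains nothing of this kind.

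The second gap is that the localisation in the $M$-direction is not justified. Translation moves the ball as well as the level: $V_{T_cG}(s)$ consists of sections of $G$ over $T_c^{-1}(B\cap\{\varphi<s\})$. Comparing this with $V_G(s-c)$ needs exactly the propagation of $F$ you are trying to prove, and shrinking $U$ does not help.

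Suppose instead you use exactly equivariant sets such as $\Omega_s=\{x\in B_M,\ t<s-\psi(x)\}$. Their moving boundary has corners over $\partial B_M$. There, non-characteristicity involves the covectors $(d\psi+\lambda\nu,1)$ with $\lambda\ge0$ and $\nu$ the outer conormal of $B_M$, and these leave any small conic neighbourhood $U$ of $p$. Consequently $SS(F_n)\cap U=\emptyset$ does not make $V_{F_n}$ constant. In a chart, $F_n=k_{\{t\ge g(x)\}}$ with $g$ affine of large slope satisfies $SS(F_n)\cap U=\emptyset$. Yet $V_{F_n}(s)$ jumps from $0$ to $k$ at the first level where $\Omega_s$ meets $\{t\ge g\}$, and that first contact occurs on the rim. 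Some genuinely microlocal reduction is needed first, for instance the microlocal cut-off lemma of \cite{MR1299726}, which is compatible with $t$-translations and so does not increase $d$.

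Finally, your treatment of the zero section is a non sequitur. Ruling out covectors with $\tau>0$ only makes $F$ locally constant near the base point, not zero. Sheaves pulled back from $M$, such as $k_{M\times\R}$, are zero in Tamarkin's category and hence invisible to the interleaving distance. The inclusion is only meaningful on $\{\tau>0\}$, which is the only part used in Proposition~\ref{prop:SStopologicalBrane}.
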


We can also define the \textbf{reduced singular support} $RS(F)$ of a sheaf $F\in D(M\times \R)$ as the closure of $\rho(SS(F)\cap\{\tau>0\})$, and $\rho\colon T^*M\times T^*\R\setminus\{\tau=0\}\to T^*M$ is given by $\rho(q,p,t,\tau)=(q,\frac{p}{\tau})$.

Putting together results due to Viterbo and Guillermou \cite{viterbo2019sheafquantizationlagrangiansfloer,guillermou2015,MR4768582} and to Asano--Guillermou--Humilière--Ike--Viterbo \cite{MR4683312}, we have the following Lagrangian quantization theorem:

\begin{theo}[Lagrangian quantization \cite{viterbo2019sheafquantizationlagrangiansfloer,guillermou2015,MR4768582,MR4683312}]\label{thm:lag_quantization}
    There exists a map $Q\colon \mathcal{LB}(T^*M,\omega)\to D_{\{\tau>0\}}(M\times\R)$ satisfying:
    \begin{enumerate}
        \item \cite{MR4768582} $Q$ is an isometric embedding for the $c$-distance on $\mathcal{LB}(T^*M,\omega)$ and the interleaving pseudo-distance $d$ on $D_{\{\tau>0\}}(M\times \R)$;
        \item \cite{guillermou2015,viterbo2019sheafquantizationlagrangiansfloer} for $(\cL,h)\in \mathcal{LB}(T^*M,\omega)$, $RS(Q(\cL,h))=\cL$ and $SS(Q(\cL,h))\cap\{\tau>0\}$ is the conification of the graph of $-h$, that is
        \[SS(Q(\cL,h))\cap\{\tau>0\}=\{(q,\tau p,-h(q,p),\tau)\mid (q,p)\in \cL,\tau >0\}=\nu^{-1}(\Gamma_{(\cL,-h)}),\]
        where $\nu\colon (T^*M\times T^*\R)\cap\{\tau>0\} \to J^1M$ is defined by $\nu(q,p,t,\tau)=(\rho(q,p,t,\tau),t)$;
        \item \cite{MR4768582} $Q$ extends to an isometric embedding $\widehat{Q}\colon \widehat{\mathcal{LB}}(T^*M,\omega)\to D_{\{\tau>0\}}(M\times\R)$ from the Humilière completion to the derived category of sheaves;
        \item \cite{MR4683312} for $\underline \cL\in \widehat{\mathcal{LB}}(T^*M,\omega)$, $RS(\widehat{Q}(\underline{\cL}))=\operatorname{\gamma-supp}(\underline{\cL})$.
    \end{enumerate}
\end{theo}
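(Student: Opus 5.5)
The statement is an assembly of known results, so my plan is to build $Q$ explicitly and then check each item against the cited works, verifying that the conventions match (the sign of $h$, the choice $\{\tau>0\}$, and the reduction map $\rho$).

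\textbf{Construction and item 2.} For $(\cL,h)\in\mathcal{LB}(T^*M,\omega)$, I would take the Legendrian lift $\Gamma_{(\cL,-h)}\subset J^1M$. The sign change comes from the convention that the $t$-coordinate of $SS$ records $-h$. I then conify it to the closed conic Lagrangian $\Lambda=\nu^{-1}(\Gamma_{(\cL,-h)})\subset T^*(M\times\R)\cap\{\tau>0\}$. Guillermou's quantization theorem \cite{guillermou2015}, also obtained through Floer-type arguments in \cite{viterbo2019sheafquantizationlagrangiansfloer}, gives a simple sheaf $F$ with $SS(F)\setminus 0=\Lambda$. The sheaf is normalized so that $F$ is $0$ near $t=-\infty$ and constant near $t=+\infty$; under the vanishing Maslov class and relative spin conditions, which hold automatically by \cite{MR3070514} and with $\Z/2\Z$ coefficients, this normalization makes $F$ unique. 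I set $Q(\cL,h)=F$. Item 2 is then immediate: $SS(F)\cap\{\tau>0\}=\Lambda$, and $\rho(\Lambda)=\cL$ is already closed, so $RS(F)=\cL$. I would also check that a shift $h\mapsto h+c$ acts on $Q$ as translation in $t$ by $-c$. This compatibility is used in the next step.

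\textbf{Item 1 (isometry).} I would follow \cite{MR4768582}. The interleaving distance compares $F$ and $G$ through the translation maps $T_c$. Morphisms $F\to T_cG$ are computed by $RHom(F,T_cG)$, which is identified with a filtered Floer or Morse complex of $(\cL,\cL')$; the filtration levels correspond to action values $h-h'$. Under this identification, the smallest $c\ge 0$ admitting the morphism induced by the unit or fundamental class $\mu$ equals $\max(0,\rho(\mu,\cL,\cL'))$. The symmetric quantity gives the other term, so the interleaving distance reproduces $c((\cL,h),(\cL',h'))$ as in Definition \ref{def:SpectralDistBrane}. Injectivity then follows from Proposition \ref{prop:GammaDistance}.

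\textbf{Items 3 and 4.} For item 3, I would use the fact, also from \cite{MR4768582}, that $D_{\{\tau>0\}}(M\times\R)$, suitably restricted to the closure of the image, is complete for $d$: a $d$-Cauchy sequence admits a limit, built as a homotopy colimit of interleaving maps. An isometric map from a metric space into a complete space extends uniquely and isometrically to the completion, which defines $\widehat Q$. For item 4, I would quote \cite{MR4683312}. The inclusion $RS(\widehat Q(\underline\cL))\subset\operatorname{\gamma-supp}(\underline\cL)$ uses that a compactly supported Hamiltonian acting away from a point fixes $\underline\cL$, together with sheaf-theoretic invariance of $RS$ under Hamiltonian kernels. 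The reverse inclusion uses that if $x\notin RS$, then small Hamiltonians supported near $x$ act trivially on the sheaf, and hence, by isometry, on $\underline\cL$.

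\textbf{Main obstacle.} I expect the hardest part to be the completeness and limit construction in item 3, together with matching it with item 4. The interleaving metric is only a pseudo-distance, so the limit is defined only up to distance zero, and one must make sure that $RS$ is well defined on that class. This is exactly where I would rely most heavily on the precise statements of \cite{MR4768582} and \cite{MR4683312} rather than reproving them.
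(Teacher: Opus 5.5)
Your proposal follows the same route as the paper. The paper gives no proof of this theorem and simply assembles it from the cited works: Guillermou and Viterbo for the construction of $Q$ and item 2, Guillermou--Viterbo for the isometry and its extension to the Humili\`ere completion, and Asano--Guillermou--Humili\`ere--Ike--Viterbo for $RS(\widehat Q(\underline{\cL}))=\operatorname{\gamma-supp}(\underline{\cL})$. One caveat on your heuristics: for $RS\subset\operatorname{\gamma-supp}$, invariance of $RS$ under Hamiltonians supported in a connected $U$ only yields $RS\cap U\in\{\emptyset,U\}$ (since $\mathrm{Ham}_c(U)$ acts transitively on $U$), so that inclusion genuinely rests on the cited argument rather than on invariance of $RS$ alone.
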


\subsection{Proof of Item \ref{prop:UniquenessPrimitive6} of Theorem \ref{thm:UniquenessPrimitiveC0}}
\label{sec:proof for sympeo}

Recall that if $(\cL_n,h_n)$ are Lagrangian branes converging to $(\cL,h)\in \overline{\mathcal{LB}(T^*M)}$, then by Proposition \ref{BraneGammaProp} the sequence $(\cL_n,h_n)$ is Cauchy for the $c$-distance. Therefore, it defines an element $\underline{\cL}_h$ of the Humilière completion $\widehat{\mathcal{LB}}(T^*M,\omega)$. This element depends only on $(\cL,h)$ (not on the approximating sequence $(\cL_n,h_n)$), which justifies the notation.

We also recall the notions of topological Lagrangian brane and topological Lagrangian submanifold introduced in Definition \ref{def:C^0Lag}. We first show the following result, as a consequence of \cite[Theorem 1.4]{MR5052827}.

\begin{prop}\label{prop:topologicalLagrangian}
    Let $(\cL,h)$ be a topological Lagrangian brane. Then, $\cL$ is a topological Lagrangian submanifold. More precisely, $\cL=\operatorname{\gamma-supp}(\underline{\cL}_h)$.
\end{prop}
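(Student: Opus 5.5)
We prove the two inclusions $\operatorname{\gamma-supp}(\underline{\cL}_h)\subset\cL$ and $\cL\subset\operatorname{\gamma-supp}(\underline{\cL}_h)$ separately. Let $(\cL_n,h_n)$ be Lagrangian branes brane-converging to $(\cL,h)$. By Proposition \ref{BraneGammaProp}, $(\cL_n)$ is $\gamma$-Cauchy and defines $\underline{\cL}_h$.

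\emph{First inclusion.} By the semi-continuity in Proposition \ref{prop:gammaSupports}, $\operatorname{\gamma-supp}(\underline{\cL}_h)\subset\liminf_n\cL_n$. Brane convergence implies Hausdorff convergence $\cL_n\to\cL$ (Proposition \ref{BraneHausdorffProp}), so $\liminf_n\cL_n=\cL$. Hence $\operatorname{\gamma-supp}(\underline{\cL}_h)\subset\cL$. This step uses neither the manifold hypothesis nor $h$.

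\emph{Second inclusion.} Here the topological-manifold hypothesis is essential, and we plan to invoke \cite[Theorem 1.4]{MR5052827}. As we understand it, that result says that a $\gamma$-support contained in a connected topological $d$-manifold, or more generally a $\gamma$-support which is a closed subset of such a manifold, must be all of it. The heuristic is that a $\gamma$-coisotropic set cannot have dimension less than $d$, so a proper closed subset of a $d$-manifold cannot be a $\gamma$-support.

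Concretely, set $S=\operatorname{\gamma-supp}(\underline{\cL}_h)$. We would proceed as follows.
\begin{enumerate}
\item $S$ is closed, nonempty and $\gamma$-coisotropic (Proposition \ref{prop:gammaSupports}). Nonemptiness follows, for instance, from the fact that the graph selector of $\underline{\cL}_h$ is nonconstant-differentiable, or from $RS(\widehat Q(\underline{\cL}_h))\neq\emptyset$, since the quantized sheaf is nonzero (Theorem \ref{thm:lag_quantization}).
\item $S\subset\cL$ by the first inclusion.
\item $\cL$ is connected by Proposition \ref{prop:Connectedness}, and it is a closed topological $d$-manifold by hypothesis.
\end{enumerate}
Applying \cite[Theorem 1.4]{MR5052827} to $S\subset\cL$ then gives $S=\cL$.

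\emph{Fallback if the cited theorem only gives local information.} If the cited result only says that $S$ has nonempty interior in $\cL$, or that $S$ is open in $\cL$ at points where $\cL$ is locally a manifold, we would close the argument by connectedness: $S$ is open and closed in the connected set $\cL$, and nonempty, so $S=\cL$.

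\emph{Main obstacle.} The difficulty lies entirely in matching the precise statement of \cite[Theorem 1.4]{MR5052827} to our setting, that is, in showing that $S$ cannot be a proper closed subset of the manifold $\cL$. If needed, we would argue locally. Around a point of $\cL\setminus S$ near $\partial S$, the set $S$ would sit inside a topological $d$-disc as a proper closed subset. We would then show that $\gamma$-coisotropy forces $S$ to contain every point of that disc, using a displacement argument with small Hamiltonians supported near the boundary of $S$ inside the disc.
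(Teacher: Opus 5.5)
Your first inclusion is exactly the paper's: semi-continuity of $\gamma$-supports together with $\liminf_n\cL_n=\cL$. The second inclusion has a genuine gap. You attribute to \cite[Theorem 1.4]{MR5052827} the statement that a $\gamma$-support lying in a connected $d$-manifold fills it, but that is the conclusion you need, not the content of the cited theorem. What the theorem actually gives, and what the paper uses, is cohomological: for a compact $\gamma$-support $S$, the natural map $H^d(M)\to\check H^d(S)$ is injective.

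The missing step is the topological argument that turns this injectivity into $S=\cL$. Suppose $x\in\cL\setminus S$. Then $S\subset\cL\setminus\{x\}$, so the map factors as
\[
H^d(M)\to H^d(\cL\setminus\{x\})\to\check H^d(S).
\]
Since $\cL$ is connected (Proposition \ref{prop:Connectedness}), no component of the $d$-manifold $\cL\setminus\{x\}$ is compact. Hence $H^d(\cL\setminus\{x\};\Z/2\Z)=0$. This contradicts injectivity, because $H^d(M;\Z/2\Z)\neq0$.

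Your fallbacks do not repair the gap.
\begin{enumerate}
\item The dimension heuristic cannot work. A closed $d$-ball inside $\cL$ is a proper closed subset of full dimension. What excludes it is global topology ($H^d$ of a ball vanishes while $H^d(M)$ must inject), not a dimension count.
\item Knowing that $S$ has nonempty interior in $\cL$ does not make $S$ open, so the clopen argument is unavailable in that branch.
\item The local displacement argument has no mechanism. $\gamma$-coisotropy is a condition imposed only at points of $S$. It gives no information that would force points of $\cL\setminus S$ near $\partial S$ into $S$.
\end{enumerate}
Finally, nonemptiness of $S$ need not be argued separately, since it also follows from the injectivity. Your justification via the graph selector is not an argument as written.
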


\begin{proof}
    We want to show that $\cL$ is a $\gamma$-support. Let $(\cL_n,h_n)$ be a sequence of smooth Lagrangian branes converging to $(\cL,h)$ in the brane topology. By the semi-continuity of $\gamma$-supports (Proposition \ref{prop:gammaSupports}), $\operatorname{\gamma-supp}(\underline \cL_h)\subset \liminf_n \cL_n=\cL$. Assume that the inclusion is strict. Let $x\in\cL\setminus\operatorname{\gamma-supp}(\underline \cL_h)$. Since $\operatorname{\gamma-supp}(\underline \cL_h)$ is closed and contained in the compact set $\cL$, it is compact and we can apply \cite[Theorem 1.4]{MR5052827}: the natural map $H^d(M)\to \check H^d(\operatorname{\gamma-supp}(\underline \cL_h))$, where $\check H$ denotes the Čech cohomology, is injective. It factors through
    \begin{equation}\label{eq:cohomology}
        H^d(M)\longrightarrow H^d(\cL\setminus\{x\})\longrightarrow \check H^d(\operatorname{\gamma-supp}(\underline \cL_h)).
    \end{equation}
    But $\cL\setminus\{x\}$ is a topological manifold of dimension $d$, which is not compact (since $\cL$ is connected by Proposition \ref{prop:Connectedness} and $\cL\setminus\{x\}$ is open). Therefore, $H^d(\cL\setminus\{x\})=0$. This implies that the map $H^d(M)\to \check H^d(\operatorname{\gamma-supp}(\underline \cL_h))$ is trivial. However, since $M$ is a closed manifold of dimension $d$, $H^d(M)$ is non-trivial, which contradicts the injectivity of the map. By contradiction, we thus have $\operatorname{\gamma-supp}(\underline \cL_h)=\cL$.
\end{proof}

In order to prove Item \ref{prop:UniquenessPrimitive6} of Theorem \ref{thm:UniquenessPrimitiveC0}, we will also need the following proposition.

\begin{prop}\label{prop:SStopologicalBrane}
    Let $(\cL,h)$ be a topological Lagrangian brane. Then,
    \[SS(\widehat Q(\underline{\cL}_h))\cap\{\tau=1\}=\Gamma_{(\cL,-h)}\times\{1\}.\]
\end{prop}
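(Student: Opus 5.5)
We prove the two inclusions separately, writing $F=\widehat Q(\underline{\cL}_h)$ and $F_n=Q(\cL_n,h_n)$ for a sequence of smooth Lagrangian branes $(\cL_n,h_n)$ brane-converging to $(\cL,h)$. By Proposition \ref{BraneGammaProp} this sequence is $c$-Cauchy. By Theorem \ref{thm:lag_quantization}, $F_n\to F$ in the interleaving topology. Also by Theorem \ref{thm:lag_quantization}, for each $n$,
\[SS(F_n)\cap\{\tau=1\}=\{(q,p,-h_n(q,p),1)\mid (q,p)\in\cL_n\}=\Gamma_{(\cL_n,-h_n)}\times\{1\}.\]

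\emph{Inclusion ``$\subset$''.} We use the lower semi-continuity of Proposition \ref{prop:singular_support_is_lsc}. Let $\xi\in SS(F)\cap\{\tau=1\}$. Then $\xi$ is a limit of points $\xi_n\in SS(F_n)$. Since $\tau=1>0$ is an open condition, $\tau_n>0$ for large $n$. By conicality, rescaling $\xi_n$ by $1/\tau_n\to1$ gives points of $\Gamma_{(\cL_n,-h_n)}\times\{1\}$ still converging to $\xi$. Brane convergence means exactly that $\Gamma_{(\cL_n,h_n)}\to\Gamma_{(\cL,h)}$ in the Hausdorff topology, and hence so do the graphs of $-h_n$. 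Their limit points therefore lie in $\Gamma_{(\cL,-h)}$, so $\xi\in\Gamma_{(\cL,-h)}\times\{1\}$.

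\emph{Inclusion ``$\supset$''.} This is the harder direction. By Proposition \ref{prop:topologicalLagrangian}, $\cL=\operatorname{\gamma-supp}(\underline{\cL}_h)$. By item 4 of Theorem \ref{thm:lag_quantization}, this equals $RS(F)$. Hence for every $x\in\cL$ there is at least one point $(x,z)$, with $z$ a value of the ``$t$''-coordinate, such that $\nu^{-1}(x,z)\subset SS(F)$. From the first inclusion we know $z=-h(x)$ whenever $(x,z)$ is a limit of points $\rho(SS(F)\cap\{\tau>0\})$. The delicate point is closure: $RS(F)$ is defined as the \emph{closure} of $\rho(SS(F)\cap\{\tau>0\})$. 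So a priori a point $x\in\cL$ might only be a limit of points $x_j=\rho(\xi_j)$ with $\xi_j\in SS(F)$, $\tau_j>0$. After normalizing $\tau_j=1$, the $\xi_j$ lie in $\Gamma_{(\cL,-h)}\times\{1\}$ by the first inclusion, so $\xi_j=(x_j,-h(x_j),1)$. Since $h$ is continuous on the compact set $\cL$, $\xi_j\to(x,-h(x),1)$. Because $SS(F)$ is closed, $(x,-h(x),1)\in SS(F)$.

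The main obstacle is thus entirely in ``$\supset$'', and it is resolved by combining three facts: the identification $RS=\gamma$-supp $=\cL$ (which uses the topological-manifold hypothesis through Proposition \ref{prop:topologicalLagrangian}), the continuity of $h$, and the closedness of singular supports. One also needs that the $\tau$-fibre coordinate is well defined under $\rho$, i.e.\ that conicality allows normalizing to $\tau=1$; this is immediate.
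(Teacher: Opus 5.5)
Your proposal is correct and follows the paper's proof essentially verbatim. You get ``$\subset$'' from lower semi-continuity of singular supports, conicity and Hausdorff convergence of the graphs. You get ``$\supset$'' from $RS(F)=\operatorname{\gamma-supp}(\underline{\cL}_h)=\cL$, continuity of $h$ and closedness of $SS(F)$; the paper phrases this last step as density of the projection $K$ in $\cL$ together with $\overline{\Gamma_{(K,-h|_K)}}=\Gamma_{(\cL,-h)}$.

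The only blemish is the opening sentence of your ``$\supset$'' paragraph. It asserts that every $x\in\cL$ already lies in $\rho(SS(F)\cap\{\tau>0\})$, before you correctly retract this and handle the closure.
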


\begin{proof}
    Let $(\cL_n,h_n)$ be a sequence of Lagrangian branes converging to $(\cL,h)$. Denote by $F_n=Q(\cL_n,h_n)\in D_{\{\tau>0\}}(M\times\R)$ the sheaf associated to $(\cL_n,h_n)$ by the quantization Theorem \ref{thm:lag_quantization}, and by $F=\widehat{Q}(\underline \cL_h)$ the sheaf associated to $\underline{\cL}_h$.\\
    Recall that the reduced singular support of $F$ is $RS(F)=\overline{\rho(SS(F)\cap\{\tau>0\})}$, where $\rho\colon T^*M\times T^*\R\setminus\{\tau=0\}\to T^*M$ is given by $\rho(q,p,t,\tau)=(q,\frac{p}{\tau})$. Then, by the fourth item of Theorem \ref{thm:lag_quantization} and Proposition \ref{prop:topologicalLagrangian}, we have $RS(F)=\operatorname{\gamma-supp}(\underline{\cL}_h)=\cL$.
    Moreover, for any $n$, $(\cL_n,h_n)$ is a smooth Lagrangian brane, so by the second item of Theorem \ref{thm:lag_quantization} we have
    \[SS(F_n)\cap\{\tau>0\}=\{(q,\tau p,-h_n(q,p),\tau)\mid (q,p)\in \cL_n,\tau >0\}=\nu^{-1}(\Gamma_{(\cL_n,-h_n)}),\]
    where $\nu\colon (T^*M\times T^*\R)\cap\{\tau>0\}\to J^1M$ is defined by $\nu(q,p,t,\tau)=(\rho(q,p,t,\tau),t)$. By Proposition \ref{prop:singular_support_is_lsc}, since the sheaves $F_n$ converge to $F$ in the interleaving topology, we have $SS(F)\subset \liminf SS(F_n)$. Therefore, by conicity of singular supports,
    \begin{align*}
        SS(F)\cap\{\tau=1\}&\subset\liminf (SS(F_n)\cap\{\tau=1\})\\
        &\subset \liminf \Gamma_{(\cL_n,-h_n)}\times\{1\}\\
        &\subset \Gamma_{(\cL,-h)}\times\{1\}
    \end{align*}
    since $\Gamma_{(\cL_n,-h_n)}$ converges to $\Gamma_{(\cL,-h)}$ in the Hausdorff topology by definition of the brane distance $d_B$. Therefore, $\nu(SS(F)\cap\{\tau>0\})\subset \Gamma_{(\cL,-h)}$. But since we know that $RS(F)=\cL$, the image $K$ of $\nu(SS(F)\cap\{\tau>0\})$ by the projection $J^1M\to T^*M$ has to be dense in $\cL$. Since $SS(F)$ is closed, so is $\nu(SS(F)\cap\{\tau>0\})\times\{1\}=SS(F)\cap\{\tau=1\}$. Therefore, $\nu(SS(F)\cap\{\tau>0\})$ is closed, and it contains the set $\Gamma_{(K,-h|_K)}$, whose closure is $\Gamma_{(\cL,-h)}$. This implies that $\nu(SS(F)\cap\{\tau>0\})=\Gamma_{(\cL,-h)}$, and therefore $SS(F)\cap\{\tau=1\}=\Gamma_{(\cL,-h)}\times\{1\}$.
\end{proof}

We can now prove Item \ref{prop:UniquenessPrimitive6} of Theorem \ref{thm:UniquenessPrimitiveC0}, which is a direct consequence of the following proposition.

\begin{prop}\label{prop:uniqueGammaSupportImpliesL^!}
    Let $(\cL,h)$ be a topological Lagrangian brane. If $\cL$ has the property that, for all $K,K'\in \widehat{\mathcal{L}}(T^*M)$, $\operatorname{\gamma-supp}(K)=\operatorname{\gamma-supp}(K')=\cL\implies K=K'$, then $\cL\in \mathscr{L}^!(T^*M)$. In particular, this is the case if $\cL\in\mathscr{L}_\text{Sympeo}\cap\mathscr{L}^\exists$.
\end{prop}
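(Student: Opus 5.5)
Suppose $(\cL,[h])$ and $(\cL,[h'])$ are two lifts in $\overline{\mathfrak{LB}(T^*M,\omega)}$. We must show $[h]=[h']$. We may pick representatives $h,h'$ with $(\cL,h),(\cL,h')\in\overline{\mathcal{LB}(T^*M,\omega)}$. Since $\cL$ is a $d$-dimensional topological manifold, both are topological Lagrangian branes.

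By Proposition \ref{prop:topologicalLagrangian}, the elements $\underline{\cL}_h$ and $\underline{\cL}_{h'}$ of $\widehat{\mathcal{LB}}(T^*M)$ both have $\gamma$-support equal to $\cL$. Their images $K,K'$ in $\widehat{\mathscr{L}}(T^*M)$ therefore satisfy $\operatorname{\gamma-supp}(K)=\operatorname{\gamma-supp}(K')=\cL$, and the hypothesis gives $K=K'$. The natural surjection $\widehat{\mathcal{LB}}\to\widehat{\mathscr{L}}$ identifies preimages up to a constant action shift. Hence there is $c\in\R$ with $\underline{\cL}_{h'}=\underline{\cL}_h+c$. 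I will check that shifting all approximating primitives $h_n$ by $c$ gives the Cauchy sequence defining $\underline{\cL}_{h+c}$. So $\underline{\cL}_{h'}=\underline{\cL}_{h+c}$.

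Next I apply $\widehat Q$ and Proposition \ref{prop:SStopologicalBrane} to both branes $(\cL,h+c)$ and $(\cL,h')$. Since the two elements of the completion coincide, their sheaves coincide, and so do their singular supports. Intersecting with $\{\tau=1\}$ gives
\[\Gamma_{(\cL,-h-c)}\times\{1\}=\Gamma_{(\cL,-h')}\times\{1\}.\]
Two partial graphs over the same set agree only if the functions agree, so $h'=h+c$ and $[h']=[h]$. This shows $\cL\in\mathscr{L}^!$. I expect this step to be routine, the only care being that $\widehat Q$ is well defined on the completion, so that equal elements give equal sheaves.

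The ``in particular'' part is the real obstacle. Let $\cL=\varphi(\cL_0)\in\mathscr{L}_\text{Sympeo}\cap\mathscr{L}^\exists$. Then $\cL$ is a topological manifold of dimension $d$ and carries some lift, so it is a topological Lagrangian brane. It remains to verify the uniqueness hypothesis on $\gamma$-supports. My plan is to use the result from \cite{viterbo2026supportshumilierecompletiongammacoisotropic} or \cite{MR4683312} that a smooth closed exact Lagrangian $\cL_0$ is the only element of $\widehat{\mathscr{L}}$ with $\gamma$-support $\cL_0$. I would then transport this through $\varphi$ via the approximating exact symplectomorphisms $\varphi_n$.

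Suppose $\operatorname{\gamma-supp}(K)=\cL$. Consider $\varphi_n^{-1}(K)$, which is well defined because exact symplectomorphisms act isometrically on the completion. I would need to show that its support, $\varphi_n^{-1}(\cL)$, is $C^0$-close to $\cL_0$. I would then argue that any element whose $\gamma$-support lies in a small neighbourhood of $\cL_0$ is $\gamma$-close to $\cL_0$. For this I would use a Weinstein neighbourhood together with the fact that $\gamma$-supports detect the spectral class, via the quantization of Theorem \ref{thm:lag_quantization}.

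Passing to the limit should identify $K$ with the same object for every $K$ having support $\cL$. The delicate point, which I expect to be hardest, is controlling the $\gamma$-distance uniformly along $\varphi_n$ when $\varphi_n$ converges only in $C^0$.
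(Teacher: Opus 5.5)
Your proof of the main implication is correct and is essentially the paper's argument. Proposition~\ref{prop:topologicalLagrangian} gives both $\gamma$-supports equal to $\cL$. The hypothesis, together with the fibres of $\widehat{\mathcal{LB}}(T^*M)\to\widehat{\mathscr{L}}(T^*M)$, gives an action shift. Proposition~\ref{prop:SStopologicalBrane} then reads off $h'=h+c$ from $SS\cap\{\tau=1\}$. Normalizing to $h+c$ first, instead of saying that the two sheaves differ by an $\R$-translation, is a harmless variant.

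The gap is in the ``in particular'' part, at exactly the step you flag. Your intermediate claim says that any element of $\widehat{\mathscr{L}}(T^*M)$ whose $\gamma$-support lies in a small neighbourhood of $\cL_0$ is $\gamma$-close to $\cL_0$. This is far from routine. Take $\cL_0=0_M$ and smooth exact $\cL\subset D^*_\epsilon M$, and combine the claim with the fibrewise rescaling $(q,p)\mapsto(q,sp)$, which multiplies $\gamma$ by $s$. You would get a uniform bound on $\gamma(\cL,0_M)$ over all exact $\cL$ in the unit codisc bundle, which is Viterbo's spectral bound conjecture. So transporting $K$ by the approximants $\varphi_n^{-1}$ and controlling only $\gamma$-supports does not close the argument.

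The missing idea is to transport $K$ by the limit homeomorphism itself. The spectral distance is invariant under exact symplectomorphisms and is $C^0$-continuous \cite{MR4263685}. Hence $\varphi$ induces an isometry of $\widehat{\mathscr{L}}(T^*M)$ that commutes with $\gamma$-supports \cite[Corollary~5.25]{viterbo2026supportshumilierecompletiongammacoisotropic}. It follows that $\operatorname{\gamma-supp}(\varphi^{-1}(K))=\varphi^{-1}(\cL)=\cL_0$. The rigidity theorem of \cite{MR5021042} then gives $\varphi^{-1}(K)=\cL_0$, so $K=\varphi(\cL_0)$ for every such $K$. That is the correct reference for ``a smooth closed exact Lagrangian is the only element of the completion with that $\gamma$-support''. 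This route absorbs the merely $C^0$ convergence of $\varphi_n$, and no estimate from $\gamma$-supports to $\gamma$-distance is needed.
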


\begin{proof}
    Let $h'$ be such that $(\cL,h')\in \overline{\mathcal{LB}(T^*M)}$. We want to prove that $[h]=[h']$. By Proposition \ref{prop:topologicalLagrangian}, we have $\operatorname{\gamma-supp}(\underline{\cL}_h)=\cL=\operatorname{\gamma-supp}(\underline{\cL}_{h'})$, and therefore by hypothesis the images of $\underline{\cL}_h$ and $\underline{\cL}_{h'}$ coincide inside $\widehat{\cL}(T^*M)$. This implies that they differ by an action shift in $\widehat{\mathcal{LB}}(T^*M)$, which in turn implies that their respective quantizations $F=\widehat Q(\underline{\cL}_h)$ and $F'=\widehat Q(\underline{\cL}_{h'})$, as well as their singular supports, differ by a translation in the $\R$ direction. But by Proposition \ref{prop:SStopologicalBrane}, we have $SS(F)\cap\{\tau=1\}=\Gamma_{(\cL,-h)}\times\{1\}$ and $SS(F')\cap\{\tau=1\}=\Gamma_{(\cL,-h')}\times\{1\}$. Therefore, $h$ and $h'$ differ by a constant, and $\cL\in \mathscr{L}^!$.

    We now show that if $\cL\in\mathscr{L}_\text{Sympeo}\cap\mathscr{L}^\exists$, then it satisfies the hypothesis of the proposition. Indeed, let $K\in\widehat{\mathcal{L}}(T^*M)$ be such that $\operatorname{\gamma-supp}(K)=\cL$. Let $\varphi$ be a homeomorphism which is a uniform limit of $C^1$ exact symplectomorphisms on compact sets, and such that $\cL = \varphi(\cL_0)$, with $\cL_0$ a smooth closed exact Lagrangian submanifold. As explained in Corollary 5.25 of \cite{viterbo2026supportshumilierecompletiongammacoisotropic} and the paragraph before it, since the spectral distance is $C^0$-continuous by \cite{MR4263685}, and since exact symplectomorphisms preserve the spectral distance between Lagrangians, $\varphi$ defines an isometry of $\widehat{\mathcal{L}}(T^*M)$, which commutes with $\gamma$-supports. Therefore, we have $\operatorname{\gamma-supp}(\varphi^{-1}(K))=\varphi^{-1}(\operatorname{\gamma-supp}(K))=\varphi^{-1}(\cL)=\cL_0$. Since $\cL_0$ is a smooth closed exact Lagrangian, by \cite{MR5021042} we have $\varphi^{-1}(K)=\cL_0$, and therefore $K=\varphi(\cL_0)$ is uniquely determined by $\cL$.
\end{proof}

\begin{rem}
    In \cite[Question 7.7]{viterbo2026supportshumilierecompletiongammacoisotropic}, Viterbo also asks whether topological Lagrangians correspond to a unique element of the Humilière completion. A positive answer would imply that the hypotheses of Proposition \ref{prop:uniqueGammaSupportImpliesL^!} are always satisfied, and therefore that topological Lagrangian branes are in $\mathscr{L}^!(T^*M)$, answering Question \ref{quest:regularity1} positively.
\end{rem}

\subsection{Proof of Theorem \ref{thm:rigidity}}
\label{sec:rigidity}

    We prove Theorem \ref{thm:rigidity}.
    By Proposition \ref{prop:SStopologicalBrane}, the sheaf $F=\widehat{Q}(\underline \cL_h)$ satisfies 
    \begin{equation*}
        SS(F)\cap\{\tau>0\}=\{(q,\tau p,-h(q,p),\tau)\mid (q,p)\in \cL,\tau >0\}=\nu^{-1}(\Gamma_{(\cL,-h)}).
    \end{equation*}
    Let $U$ be an open subset of $\cL$ where $\cL$ is $C^1$. Since $\cL$ is a topological manifold, we have $\operatorname{\gamma-supp}(\underline{\cL}_h)= \cL$, and therefore $\cL$ is $\gamma$-coisotropic. Since $U$ is $C^1$, it is then coisotropic in the usual sense, and therefore Lagrangian. Let $H\colon[0,1]\times T^*M\to\R$ be a Hamiltonian function vanishing on $U$. Its homogeneous lift $\widetilde{H}\colon [0,1]\times T^*M\times T^*\R \setminus \{ \tau = 0 \} \to\R$ is given by $(s,q,p,t,\tau)\mapsto \tau H(s,q,\frac{p}{\tau})$. Since it does not depend on the $t$ coordinate, its flow preserves the levels of $\tau$, and its restriction to $\{\tau=1\}$ is given by
    \[\phi^s_{\widetilde{H}}(x,t,1)=\bigg(\phi^s_H(x),t+\int_0^s\big(H_{s'}(\phi^{s'}_H(x))-\lambda_{\phi^{s'}_H(x)}(X_{H_{s'}}(\phi^{s'}_H(x)))\big)\;ds',1\bigg).\]
    Since $U$ is open in $\cL$, there exists an open subset $W$ of $T^*M$ whose intersection with $\cL$ is $U$. Because $H$ vanishes on $U$, $\widetilde{H}$ vanishes on $\nu^{-1}(\Gamma_{(U,-h|_U)})=SS(F)\cap(\rho^{-1}(W)\cap\{\tau>0\})$. Since singular supports are closed and cone-coisotropic, $\nu^{-1}(\Gamma_{(U,-h|_U)})$ is closed and cone-coisotropic in $\rho^{-1}(W)\cap\{\tau>0\}$, and by Proposition \ref{prop:cone-coiso} this implies that for any point $y$ in $\nu^{-1}(\Gamma_{(U,-h|_U)})$, the trajectory $\phi^s_{\widetilde{H}}(y)$ remains in $\nu^{-1}(\Gamma_{(U,-h|_U)})$ for small positive times $s$. In particular, in the level $\{\tau=1\}$, we get for $x$ in $U$ and small times $s\geq 0$, that
    \[\phi^s_{\widetilde{H}}(x,-h(x),1)=\bigg(\phi^s_H(x),-h(x)+\int_0^s\big(H_{s'}(\phi^{s'}_H(x))-\lambda_{\phi^{s'}_H(x)}(X_H^{s'}(\phi^{s'}_H(x)))\big)\;ds',1\bigg)\in \Gamma_{(U,-h|_U)} \times \{1\} \]
    i.e.
    \begin{align*}h(\phi^s_H(x))&=h(x)-\int_0^s\big(H_{s'}(\phi^{s'}_H(x))-\lambda_{\phi^{s'}_H(x)}(X_{H_{s'}}(\phi^{s'}_H(x)))\big)\;ds'\\
    &=h(x)+\int_0^s\lambda_{\phi^{s'}_H(x)}(X_{H_{s'}}(\phi^{s'}_H(x)))\;ds'
    \end{align*}
    since $H$ vanishes on $U$. This formula implies that $h$ is a primitive of $\lambda$ along the trajectories of $H$. To conclude the proof, it only remains to show that any $C^1$ injective path $\gamma\colon[0,1]\to U$ can be realized, up to reparametrization, by a $C^1$ flow of a Hamiltonian vanishing on $U$. Indeed, this would imply that $\lambda|_U=dh|_U$. If $U$ is at least $C^2$, then this follows easily from Weinstein's neighbourhood theorem. We present a proof for the $C^1$ case.
    
    Since $U$ is a $C^1$ Lagrangian and $\gamma([0,1])$ is compact, for a sufficiently fine finite subdivision $0=t_0<t_1<\ldots<t_n=1$ we can find $C^1$ symplectic charts $(\varphi_j\colon V_j\to V'_j)_{0\leq j\leq n}$ from neighbourhoods $V_j$ of $\gamma(t_j)$ in $T^*M$ to neighbourhoods $V'_j$ of the origin in $\C^d$ that map $U\cap V_j$ to $\R^d\cap V'_j$, and such that the $V_j$ cover $\gamma([0,1])$. For $0\leq j \leq n$, let $I_j=\gamma^{-1}(V_j)$ and $\gamma_j=\varphi_j\circ \gamma|_{I_j}$. Since $\gamma$ is injective, up to taking a finer subdivision and shrinking the $V_j$, we may assume that the $I_j$ are open intervals. We also pick points $0=s_0<s_1<\ldots<s_{n+1}=1$ satisfying $s_j\in I_{j-1}\cap I_j$ for $1\leq j\leq n$, so that each $I_j$ contains $[s_j,s_{j+1}]$. Note that for each $j$, $\gamma_j$ is a $C^1$ path contained in $\R^d$, and therefore the Hamiltonian $G^j\colon I_j\times\C^d\to \R$ given by $G^j_t(q+ip)=p\cdot \dot{\gamma}_j(t)$ is well-defined. For a fixed time $t$, this Hamiltonian is smooth, and its associated vector field is given by $X_{G^j}^t=\dot{\gamma_j}(t)$. Its flow is given by $\phi_{G^j}^{s,t}(q+ip)=q+\gamma_j(t)-\gamma_j(s)+ip$. It is $C^1$, and satisfies $\phi_{G^j}^{s,t}(\gamma_j(s))=\gamma_j(t)$. By multiplying $G^j$ by a smooth cut-off function supported in $V'_j$, and equal to $1$ in a neighbourhood of $\gamma_j([s_j,s_{j+1}])$, we obtain a Hamiltonian $\widetilde{G}^j$, compactly supported in $V'_j$, vanishing on $\R^d\cap V'_j$, and whose flow is $C^1$ and realizes the path $\gamma_j|_{[s_j,s_{j+1}]}$. Also, since we only care about realizing the image of the path $\gamma$, we may reparametrize it so that it stays constant in small neighbourhoods of the $s_j$, so that $\widetilde{G}^j_t$ is identically zero for $t$ close to $s_j$ or $s_{j+1}$. Then, we define $H^j\colon[s_j,s_{j+1}]\times V_j\to\R$ by the formula $H^j_t=\widetilde{G}^j_t\circ\varphi_j$. It is compactly supported, and therefore extends to $[s_j,s_{j+1}]\times T^*M$. It also vanishes on $U\cap V_j$, is $C^1$ for fixed times, and its flow is given by $\phi_{H^j}^{s,t}=\varphi_j^{-1} \circ \phi_{\widetilde{G}^j}^{s,t}\circ\varphi_j$ which is $C^1$ and realizes $\gamma|_{[s_j,s_{j+1}]}$. Moreover, each $H^j$ vanishes identically near $s_j$ and $s_{j+1}$, so we can concatenate them to obtain a Hamiltonian $H\colon[0,1]\times T^*M\to\R$ satisfying all the desired properties.
    \qed

\subsection{Proof of Item \ref{prop:UniquenessPrimitive4} of Theorem \ref{thm:UniquenessPrimitiveC0}}

We start by proving that strongly exact symplectic homeomorphisms preserve $\mathscr{L}^\exists(T^*M)$. Let $\cL\in\mathscr{L}^\exists(T^*M)$ and $\varphi$ be a strongly exact symplectic homeomorphism. Let $h$ be a continuous function on $\cL$ such that $(\cL,h)\in \overline{\mathcal{LB}(T^*M,\omega)}$, and $(\cL_n,h_n)$ be a sequence of smooth Lagrangian branes converging to $(\cL,h)$ in the brane topology. Let $(\varphi_n)$ be a sequence of $C^1$ exact symplectic diffeomorphisms converging to $\varphi$ and satisfying $\varphi_n^*\lambda-\lambda=dS_n$, with $(S_n)$ converging uniformly on compact sets to a continuous function $S$.

Then, since the $\varphi_n$ are exact, we have by Proposition \ref{BraneInvariance} that $(\varphi_n(\cL),g_n)\in \overline{\mathcal{LB}(T^*M,\omega)}$, with $g_n\coloneq h\circ\varphi_n^{-1}+(S_n\circ\varphi_n^{-1})|_{\varphi_n(\cL)}$. We claim that the bare Lagrangian branes $(\varphi_n(\cL),g_n)$ converge in the brane topology to $(\varphi(\cL),g)$, where $g=h\circ\varphi^{-1}+(S\circ\varphi^{-1})|_{\varphi(\cL)}$. This would imply $(\varphi(\cL),g)\in\overline{\mathcal{LB}(T^*M,\omega)}$ since $\overline{\mathcal{LB}(T^*M,\omega)}$ is closed. To prove the claim, consider the lifts $\Phi_n\colon T^*M\times \R\to T^*M\times \R$ given by $\Phi_n(x,t)=(\varphi_n(x),t+S_n(x))$. By the defining properties of the sequences $(\varphi_n)$ and $(S_n)$, we have that the sequence of lifts $(\Phi_n)$ converges uniformly on compact sets to the lift $\Phi$ of $\varphi$ given by $\Phi(x,t)=(\varphi(x),t+S(x))$. Therefore, the sets $\Phi_n(\Gamma_{(\cL,h)})$ converge in the Hausdorff topology to the set $\Phi(\Gamma_{(\cL,h)})$. But $\Phi_n(\Gamma_{(\cL,h)})=\{(\varphi_n(x),h(x)+S_n(x))\mid x\in\cL\}=\Gamma_{(\varphi_n(\cL),g_n)}$, and similarly $\Phi(\Gamma_{(\cL,h)})=\Gamma_{(\varphi(\cL),g)}$. Hence, the Hausdorff convergence above implies the brane convergence of $(\varphi_n(\cL),g_n)$ to $(\varphi(\cL),g)$, which completes the proof.
\qed

\subsection{Proof of Item \ref{prop:UniquenessPrimitive3} of Theorem \ref{thm:UniquenessPrimitiveC0}} \label{Subsection:regularity}

The goal of this section is to prove the strict inclusion $\mathscr{L}_{\text{gr}}\subsetneq \mathscr{L}_\text{Sympeo}^\text{str}$. We start by showing the inclusion in the lemma below, and then produce an example of an element in $\mathscr{L}_\text{Sympeo}^\text{str}\setminus \mathscr{L}_{\text{gr}}$.

\begin{lem}\label{lem:L_gr in L_sympeo}
    We have the inclusion $\mathscr{L}_{\text{gr}}\subset \mathscr{L}_\text{Sympeo}^\text{str}$.
\end{lem}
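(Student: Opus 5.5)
We have to show that every $\cL=\varphi(\mathcal{G}(dv))$, with $v\in C^1(M)$ and $\varphi$ an exact $C^1$ symplectomorphism, is the image of a closed exact smooth Lagrangian under a strongly exact symplectic homeomorphism. The natural smooth Lagrangian is the zero section $0_M$, with primitive $0$. We will write $\cL=\varphi\circ\tau_{dv}(0_M)$, where $\tau_{dv}(q,p)=(q,p+dv(q))$ is the fibrewise translation, and prove two things. First, $\tau_{dv}$ is a strongly exact symplectic homeomorphism. Second, composing with the exact $C^1$ symplectomorphism $\varphi$ preserves this property, which is the subgroup statement of Proposition \ref{prop:strongExactSubgroup}; we may assume it, or reprove the easy case below.

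\emph{Step 1: the translation.} The map $\tau_{dv}$ is a homeomorphism, with inverse $\tau_{-dv}$. Since $v$ is only $C^1$, $dv$ is merely continuous, so $\tau_{dv}$ is not itself a diffeomorphism. To fix this, choose smooth functions $v_n$ with $v_n\to v$ in the $C^1$ topology; $M$ is closed, so mollification in charts with a partition of unity suffices. Then $\tau_{dv_n}$ is a smooth symplectomorphism and
\[
\tau_{dv_n}^*\lambda=\lambda+\pi_M^*dv_n=\lambda+d(v_n\circ\pi_M).
\]
So we take $\varphi_n=\tau_{dv_n}$ and $S_n=v_n\circ\pi_M$. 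By $C^1$ convergence, $\varphi_n\to\tau_{dv}$ uniformly on compact sets, indeed uniformly on all of $T^*M$. Also $S_n\to S:=v\circ\pi_M$ uniformly, and $S$ is continuous. Hence $(\tau_{dv},v\circ\pi_M)$ is a strongly exact symplectic homeomorphism brane.

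\emph{Step 2: composing with $\varphi$.} Let $f$ be a $C^1$ function with $\varphi^*\lambda=\lambda+df$. Set $\psi_n=\varphi\circ\tau_{dv_n}$, which is a $C^1$ diffeomorphism. Then
\[
\psi_n^*\lambda=\tau_{dv_n}^*(\lambda+df)=\lambda+d\big(S_n+f\circ\tau_{dv_n}\big).
\]
Because $\tau_{dv_n}\to\tau_{dv}$ uniformly on compact sets and maps compacts into a fixed compact set, and because $\varphi$ and $f$ are uniformly continuous on compacts, we get $\psi_n\to\varphi\circ\tau_{dv}$ and $S_n+f\circ\tau_{dv_n}\to v\circ\pi_M+f\circ\tau_{dv}$ uniformly on compact sets. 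The limit $\varphi\circ\tau_{dv}$ is a homeomorphism, being a composition of homeomorphisms. It sends $0_M$ onto $\varphi(\mathcal{G}(dv))=\cL$, so $\cL\in\mathscr{L}_\text{Sympeo}^\text{str}$.

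\emph{Main obstacle.} No step here is a serious difficulty. The points that need some care are that the approximating $\psi_n$ are only $C^1$ (the definition allows this), and that the convergence is locally uniform, which requires the compact-set bookkeeping above. The genuinely hard part of Item \ref{prop:UniquenessPrimitive3} is the strictness of the inclusion, but that is not part of this lemma.
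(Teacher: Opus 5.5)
Your proposal is correct and follows the paper's proof. The paper also approximates $v$ in $C^1$ by smooth $v_n$, uses the fibre translations $(q,p)\mapsto(q,p+d_qv_n)$ (written as time-one maps of $H_n=-v_n\circ\pi_M$) with primitives $v_n$, and composes the limit with $\varphi$. Your explicit computation for $\varphi\circ\tau_{dv_n}$ simply spells out the composition step that the paper asserts in one line.
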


\begin{proof}
    Let $\cL$ be an element of $\mathscr{L}_{\text{gr}}$. Then, there exists a $C^1$ map $v\colon M\to \R$ and a $C^1$ exact symplectomorphism $\varphi$ such that $\cL=\varphi(\G(dv))$. Let $(v_n)_{n\in\N}$ be a sequence of smooth maps approximating $v$ in the $C^1$ topology. For $n\in\N$, let $H_n\colon T^*M\to \R$ be the autonomous Hamiltonian given by $H_n(q,p)=-v_n(q)$. Let $\psi_n$ denote the time-1 flow of $H_n$. Then, for all $n\in\N$ and $(q,p)\in T^*M$, we have $\psi_n(q,p)=(q,p+d_qv_n)$, and $\psi_n^*\lambda = \lambda +dv_n$. Therefore, since $(v_n)$ converges to $v$ in the $C^1$ topology, we have that $(\psi_n)$ converges in the $C^0$ topology to the strongly exact symplectic homeomorphism $\psi$ given by $\psi(q,p)=(q,p+d_qv)$, and hence $\G(dv)=\psi(0_M)$ with $\psi$ a strongly exact symplectic homeomorphism. Since $\varphi$ is a $C^1$ exact symplectic diffeomorphism, $\varphi\circ\psi$ is still a strongly exact symplectic homeomorphism, and therefore $\cL\in \mathscr{L}_{\text{Sympeo}}^\text{str}$.
\end{proof}

We now show that this inclusion is strict by using ``infinite twist maps''. Let $R>0$, and $f\colon(0,R)\to[0,+\infty)$ be a smooth non-increasing function supported in $(0,R/2)$ and such that $f(r)\xrightarrow[r\to 0]{}+\infty$. Let $B(R)$ denote the open ball of radius $\sqrt{R}$ centred at the origin in $\C^d$ equipped with the symplectic form $\omega_0=-d\lambda_0$, where $\lambda_0$ is the standard Liouville form.  We define the infinite twist $\varphi_0\colon B(R)\setminus\{0\}\to B(R)\setminus\{0\}$ by $\varphi_0(z)=e^{if(\Vert z\Vert^2)}\cdot z$. It is a Hamiltonian diffeomorphism generated by the autonomous Hamiltonian $H(z)=-\frac{1}{2}F(\Vert z\Vert^2)$, where $F$ is a primitive of $f$. Therefore, it is exact, and satisfies $\varphi_0^*\lambda-\lambda = dS$, where 

\begin{align*}
S(z)&=\int_0^1{\lambda_0}_{e^{itf(\Vert z\Vert^2)}z}(if(\Vert z\Vert^2)e^{itf(\Vert z\Vert^2)}z)+\frac{1}{2}F(\Vert z\Vert^2)\;dt\\
&=\frac 1 2F(\Vert z\Vert^2)-f(\Vert z\Vert^2)\int_0^1\sum_{i=1}^dr_i^2\sin^2(\theta_i+tf(\Vert z\Vert^2))\;dt\\
&=\frac 1 2(F(\Vert z\Vert^2)-\Vert z\Vert^2f(\Vert z\Vert^2))+\frac 1 4 \sum_{i=1}^dr_i^2(\sin(2\theta_i+2f(\Vert z\Vert^2))-\sin(2\theta_i)),
\end{align*}
where $z=(r_1e^{i\theta_1},\ldots,r_de^{i\theta_d})\in B(R)$.   

$\varphi_0$ extends continuously to a compactly supported homeomorphism $B(R)\to B(R)$ mapping the origin to itself, which we still denote $\varphi_0$. One can show it is a $C^0$-limit of compactly supported smooth Hamiltonian diffeomorphisms of $B(R)$. Now, pick a point $q\in M$. By Weinstein's neighbourhood theorem, there exists a neighbourhood $U$ of $(q,0)$ in $T^*M$, $R>0$, and an exact smooth symplectomorphism $\psi\colon (U,\lambda)\to (B(R),\lambda_0)$ mapping $(q,0)$ to the origin and $U\cap 0_M$ to $B(R)\cap \R^d$. Let $S'$ be a smooth function on $U$ satisfying $\psi^*\lambda_0=\lambda+dS'$, and let $\varphi\coloneq\psi^{-1}\varphi_0\psi$. It is a Hamiltonian homeomorphism compactly supported in $U$, and therefore extends to a compactly supported Hamiltonian homeomorphism of $T^*M$. It is a smooth exact symplectomorphism away from $(q,0)$, with $\varphi^*\lambda-\lambda = d(S\circ\psi+S'-S'\circ\varphi)$. Let $\cL\coloneq \varphi(0_M)$.

By construction, $\cL$ is an element of $\mathscr{L}_{\text{Sympeo}}$. It is a smooth Lagrangian away from $(q,0)$, and it admits a Liouville primitive $h$ away from $(q,0)$ given by the formula
\begin{align*}h(\varphi(x))&=S(\psi(x)) +S'(x)-S'\circ\varphi(x)\\
&=\frac 1 2(F(\Vert x\Vert^2)-\Vert x\Vert^2f(\Vert x\Vert^2))+\frac 1 4 \Vert x\Vert^2\sin(2f(\Vert x\Vert^2))+S'(x)-S'\circ\varphi(x),
\end{align*}
for $x$ in $U\cap 0_M\setminus\{(q,0)\}$, where we used the notation $\Vert x\Vert$ for $\Vert\psi(x)\Vert$. By applying Theorem \ref{thm:rigidity} to the smooth part of the Lagrangian, we have that if there exists a continuous function $h'\colon\cL\to\R$ such that $(\cL,h')\in\overline{\mathcal{LB}(T^*M,\omega)}$, then $h'$ coincides with $h$ on $\cL\setminus\{(q,0)\}$ up to a locally constant function. By continuity of $h'$, this implies that if $\cL\in\mathscr{L}^\exists$, then $\cL\in\mathscr{L}^!$ and $h$ extends continuously at $0$.
    
Because $S'-S'\circ\varphi$ is continuous at $(q,0)$, if $f$ is chosen so that $rf(r)-F(r)$ diverges at zero (this is for instance the case if $f$ coincides with $r\mapsto\frac{1}{r}$ near $0$), then the primitive blows up at $(q,0)$. In this case, we have $\cL\in \mathscr{L}_{\text{Sympeo}}\setminus\mathscr{L}^\exists$. Note that by the inclusion $\mathscr{L}_\text{gr}\subset\mathscr{L}_\text{Sympeo}^\text{str}$ of Lemma \ref{lem:L_gr in L_sympeo}, together with the inclusions of Items \ref{prop:UniquenessPrimitive4} and \ref{prop:UniquenessPrimitive6} of Theorem \ref{thm:UniquenessPrimitiveC0}, and $\mathscr{L}^!\subset\mathscr{L}^\exists$, we have for such a choice of rotation speed that $\cL\notin\mathscr{L}_\text{gr}$.

On the other hand, if $f$ is chosen so that $rf(r)$ and $F(r)$ converge at $0$ (for instance if $f$ coincides with $r\mapsto -\frac 1 2\log r$ near $0$), then the global primitive $S$ extends continuously at the origin of $B(R)$, and one can easily show, using a sequence of smooth $f_n\colon [0,R)\to[0,+\infty)$ coinciding with $f$ away from a ball whose radius decreases to zero, that $\varphi_0$ is a strongly exact symplectic homeomorphism, and therefore $\cL\in \mathscr{L}_\text{Sympeo}^\text{str}$. From now on, we assume that $f$ coincides with $r\mapsto -\frac 1 2\log r$ for $0 < r <\varepsilon$, for a sufficiently small $\varepsilon>0$. We will show that in this case we also have $\cL\notin \mathscr{L}_{\text{gr}}$, which will conclude the proof of Item \ref{prop:UniquenessPrimitive3} of Theorem \ref{thm:UniquenessPrimitiveC0}.

Arguing by contradiction, suppose that $\cL\in \mathscr{L}_{\text{gr}}$, and let $v\colon M\to \R$ be a $C^1$ function and $\varphi'$ an exact $C^1$ symplectomorphism such that $\cL=\varphi'(\mathcal{G}(dv))$. Let $q'\in M$ be the unique point such that $\varphi'(q',d_{q'}v)=\varphi(q,0)=(q,0)$. Then, since $T^*_{q'}M$ intersects $\mathcal{G}(dv)$ at the single point $(q',d_{q'}v)$, we also have that the $C^1$ exact Lagrangian submanifold $\varphi'(T^*_{q'}M)$ intersects $\cL=\varphi(0_M)$ at the single point $(q,0)$. To obtain a contradiction, we will show that in the chart $U$, $\varphi_0(B(R)\cap\R^d)$ intersects any $C^1$ exact Lagrangian submanifold passing through the origin infinitely many times arbitrarily close to the origin.

We begin with the following lemma.

\begin{lem}\label{lem:intersection Lag plane}
    $\cL_0\coloneq \varphi_0(B(R)\cap\R^d)$ intersects cleanly any Lagrangian plane $P$ passing through the origin infinitely many times arbitrarily close to the origin.
\end{lem}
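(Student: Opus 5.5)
The plan is to describe $\cL_0$ explicitly as a union of round spheres sitting in rotated copies of $\R^d$, and to reduce the intersection with $P$ to an eigenvalue condition on a unitary matrix. Write $\theta(s)\coloneq f(s^2)$ for $0<s<\sqrt R$. By definition of $\varphi_0$ we have
\[\cL_0=\bigcup_{0\le s<\sqrt R} e^{i\theta(s)}\big(S_s\big),\qquad S_s=\{x\in\R^d\mid \Vert x\Vert=s\},\]
that is, the sphere of radius $s$ in $\cL_0$ lies in the Lagrangian plane $e^{i\theta(s)}\R^d$. Since $f$ coincides with $r\mapsto-\frac12\log r$ near $0$, we get $\theta(s)=-\log s$ for small $s$. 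Hence $\theta$ is strictly decreasing, $\theta'(s)=-1/s\neq0$, and $\theta(s)\to+\infty$ as $s\to0$.

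\textbf{Reduction to an eigenvalue condition.} A point $z$ with $\Vert z\Vert=s>0$ lies in $\cL_0\cap P$ if and only if $z\in P\cap e^{i\theta(s)}\R^d$. Write $P=A\R^d$ with $A\in U(d)$. By the standard description of the Lagrangian Grassmannian, $e^{i\theta}\R^d\cap A\R^d\neq\{0\}$ if and only if the symmetric unitary matrix $AA^T$ has $e^{2i\theta}$ as an eigenvalue. I would verify this directly: $e^{i\theta}x=Ay$ with $x,y$ real gives $e^{-2i\theta}AA^T\bar{\ }\!$-type relations, and the result is classical. Let $e^{2i\alpha}$ be any eigenvalue of $AA^T$; one exists since the matrix is unitary. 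Then every $\theta\in\alpha+\pi\Z$ works. By the intermediate value theorem and monotonicity, there is a sequence $s_k\to0$, namely $s_k=e^{-(\alpha+k\pi)}$ for large $k$, with $\theta(s_k)\equiv\alpha\pmod\pi$. At each such radius, $\cL_0\cap P$ contains the sphere of radius $s_k$ in the nonzero subspace $V_k=P\cap e^{i\theta(s_k)}\R^d$. This gives infinitely many intersection components accumulating at the origin. Because $\theta$ is strictly monotone, the radii at which $P\cap e^{i\theta(s)}\R^d\neq\{0\}$ are isolated away from $0$. So near $s_k$ the intersection is exactly this sphere, which is a smooth submanifold.

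\textbf{Cleanness, which is the main step.} I expect this linear algebra to be the main obstacle. Fix $z=e^{i\theta}x$ with $\theta=\theta(s_k)$ and $\Vert x\Vert=s_k$. The tangent space $T_z\cL_0$ is spanned by two kinds of vectors:
\begin{enumerate}[label=(\alph*)]
\item $e^{i\theta}w$, with $w\in x^\perp\subset\R^d$, coming from the sphere directions;
\item $e^{i\theta}\big(\hat x+is_k\theta'(s_k)\hat x\big)$, with $\hat x=x/\Vert x\Vert$, coming from the radial direction.
\end{enumerate}
Suppose $e^{i\theta}(w+c\,\hat x+ic\,s_k\theta'\hat x)\in P$. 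Since $P$ is Lagrangian and $z\in P$, the symplectic pairing of this vector with $z=e^{i\theta}x$ must vanish. Multiplication by $e^{i\theta}$ is symplectic, and $\omega_0$ vanishes on $\R^d\times\R^d$. So this pairing reduces, up to sign, to $c\,s_k\theta'(s_k)\Vert x\Vert$. Since $\theta'\neq0$, this forces $c=0$. The remaining vector $e^{i\theta}w$ then lies in $P\cap e^{i\theta}\R^d=V_k$ and is orthogonal to $x$. Therefore $T_z\cL_0\cap P=T_z(\text{sphere of radius } s_k \text{ in } V_k)$, which is exactly the clean intersection condition.

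One caveat on the reduction step: I would double-check the precise form of the eigenvalue criterion, namely $AA^T$ versus $\bar A A^{-1}$, by a direct computation. Only the existence of some admissible angle $\alpha$ modulo $\pi$ is actually used in the argument.
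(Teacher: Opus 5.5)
Your proof is correct and has the same skeleton as the paper's. A point of $\cL_0\cap P$ at radius $s$ corresponds to $P\cap e^{i\theta(s)}\R^d\neq\{0\}$. Only finitely many angles modulo $\pi$ occur. Monotonicity and divergence of $f$ near $0$ then give isolated spheres accumulating at the origin, and cleanness is checked on tangent spaces. The differences are in the two linear-algebra inputs.

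\textbf{Locating the angles.} The paper treats the case $P\cap i\R^d=\{0\}$ by writing $P=\{x+iBx\mid x\in\R^d\}$ with $B$ symmetric. It diagonalizes $B$ by a real orthogonal matrix, which commutes with the rotations $e^{i\theta}$, and reads off the angles $\arg(1+i\lambda_j)$. The non-graph case is only handled by a brief appeal to a unitary change of coordinates. Your criterion via the symmetric unitary matrix $AA^T$ treats every $P$ uniformly. Your hesitation about its form is unnecessary, since it is the right one:
\begin{itemize}
\item If $e^{i\theta}x=Ay$ with $x,y$ real, then $y=e^{i\theta}A^*x$ and $\bar y=e^{-i\theta}A^Tx$, so $y=\bar y$ gives $AA^Tx=e^{2i\theta}x$.
\item Conversely, $W=AA^T$ satisfies $\bar W=W^{-1}$. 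So each eigenspace is stable under conjugation and contains a real eigenvector $x$, for which $y=e^{i\theta}A^*x$ is real and $Ay=e^{i\theta}x$.
\end{itemize}

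\textbf{Cleanness.} You compute the same spanning vectors of $T_z\cL_0$ as the paper. The paper, however, passes from $T_zP\cap\Span(v)=\{0\}$, with $v$ the radial image vector, directly to $T_z\cL_0\cap T_zP=T_z(e^{i\theta}\R^d\cap S^{2d-1}(s_k))\cap T_z(P\cap S^{2d-1}(s_k))$. That step needs an extra argument, because a vector $u+cv$ with $c\neq0$ is not excluded by $T_zP\cap\Span(v)=\{0\}$ alone. Your pairing with $z\in P\cap e^{i\theta}\R^d$ supplies it: both planes are Lagrangian and $\omega_0(i\hat x,x)\neq0$, so the radial coefficient must vanish.

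\textbf{A correction to your closing remark.} It is not only the existence of an admissible angle that is used. Your claim that the radii with $P\cap e^{i\theta(s)}\R^d\neq\{0\}$ are isolated near $0$ relies on there being at most $d$ admissible angles modulo $\pi$. That is what makes each sphere a full component of $\cL_0\cap P$, which the clean-intersection normal form in the next lemma requires. This finiteness comes from the full eigenvalue criterion, so the criterion should not be weakened.

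\textbf{A minor point.} You use the explicit formula $\theta(s)=-\log s$ only to name $s_k$. As the paper remarks, divergence of $f$ and $f'\neq0$ near $0$ suffice, and your argument works verbatim under those assumptions.
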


\begin{proof}
    Let $x\in B(R)\cap\R^d$. If $P$ intersects $\cL_0$ at $\varphi_0(x)=e^{if(\Vert x\Vert^2)}\cdot x$, then we have $P_{f(\Vert x\Vert^2)}\cap P\neq \{0\}$, where $P_\theta\coloneq e^{i\theta}\cdot \R^d$ for $\theta\in \R$. Conversely, if $P_{\theta}\cap P\neq \{0\}$ for some $\theta\in \R$, then there exists $x\in \R^d\setminus\{0\}$ such that $e^{i\theta}\cdot x\in P$. Since $f$ goes to infinity at $0$, there are arbitrarily small $r>0$ such that $f(r^2)=\theta \text{ mod } \pi$. Then, $\varphi_0\Big(\frac{r}{\Vert x\Vert}\cdot x\Big)=e^{if(r^2)}\cdot \frac{r}{\Vert x\Vert} x=\pm\frac{r}{\Vert x\Vert}e^{i\theta}\cdot x\in P$.
    
    We will show that the loop of Lagrangian planes $\theta\in \R/2\pi\Z\mapsto P_\theta$ intersects any Lagrangian plane $P$ non-trivially at a finite number of angle values.
    Let us first assume that $P\cap i\R^d=\{0\}$. Then, the projection by taking the real part is a bijection between $P$ and $\R^d$, and $P$ can be written as a graph over $\R^d$:
    \[P=\{x+iAx\mid x\in \R^d\}\]
    where $A$ is a real matrix. Moreover, since $P$ is Lagrangian, we have for $x,y\in \R^d$:
    \begin{align*}
        0&=\omega(x+iAx,y+iAy)=\text{Im}\langle x+iAx,y+iAy\rangle\\
        &=\langle Ax,y\rangle - \langle x,Ay\rangle
    \end{align*}
    and therefore $A$ is a symmetric matrix. Hence, there are real numbers $\lambda_1,\ldots,\lambda_d$ and a real orthogonal matrix $O$ such that $A=O\text{diag}[\lambda_1,\ldots,\lambda_d]O^{-1}$. Since $O$ is real orthogonal, it maps $\R^d$ to itself, and $P_\theta=e^{i\theta}\cdot \R^d$ to itself for any $\theta\in \R$. Therefore, we can assume that the matrix $A$ is diagonal, and $P$ is of the form
    \[P=\big\{((1+i\lambda_1)x_1,\ldots,(1+i\lambda_d)x_d) \big\mid (x_1,\ldots,x_d)\in \R^d\big\}.\]
    For $1\leq j\leq d$, we write $1+i\lambda_j=r_je^{i\theta_j}$ for some $r_j>0$ and $\theta_j\in[0,2\pi)$. Then, we have that $P_\theta\cap P\neq \{0\}$ whenever $\theta\in\{\theta_1,\ldots,\theta_d\}$ mod $\pi$.
    It remains to consider the case when $P\cap i\R^d\neq\{0\}$. Then, $P$ intersects $P_{\frac{\pi}{2}}=i\R^d$, and after applying a unitary change of coordinates, one can still show that there are finitely many angles $\{\theta_1,\ldots,\theta_d\}$ such that $P_{\theta_j}\cap P\neq \{0\}$.

    We denote by $S^{2d-1}(r)$ the sphere of radius $r$. Note that for all small $r>0$, $\cL_0\cap P\cap S^{2d-1}(r)=P_\theta\cap P\cap S^{2d-1}(r)$, where $f(r^2)=\theta \text{ mod } \pi$. Now, fix $\theta \in\{\theta_1,\ldots,\theta_d\}$, and let $r>0$ be small enough and such that $f(r^2)=\theta \text{ mod } \pi$. Since there are finitely many intersection angles, and since $f$ is strictly monotone near 0, $C_r\coloneq \cL_0\cap P\cap S^{2d-1}(r)=P_\theta\cap P\cap S^{2d-1}(r)$ is an isolated component of the intersection $\cL_0\cap P$. We want to check that this intersection is clean, that is $T\cL_0|_{C_r}\cap TP|_{C_r}=TC_r$.
    We start by computing the tangent space of $\cL_0$ at a point $e^{i\theta} x\in C_r$, where $x\in \R^d\cap S^{2d-1}(r)$. Since $P_\theta\cap S^{2d-1}(r)\subset \cL_0$, $T_{e^{i\theta}x}\cL_0$ contains the $(d-1)$-dimensional subspace $T_{e^{i\theta} x}(P_\theta\cap S^{2d-1}(r))$. Moreover, by differentiating $x\mapsto e^{if(\Vert x\Vert^2)}x$ in the radial direction, we get that $T_{e^{i\theta} x}\cL_0$ also contains the line generated by $(1+2ir^2f'(r^2))e^{i\theta}x$. On the other hand, $T_{e^{i\theta}x}P=T_{e^{i\theta}x}(P\cap S^{2d-1}(r))\oplus \operatorname{Span}(e^{i\theta}x)$.
    Therefore, since $f'(r^2)\neq 0$, we have $T_{e^{i\theta}x}P\cap \operatorname{Span}((1+2ir^2f'(r^2))e^{i\theta}x)=\{0\}$, and
    \begin{align*}
        T_{e^{i\theta} x}\cL_0\cap T_{e^{i\theta}x}P&=T_{e^{i\theta} x}(P_\theta\cap S^{2d-1}(r))\cap T_{e^{i\theta}x}(P\cap S^{2d-1}(r))\\
        &=T_{e^{i\theta} x}P_\theta\cap T_{e^{i\theta}x}P\cap T_{e^{i\theta} x}S^{2d-1}(r)\\
        &=T_{e^{i\theta} x}(P_\theta\cap P\cap S^{2d-1}(r))\\
        &=T_{e^{i\theta} x}C_r.
    \end{align*}
\end{proof}

Note that in this proof, we have not used the explicit form of the function $f$, but only that it diverges at $0$ and has non-vanishing derivative near $0$. However, the assumption that $f(r)=-\frac{1}{2}\log(r)$ for $0<r<\varepsilon$ will become crucial in the proof of the next lemma.

\begin{lem}
    $\cL_0$ intersects any $C^1$ exact Lagrangian submanifold $N$ passing through the origin infinitely many times arbitrarily close to the origin.
\end{lem}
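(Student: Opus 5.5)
The plan is to exploit the exact self-similarity of $\cL_0$ coming from the choice $f(r)=-\frac12\log r$, and then reduce the statement to a local critical-point argument at a fixed scale.

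\textbf{Step 1: self-similarity of $\cL_0$.} For $\Vert x\Vert<\sqrt{\varepsilon}$ we have $f(\Vert x\Vert^2)=-\log\Vert x\Vert$. Hence, for $\lambda=e^{-\pi}$,
\[
\varphi_0(\lambda x)=e^{i\pi}\lambda\, e^{if(\Vert x\Vert^2)}x=-\lambda\,\varphi_0(x).
\]
Since $\cL_0$ is invariant under $x\mapsto -x$ (it is the image of a subset of $\R^d$ invariant under $-1$), the dilation $\delta_k(z)=e^{k\pi}z$ maps $\cL_0\cap B(e^{-2k\pi}\varepsilon')$ onto $\cL_0\cap B(\varepsilon')$ for every $k\geq 0$.

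\textbf{Step 2: blow-up of $N$.} Let $P=T_0N$, which is a Lagrangian plane. Since $N$ is $C^1$, the rescaled submanifolds $N_k:=\delta_k(N)$ converge to $P$ in the $C^1$ topology on the compact annulus $A=\{\rho_1\leq\Vert z\Vert\leq\rho_2\}$. Here $\rho_1<\rho_2<\sqrt{\varepsilon}$ are chosen so that $A$ contains, in its interior, one full period of angles of $\cL_0$.

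By Lemma \ref{lem:intersection Lag plane}, $A$ then contains a compact isolated component $C=C_r$ of $\cL_0\cap P$ along which the intersection is clean. By Step 1, every intersection point of $N_k$ with $\cL_0$ near $C$ pulls back under $\delta_k^{-1}$ to an intersection point of $N$ with $\cL_0$ at radius of order $e^{-k\pi}r$. It therefore suffices to prove that $N_k\cap\cL_0\neq\emptyset$ near $C$ for all large $k$.

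\textbf{Step 3: local generating functions.} Take a Weinstein chart identifying a neighbourhood $W$ of $C$ in $B(R)$ with a neighbourhood of the zero section in $T^*(\cL_0\cap W)$, where $\cL_0\cap W$ is the zero section; this is legitimate because $\cL_0$ is smooth away from $0$. In this chart, after shrinking $W$ to a tubular neighbourhood of $C$:
\begin{itemize}
\item $P\cap W$ is the graph of $dF$ for a smooth function $F$ whose critical set in $W$ is exactly $C$, and this critical set is Morse--Bott, by cleanness;
\item by the $C^1$ convergence of Step 2, $N_k\cap W$ is the graph of $dG_k$ with $dG_k\to dF$ uniformly;
\item each $G_k$ is a $C^1$ function, defined by integrating the closed form, since $W$ is chosen with trivial first cohomology relative to the fibration, or after passing to a cover. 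Local exactness is all that is needed here.
\end{itemize}
Intersection points of $N_k$ with $\cL_0$ in $W$ are exactly the critical points of $G_k$.

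\textbf{Step 4: persistence of critical points.} This is the main obstacle. A clean, non-transverse intersection is not stable under $C^1$ perturbation in general, so one has to use homological information rather than transversality. Because $C$ is a compact isolated Morse--Bott critical manifold, its local Morse (critical-group) homology is nonzero; it is isomorphic to a shift of $H_*(C;\Z/2\Z)$ twisted by the negative bundle.

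Choose an isolating neighbourhood $V\subset W$ of $C$ on whose boundary $\Vert dF\Vert\geq\eta>0$. For $k$ large, $\Vert dG_k-dF\Vert_\infty<\eta/2$, and $G_k$ is $C^0$-close to $F$ after normalising constants. If $G_k$ had no critical point in $V$, then a pseudo-gradient deformation lemma, which is valid for $C^1$ functions, would make the relative sublevel homology of $G_k$ on $V$ vanish. This contradicts the continuation invariance of the Conley index under perturbations that keep $\partial V$ free of critical points, for which the uniform $dG_k$ bound is exactly what is needed.

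Hence $N_k\cap\cL_0\cap W\neq\emptyset$ for all large $k$, and by Step 2 this gives intersections of $N$ and $\cL_0$ at infinitely many radii tending to $0$.
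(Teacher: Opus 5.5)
Your proposal is correct in outline, but its key step differs from the paper's. Steps 1--2 coincide with the paper's rescaling argument. Your identity $\varphi_0(e^{-\pi}x)=-e^{-\pi}\varphi_0(x)$ is the self-similarity that the paper writes as $r(k)^{-1}(\cL_0\cap B(\varepsilon))=e^{i\theta}\widetilde{\cL}_0\cap B(r(k)^{-2}\varepsilon)$. In both arguments one reduces to showing that the $C^1$ blow-ups $N_k\to P$ meet $\cL_0$ near a fixed clean intersection component $C$.

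The divergence is the local model. You make $\cL_0$ the zero section and $P=\G(dF)$, with $C$ a Morse--Bott critical manifold of $F$. You then have to show that critical points persist under a $C^1$-small perturbation of $dF$, which forces critical-group or Conley-index continuation for merely $C^1$ functions. The paper uses the clean-intersection normal form the other way round: $P$ becomes the zero section of $T^*P$ and $\cL_0$ becomes the conormal bundle $\nu^*\psi(C)$. Then $\psi(N_k)=\G(dh_k)$, and $\G(dh_k)\cap\nu^*\psi(C)$ is exactly the set of critical points of $h_k|_{\psi(C)}$. These exist simply because $\psi(C)$ is compact: take a maximum. The conormal model does the reduction to $C$ geometrically, so no Morse--Bott lemma, isolating neighbourhood or homotopy invariance of local homology is needed. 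Your route is valid but considerably heavier for the same conclusion.

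Two points in your Step 3 need repair.

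First, an arbitrary Weinstein chart of $\cL_0$ does not make $P$ a graph near $C$. In the clean-intersection normal form with $\cL_0$ as zero section, $P$ is the conormal of $C$, which is vertical in the normal directions. You must choose the fibres along $C$ transverse to $TP$. For example, take them tangent to $e^{i\phi}T\cL_0$, with $\phi>0$ a constant smaller than every angle $\vartheta\in(0,\pi)$ such that $e^{i\vartheta}T\cL_0\cap TP\neq\{0\}$ at some point of $C$. Such $\phi$ exists by cleanness and compactness of $C$.

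Second, your justification for the existence of $G_k$ is wrong.
\begin{itemize}
\item $W$ retracts onto $C$, which is a circle whenever $\dim(P\cap P_\theta)=2$, so $H^1(W)$ need not vanish.
\item Passing to a cover destroys the compactness of $C$ that Step 4 relies on.
\item ``Local exactness'' is not enough: a merely closed form $C^0$-close to $dF$ can have no zeros near a circle of critical points. Think of $dF+\epsilon\, d\vartheta$, with $\vartheta$ the angle along $C$.
\end{itemize}
What makes the form defining $N_k\cap W$ exact on $W$ is the exactness of $N$, transported through the (exact) chart. Alternatively, it suffices that near $0$ the submanifold $N$ is the graph of a closed form over a ball in $P$. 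This is where the hypothesis on $N$ enters, exactly as in the paper's ``since $N$ and $\psi$ are exact''.
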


\begin{proof}
    Let $P$ be the tangent space to $N$ at the origin. It is a Lagrangian plane, and following the proof of Lemma \ref{lem:intersection Lag plane}, there exists an angle $\theta$ such that for all $r>0$ sufficiently small and such that $f(r^2)=\theta \text{ mod } \pi$, $\cL_0$ intersects $P$ cleanly along $C_r=\cL_0\cap P\cap S^{2d-1}(r)=P_\theta\cap P\cap S^{2d-1}(r)$. For $k\in \N$, let $r(k)\coloneq e^{-\theta-k\pi}$. Then, for $k$ sufficiently large, $r(k)$ is close to $0$ and $f(r(k)^2)=-\frac 1 2\log(r(k)^2)=\theta + k\pi$, and therefore $\cL_0$ intersects $P$ cleanly along $C_{r(k)}$.
    
    The goal will be to show that for $k$ large enough, $\cL_0$ also intersects $N$ in a small neighbourhood of $C_{r(k)}$. We will prove that this happens if we can write $N$ as an exact graph over $P$ in a Weinstein neighbourhood of $P$ in which $\cL_0$ is identified with the conormal bundle over $C_{r(k)}$. The normal form theorem for clean Lagrangian intersections \cite[Proposition 3.4.1]{MR1736217} provides us with the desired Weinstein neighbourhood, but to make sure that $N$ becomes a graph in this neighbourhood when $k$ is large enough, we need the Weinstein chart to be uniform in some sense. In our case, this uniformity comes from the following rescaling property that we get from our choice of function $f$: for $x\in \R^d$ and $k$ such that $r(k)x\in B(\varepsilon)$, we have 
    \begin{align*}
        \varphi_0(r(k)x)&=r(k)e^{if(r(k)^2\Vert x\Vert^2)}x=r(k)e^{-\frac i 2\log(r(k)^2\Vert x\Vert^2)}x\\
        &=r(k)e^{i(\theta+k\pi)}e^{-\frac i 2\log(\Vert x\Vert^2)}x=(-1)^kr(k)e^{i\theta}e^{-\frac i 2\log(\Vert x\Vert^2)}x.
    \end{align*}
    Since $\cL_0=-\cL_0$, this implies that $r(k)^{-1}(\cL_0\cap B(\varepsilon))=e^{i\theta}\widetilde{\cL}_0\cap B(r(k)^{-2}\varepsilon)$, where 
    \begin{equation*}
        \widetilde{\cL}_0=\{e^{-\frac{i}{2}\log(\Vert x\Vert^2)}x,x\in \R^d\} 
    \end{equation*}
    Therefore, for $k$ large enough,
    \begin{align*}
        r(k)^{-1}C_{r(k)}&=r(k)^{-1}(\cL_0\cap P\cap S^{2d-1}(r(k)))\\
        &=(r(k)^{-1}\cL_0)\cap(r(k)^{-1} P)\cap(r(k)^{-1} S^{2d-1}(r(k)))\\
        &=(e^{i\theta}\widetilde{\cL}_0)\cap P\cap S^{2d-1}(1)
    \end{align*}
    which does not depend on $k$. Let $C\coloneq (e^{i\theta}\widetilde{\cL}_0)\cap P\cap S^{2d-1}(1)$. To show that $\cL_0$ intersects $N$ infinitely many times arbitrarily close to the origin, it is now enough to show that for $k$ large enough, $N_k\coloneq r(k)^{-1}N$ intersects $e^{i\theta}\widetilde{\cL}_0$ in a neighbourhood of $C$ that does not contain $0$.
    Since $P$ intersects $e^{i\theta}\widetilde{\cL}_0$ cleanly along $C$ (by Lemma \ref{lem:intersection Lag plane}), by the normal form theorem for clean Lagrangian intersections \cite[Proposition 3.4.1]{MR1736217}, we can find a small neighbourhood $V$ of $C$ not containing the origin, an open set $W\subset T^*P$ and an exact symplectomorphism $\psi\colon V\to W$ mapping $P\cap V$ to $0_P\cap W$ and $e^{i\theta}\widetilde{\cL}_0\cap V$ to the intersection of the conormal bundle $\nu^*\psi(C)$ with $W$.
    Since $N_k$ converges to $P$ uniformly in the $C^1$ topology inside $V$, we have that for $k$ large enough, $\psi(N_k\cap V)$ is a graph over $0_P\cap W$. Moreover, since $N$ and $\psi$ are exact, $\psi(N_k\cap V)$ is also exact and therefore it is of the form $\G(dh_k)$, where $h_k\colon 0_P\cap W\to\R$ is a $C^1$ function.
    Then, 
    \begin{equation*}
        \psi(N_k\cap e^{i\theta} \widetilde{\cL}_0\cap V)=\G(dh_k)\cap \nu^*\psi(C)=\{(x,d_xh_k)\mid x\in\psi(C),d_xh_k|_{T_x\psi(C)}=0\}
    \end{equation*}
    Since $h_k$ is $C^1$ and $\psi(C)$ is compact, $h_k|_{\psi(C)}$ admits a critical point, and therefore $N_k\cap e^{i\theta}\widetilde{\cL}_0\cap V$ is non-empty, which concludes the proof.    
\end{proof}

\section{Birkhoff Theorems for Stratified Lagrangian Sets}
\label{sec:BirkhoffStratified}

\subsection{A Weinstein neighbourhood for a Lagrangian with an Attached Filament}

We begin with a criterion for constructing Weinstein neighbourhoods with a prescribed differential.
    
	\begin{lem} \label{lem:RelativeSymplecticExtension}
		Let $(N_1,\omega_1)$ and $(N_2,\omega_2)$ be two symplectic manifolds, and let $S_1$ and $S_2$ be two diffeomorphic submanifolds of $N_1$ and $N_2$, respectively. Let $f : S_1 \to S_2$ be a diffeomorphism, and fix a compact subset $A$ of $S_1$ together with a neighbourhood $V$ of $A$ in $N_1$. We also fix a symplectomorphism $\Psi_V : V \to N_2$. Assume that
		\begin{enumerate}[label=\roman*.]
			\item The map $\Psi_V$ restricts to $f$ on $S_1 \cap V$.
			\item There exists a smooth fibrewise symplectic bundle isomorphism $\mathcal{F} : T_{S_1}N_1 \to f^*T_{S_2}N_2$ such that $\mathcal{F}|_{TS_1}=df$ and $\mathcal{F}=d\Psi_V$ over $S_1 \cap V$.
		\end{enumerate}
		Then, there exist neighbourhoods $U_1$ and $U_2$ of $S_1$ and $S_2$ in $N_1$ and $N_2$, respectively, and a symplectomorphism $\Psi : U_1 \to U_2$ such that
		\begin{equation}
			\Psi|_{S_1} = f, \quad d\Psi|_{T_{S_1}N_1} = \mathcal{F}, \quad \text{and} \quad  \Psi = \Psi_V \text{ on a (smaller) neighbourhood of } A.
		\end{equation}
	\end{lem}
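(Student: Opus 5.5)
This is a relative version of the symplectic neighbourhood theorem (Weinstein's theorem for arbitrary submanifolds with prescribed symplectic normal data). The proof will be a Moser argument carried out relative to the region near $A$ where $\Psi_V$ is already given.

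\textbf{Step 1: a first-order extension.} Build a diffeomorphism $\Psi_0 : U_1' \to U_2'$ between neighbourhoods of $S_1$ and $S_2$ satisfying
\[
\Psi_0|_{S_1}=f, \qquad d\Psi_0|_{T_{S_1}N_1}=\mathcal{F}, \qquad \Psi_0=\Psi_V \text{ near } A.
\]
Choose complements $E_1$ of $TS_1$ in $T_{S_1}N_1$ and $E_2$ of $TS_2$ in $T_{S_2}N_2$, together with tubular maps (for instance exponential maps of auxiliary metrics). Then:
\begin{enumerate}[label=\roman*.]
\item Define $\Psi_1 := \exp_2 \circ \widetilde{\mathcal{F}} \circ \exp_1^{-1}$, where $\widetilde{\mathcal{F}}$ is $\mathcal{F}$ composed with the projection to $E_2$ along $TS_2$. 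This is a diffeomorphism near $S_1$ with $\Psi_1|_{S_1}=f$ and $d\Psi_1=\mathcal{F}$ along $S_1$.
\item Near $A$, both $\Psi_1$ and $\Psi_V$ agree with $f$ on $S_1$ and have the same differential there. In local coordinates, they therefore differ by a map that is the identity to first order along $S_1$.
\item Interpolate with a cutoff $\chi$ equal to $1$ near $A$ and supported in $V$: take $\Psi_0 = \exp_{\Psi_1(x)}\big(\chi \cdot \exp^{-1}_{\Psi_1(x)}\Psi_V(x)\big)$. Because the correction vanishes to first order along $S_1$, the first-order data along $S_1$ is unchanged. Since $d\Psi_0 = \mathcal{F}$ is invertible along $S_1$, shrinking the neighbourhood makes $\Psi_0$ a diffeomorphism onto its image.
\end{enumerate}

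\textbf{Step 2: Moser's method.} Set $\omega_0=\omega_1$ and $\omega_1' = \Psi_0^*\omega_2$. These two forms agree:
\begin{itemize}
\item on $T_{S_1}N_1$, because $\mathcal{F}$ is fibrewise symplectic;
\item identically on a neighbourhood $A'$ of $A$, because $\Psi_V$ is symplectic there.
\end{itemize}
Consider the path $\omega_t=\omega_0+t(\omega_1'-\omega_0)$, which is nondegenerate on a small neighbourhood of $S_1$ since it is constant along $S_1$.

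Next, find a primitive $\sigma$ with $\omega_1'-\omega_0=d\sigma$ such that $\sigma$ vanishes on $T_{S_1}N_1$ and $\sigma = 0$ on $A'$. The relative Poincaré lemma supplies this: use the homotopy operator $\sigma = \int_0^1 \rho_s^*(\iota_{\partial_s\rho_s}\tau)\,ds$ built from the fibrewise radial contraction $\rho_s$ of the tubular neighbourhood, applied to $\tau=\omega_1'-\omega_0$. Since $\tau$ vanishes along $S_1$, $\sigma$ vanishes to first order along $S_1$. If the contraction is chosen so that it preserves $A'$ — take the tubular structure near $A$ to be that of $V$, shrinking neighbourhoods if needed — then $\tau\equiv0$ on $A'$ gives $\sigma\equiv0$ there.

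Solve $\iota_{X_t}\omega_t=-\sigma$. The resulting vector field $X_t$ vanishes on $S_1$ and on $A'$, so its flow $\theta_t$ exists up to time $1$ on a neighbourhood of $S_1$. The flow fixes $S_1$ pointwise, satisfies $d\theta_1=\mathrm{id}$ along $S_1$ (because $X_t$ vanishes to second order there), equals the identity near $A$, and satisfies $\theta_1^*\omega_1'=\omega_0$.

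\textbf{Conclusion.} Set $\Psi=\Psi_0\circ\theta_1$. Then
\[
\Psi^*\omega_2=\theta_1^*\Psi_0^*\omega_2=\theta_1^*\omega_1'=\omega_1,
\]
so $\Psi$ is a symplectomorphism. It restricts to $f$ on $S_1$, has $d\Psi|_{T_{S_1}N_1}=\mathcal{F}$, and equals $\Psi_V$ near $A$.

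\textbf{Main obstacles.}
\begin{itemize}
\item \emph{Compatibility of the primitive with $A$.} The radial homotopy for $S_1$ and the region where $\omega_1'=\omega_0$ must match, so that $\sigma$ vanishes identically near $A$ and not merely along $S_1$. I would arrange this by choosing the contraction to preserve a neighbourhood of $A$ on which the two forms coincide. If that proves awkward, I would instead cut off $\sigma$ by $1-\chi$, adding $d\chi\wedge$-correction terms supported where $\tau=0$, which therefore do not disturb $d\sigma$.
\item \emph{Noncompact $S_1$.} If $S_1$ is not compact, the neighbourhoods must be chosen with variable radius along $S_1$ so that the flow exists up to time $1$; this is a routine adaptation.
\end{itemize}
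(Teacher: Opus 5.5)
Your overall architecture is the paper's: first build a diffeomorphism with the prescribed $1$-jet along $S_1$ that already equals $\Psi_V$ near $A$, then run a relative Moser argument with the radial-homotopy primitive. However, Step 1(i) fails as written.

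With $E_2$ an arbitrary complement of $TS_2$ and $\widetilde{\mathcal F}=\mathrm{pr}_{E_2}\circ\mathcal F|_{E_1}$ (projection along $TS_2$), the differential of $\Psi_1$ along $S_1$ is $df\oplus\bigl(\mathrm{pr}_{E_2}\circ\mathcal F|_{E_1}\bigr)$. This differs from $\mathcal F$ by $\mathrm{pr}_{TS_2}\circ\mathcal F|_{E_1}$, which is nonzero in general. The resulting linear map need not even be symplectic. For instance, in $\R^4$ take $S=\{p=0\}$, $\mathcal F=\mathrm{id}$, $E_1=\Span(\partial_{p_1},\partial_{p_2})$ and $E_2=\Span(\partial_{p_1}+a\,\partial_{q_2},\partial_{p_2})$. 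The images of $\partial_{p_1}$ and $\partial_{p_2}$ then have $\omega$-pairing $\pm a\neq 0$.

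Everything downstream inherits this error:
\begin{enumerate}
\item In Step 1(ii), $\Psi_1$ and $\Psi_V$ no longer share their differential along $S_1\cap V$, because $d\Psi_V(E_1)=\mathcal F(E_1)$ need not lie in $E_2$. So the interpolation correction does not vanish to first order.
\item In Step 2, $\tau$ no longer vanishes on $T_{S_1}N_1$. You therefore lose the nondegeneracy of $\omega_t$ near $S_1$, the first-order vanishing of $\sigma$, and the conclusion $d\Psi|_{T_{S_1}N_1}=\mathcal F$.
\end{enumerate}
The repair is to choose $E_2:=\mathcal F(E_1)$. This is a complement of $TS_2$, since $\mathcal F$ is an isomorphism carrying $TS_1$ onto $TS_2$. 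Then no projection is needed and $d\Psi_1=df\oplus\mathcal F|_{E_1}=\mathcal F$. This is exactly the paper's choice.

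With this repair your proof is correct and essentially the paper's. The one real difference is how you obtain agreement with $\Psi_V$ near $A$. The paper interpolates metrics rather than maps: it chooses $g_1=\Psi_V^*g_2$ near $A$. Then $\exp_2\circ d\Psi_V=\Psi_V\circ\exp_1$ there, so $F=\exp_2\circ\mathcal F|_{E_1}\circ\exp_1^{-1}$ equals $\Psi_V$ near $A$ for free. Your exponential interpolation of maps also works, but it needs extra checks: that $\Psi_V(x)$ lies in a normal ball around $\Psi_1(x)$ on $\operatorname{supp}\chi$, and that $\Psi_0$ is injective after shrinking.

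Your first ``main obstacle'' is not actually an obstacle. The contraction $R_s(x,v)=(x,sv)$ preserves every set $\{(x,v) : x\in B,\ |v|<\varepsilon\}$. By compactness of $A$, such a set, with $B$ a neighbourhood of $A$ in $S_1$, lies inside the region where $\tau=0$. So $\sigma$ vanishes there automatically, with no special choice of tubular structure; this is how the paper argues.

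Your fallback would not work as stated anyway. Indeed $d\bigl((1-\chi)\sigma\bigr)=(1-\chi)\tau-d\chi\wedge\sigma$, and the term $d\chi\wedge\sigma$ is not killed by $\tau=0$. You would need to write $\sigma=dg$ on that region and subtract $d(\chi g)$.
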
		

	\begin{proof}
		Choose Riemannian metrics $g_1$ and $g_2$ on $N_1$ and $N_2$, respectively, such that $g_1 = \Psi_V^*g_2$ on a neighbourhood of $A$, after shrinking $V$ if necessary. Let $E_1$ be the normal bundle of $S_1$ in $N_1$, and set $E_2 = \mathcal{F}(E_1)$. Denote by $\exp_i$ the exponential map associated to $g_i$. After restricting to sufficiently small neighbourhoods of the zero sections, the maps $\exp_i$ identify neighbourhoods of $S_i$ in $E_i$ with tubular neighbourhoods $U_i$ of $S_i$ in $N_i$.

		After shrinking $U_1$ and $U_2$ if necessary, we define the diffeomorphism $F : U_1 \to U_2$ by
		\begin{equation}
			F = \exp_2 \circ \mathcal{F}|_{E_1} \circ \exp_1^{-1}
		\end{equation}
		By construction,
		\begin{equation}
			F|_{S_1} = f, \quad dF|_{T_{S_1}N_1} = \mathcal{F}, \quad \text{and } F = \Psi_V \text{ near } A
		\end{equation}
		The last identity follows from the compatibility of the metrics $g_1 = \Psi_V^*g_2$ near $A$, which implies $\exp_2 \circ d\Psi_V = \Psi_V \circ \exp_1$.
		
		We now correct $F$ by a Moser argument. Consider the closed $2$-form $\eta = F^*\omega_2 - \omega_1$. Since $\mathcal{F}$ is fibrewise symplectic and $dF|_{T_{S_1}N_1} = \mathcal{F}$, the form $\eta$ vanishes along $S_1$. Moreover, since $F=\Psi_V$ near $A$, it also vanishes on a neighbourhood of $A$. We identify the tubular neighbourhood $U_1$ of $S_1$ with its preimage under $\exp_1$ in the normal bundle $E_1$. Let $R : [0,1] \times U_1 \to U_1$ be the deformation retraction defined by $R(s,(x,v)) = (x,sv)$, and denote $R_s = R(s,\cdot)$. By Cartan's formula,
		\begin{align*}
			R_1^*\eta - R_0^*\eta &= \int_0^1 \frac{d}{ds}(R_s^*\eta) \; ds = \int_0^1 L_{\partial_s}(R^*\eta) \; ds \\
			&= d \left( \int_0^1 \iota_{\partial_s}(R^*\eta) \; ds \right) + \int_0^1 \iota_{\partial_s}R^*(d\eta) \; ds \\
			&= d \left( \int_0^1 \iota_{\partial_s}(R^*\eta) \; ds \right)
		\end{align*}
		where we have used $d\eta=0$. Since $R_1$ is the identity and $R_0$ takes values in $S_1$, where $\eta$ vanishes, we obtain
		\begin{equation}
			\eta = d\mu
			\quad \text{with} \quad
			\mu := \int_0^1 \iota_{\partial_s}(R^*\eta) \; ds
		\end{equation}
		Moreover, $\mu$ vanishes near $A$, and its $1$-jet vanishes along $S_1$. Set $\Omega_s = \omega_1 + s d\mu$. Since $\Omega_s=\omega_1$ along $S_1$, after shrinking $U_1$ if necessary, we may assume that $\Omega_s$ is symplectic on $U_1$ for every $s \in [0,1]$. We now apply Moser's argument. Let $X_s$ be the vector field defined by $\iota_{X_s}\Omega_s = -\mu$
		and let $\varphi_s$ denote its flow. Then
		\begin{align*}
			\frac{d}{ds}(\varphi_s^*\Omega_s) &= \varphi_s^* \left( \partial_s \Omega_s + L_{X_s}\Omega_s \right) = \varphi_s^* \left( d\mu + d(\iota_{X_s}\Omega_s) \right) = \varphi_s^* \left( d\mu - d\mu \right) = 0
		\end{align*}
		Hence, $\varphi_1^*\Omega_1 = \Omega_0 = \omega_1$. Finally, set $\Psi = F \circ \varphi_1$. We obtain
		\begin{align*}
			\Psi^*\omega_2
			= \varphi_1^*F^*\omega_2
			= \varphi_1^*\Omega_1
			= \omega_1
		\end{align*}
		so that $\Psi$ is a symplectomorphism.

		It remains to verify the prescribed behaviour along $S_1$ and near $A$. Since $\mu$ has vanishing $1$-jet along $S_1$, the same holds for $X_s$. Hence, $\varphi_s$ restricts to the identity on $S_1$ and its differential is the identity along $S_1$. Moreover, since $\mu$ vanishes near $A$, so does $X_s$, and therefore $\varphi_s$ is the identity near $A$. Consequently,
		\begin{equation}
			\Psi|_{S_1} = f, \quad d\Psi|_{T_{S_1}N_1} = \mathcal{F}, \quad \text{and } \Psi = \Psi_V \text{ near } A
		\end{equation}
		which concludes the proof.
	\end{proof}

	The main result of this section is given by the following lemma.
    
	\begin{lem} \label{lem:WeinsteinLag+Curve}
		Let $\mathcal{K} = \cL \cup \im(\gamma)$, where $\cL$ is an exact Lagrangian submanifold, and let $\gamma : [0,1] \to T^*M$ be a smooth embedded curve satisfying
        \begin{equation}
	       	\gamma([0,1]) \cap \cL = \{ \gamma(0) \}, \quad \dot{\gamma}(0) \notin T_{\gamma(0)} \cL
	    \end{equation}
        Then, there exists a neighbourhood $U$ of $\mathcal{K}$ and an exact symplectomorphism $\Psi : U \to \Psi(U) \subset  T^*\cL$ such that the image of $\cL$ is the zero section $0_\cL$ and the image of $\gamma$ is the curve $\gamma_0 : [0,1] \to T^*\cL$ defined by $\gamma_0(t) = (\gamma(0), p_1 = t, 0, \ldots, 0)$ written in some coordinate chart of $T^*\cL$.
	\end{lem}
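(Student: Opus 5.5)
The plan is to apply Lemma \ref{lem:RelativeSymplecticExtension} with $N_1 = T^*M$, $N_2 = T^*\cL$, $S_1 = \mathcal{K}$ and $S_2 = 0_\cL \cup \im(\gamma_0)$. Since $\mathcal{K}$ is not a submanifold near $\gamma(0)$, we proceed in two stages, gluing afterwards by the relative version of the lemma.

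First, Weinstein's neighbourhood theorem gives an exact symplectomorphism $\Psi_0$ from a neighbourhood $W$ of $\cL$ onto a neighbourhood of $0_\cL$ in $T^*\cL$, sending $\cL$ to $0_\cL$. Since $\dot\gamma(0)\notin T_{\gamma(0)}\cL$, the curve $\Psi_0\circ\gamma$ leaves $0_\cL$ transversally at $q_0:=\gamma(0)$.

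Next, we straighten the germ of $\Psi_0\circ\gamma$ near $t=0$. Choose coordinates $(q,p)$ near $q_0$ with $d(\Psi_0\circ\gamma)(0)$ having nonzero $p$-component; after a linear change of the $q$-coordinates, which lifts to a cotangent map preserving $0_\cL$, we may assume the $p$-component is a positive multiple of $\partial_{p_1}$. We then look for a local Hamiltonian diffeomorphism, generated by a function vanishing to first order on $0_\cL$ (for instance quadratic or higher in $p$ plus terms of the form $p\cdot X(q)$ with $X(q_0)=0$), that fixes $0_\cL$ pointwise and moves $\Psi_0\circ\gamma|_{[0,\varepsilon]}$ onto $\gamma_0|_{[0,\varepsilon']}$ after reparametrization. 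Concretely: a fibrewise-linear symplectic shear $(q,p)\mapsto(q+Sp,p)$ with $S$ symmetric removes the $q$-tangential component of the velocity at $t=0$ while keeping $0_\cL$ fixed, and a Hamiltonian isotopy supported near $q_0$, fixing $0_\cL$, corrects the higher-order deviation. The latter works because the flow of $G=\sum_j p_j c_j(p_1)$, or similar, moves points with $p_1>0$ along prescribed directions while fixing $\{p=0\}$. This produces $\Psi_V$ on a neighbourhood $V$ of a compact piece $A$ containing $\cL\cup\gamma([0,\varepsilon])$.

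Finally, we extend along the rest of the curve $\gamma([\varepsilon/2,1])$, which is an embedded arc away from $\cL$. Apply Lemma \ref{lem:RelativeSymplecticExtension} to $S_1=\gamma([\varepsilon/2,1])$ (slightly extended), $S_2=\gamma_0$ correspondingly reparametrized, with $f$ the reparametrization and $A=\gamma([\varepsilon/2,\varepsilon])$. The bundle isomorphism $\mathcal{F}$ is built as follows. Along a curve, the symplectic normal bundle is trivial, and $\mathrm{Sp}$ is connected. Extend $df$ on $T\gamma$ together with a chosen symplectic complement over $[\varepsilon/2,\varepsilon]$ given by $d\Psi_V$; since the space of symplectic frames extending a given direction is connected and the arc is contractible, we can interpolate smoothly. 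Shrinking neighbourhoods so they do not meet away from the glued region (which uses embeddedness and $\gamma\cap\cL=\{\gamma(0)\}$) gives a symplectomorphism $\Psi$ on a neighbourhood $U$ of $\mathcal{K}$. This $\Psi$ is automatically exact: $\Psi^*\lambda-\lambda$ is closed on $U$, which deformation retracts onto $\mathcal{K}\simeq\cL$, and it is exact on $\cL$ because $\Psi_0$ is exact and both $\cL$ and $0_\cL$ are exact.

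The main obstacle is the local straightening near the attachment point $\gamma(0)$. There the lemma cannot be applied directly, because $\mathcal{K}$ is singular at that point, so the straightening must be done by hand while keeping $0_\cL$ fixed. The first attempt would be to write $\Psi_0\circ\gamma(t)=(q_0+a(t),\,t\,b(t))$ with $b(0)\neq0$, and to realize the map $(q_0+a(t),tb(t))\mapsto(q_0,te_1)$ as the time-one map of a Hamiltonian vanishing on $\{p=0\}$. This Hamiltonian would be built via a generating function $S(q,P)=q\cdot P+\sum\ldots$ whose correction terms are at least quadratic in $P$, which guarantees that $\{p=0\}$ stays fixed.
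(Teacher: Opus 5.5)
Your proposal is correct in outline, but it runs in the opposite order from the paper. The paper straightens the \emph{curve} first. It applies Lemma \ref{lem:RelativeSymplecticExtension} along $\im(\gamma)$ with empty relative set, sending $\gamma(t)$ to a segment $(x,te)$ in the single fibre $T^*_x\cL$ and $T_x\cL$ to $T_x0_\cL$. The junction then comes for free. Near $x$, the image of $\cL$ is the graph of a closed $1$-form $\alpha$ with $\alpha(x)=0$. The translation $\tau_\alpha$ and the cotangent lift of a local diffeomorphism fixing $x$ are defined on $\pi^{-1}$ of a neighbourhood of $x$, hence near the whole segment, and they map $T^*_x\cL$ to itself linearly. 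So they flatten $\cL$ while keeping the curve straight. The cost is moved to a relative Weinstein extension along the $d$-dimensional $\cL$, prescribed near a compact neighbourhood $A$ of $x$, which requires building the bundle isomorphism by gluing Lagrangian complements.

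You instead use the standard Weinstein chart for $\cL$. Your only relative extension is then along an arc, where the bundle interpolation is trivial. The price is a hand-made normal form for a curve germ leaving $0_\cL$ non-tangentially, under symplectomorphisms fixing $0_\cL$. That step works, but as written it is only half done.
\begin{itemize}
\item The flow of $G=\sum_j p_jc_j(p_1)$ preserves $p$, so it only removes the $q$-deviation. It fixes $\{p=0\}$ only if $c(0)=0$. Along $p=se_1$ the $q_1$-velocity is $\tfrac{d}{ds}\bigl(s\,c_1(s)\bigr)$, so you must solve $\tfrac{d}{ds}\bigl(s\,c_1(s)\bigr)=-a_1(s)$.
\item The deviation in $p_2,\dots,p_d$ needs a second Hamiltonian, for instance $-\sum_{j\geq 2}(q_j-q_{0,j})\,\phi_j(p_1)$ with $\phi_j(0)=\phi_j'(0)=0$. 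Its time-one map adds $\phi_j(p_1)$ to $p_j$ and still fixes $0_\cL$ pointwise. The vanishing conditions are available because, after your shear, both deviations are $O(p_1^2)$.
\end{itemize}

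Finally, the closed form to consider is $\Psi^*\lambda_0-\lambda$, with $\lambda_0$ the Liouville form of $T^*\cL$. Its restriction to $\cL$ is $-dh$ simply because $\Psi(\cL)\subset 0_\cL$; exactness of $\Psi_0$ plays no role. You also need to choose $U$ with $H^1(U)\cong H^1(\cL)$, which the paper does via Mayer--Vietoris.
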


	\begin{proof}
		This lemma is proved in four steps.
        
		\textit{1. Defining $\Psi$ around $\im(\gamma)$.} Set $x = \gamma(0)$, let $e$ be a non-zero element of $T^*_{\gamma(0)}\cL$, and set $\tilde{\gamma}_0(t) = (x,te)$. Since $\dot{\gamma}(0) \notin T_x\cL$, we can choose a symplectic linear isomorphism $A : T_x(T^*M) \to T_x(T^*\cL)$ such that
		\begin{equation}
			A(T_x\cL) = T_x0_{\cL} \quad \text{and} \quad A(\dot{\gamma}(0)) = e
		\end{equation}
	
		Set $v(t) = \dot{\gamma}(t)$ and $v_0(t) = \dot{\tilde{\gamma}}_0(t) = e$. Denoting by $\omega_0$ the standard symplectic form on $T^*\cL$, we choose smooth vector fields $w(t)$ and $w_0(t)$ such that
		\begin{align*}
			\omega(v(t),w(t)) = 1, \quad \omega_0(v_0(t),w_0(t)) = 1, \quad \text{and} \quad w_0(0) = Aw(0)
		\end{align*}
		Set
		\begin{equation}
			E(t) = \Span(v(t),w(t))^\omega \quad \text{and} \quad E_0(t) = \Span(v_0(t),w_0(t))^{\omega_0}
		\end{equation}
		We choose smooth symplectic frames of $E(t)$ and $E_0(t)$ such that, at $t=0$, the second one is the image under $A$ of the first one. By sending $v$ to $v_0$, $w$ to $w_0$, and these symplectic frames to one another, we obtain a smooth fibrewise symplectic bundle isomorphism
		\begin{equation}
			\mathcal{F}_\gamma : T_{\im(\gamma)}(T^*M) \to T_{\im(\tilde{\gamma}_0)}(T^*\cL)
		\end{equation}
		which restricts to $A$ at $\gamma(0)$ and to the differential of the map $f_\gamma : \gamma(t) \mapsto \tilde{\gamma}_0(t)$ on $T\im(\gamma)$.

		Applying Lemma \ref{lem:RelativeSymplecticExtension} with an empty relative set yields neighbourhoods $U_\gamma$ and $U_{\tilde{\gamma}_0}$ of $\im(\gamma)$ and $\im(\tilde{\gamma}_0)$, respectively, and a symplectomorphism $\Psi_\gamma : U_\gamma \to U_{\tilde{\gamma}_0}$ such that
		\begin{equation}
			\Psi_\gamma(\gamma(t)) = \tilde{\gamma}_0(t) \quad \text{and} \quad d\Psi_\gamma(T_{\gamma(0)}\cL) = T_{\tilde{\gamma}_0(0)}0_{\cL} 
		\end{equation}\\

		\textit{2. Adjustment around $\cL$.} We have defined a map such that $d\Psi_\gamma(T_{\gamma(0)}\cL) = T_{\tilde{\gamma}_0(0)}0_{\cL}$. However, we would like it to send a neighbourhood of the Lagrangian $\cL$ into the zero section $0_\cL$. This adjustment is done as follows.
		
		By smoothness of $\cL$ and $\Psi_\gamma$, we deduce that, in a sufficiently small neighbourhood of $x=\tilde{\gamma}_0(0)$, $\Psi_\gamma(\cL)$ is the graph of a $1$-form $\alpha$ over the zero section $0_\cL$, with $\alpha(x)=0$. Since $\Psi_\gamma(\cL)$ is Lagrangian, we have $d\alpha=0$. Thus, the translation $\tau_{\alpha} : (q,p) \mapsto (q,p-\alpha_q)$ is symplectic, sends $\Psi_\gamma(\cL)$ to $0_\cL$, and fixes every point of the fibre $T^*_x\cL$, which contains $\tilde{\gamma}_0$. Hence, the restriction of $\tau_{\alpha} \circ \Psi_\gamma$ to a neighbourhood of $x$ in $\cL$ is of the form $(q,p) \mapsto (r(q,p),0)$ for some local diffeomorphism $r$ satisfying $r(x)=x$. 
		
		We shrink $U_\gamma$ and $U_{\tilde{\gamma}_0}$ so that $r : \cL \cap U_\gamma \to 0_{\cL} \cap U_{\tilde{\gamma}_0}$ is well defined. Define the symplectic cotangent lift of its inverse by
		\begin{equation}
			\widehat{r^{-1}}(q,p) = \left( r^{-1}(q), (d_{r^{-1}(q)}r)^*p \right)
		\end{equation}
		
		It sends $(x,te)$ to $(x,te')$ for some $e' \neq 0$ in $T^*_{\gamma(0)}\cL$. Choose coordinates $(q_1,\ldots,q_d)$ centered at $x$ such that $dq_1(x)=e'$. In these coordinates, set $\gamma_0(t)= (\gamma(0),p_1=t,0,\ldots,0)$. We then define
		\begin{equation}
			\Psi_0 = \widehat{r^{-1}} \circ \tau_{\alpha} \circ \Psi_{\gamma}
		\end{equation}
		which satisfies
		\begin{equation}
			\Psi_0(x) = (x,0) \quad \text{for all } x \in U_\gamma \cap \cL, \quad \text{and} \quad \Psi_0(\gamma(t)) = \gamma_0(t) \quad \text{for all } t \in [0,1] 
		\end{equation}\\

		\textit{3. Extension to a neighbourhood of $\cL$.} We apply Lemma \ref{lem:RelativeSymplecticExtension} with $S_1 = \cL$ and $S_2 = 0_\cL$, and with the diffeomorphism $f : \cL \to 0_\cL$ given by $f(y) = (y,0)$. We take as relative set a compact neighbourhood $A$ of $x = \gamma(0)$ in $\cL$, and prescribe the map $\Psi_0$ on a neighbourhood of $A$.

		Choose $A$ small enough so that $A \subset U_\gamma$. Let $\mathcal{C}$ be a Lagrangian complement of $T\cL$ over a neighbourhood of $A$, and set $\mathcal{C}_0 = d\Psi_0(\mathcal{C})$. We first extend $\mathcal{C}$ and $\mathcal{C}_0$ to global Lagrangian complements of $T\cL$ and $T0_\cL$, respectively. To do so, choose compatible almost-complex structures $J$ and $J_0$ and consider the global Lagrangian complements $J(T\cL)$ and $J_0(T0_\cL)$. The local complements $\mathcal{C}$ and $\mathcal{C}_0$ can be glued to these global complements outside $A$. Indeed, relative to a fixed Lagrangian complement, any other Lagrangian complement is the graph of a bundle map whose associated bilinear form is symmetric. One can therefore interpolate between the corresponding bundle maps using a smooth cutoff function while preserving the Lagrangian condition.

		We now define a smooth fibrewise symplectic bundle isomorphism $\mathcal{F} : T_\cL(T^*M) \to f^*T_{0_\cL}(T^*\cL)$. On $T\cL$, set $\mathcal{F}|_{T\cL} = df$. For every $y \in \cL$, consider the linear isomorphisms
		\begin{equation}
			\begin{matrix}
				\beta_y : & \mathcal{C}_y & \longrightarrow & T_y^*\cL &,& \beta_{0,y} : & (\mathcal{C}_0)_{f(y)} & \longrightarrow & T_y^*\cL, \\
				& c & \longmapsto & \big(v \mapsto \omega(v,c)\big) &, && c_0 & \longmapsto & \big(v \mapsto \omega_0(df(v),c_0)\big)
			\end{matrix}
		\end{equation}
		For every $c \in \mathcal{C}_y$, define
		\begin{equation}
			\mathcal{F}(c) = \beta_{0,y}^{-1}\big(\beta_y(c)\big)
		\end{equation}
		Equivalently, $\mathcal{F}(c)$ is the unique element of $(\mathcal{C}_0)_{f(y)}$ such that
		\begin{equation}
			\omega(v,c) = \omega_0(df(v),\mathcal{F}(c))
			\quad \text{for all } v \in T_y\cL
		\end{equation}
	
		The map $\mathcal{F}$ is a smooth bundle isomorphism. We claim that it is fibrewise symplectic. Indeed, for all $u,u' \in T_y\cL$ and $c,c' \in \mathcal{C}_y$, we have
		\begin{align*}
			\omega_0\big(\mathcal{F}(u+c),\mathcal{F}(u'+c')\big) &= \omega_0\big(\mathcal{F}(u),\mathcal{F}(u')\big) + \omega_0\big(\mathcal{F}(u),\mathcal{F}(c')\big) \\
			&\quad + \omega_0\big(\mathcal{F}(c),\mathcal{F}(u')\big) + \omega_0\big(\mathcal{F}(c),\mathcal{F}(c')\big) \\
			&= \omega_0\big(df(u),\mathcal{F}(c')\big) + \omega_0\big(\mathcal{F}(c),df(u')\big) \\
			&= \omega(u,c') + \omega(c,u') \\
			&= \omega(u+c,u'+c')
		\end{align*}
		where we have used that $T_y\cL$, $\mathcal{C}_y$, $T_{f(y)}0_\cL$, and $(\mathcal{C}_0)_{f(y)}$ are Lagrangian.

		Moreover, by construction, $\mathcal{F}=d\Psi_0$ over $A$. We may therefore apply Lemma \ref{lem:RelativeSymplecticExtension}. We obtain neighbourhoods $U_\cL$ and $U_{0_\cL}$ of $\cL$ and $0_\cL$ in $T^*M$ and $T^*\cL$, respectively, together with a symplectomorphism $\Psi_\cL : U_\cL \to U_{0_\cL}$ such that
		\begin{equation}
			\Psi_\cL(y) = (y,0) \quad \text{for all } y \in \cL, \quad \text{and} \quad  \Psi_\cL = \Psi_0 \quad \text{on a neighbourhood of } A
		\end{equation}

		By shrinking $U_\gamma$ and $U_\cL$ if necessary, we may assume that $U_\gamma \cap U_\cL$ is contained in the neighbourhood of $A$ on which $\Psi_\cL = \Psi_0$. The two maps therefore glue to a symplectomorphism
		\begin{equation}
			\Psi : U = U_\gamma \cup U_\cL \longrightarrow U_{\gamma_0} \cup U_{0_\cL}
		\end{equation}
		satisfying the required geometric properties. \\

		\textit{4. Exactness.} It remains to prove that the symplectomorphism $\Psi$ is exact. After shrinking the neighbourhoods if necessary, we may assume that $U_\cL$ is a fibrewise star-shaped neighbourhood of $\cL$, while $U_\gamma$ is a tubular neighbourhood of $\gamma$, chosen so that both $U_\gamma$ and $U_\gamma \cap U_\cL$ are contractible.

		Since $U_\cL$ deformation retracts onto $\cL$, we have $H^1(U_\cL,\R) \simeq H^1(\cL,\R)$. Moreover,
		\begin{equation}
			H^1(U_\gamma,\R)=0 \quad \text{and} \quad H^1(U_\cL \cap U_\gamma,\R)=0
		\end{equation}
		By the Mayer--Vietoris sequence in de Rham cohomology, and since the sets involved are connected, we obtain
		\begin{align*}
			0
			\longrightarrow H^1(U,\R)
			\longrightarrow H^1(U_\cL,\R) \oplus H^1(U_\gamma,\R)
			\longrightarrow H^1(U_\cL \cap U_\gamma,\R),
		\end{align*}
		and therefore
		\begin{equation}
			H^1(U,\R) \simeq H^1(\cL,\R)
		\end{equation}

		Let $\lambda_0$ denote the Liouville form on $T^*\cL$, and set $\eta = \Psi^*\lambda_0 - \lambda$. Since $\Psi$ is symplectic, $\eta$ is closed. On $\cL$, the map $\Psi$ takes values in the zero section, and hence $\Psi^*\lambda_0|_\cL = 0$. Additionally, since $(\cL,h)$ is exact, we have $\lambda|_\cL = dh$ and we infer $\eta|_\cL = -dh$. Thus, the restriction of the cohomology class $[\eta]$ to $\cL$ vanishes. Under the isomorphism $H^1(U,\R) \simeq H^1(\cL,\R)$, this implies $[\eta]=0$. Hence, $\eta$ is exact, and consequently $\Psi$ is an exact symplectomorphism.
	\end{proof}

\subsection{One Dimensional Case: Unique Liftability}

We show the following local rigidity result in the one-dimensional case.
\begin{prop} \label{prop:1DLocalRigidity}
	Let $(\mathcal{K},k)$ be in $\overline{\mathcal{LB}}(T^*\T^1,\omega)$ and assume that there exists an open set $U$ of $T^*\T^1$ such that $\mathcal{K} \cap U$ is an embedded smooth curve $\gamma:(0,1) \to T^*\T^1$. Then, $k$ restricted to $\gamma$ is a Liouville primitive, i.e. $d(k|_{\gamma}) = \lambda|_{\gamma}$.
\end{prop}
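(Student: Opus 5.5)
The plan is to work in an exact symplectic chart around a compact sub-arc of $\gamma$. Inside it, embeddedness of the approximating curves together with a planar area (Jordan) estimate controls their Liouville primitives, so that in the limit $k$ can only differ from a fixed local primitive of $\lambda$ along $\gamma$ by a locally constant function.

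\emph{Step 1 (setup).} Fix $0<a<b<1$ and a slightly larger interval $[a',b']\subset(0,1)$ with $a'<a$ and $b<b'$. The arc $\gamma([a',b'])$ is a contractible embedded curve, and is automatically Lagrangian since $\dim T^*\T^1=2$. By Weinstein's theorem, an argument as in the exactness step of Lemma \ref{lem:WeinsteinLag+Curve}, and shrinking $U$ if needed, there is an exact symplectic chart
\[\Psi: W\to (a'',b'')\times(-\varepsilon,\varepsilon)\subset T^*\R, \qquad a''<a'<b'<b'',\]
sending $\gamma$ to the zero section, with $\Psi^*(p\,dq)=\lambda+dS$ for some smooth $S$. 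It suffices to show that $k\circ\gamma+S\circ\gamma$ is locally constant on $[a,b]$, since $S\circ\gamma$ restricted to $\gamma$ is a primitive of $-\lambda|_\gamma$ there.

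Let $(\cL_n,h_n)$ be Lagrangian branes brane-converging to $(\mathcal{K},k)$, and consider the compact box $Q_\eta=\Psi^{-1}([a',b']\times[-\eta,\eta])$. Since $\mathcal{K}\cap U=\im\gamma$, the set $\mathcal{K}$ stays away from the horizontal sides $\{p=\pm\eta\}$ of $Q_\eta$. By Proposition \ref{BraneHausdorffProp}, for $n$ large $\cL_n\cap Q_\eta$ therefore lies in the thin strip $\{|p|\le\delta_n\}$ with $\delta_n\to0$. Each $\cL_n$ is a closed embedded curve. It is not contained in $Q_\eta$, because it is Hausdorff close to $\mathcal{K}\not\subset Q_\eta$. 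Up to a generic choice of $a',b'$ making the intersection transverse, every component $C$ of $\cL_n\cap Q_\eta$ is then an embedded arc with both endpoints on the vertical ends $\{q=a'\}$ or $\{q=b'\}$.

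\emph{Step 2 (area estimate).} Let $z\in C$ and let $e$ be an endpoint of $C$. Closing the sub-arc of $C$ from $e$ to $z$ by the vertical segment from $z$ to $\{p=0\}$, the horizontal segment along the zero section, and the vertical segment up to $e$, one gets a loop in the strip. I expect to bound
\[\Big|\int_{C\,:\,e\to z} p\,dq\Big| \le 2\delta_n(b'-a').\]
The idea is that an embedded arc in the strip, together with these short closing segments, bounds signed area at most that of the strip, and the closing segments contribute nothing to $\int p\,dq$ (the vertical ones have $dq=0$, the horizontal one has $p=0$). Exactness of the chart then gives
\[h_n(z)=h_n(e)-S(z)+S(e)+O(\delta_n).\]
This is the step I expect to be the main obstacle. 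An arc of $\cL_n$ may wiggle with unbounded length, so the naive length-times-$\delta_n$ bound fails. Embeddedness is essential, and I would first try the Jordan curve theorem on the closed loop, handling self-intersections of the closing segments with $C$ by cutting it into finitely many simple loops, all inside the strip.

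\emph{Step 3 (limit).} Every endpoint on $\{q=a'\}$ converges to $\gamma(a')$, since $\mathcal{K}$ meets that end only there, and similarly for $\{q=b'\}$. By Proposition \ref{BranePointwiseConv}, $h_n(e)\to k(\gamma(a'))$ or $k(\gamma(b'))$ respectively. For $c\in[a,b]$, Hausdorff convergence gives points $z_n\in\cL_n\cap Q_\eta$ with $z_n\to\gamma(c)$, and then $h_n(z_n)\to k(\gamma(c))$. Passing to the limit in Step 2 shows
\[k(\gamma(c))+S(\gamma(c))\in\big\{k(\gamma(a'))+S(\gamma(a')),\;k(\gamma(b'))+S(\gamma(b'))\big\}.\]
The left-hand side is continuous in $c$ on the connected interval $[a,b]$ and takes values in this two-point set, so it is constant. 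Hence $d(k|_\gamma)=\lambda|_\gamma$ on $\gamma([a,b])$, and since $a<b$ were arbitrary, on all of $\gamma$.

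Note that this approach never needs an arc of $\cL_n$ crossing the whole box: U-turn components are harmless, because each is anchored at one end of the box.
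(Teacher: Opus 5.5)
Your architecture matches the paper's: exact chart, thin strip, a Stokes estimate anchored at an end of the box, Proposition \ref{BranePointwiseConv}, and the two-point-set argument. The gap is Step 2. As stated, for an arbitrary $z\in C$, the bound is not just unproven but false, and cutting the loop into simple loops cannot fix it.

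An embedded arc $C$ in the strip may contain a double spiral: it winds $N$ times inward around a point of $\{p=0\}$ at scale $\delta_n$, then winds back out along an interleaved arm. Take $z$ to be the centre of the spiral. The loop you build (the sub-arc from $e$ to $z$, closed along the zero section) then has winding number of order $N$ around the points of the spiral region. Hence $\int_{C:\,e\to z}p\,dq$ is of order $N\delta_n^2$, which is much larger than $2\delta_n(b'-a')$ once $N\gg (b'-a')/\delta_n$. Such spirals are even compatible with brane convergence (take $N_n\delta_n^2\to0$ but $N_n\delta_n\to\infty$). So the estimate at an arbitrary point cannot come from the geometry of embedded arcs in a strip.

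Decomposing the loop into simple loops only gives (number of loops) times (area of the strip), and that number is uncontrolled. The underlying problem is that you close up along $\{p=0\}$, which $C$ may cross arbitrarily often.

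The missing idea is to choose the interior point more carefully. Given $z_n\in C$ close to $\gamma(c)$ and an endpoint $e_n$ of $C$, let $x_n$ be the \emph{last} point of the sub-arc from $z_n$ to $e_n$ lying on the vertical line $\{q=q(z_n)\}$.
\begin{itemize}
\item The sub-arc from $x_n$ to $e_n$ then has its interior strictly between the lines $\{q=q(z_n)\}$ and $\{q=q(e_n)\}$.
\item Close it by the vertical segment at $q(e_n)$ up to height $\delta'_n>\delta_n$, the horizontal segment at height $\delta'_n$ (which $C$ never meets), and the vertical segment down to $x_n$.
\item This is a genuine Jordan curve inside a rectangle of area at most $2\delta'_n(b'-a')$.
\item The vertical pieces contribute nothing to $\int p\,dq$ and the top piece at most $\delta'_n(b'-a')$, so $\bigl|\int_{x_n\to e_n}p\,dq\bigr|\le 3\delta'_n(b'-a')$.
\end{itemize}
Since $q(x_n)=q(z_n)$ and $|p(x_n)|\le\delta_n$, you still have $x_n\to\gamma(c)$. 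Proposition \ref{BranePointwiseConv} then gives $h_n(x_n)\to k(\gamma(c))$, and your Step 3 goes through unchanged with $x_n$ in place of $z_n$. This is precisely the paper's construction: first exit time from the box, then last crossing of the line $q=t_n$, closed through the top edge of the strip.
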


\begin{proof}
	Let $(\cL_n,h_n)$ be a sequence of smooth exact Lagrangian branes that brane-converges to $(\mathcal{K},k)$. Fix $0<a<b<1$. We choose an exact Weinstein chart $\Psi : V \subset U \to V_0 \subset T^*\R$ from a neighbourhood $V$ of $\gamma([a,b])$, such that
	\begin{equation*}
		\Psi(\gamma(t))=(t,0), \quad \Psi^*\lambda_0-\lambda=dS, \quad \lambda_0=p\;dq
	\end{equation*}
	After shrinking $V$ if necessary, we may assume that $V_0$ contains a box $[a-\delta_0,b+\delta_0]\times[-\delta_0,\delta_0]$ for some $\delta_0>0$. We set
	\begin{equation*}
		\Lambda_n=\Psi(\cL_n\cap V), \quad g_n=(h_n+S)\circ\Psi^{-1}, \quad k_0=(k+S)\circ\Psi^{-1}
	\end{equation*}
	By Proposition \ref{BraneInvariance}, $g_n$ is a Liouville primitive on $\Lambda_n$ and $(\Lambda_n,g_n)$ brane converges to $(\Psi(\mathcal{K}\cap V),k_0)$.

	We prove that $k_0$ is constant on $[a,b]\times\{0\}$. Fix $\delta>0$ sufficiently small and set
	\begin{equation*}
		R=[a,b]\times[-\delta,\delta]\subset V_0, \quad \text{and} \quad \Psi(\mathcal{K}\cap V)\cap R=[a,b]\times\{0\}
	\end{equation*}
	By Hausdorff convergence, there exists a sequence of positive real numbers $\delta_n\to0$ such that
	\begin{equation} \label{eq:FilBox}
		\Lambda_n\cap R
		\subset
		[a,b]\times(-\delta_n,\delta_n).
	\end{equation}

	Fix $t\in(a,b)$. By Hausdorff convergence, we can choose $z_n=(t_n,p_n)\in\Lambda_n\cap\operatorname{int}(R)$ such that $z_n\to(t,0)$. Let $\cL_n(s):\T^1\to\cL_n$ be a parametrization of $\cL_n$, and choose $s_n$ such that $\Psi(\cL_n(s_n))=z_n$. Lifting the parametrization to $\R$, we consider it on $[s_n,s_n+1]$. Since $\cL_n$ is not contained in $\Psi^{-1}(R)$, there exists a first exit time
	\begin{equation*}
		\tau_n := \inf \left\{ \tau\in[s_n,s_n+1] \; \middle| \; \Psi(\cL_n(\tau))\in\partial R \right\}
	\end{equation*}
	By \eqref{eq:FilBox}, for $n$ sufficiently large, $\Lambda_n$ does not intersect the horizontal sides of $R$. Hence,
	\begin{equation*}
		y_n:=\Psi(\cL_n(\tau_n)) \in \{a,b\}\times(-\delta_n,\delta_n).
	\end{equation*}
	We write $y_n=(e_n,r_n)$ with $e_n\in\{a,b\}$. Now set
	\begin{equation*}
		\tau_n^0 := \sup \left\{ \tau\in[s_n,\tau_n] \; \middle| \; \Psi(\cL_n(\tau))\in \{t_n\}\times(-\delta_n,\delta_n) \right\}
	\end{equation*}
	and define $x_n:=\Psi(\cL_n(\tau_n^0))$. We obtain embedded arcs $\gamma_n := \Psi\circ\cL_n|_{[\tau_n^0,\tau_n]}$ joining $x_n$ to $y_n$. By definition of $\tau_n$ and $\tau_n^0$, the interior of $\gamma_n$ lies strictly between the vertical lines $q=t_n$ and $q=e_n$.

	Let $\sigma_n$ be the polygonal path joining
	\begin{equation*}
		y_n \longrightarrow (e_n,\delta_n) \longrightarrow (t_n,\delta_n) \longrightarrow x_n
	\end{equation*}
	The simple closed curve $\gamma_n\cup\sigma_n$ bounds a domain contained in the rectangle of area at most $2\delta_n|e_n-t_n|$. Hence, by Stokes' theorem,
	\begin{equation*}
		\left| \int_{\gamma_n}\lambda_0 + \int_{\sigma_n}\lambda_0 \right| \leq 2\delta_n|e_n-t_n|
	\end{equation*}
	Moreover, we have
	\begin{equation*}
		\left| \int_{\sigma_n}\lambda_0 \right| \leq \delta_n(e_n+t_n)
	\end{equation*}
	Since $g_n$ is a Liouville primitive on $\Lambda_n$, we obtain
	\begin{align*}
		|g_n(y_n)-g_n(x_n)| &= \left| \int_{\gamma_n}\lambda_0 \right| \leq \delta_n(e_n+t_n) + 2\delta_n|e_n-t_n| \leq 3\delta_n(a+b) \longrightarrow 0 \quad \text{as } n\to+\infty
	\end{align*} 
	
	Up to extracting a subsequence, we may assume that $e_n=e$ for some $e\in\{a,b\}$. Since $\lim_n x_n = (t,0)$ and $\lim_n y_n = (e,0)$, Proposition \ref{BranePointwiseConv} yields $k_0(t,0)=k_0(e,0)$. We have therefore shown that, for every $t\in(a,b)$,
	\begin{equation*}
		k_0(t,0) \in \{k_0(a,0),k_0(b,0)\}
	\end{equation*}
	Since $k_0$ is continuous, its image on the connected set $[a,b]\times\{0\}$ is connected and contained in a two-point set. Hence, $k_0$ is constant on $[a,b]\times\{0\}$. Finally, We infer from Proposition \ref{BraneInvariance} that $d(k|_{\gamma}) = \lambda|_{\gamma}$.
\end{proof}          

\subsubsection{Proof of Item \ref{prop:UniquenessPrimitive5} of Theorem \ref{thm:UniquenessPrimitiveC0}}

    Following Proposition \ref{prop:1DLocalRigidity}, it suffices to prove that $\mathcal{K}$ belongs to $\mathscr{L}^\exists$. We first assume that $\gamma$ is transverse to $\cL$. We apply Lemma \ref{lem:WeinsteinLag+Curve} to $\mathcal{K}$ and obtain an exact symplectomorphism $\Psi : U \to U_0$ from a neighbourhood $U$ of $\mathcal{K}$ onto a neighbourhood $U_0$ of $\mathcal{K}_0 = 0_{\mathbb{T}^1} \cup \im(\gamma_0)$, where $\gamma_0 : [0,1] \to T^*\mathbb{T}^1$ is given by $\gamma_0(t) = (0,t)$, and such that $\Psi(\mathcal{K}) = \mathcal{K}_0$. By Proposition \ref{BraneInvariance}, it is enough to prove that $\mathcal{K}_0$ belongs to $\mathscr{L}^\exists$.

	Let $\eta : [-1,1] \to \R$ be a bump function such that $0 \leq \eta \leq 1$, $\eta(0) = 1$, and $\int_{-1}^1 \eta = 1$. Let $\delta_n$ be a sequence of positive real numbers converging to $0$. Let $\delta>0$ be a small real number such that $[-\delta,\delta] \times [-\delta,1+\delta] \subset U_0$. We define the smooth map $v_n : \R \to \R$, supported in $(-\delta,\delta)$, by
	\begin{equation} \label{eq:BumpApprox}
		v_n(t) = \eta \left( \frac{t}{\delta_n} \right) - \frac{\delta_n}{\delta} \eta \left( \frac{t}{\delta} \right)
	\end{equation}
	We have
	\begin{align*}
		\int_{-\delta}^\delta v_n &= \int_{-\delta}^\delta \eta \left( \frac{t}{\delta_n} \right) \; dt - \int_{-\delta}^\delta \frac{\delta_n}{\delta} \eta \left( \frac{t}{\delta} \right) \; dt \\
		&= \delta_n \int_{-\delta/\delta_n}^{\delta/\delta_n} \eta - \delta_n \int_{-1}^1 \eta = \delta_n - \delta_n = 0
	\end{align*}

	Moreover, $\lim_n v_n(0) = \lim_n 1-\frac{\delta_n}{\delta} = 1$ while, for every non-zero $t$, we have $v_n(t)\to0$. In addition, $v_n \leq 1$ and $\inf v_n \geq -\delta_n/\delta$. It follows that the graphs of $v_n$ converge in the Hausdorff topology to $(\R \times \{0\}) \cup (\{0\} \times [0,1])$. Set
	\begin{equation} \label{eq:BumpApprox2}
		w_n(t) = \int_{-\infty}^t v_n(\tau) \; d\tau
	\end{equation}
	Since $\int_{-1/2}^{1/2} v_n = 0$, we have $w_n(-1/2)=w_n(1/2)$, and therefore $w_n|_{[-\frac{1}{2},\frac{1}{2}]}$ descends to a smooth map on $\T^1$. Moreover,
	\begin{equation*}
		\Vert w_n \Vert_\infty \leq \int_{-\delta}^{\delta}|v_n(\tau)| \; d\tau \longrightarrow 0 \quad \text{as } n \to +\infty
	\end{equation*}
	We set $h_n = w_n \circ \pi_{\T^1}$ and let $\mathcal{L}_n = \mathcal{G}(dw_n)$ be the graph of $v_n$. Then, the sequence of exact Lagrangian branes $(\mathcal{L}_n,h_n)$ brane-converges to $(\mathcal{K}_0,0)$. In particular, $\mathcal{K}_0$ belongs to $\mathscr{L}^\exists$.\\

\textit{Non-transverse case.} Set $x=\gamma(0)$ and let $h$ be a Liouville primitive on $\cL$. We approximate $\gamma$ in the $C^1$ topology by a sequence of smooth embedded curves $\gamma_j : [0,1] \to T^*\T^1$ such that
\begin{equation*}
	\gamma_j(0)=x, \quad \im(\gamma_j)\cap\cL=\{x\}, \quad \text{and} \quad \dot{\gamma}_j(0)\notin T_x\cL
\end{equation*}
Such an approximation can be constructed locally by taking coordinates in which $\cL=\{p=0\}$ and writing the initial arc as $p=f(q)$ for $q\geq0$. We then replace $f(q)$ by $f(q)+\text{sign}(f)\varepsilon_j\chi(q)q$, where $\varepsilon_j \to 0$ and $\chi$ is a non-negative smooth bump function supported near $0$ and equal to $1$ in a neighbourhood of $0$.

Set $\mathcal{K}_j=\cL\cup\im(\gamma_j)$. By the transverse case, $\mathcal{K}_j$ belongs to $\mathscr{L}^\exists$. By the local primitive property on the smooth arcs and continuity at $x$, we can normalize the associated primitive $k_j$ so that $k_j|_{\cL}=h$ and
\begin{equation*}
	k_j(\gamma_j(t)) = h(x) + \int_0^t \gamma_j^*\lambda
\end{equation*}
We define $k:\mathcal{K}\to\R$ by
\begin{equation*}
	k|_{\cL}=h \quad \text{and} \quad k(\gamma(t)) = h(x) + \int_0^t \gamma^*\lambda
\end{equation*}
Since $\gamma_j\to\gamma$ in the $C^1$ topology, we have
\begin{align*}
	d_B\bigl((\mathcal{K}_j,k_j),(\mathcal{K},k)\bigr) \leq \sup_{t\in[0,1]} \left( d(\gamma_j(t),\gamma(t)) + \left|\int_0^t(\gamma_j^*\lambda-\gamma^*\lambda)\right|\right) \longrightarrow 0 \quad \text{as } j\to+\infty.
\end{align*}
Since $\overline{\mathcal{LB}}(T^*\T^1,\omega)$ is closed for the brane topology, we conclude that $(\mathcal{K},k)\in\overline{\mathcal{LB}}(T^*\T^1,\omega)$. Hence, $\mathcal{K}$ belongs to $\mathscr{L}^\exists$.

    \qed

\subsubsection{Proof of Proposition \ref{prop:NonExistence1D}}
    
    Let $C \subset T^*\mathbb{T}^1$ be a smooth embedded contractible circle tangent to the zero section at $(0,0)$. Assume, by contradiction, $\mathcal{K} = 0_{\T^1} \cup C$ belongs to $\mathscr{L}^\exists$ and let $k : \mathcal{K} \to \R$ be a continuous map such that $(\mathcal{K},k) \in \overline{\mathcal{LB}}(T^*\T^1,\omega)$.

	Let $c : [0,1] \to C$ be a smooth parametrization, injective on $[0,1)$, such that $c(0)=c(1)=(0,0)$. Fix $0<a<b<1$. Then, $c([a,b])$ is a portion of $C$ that does not contain $(0,0)$. By Proposition \ref{prop:1DLocalRigidity}, $k$ restricts to a local Liouville primitive on $c([a,b])$. Since $0<a<b<1$ are arbitrary and $(0,1)$ is connected, there exists a constant $\varsigma$ such that, for every $t\in(0,1)$,
	\begin{align*}
		k(c(t)) = \varsigma + h(t) \quad \text{where } h(t) = \int_0^t \lambda_{c(\tau)}.\dot{c}(\tau)\;d\tau
	\end{align*}

	By continuity of $h$ and $k$, and since $c(0)=c(1)$, we obtain
	\begin{align*}
		0= h(0) = k(c(0))-\varsigma = k(c(1))-\varsigma = h(1) =\int_C\lambda 
	\end{align*}
	However, if $D$ denotes the disk bounded by $C$, Stokes' theorem gives
	\begin{equation*}
		\left|\int_C \lambda\right| = \leb(D) > 0
	\end{equation*}
	which yields a contradiction.

    \qed

\subsection{Higher-Dimensional Case}

\subsubsection{Non-Unique Liftability: Proof of Theorem \ref{thm:NonUniqueness2D}}

\begin{prop} \label{prop:fil}
	Assume that $\dim M \geq 2$. Let $(\mathcal{L},h)$ be a compact Lagrangian brane in $\mathcal{LB}(T^*M,\omega)$. Let $\gamma : [0,1] \to T^*M$ be an embedded continuous curve that intersects $\cL$ only at $\gamma(0)$, and let $f : \im(\gamma) \to \R$ be a continuous map such that $f(\gamma(0)) = h(\gamma(0))$. Let $(\mathcal{K},k)$ be the bare brane defined by $\mathcal{K} = \cL \cup \im(\gamma)$, where $k : \mathcal{K} \to \R$ restricts to $h$ on $\cL$ and to $f$ on $\im(\gamma)$. Then, $(\mathcal{K},k)$ is a bare Lagrangian brane in $\overline{\mathcal{LB}}(T^*M,\omega)$. 
	
	More precisely, for every small neighbourhood $U$ of $\gamma$ in $T^*M$, there exists a sequence of Lagrangian branes $(\cL_n,h_n)$ that brane-converges to $(\mathcal{K},k)$ and such that the $\cL_n$ coincide with $\cL$ outside of $U$.
\end{prop}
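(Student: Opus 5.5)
We first reduce to a smooth filament in a standard model, and then perform a local Lagrangian surgery near the filament. The point is that a Lagrangian can carry a long, thin ``finger'' along which its primitive takes arbitrary values, as soon as $d\geq 2$.

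\emph{Reduction to a smooth filament in a model.} Approximate $\gamma$ uniformly by smooth embedded curves $\gamma_j$ inside $U$ that meet $\cL$ only at $\gamma(0)$, transversally. In dimension $\geq 2$ such curves exist, since generic curves avoid $\cL$ away from the endpoint. Approximate $f$ by smooth functions $f_j$ on $\im(\gamma_j)$, with $f_j(\gamma_j(0))=h(\gamma(0))$. Since $\overline{\mathcal{LB}}$ is closed, it suffices to treat smooth $\gamma$ and smooth $f$, with the approximating Lagrangians supported in $U$. By Lemma \ref{lem:WeinsteinLag+Curve} and Proposition \ref{BraneInvariance}, we may then work in $T^*\cL$, with $\cL=0_\cL$ and $\gamma=\gamma_0(t)=(x_0,te_1)$. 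The exact symplectomorphism only shifts primitives by a continuous function, which we absorb into $f$. After shifting, we may assume $h(x_0)=0$ and $h\equiv 0$ near $x_0$, since exactness allows this in a Darboux–Weinstein chart of the form $\cL=\mathcal{G}(0)$.

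\emph{Local construction.} Work in coordinates $q=(q_1,q')\in\R\times\R^{d-1}$ near $x_0$. In the one-dimensional proof the graph of $dw_n$ produced the vertical segment $\{q=0\}\times[0,1]$, but the primitive along it was forced to equal $\int p\,dq$. This integral vanishes along a vertical segment, since $dq=0$ there, so in one dimension $k$ must be constant on the filament. In dimension $\geq 2$ we instead let the approximating Lagrangian be $\mathcal{G}(dW_n)$, with
\[W_n(q_1,q')=w_n(q_1)\,\chi(q'/\epsilon_n)+\phi_n(q_1,q').\]
Here the first term is built as in \eqref{eq:BumpApprox}--\eqref{eq:BumpApprox2} and creates the spike $p_1\in[0,1]$ over a thin tube. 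The second term is a small correction that oscillates in the $q'$ directions with amplitude $\to 0$ but with large frequency. The primitive on $\mathcal{G}(dW_n)$ is $W_n\circ\pi$, so we must arrange $W_n\to 0$ uniformly (the primitive on $\cL$), while on the part of the graph near $(x_0,te_1)$ the value of $W_n$ approaches $f(t)$. The tension is that the value $W_n$ and the height $p_1=\partial_{q_1}W_n$ must be controlled simultaneously at the same base points.

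The planned resolution uses a wiggled graph rather than a graph over a shrinking set. Consider a Lagrangian which, over a tiny ball $B_n$ around $x_0$, is not a graph: it goes up the finger and comes back. For instance, take $\cL_n$ to be the image of $\cL$ under a Hamiltonian diffeomorphism supported in $U$ that pushes a small disc along $\gamma$. The primitive on $\cL_n$ changes by the action of the isotopy. We choose the isotopy so that a point reaching $\gamma(t)$ travels along a path whose action $\int(p\,dq-H)$ equals $f(t)$, up to $o(1)$. In the extra $q'$ direction, a zigzag of width $\epsilon_n$ and $p'$-height $A$ contributes action $\approx A\cdot(\text{length})$. We can therefore prescribe any action profile with $q$-displacement $\to 0$, provided the $p'$-heights stay small, which forces the zigzag count to grow. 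Concretely, we parametrize the finger tip curve $\Gamma_n(t)=(x_0+\beta_n(t),\,te_1+\alpha_n(t))$, where $\beta_n,\alpha_n\to 0$ in $C^0$ and $\int_0^t \Gamma_n^*\lambda\to f(t)$. We then thicken it into a Lagrangian finger, i.e. the boundary of a Lagrangian tube, by taking the conormal-type sweep of $\Gamma_n$ together with a small sphere $S^{d-2}$ in the $q'$ directions, glued to $\cL$ by a Lagrangian handle. Exactness of the finger is automatic, since it is contractible and attached along a disc.

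\emph{Main obstacle.} The hardest step is to make the thickened finger a genuine embedded exact Lagrangian whose primitive converges to $f$ along the whole tube, not just along the core curve $\Gamma_n$. This requires the sphere factor to carry negligible action. It also requires the finger to converge in the Hausdorff sense to $\im(\gamma)$ without creating extra limit points. To handle this, I would first realize the finger as the image of $\cL$ under a compactly supported Hamiltonian isotopy, giving an explicit primitive via Proposition \ref{LagsImagesPrimitive}. I would then estimate the primitive using the fact that the isotopy displaces the tube cross-section only by $O(\epsilon_n)$. Brane convergence then follows from Proposition \ref{BraneHausdorffProp} and pointwise primitive estimates, since outside $U$ we have $\cL_n=\cL$ and $h_n=h$.
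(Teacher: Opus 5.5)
Your reduction to the model $0_\cL\cup\{(x_0,te_1)\mid t\in[0,1]\}$ matches the paper's. Your diagnosis that graphs alone cannot work is also right: on the $o(1)$-ball carrying the finger, $|dW_n|$ is bounded, so $W_n$ is nearly constant there and $f$ would be forced to be constant. Your oscillating curve $\Gamma_n$ with $\int_0^t\Gamma_n^*\lambda\to f(t)$ is exactly the paper's key device. The paper takes $q=\mu_l(t)e_2$, $p=te_1+\nu_l(t)e_2$, with $\mu_l=l^{-1/2}\sin(2\pi lt)$ away from an initial interval on which $f$ has first been made constant, and $\nu_l=g_l'/\mu_l'=O(l^{-1/4})$ cut off near the turning points of $\mu_l$.

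The gap is the step you yourself call the main obstacle, and it is not a technicality. Nothing in your sketch actually produces embedded exact Lagrangians that brane-converge to $\cL\cup\im(\Gamma_n)$ with primitive $h(x_0)+\int_0^t\Gamma_n^*\lambda$ on the arc. The ``boundary of a Lagrangian tube'' is never specified; as described, $\Gamma_n$ swept by a small $S^{d-2}$ is only $(d-1)$-dimensional. No Hamiltonian isotopy is written down. Moreover, a $C^0$ bound on how far cross-sections move gives no control of the primitive of $\phi^1(\cL)$. By Proposition \ref{LagsImagesPrimitive} that primitive is the action of the isotopy, which here must be of order one and vary along the finger.

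The missing idea is that you never need to thicken $\Gamma_n$ by hand, nor any estimate uniform in $n$. The arc $\Gamma_n$ is smooth and embedded, meets $0_\cL$ only at $t=0$ (its $p_1$-coordinate is $t$), and is transverse there. So Lemma \ref{lem:WeinsteinLag+Curve} applies to $0_\cL\cup\im(\Gamma_n)$ itself and gives an exact symplectomorphism $\Psi_n$ straightening $\Gamma_n$ into the vertical segment. Since $\Gamma_n^*\lambda=dg_n$, the transported primitive on that segment is a Liouville primitive (Proposition \ref{BraneInvariance}). It is therefore constant, because $\lambda_0$ vanishes on a cotangent fibre.

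For fixed $n$ you are thus back in the one case where graphs do work: a vertical filament with constant primitive. Take $u_i(q_1,q')=w_i(q_1)\bar\eta(q'/r_i)$ with $r_i=\Vert w_i\Vert_\infty^{1/2}$. Then $\mathcal{G}(du_i)\to 0_{\R^d}\cup\{(0,te_1)\mid t\in[0,1]\}$ in the Hausdorff topology, with $u_i\to 0$ uniformly. Pulling back by the fixed map $\Psi_n$ gives Lagrangian branes, equal to $\cL$ outside the chart, converging to $\cL\cup\im(\Gamma_n)$ with the required primitive. Letting $n\to\infty$ then uses only that $\overline{\mathcal{LB}}$ is closed, via a diagonal extraction. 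Your worry about controlling the primitive along the whole tube uniformly in $n$ disappears.
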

	
\begin{proof}
    We first outline the proof, which is divided into six steps.
    \begin{itemize}[label=-]
        \item \textit{Step 1. (Smooth approximation).} We approximate the filament and its prescribed primitive by smooth ones.
        \item \textit{Step 2. (Reduction to $T^*\cL$).} Using a relative exact Weinstein neighbourhood theorem, we reduce to a local model where $\cL$ is the zero section and the filament is vertical.
        \item \textit{Step 3. (Reparametrization).} We modify the primitive so that it is constant near the attaching point.
        \item \textit{Step 4. (Approximation by exact curves).} This is the heart of the proof. Using an explicit oscillatory construction in the spirit of convex integration, we approximate the vertical filament by brane-converging exact curves $\sigma_l$. See \cite{MR864505,MR3024860,MR4381219} for the general philosophy of convex integration, and \cite{MR3619673} for a related application to Legendrian approximation in $3$-dimensional contact manifolds, where convex integration is used to give a constructive approximation of arbitrary continuous curves by Legendrian curves.
        \item \textit{Step 5. (Second reduction).} A second exact Weinstein reduction brings the exact oscillating curve back to the standard vertical filament with constant primitive.
        \item \textit{Step 6. (Lagrangian approximation).} We approximate the standard model by graphs of exact one-forms.
    \end{itemize}

	\textit{Step 1. (Smooth approximation).} We begin by approximating $(\gamma,f)$ by a sequence of smooth bare branes $(\gamma^\infty_n,f^\infty_n)$. By relative smooth approximation of arcs, one can uniformly approximate $\gamma$ by a smooth embedding $\gamma^\infty_n : [0,1] \to T^*M$ such that
	\begin{equation} \label{prop:fil:dem:filtransv}
		\gamma^\infty_n(0) = \gamma(0), \quad \gamma^\infty_n([0,1]) \cap \cL = \{ \gamma(0) \}, \quad \dot{\gamma}^\infty_n(0) \notin T_{\gamma(0)} \cL
	\end{equation}
	and
	\begin{equation}
		\sup_{t \in [0,1]} d(\gamma^\infty_n(t),\gamma(t)) < \frac{1}{2n}
	\end{equation}

	We use the notation $f(t) = f(\gamma(t))$ and approximate $f$ by a smooth map $f^\infty_n : [0,1] \to \R$ such that $f^\infty_n(0) = f(0)$ and $\Vert f^\infty_n - f \Vert_\infty < 1/2n$. We identify $f^\infty_n$ with the map defined on $\im(\gamma^\infty_n)$ by $f^\infty_n(\gamma^\infty_n(t)) = f^\infty_n(t)$. Then, for all $t \in [0,1]$, we have
	\begin{align*}
		d(\gamma^\infty_n(t),\gamma(t)) + |f^\infty_n(\gamma^\infty_n(t)) - f(\gamma(t))| < \frac{1}{n}
	\end{align*}
	and we obtain $d_B((\gamma^\infty_n,f^\infty_n),(\gamma,f)) < 1/n$.
	We finally set $\mathcal{K}^\infty_n := \cL \cup \im(\gamma^\infty_n)$ and we set $k^\infty_n : \mathcal{K}^\infty_n \to \R$ to be the map that restricts to $h$ on $\cL$ and to $f^\infty_n$ on $\im(\gamma^\infty_n)$. The sequence $(\mathcal{K}^\infty_n,k^\infty_n)$ brane-converges to $(\mathcal{K},k)$.\\

	\textit{Step 2. (Reduction to $T^*\cL$).} We now reduce the problem to the cotangent bundle $T^*\cL$, in such a way that $\gamma^\infty_n$ becomes a straight segment contained in the fibre over $x = \gamma^\infty_n(0)$. 
    
    This is done by applying Lemma \ref{lem:WeinsteinLag+Curve} to $\mathcal{K}^\infty_n = \cL \cup \im(\gamma^\infty_n)$. We obtain an exact symplectomorphism $\Psi^0_n  : U^\infty_n \to U^0_n$ from a neighbourhood $U^\infty_n$ of $\mathcal{K}^\infty_n$ onto a neighbourhood $U^0_n$ of the set $\mathcal{K}^0 = 0_{\cL} \cup \im(\gamma^0)$. Moreover, after shrinking $U^0_n$ around $\im(\gamma^0)$ if necessary, we may choose local coordinates on $\cL$ centered at $x=\gamma(0)$ and identify $U^0_n$ with an open subset of $T^*\R^d$ in such a way that $0_\cL \cap U^0_n$ is identified with the zero section, $x=0$ and $\gamma^0(t) = (0,\ldots,0,p_1=t,0,\ldots,0)$. Under this identification, we regard $\mathcal{K}^0 \cap U^0_n$ as a subset of $T^*\R^d$.	

	By exactness of $\Psi^0_n $, there exists a map $S^0_n : U^\infty_n \to \R$ such that $(\Psi^0_n) ^*\lambda_0 - \lambda = dS^0_n$, where $\lambda_0$ denotes the Liouville form on $T^*\cL$. We define $\tilde{k}^0_n : \mathcal{K}^0 \to \R$ by
	\begin{equation}
		\tilde{k}^0_n = (k^\infty_n + S^0_n) \circ (\Psi^0_n) ^{-1}
	\end{equation}	
	
	We denote by $\tilde{f}^0_n$ the restriction of $\tilde{k}^0_n$ to $\im(\gamma^0)$, and by $h^0$ its restriction to $0_\cL$. Although it will not be used, we note from Proposition \ref{BraneInvariance} that $h^0$ is constant, since it is a Liouville primitive on the zero section. We now work with the bare brane $(\mathcal{K}^0,\tilde{k}^0_n)$, using the above local identification with $T^*\R^d$ near $\im(\gamma^0)$.\\

	\textit{Step 3. Reparametrization of $\tilde{f}^0_n$.} We approximate $\tilde{f}^0_n$ by an $(\varepsilon = \frac{1}{m})$-close map $f^0_{n,m} : \im(\gamma^0) \to \R$ which is constant equal to $h^0(\gamma^0(0))$ on a neighbourhood $\gamma^0([0,a])$ of $\gamma^0(0)$. For simplicity, we will omit the $n$ in the notation.
	
	The map $\tilde{f}^0_n$ is uniformly continuous on $\im(\gamma^0)$, hence, there exists a real constant $\delta_1 >0$ such that if $d(\gamma^0(s), \gamma^0(t)) < \delta_1$, we get $| \tilde{f}^0_n ( \gamma^0(s) ) - \tilde{f}^0_n (\gamma^0(t)) | < 1/m$. Similarly, the curve $\gamma^0$ is uniformly continuous on $[0,1]$, hence, there exists a small real constant $\delta_2 >0$ such that if $|s-t| < \delta_2$, then $d(\gamma^0(s), \gamma^0(t)) < \delta_1$. Now set $r : [0,1] \to [0,1]$ to be a smooth map such that $r(t) = 0$ on $[0,\delta_2/3]$, $r(t) = t$ on $[2\delta_2/3,1]$ and $|r(t) - t| < \delta_2$ for all $t \in [0,1]$. We set $f^0_m : \im ( \gamma^0 ) \to \R$ defined by
	\begin{equation}
		f^0_m(\gamma^0(t)) = \tilde{f}^0_n \circ \gamma^0 \circ r(t)
	\end{equation}
	If we set $a = \delta_2/3$, we have for all $t \in [0,a]$, 
	\begin{align*}
		f^0_m(t) = \tilde{f}^0_n \circ \gamma^0(r(t)) = \tilde{f}^0_n \circ \gamma^0(0) = h^0(\gamma^0(0))
	\end{align*}		
	Moreover, for all $t \in [0,1]$, $|r(t)-t| < \delta_2$, which implies that $d(\gamma^0(r(t)), \gamma^0(t)) < \delta_1$, which yields 
	\begin{align*}
		| f^0_m ( \gamma^0(t) ) - \tilde{f}^0_n (\gamma^0(t)) | = | \tilde{f}^0_n  ( \gamma^0(r(t)) ) - \tilde{f}^0_n (\gamma^0(t)) | < \frac{1}{m}
	\end{align*}
	Hence, $\Vert f^0_m - \tilde{f}^0_n \Vert_\infty < 1/m$ and, if we set $k^0_m : \mathcal{K}^0 \to \R$ to be the map that restricts to $h^0$ on $0_\cL$ and to $f^0_m$ on $\gamma^0$, we get 
	\begin{equation}
		d_B \big( (\mathcal{K}^0,k^0_m) , ( \mathcal{K}^0, \tilde{k}^0_n) \big) < \frac{1}{m}.
	\end{equation}
	
	We work with the bare brane $(\mathcal{K}^0,k^0_m)$, using the above local identification with $T^*\R^d$ near $\im(\gamma^0)$. Then, the sequence $(\mathcal{K}^0,k^0_m)_m$ brane-converges to $(\mathcal{K}^0,\tilde{k}^0_n)$.\\

	\textit{Step 4. Approximation of $(\gamma^0,f^0_m)$ by a sequence of exact curves $(\sigma_{m,l},g_{m,l})$.} In this step, we work in a neighbourhood of $\im(\gamma^0)$ identified with an open subset of $T^*\R^d$, where $0_\cL$ is identified with the zero section $0_{\R^d}$. We have $\gamma^0(t) = (0, \ldots ,0, p_1 = t, 0 ,\ldots,0)$, we use the notation $f^0_m(t) := f^0_m(\gamma^0(t))$, and we recall that $f^0_m|_{[0,a]}$ is constant for some sufficiently small $0<a<1$.
	
	 Our goal is to approximate the pair $(\gamma^0,f^0_m)$, with respect to the brane distance, by an exact curve $(\sigma_{m,l},g_{m,l})$, i.e. a pair satisfying $\sigma_{m,l}^*\lambda = dg_{m,l}$. For simplicity, we will omit $m$ in the notation. For each $l \geq 1$, we consider a curve $\sigma_l : [0,1] \to T^*\R^d$ of the form
	\begin{equation}
		\sigma_l(t) = (0,\mu_l(t),0, \ldots, 0, p_1 = t, \nu_l(t),0, \ldots , 0) \quad \text{and} \quad g'_l(t) = \nu_l(t).\mu'_l(t).
	\end{equation}
	It then follows that $\sigma_l^*\lambda = \nu_l\mu'_l dt = g'_l dt = dg_l$. We further require that $\lim_l \mu_l = \lim_l \nu_l = 0$, so that $\sigma_l$ converges uniformly to $\gamma^0$, while at the same time imposing $\lim_l g_l = f^0_m$. Notice that $g'_l$ should not converge to zero. Thus, although both $\mu_l$ and $\nu_l$ converge to zero, the product $\mu'_l.\nu_l$ must remain nontrivial. To achieve this, we introduce increasingly rapid oscillations in $\mu_l$, so that its derivatives become large while $\mu_l$ itself remains uniformly small. This mechanism is reminiscent of ideas arising in convex integration. 
	
	We set $\mu_l(t) = a_l \sin \theta_l(t)$ with $a_l = 1/\sqrt{l}$. The angle map $\theta_l$ is defined as follows. Let $\rho: [0,1] \to [0,1]$ be a smooth non-decreasing map such that $\rho (t) = 0$ on $[0,a/3]$ and $\rho(t) = 1$ on $[a,1]$, and set $\theta_l(t) := 2\pi l t\rho(t)$.

	In order to construct $g_l$, fix $\delta_l := l^{-\frac{1}{4}}$ and consider a smooth $\pi$-periodic map $\psi_l : \R \to [0,1]$ that satisfies
	\begin{equation}
		\psi_l(u) =
		\begin{cases}
			0 & \text{if } | \cos u | \leq  \delta_l \\
			1 & \text{if } | \cos u | \geq 2 \delta_l
		\end{cases}
	\end{equation}
	and set $\chi_l(t) := \rho(t) \psi_l(\theta_l(t))$. We then define $g_l$ and $\nu_l : [0,1] \to \R$ by
	\begin{equation}
		g_l(t) = f^0_m(0) + \int_0^t \chi_l(\tau) (f^0_m)' (\tau) \; d\tau \quad \text{and} \quad \nu_l(t) = 
		\begin{cases} 
			\frac{g'_l(t)}{\mu'_l(t)} & \text{if } t \in [a,1] \text{ and } \mu'_l(t) \neq 0 \\
			0 & \text{elsewhere }
		\end{cases}
	\end{equation}
	
	 We first show that $g_l$ converges uniformly to $f^0_m$ on $[0,1]$. Indeed, we have
	\begin{align*}
		g_l(t) - f^0_m(t) = \int_0^t (\chi_l(\tau) - 1) (f^0_m)'(\tau) \; d\tau
	\end{align*}
	On $[0,a]$, we have $(f^0_m)' = 0$, and hence $g_l = f^0_m$. On $[a,1]$, whenever $|\cos(\theta_l(\tau))| \geq 2\delta_l$, we have $\psi_l(\theta_l(\tau)) = 1$ and therefore $1-\chi_l(\tau) = 0$. Thus, the integrand can be non-zero only on the set
	\begin{equation*}
		A = \{\tau \in [a,1] \;|\; |\cos(\theta_l(\tau))| \leq 2\delta_l \}
	\end{equation*}
	We bound the measure of $A$. If $\tau \in A$, then, since $\rho(\tau)=1$ on $[a,1]$,
	\begin{equation*}
		|\cos(\theta_l(\tau))| = |\cos(2 \pi l \tau)| \leq 2\delta_l
	\end{equation*}
	Thus, $\tau$ belongs to
	\begin{equation*}
		\bigcup_k \left[ \frac{\arccos(2\delta_l)}{2 \pi l} + \frac{k}{2l}, \frac{1}{2l} - \frac{\arccos(2 \delta_l)}{2 \pi l} + \frac{k}{2l} \right] \cap [a,1]
	\end{equation*}
	where $k$ ranges from $0$ to $2l-1$. Hence,
	\begin{align*}
		\leb(A) &\leq 2l \left( \frac{1}{2l} - 2\frac{\arccos(2 \delta_l)}{2 \pi l} \right) = 1-2\frac{\arccos(2 \delta_l)}{\pi} \underset{l \to \infty}{\sim} \frac{4}{\pi}\delta_l
	\end{align*}
	Consequently, there exists a constant $C>0$, independent of $l$, such that
	\begin{align*}
		\Vert g_l - f^0_m \Vert_\infty \leq C \Vert (f^0_m)' \Vert_\infty \delta_l \longrightarrow 0 \quad \text{as } l \to \infty
	\end{align*}
	
	We now show that the curve $\sigma_l$ is smooth and embedded, and that it converges uniformly to $\gamma^0$. Its injectivity follows immediately from the coordinate $p_1 = t$ appearing in the definition of $\sigma_l$. Let us first examine the map $\nu_l$. For $t \in [a,1]$ such that $\mu'_l(t) \neq 0$, we have
	\begin{align*}
		\nu_l(t) &= \frac{g'_l(t)}{\mu'_l(t)} = \frac{\chi_l(t)(f^0_m)'(t)}{\mu'_l(t)} = \frac{(f^0_m)'(t)}{2 \pi l a_l} \frac{\rho(t)}{(t \rho(t))'} \frac{\psi_l(\theta_l(t))}{\cos(\theta_l(t))} = \frac{(f^0_m)'(t)}{2 \pi l a_l} \frac{\psi_l(\theta_l(t))}{\cos(\theta_l(t))}
	\end{align*}
	where, in the last equality, we used that $\rho(t)=1$ on $[a,1]$. Since $f^0_m$ is constant on $[0,a]$, there is no regularity issue at $t=a$. Moreover, whenever $|\cos(\theta_l(t))| \leq \delta_l$, we have $\psi_l(\theta_l(t))=0$. This allows $\nu_l$ to extend smoothly across the points where $\mu'_l(t)=0$. Hence, $\nu_l$ is smooth, and therefore so is the curve $\sigma_l$.
	
	We now prove that $\nu_l$ converges uniformly to $0$. Whenever $|\cos(\theta_l(t))| \geq \delta_l$, we have
	\begin{align*}
		|\nu_l(t)| &= \left| \frac{(f^0_m)'(t)}{2 \pi l a_l} \frac{\psi_l(\theta_l(t))}{\cos(\theta_l(t))} \right| \leq \frac{\Vert (f^0_m)' \Vert_\infty}{2 \pi l a_l \delta_l} = \frac{\Vert (f^0_m)' \Vert_\infty}{2 \pi l^{\frac{1}{4}}} \longrightarrow 0 \quad \text{as } l \to \infty
	\end{align*}
	On the other hand, if $|\cos(\theta_l(t))| \leq \delta_l$, then $\nu_l(t)=0$. Thus, $\nu_l$ converges uniformly to $0$. 
	
	Finally, since $\mu_l = a_l \sin \theta_l$ and $a_l \to 0$, the sequence $\mu_l$ also converges uniformly to $0$. We conclude that $\sigma_l$ converges uniformly to $\gamma^0$.	 In particular, $(\sigma_l,g_l)$ brane-converges to $(\gamma^0,f^0_m)$. Let $\mathcal{K}'_l = 0_\cL \cup \im(\sigma_l)$, and let $k'_l : \mathcal{K}'_l \to \R$ be the map that restricts to $h^0$ on $0_\cL$ and to $g_l$ on $\im(\sigma_l)$. Then, $(\mathcal{K}'_l,k'_l)_l$ brane-converges to $(\mathcal{K}^0,k^0_m)$.\\

	\textit{Step 5. Second reduction to $T^*\cL$.} We apply Lemma \ref{lem:WeinsteinLag+Curve} to $(\mathcal{K}'_l,k'_l)$ and obtain an exact symplectomorphism $\Psi^1_l : U'_l \to U^1_l$ from a neighbourhood $U'_l$ of $\mathcal{K}'_l$ onto a neighbourhood $U^1_l$ of the set $\mathcal{K}^1 = 0_{\cL} \cup \im(\sigma^0)$, where $\sigma^0(t) = (\gamma(0),p_1=t,0,\ldots,0)$. Moreover, after shrinking $U^1_l$ around $\im(\sigma^0)$ if necessary, we may choose new local coordinates on $\cL$ centered at $x=\gamma(0)$ and identify $U^1_l$ with an open subset of $T^*\R^d$ in such a way that $0_\cL \cap U^1_l$ is identified with the zero section, $x=0$, and $\sigma^0(t) = (0,\ldots,0,p_1=t,0,\ldots,0)$. Under this identification, we regard $\mathcal{K}^1 \cap U^1_l$ as a subset of $T^*\R^d$.

	By exactness of $\Psi^1_l$, there exists a map $S^1_l : U'_l \to \R$ such that $(\Psi^1_l)^*\lambda_0 - \lambda_0 = dS^1_l$, where $\lambda_0$ denotes the Liouville form on $T^*\cL$. We define $k^1_l : \mathcal{K}^1 \to \R$ by
	\begin{equation}
		k^1_l = (k'_l + S^1_l) \circ (\Psi^1_l)^{-1}
	\end{equation}

	We denote by $g^1_l$ the restriction of $k^1_l$ to $\im(\sigma^0)$, and by $h^1_l$ its restriction to $0_\cL$. By construction of $\sigma_l$, we have $\lambda_0|_{\sigma_l} = dg_l$. Hence, by Proposition \ref{BraneInvariance},
	\begin{equation}
		\lambda_0|_{\sigma^0} = dg^1_l \quad \text{and} \quad \lambda_0|_{0_\cL} = dh^1_l
	\end{equation}
	Since $\sigma^0$ is contained in the fibre over $x$, we have $\lambda_0|_{\sigma^0}=0$, while $\lambda_0$ also vanishes on the zero section. Therefore, both $g^1_l$ and $h^1_l$ are constant. Since $\sigma^0$ intersects $0_\cL$ at $x$ and $k^1_l$ is well defined on their union, these constants coincide. Hence, $k^1_l$ is constant on $\mathcal{K}^1$.\\

	\textit{Step 6. Construction of a Lagrangian brane.} We work in a neighbourhood of $\im(\sigma^0)$ identified with $[-\delta,\delta]^d \times [-\delta,1+\delta] \times [-\delta,\delta]^{d-1}$ of $T^*\R^d$, where $0_\cL$ is identified with the zero section $0_{\R^d}$. We have $\sigma^0(t) = (0, \ldots ,0, p_1 = t, 0 ,\ldots,0)$.

	Let $\eta : [-1,1] \to \R$ be a bump function such that $0 \leq \eta \leq 1$, $\eta(0) = 1$, and $\int_{-1}^1 \eta = 1$. Let $\delta_i$ be a sequence of positive real numbers converging to $0$. For $i$ sufficiently large so that $\delta_i<\delta$, define as in \eqref{eq:BumpApprox} the smooth map $v_i : \R \to \R$, supported in $(-\delta,\delta)$, by $v_i(t) = \eta \left( \frac{t}{\delta_i} \right) - \frac{\delta_i}{\delta} \eta \left( \frac{t}{\delta} \right)$. Then, we have $\int_{-\delta}^\delta v_i =0$ and the graphs of $v_i$ converge in the Hausdorff topology to $(\R \times \{0\}) \cup (\{0\} \times [0,1])$. Moreover, as in \eqref{eq:BumpApprox2}, set $w_i(t) = \int_{-\infty}^t v_i(s) \; ds$ which descends to $\T^1$ and satisfies $\lim_i \Vert w_i \Vert_\infty =0$. We then set $r_i = \sqrt{\Vert w_i \Vert_\infty}$ so that $r_i \to 0$.
	
	We now extend the construction to $\R^d$. Let $\bar{\eta} : \R^{d-1} \to \R$ be a smooth bump function supported in $(-\delta,\delta)^{d-1}$ such that $0 \leq \bar{\eta} \leq 1$ and $\bar{\eta} \equiv 1$ near $0$. We define $u_i : \R^d \to \R$ by
	\begin{equation}
		u_i(q_1,\bar{q}_1) = w_i(q_1)\bar{\eta}\left(\frac{\bar{q}_1}{r_i}\right)
	\end{equation}
	where $\bar{q}_1=(q_2,\ldots,q_d)$. For $i$ sufficiently large, $u_i$ is supported in $(-\delta,\delta)^d$, and $u_i$ converges uniformly to $0$, since
	\begin{equation}
		\Vert u_i \Vert_\infty \leq \Vert w_i \Vert_\infty \longrightarrow 0
	\end{equation}

	We claim that the graph of its differential $\mathcal{G}(du_i)$ converges in the Hausdorff topology to $\mathcal{K}_{\R^d} := 0_{\R^d} \cup \im(\sigma^0)$. Indeed, using $w'_i=v_i$, we have
	\begin{align*}
		du_i(q_1,\bar{q}_1) &= (\partial_{q_1}u_i,\partial_{\bar{q}_1}u_i) = \left( v_i(q_1)\bar{\eta}\left(\frac{\bar{q}_1}{r_i}\right), \frac{w_i(q_1)}{r_i} d\bar{\eta}\left(\frac{\bar{q}_1}{r_i}\right) \right)
	\end{align*}

	Moreover, the parts of the graphs $\mathcal{G}(du_i)$ that differ from the zero section are contained in a fixed compact subset of $T^*\R^d$. Indeed, the maps $u_i$ are supported in the fixed compact set $[-\delta,\delta]^d$, the first component of $du_i$ is uniformly bounded, and
	\begin{equation*}
		\Vert \partial_{\bar{q}_1}u_i \Vert_\infty \leq \Vert d\bar{\eta} \Vert_\infty \sqrt{\Vert w_i \Vert_\infty} \longrightarrow 0 \quad \text{as } i \to +\infty
	\end{equation*}
	Hence, every subsequence of $\mathcal{G}(du_i)$ admits a further subsequence converging in the Hausdorff topology. Let $\mathcal{G}_\infty$ be the Hausdorff limit of such a subsequence, which we still index by $i$.

	We first prove the inclusion $\mathcal{G}_\infty \subset \mathcal{K}_{\R^d}$. Consider a sequence $(q^i,p^i=du_i(q^i)) \in \mathcal{G}(du_i)$ converging to $(q,p) \in \mathcal{G}_\infty$. Writing $q^i=(q^i_1,\bar{q}^i_1) \quad \text{and} \quad p^i=(p^i_1,\bar{p}^i_1)$, we have seen that $\lim_i \bar{p}^i_1 = \lim_i \partial_{\bar{q}_1}u_i(q^i) =0$. Hence, $\bar{p}_1=(p_2,\ldots,p_d)=0$. We assume first that $p_1 \neq 0$. We have
	\begin{equation}
		p^i_1 = v_i(q^i_1) \bar{\eta}\left(\frac{\bar{q}^i_1}{r_i}\right) \longrightarrow p_1 \neq 0
	\end{equation}
	Since the graphs of $v_i$ converge to $(\R \times \{0\}) \cup (\{0\} \times [0,1])$, we deduce that $q_1=\lim_i q^i_1=0$. Moreover, for $i$ sufficiently large, $\bar{q}^i_1$ must belong to the support of $\bar{\eta}\left(\frac{\bar{q}_1}{r_i}\right)$, which is contained in $(-\delta r_i,\delta r_i)^{d-1}$. Since $\lim_i r_i = 0$, it follows that $\bar{q}_1=\lim_i \bar{q}^i_1=0$. Finally, since $0 \leq \bar{\eta} \leq 1$, $v_i \leq 1$, and the negative part of $v_i$ converges uniformly to $0$, we obtain $0 \leq p_1 \leq 1$. Thus, $(q,p) \in \im(\sigma^0) \subset \mathcal{K}_{\R^d}$. If $p_1=0$, then $\bar{p}_1=0$ also, and hence $p=0$. Thus, $(q,p) \in 0_{\R^d} \subset \mathcal{K}_{\R^d}$. This proves $\mathcal{G}_\infty \subset \mathcal{K}_{\R^d}$.
	
	We now prove the reverse inclusion. First, let $q \neq 0$. We claim that $\lim_i du_i(q) = 0$. Indeed, if $\bar{q}_1 \neq 0$, then
	\begin{equation}
		\bar{\eta}\left(\frac{\bar{q}_1}{r_i}\right) 	= d\bar{\eta}\left(\frac{\bar{q}_1}{r_i}\right) = 0
	\end{equation}
	for $i$ sufficiently large. If $\bar{q}_1=0$ and $q_1\neq0$, then $\lim_i v_i(q_1) = 0$, while $d\bar{\eta}(0)=0$ since $\bar{\eta}$ is constant near $0$. Hence, in both cases, $\lim_i du_i(q)=0$ and therefore $(q,0)\in\mathcal{G}_\infty$ for every $q \neq 0$. By closedness of $\mathcal{G}_\infty$, we also deduce that $(0,0)$ belongs to $\mathcal{G}_\infty$. Moreover, at $q=0$, we have
	\begin{align*}
		du_i(0) &= \left( v_i(0)\bar{\eta}(0), \frac{w_i(0)}{r_i}d\bar{\eta}(0) \right) = \left( v_i(0),0,\ldots,0 \right) \longrightarrow (1,0,\ldots,0) \quad \text{as } i\to+\infty
	\end{align*}
	where we have used that $\bar{\eta}(0)=1$ and $d\bar{\eta}(0)=0$. Hence, the endpoint $(0,\ldots,0,p_1=1,0,\ldots,0)$ of $\im(\sigma^0)$ belongs to $\mathcal{G}_\infty$. Since $\mathcal{G}_\infty$ is connected, is contained in $\mathcal{K}_{\R^d}$, and contains both the zero section $0_{\R^d}$ and the endpoint of $\im(\sigma^0)$, we deduce that $\im(\sigma^0) \subset \mathcal{G}_\infty$. Therefore, $\mathcal{G}_\infty = \mathcal{K}_{\R^d}$. Since the initial subsequence was arbitrary, every subsequence of $\mathcal{G}(du_i)$ admits a further subsequence converging in the Hausdorff topology to the same set $\mathcal{K}_{\R^d}$. We conclude that the whole sequence $\mathcal{G}(du_i)$ converges to $\mathcal{K}_{\R^d}$ in the Hausdorff topology.

	Since $u_i$ is supported in $(-\delta,\delta)^d$ and $\mathcal{G}(du_i)$ converges to $\mathcal{K}_{\R^d}$ in the Hausdorff topology, for $i$ sufficiently large, the part of $\mathcal{G}(du_i)$ that differs from the zero section is contained in $U^1_l$. We can therefore glue $\mathcal{G}(du_i) \cap U^1_l$ with the zero section $0_\cL$ outside $U^1_l$ to obtain an exact Lagrangian submanifold $\cL^1_i$. A Liouville primitive is given by $h^1_i = u_i \circ \pi_{0_\cL} + k_l^1(\gamma(0))$, where $\pi_{0_\cL} : T^*\cL \to 0_\cL$ denotes the canonical projection. Then, the sequence $(\cL^1_i,h^1_i)$ brane-converges to $(\mathcal{K}^1,k^1_l)$.\\

	\textit{Conclusion.} Let us summarize the constructions above. We have obtained the following convergences:
	\begin{align*}
		\lim_n (\mathcal{K}^\infty_n,k^\infty_n) &= (\mathcal{K},k), \\
		\lim_m (\mathcal{K}^0,k^0_m) &= (\mathcal{K}^0,\tilde{k}^0_n) \quad \text{with } (\mathcal{K}^0,\tilde{k}^0_n) = (\Psi^0_n)_* (\mathcal{K}^\infty_n,k^\infty_n), \\
		\lim_l (\mathcal{K}'_l,k'_l) &= (\mathcal{K}^0,k^0_m), \\
		\lim_i (\cL^1_i,h^1_i) &= (\mathcal{K}^1,k^1_l) \quad \text{with } (\mathcal{K}^1, k^1_l) = (\Psi^1_l)_*(\mathcal{K}'_l,k'_l)
	\end{align*}

	We set
	\begin{equation}
		\cL_i = (\Psi^0_n)^{-1} \circ (\Psi^1_l)^{-1}(\cL_i^1) \quad \text{and} \quad h_i = (h_i^1 \circ \Psi^1_l - S^1_l) \circ \Psi^0_n - S^0_n,
	\end{equation}
	so that $(\cL_i,h_i) = (\Psi^1_l \circ \Psi^0_n)^*(\cL_i^1,h_i^1)$ is an exact Lagrangian brane. By Proposition \ref{BraneInvariance} and a diagonal extraction in the indices $i$, $l$, $m$, and $n$, we obtain a sequence $(\cL_i,h_i)$ that brane-converges to $(\mathcal{K},k)$. Moreover, for every open neighbourhood $U$ of $\im(\gamma)$, the branes $(\cL_i,h_i)$ coincide with $(\mathcal{K},k)$ on the complement of $U$ for all sufficiently large $i$.
\end{proof}

\begin{proof}[Proof of Theorem \ref{thm:NonUniqueness2D}]
	We show that $(\mathcal{K},k)$ belongs to $\overline{\mathcal{LB}}(T^*M,\omega)$. More precisely, we show that, for every $\varepsilon>0$, we can construct $(\mathcal{K}_\varepsilon,k_\varepsilon)$ in $\overline{\mathcal{LB}}(T^*M,\omega)$ such that $d_B \big( (\mathcal{K}_\varepsilon,k_\varepsilon),(\mathcal{K},k) \big) < 3\varepsilon$. Since $\overline{\mathcal{LB}}(T^*M,\omega)$ is closed for the brane topology, we will conclude that $(\mathcal{K},k)$ belongs to $\overline{\mathcal{LB}}(T^*M,\omega)$.

	By the Tietze extension theorem, there exists a continuous map $f : T^*M \to \R$ whose restriction to $\mathcal{K}$ is $k$. We set
	\begin{equation} \label{thm:NonUniqueness2D:dem:1}
		V := \{ x \in T^*M \; | \; \delta_{(T^*M,f)}(x,(\mathcal{K},k)) < \varepsilon \} \subset T^*M
	\end{equation}
	where $\delta_{(T^*M,f)}(x,(\mathcal{K},k))$ is defined as in \eqref{eq:BraneDistanceFormula0}, even though $T^*M$ is non-compact. The set $V$ is open and contains $\mathcal{K}$. Let $U$ denote the connected component of $V$ containing $\mathcal{K}$.

	Since $(\mathcal{C},c)$ belongs to $\overline{\mathcal{LB}}(T^*M,\omega)$, we can choose an exact Lagrangian brane $(\mathcal{L}_\varepsilon,h_\varepsilon)$ sufficiently close to $(\mathcal{C},c)$ so that
	\begin{equation} \label{thm:NonUniqueness2D:dem:2}
		d_B \big( (\mathcal{L}_\varepsilon,h_\varepsilon),(\mathcal{C},c) \big) < \varepsilon \quad \text{and} \quad \mathcal{L}_\varepsilon \subset U
	\end{equation}
	Moreover, by choosing the approximation sufficiently close, we may fix a point $y_\varepsilon \in \mathcal{L}_\varepsilon$ such that
	\begin{equation} \label{thm:NonUniqueness2D:dem:3}
		|h_\varepsilon(y_\varepsilon)-f(y_\varepsilon)|<\varepsilon
	\end{equation}

	Let $\big((x_i,z_i)\big)_i$ be a finite $\varepsilon$-dense subset of the graph $\Gamma_{(\mathcal{K},k)}$ of $k$ in $T^*M \times \R$. For every $i$, define
	\begin{equation}
		O_i = \{x \in U \; | \; d(x,x_i) + |f(x)-z_i| < \varepsilon \}
	\end{equation}
	Since $z_i=k(x_i)=f(x_i)$, each $O_i$ is a non-empty open subset of $U$ containing $x_i$. For every $i$, fix a point $y_i$ in $O_i \setminus \mathcal{L}_\varepsilon$. Since $\codim \mathcal{L}_\varepsilon = d \geq 2$, the set $U \setminus \mathcal{L}_\varepsilon$ is path-connected. We can therefore connect $y_\varepsilon$ to $y_0$ by an arc that intersects $\mathcal{L}_\varepsilon$ only at $y_\varepsilon$, and connect successively $y_i$ to $y_{i+1}$ by paths contained in $U \setminus \mathcal{L}_\varepsilon$. After a small perturbation, we may assume that the resulting curve $\gamma_\varepsilon : [0,1] \to U$ is embedded and satisfies
	\begin{equation}
		\gamma_\varepsilon(0) = y_\varepsilon, \quad \mathcal{L}_\varepsilon \cap \im(\gamma_\varepsilon) = \{y_\varepsilon\}, \quad \text{and} \quad \im(\gamma_\varepsilon) \cap O_i \neq \emptyset \quad \text{for all } i
	\end{equation}

	Set $\varsigma = h_\varepsilon(y_\varepsilon)-f(y_\varepsilon)$. By \eqref{thm:NonUniqueness2D:dem:3}, we have $|\varsigma|<\varepsilon$. We consider the bare brane $(\mathcal{K}_\varepsilon,k_\varepsilon)$ defined by
	\begin{equation}
		\mathcal{K}_\varepsilon
		:=
		\mathcal{L}_\varepsilon \cup \im(\gamma_\varepsilon)
		\quad \text{and} \quad
		k_\varepsilon =
		\begin{cases}
			h_\varepsilon & \text{on } \mathcal{L}_\varepsilon \\
			\varsigma + f & \text{on } \im(\gamma_\varepsilon)
		\end{cases}
	\end{equation}
	The two definitions agree at $y_\varepsilon$, so $k_\varepsilon$ is continuous. Proposition \ref{prop:fil} then implies that $(\mathcal{K}_\varepsilon,k_\varepsilon)$ is a bare Lagrangian brane in $\overline{\mathcal{LB}}(T^*M,\omega)$.

	We now show that $d_B \big( (\mathcal{K}_\varepsilon,k_\varepsilon),(\mathcal{K},k) \big) < 3\varepsilon$. Let $x_\varepsilon$ be in $\mathcal{K}_\varepsilon$. If $x_\varepsilon$ belongs to $\im(\gamma_\varepsilon)$, then, since $x_\varepsilon \in U \subset V$, we have
	\begin{align*}
		\delta_{(\mathcal{K}_\varepsilon,k_\varepsilon)}(x_\varepsilon,(\mathcal{K},k)) &= \inf_{x \in \mathcal{K}} \Big\{ d(x_\varepsilon,x) + |k_\varepsilon(x_\varepsilon)-k(x)| \Big\} \\
		&\leq |\varsigma| + \inf_{x \in \mathcal{K}} \Big\{ d(x_\varepsilon,x) + |f(x_\varepsilon)-k(x)| \Big\} \\
		&= |\varsigma| + \delta_{(T^*M,f)}(x_\varepsilon,(\mathcal{K},k)) < 2\varepsilon
	\end{align*}
	If $x_\varepsilon$ belongs to $\mathcal{L}_\varepsilon$, then, since $\mathcal{C} \subset \mathcal{K}$ and $k|_{\mathcal{C}}=c$, \eqref{thm:NonUniqueness2D:dem:2} gives
	\begin{align*}
		\delta_{(\mathcal{K}_\varepsilon,k_\varepsilon)}(x_\varepsilon,(\mathcal{K},k)) &= \delta_{(\mathcal{L}_\varepsilon,h_\varepsilon)}(x_\varepsilon,(\mathcal{K},k)) \\
		&\leq \delta_{(\mathcal{L}_\varepsilon,h_\varepsilon)}(x_\varepsilon,(\mathcal{C},c)) < \varepsilon
	\end{align*}

	Conversely, let $x$ be in $\mathcal{K}$. By the $\varepsilon$-density of $\big((x_i,z_i)\big)_i$ in $\Gamma_{(\mathcal{K},k)}$, there exists $i$ such that 
	\begin{equation*}
		d(x,x_i)+|k(x)-z_i|<\varepsilon
	\end{equation*}	
	Choose a point $y_i \in \im(\gamma_\varepsilon) \cap O_i$. By definition of $O_i$,
	\begin{equation*}
		d(x_i,y_i)+|z_i-f(y_i)|<\varepsilon
	\end{equation*}
	Since $k(x)=f(x)$ and $z_i=f(x_i)$, we obtain
	\begin{align*}
		d(x,y_i)+|f(x)-f(y_i)| &\leq \big[d(x,x_i)+|f(x)-f(x_i)|\big] + \big[d(x_i,y_i)+|f(x_i)-f(y_i)|\big] < 2\varepsilon
	\end{align*}
	Thus,
	\begin{align*}
		\delta_{(\mathcal{K},k)}(x,(\mathcal{K}_\varepsilon,k_\varepsilon)) &\leq d(x,y_i)+|k(x)-k_\varepsilon(y_i)| \\
		&= d(x,y_i)+|f(x)-f(y_i)-\varsigma| \\
		&\leq d(x,y_i)+|f(x)-f(y_i)|+|\varsigma| < 3\varepsilon
	\end{align*}
	
	We conclude that $d_B \big( (\mathcal{K}_\varepsilon,k_\varepsilon),(\mathcal{K},k) \big) < 3\varepsilon$.\\
	
	Let $\mathcal{C}$ be in $\mathscr{L}^\exists$, let $c : \mathcal{C} \to \R$ be such that $(\mathcal{C},c)$ belongs to $\overline{\mathcal{LB}}(T^*M,\omega)$, and let $\mathcal{K}$ be a compact connected set containing $\mathcal{C}$. By the Tietze extension theorem, there exists a continuous map $k : \mathcal{K} \to \R$ whose restriction to $\mathcal{C}$ is equal to $c$. By the preceding result, $(\mathcal{K},k) \in \overline{\mathcal{LB}}(T^*M,\omega)$ is a bare Lagrangian brane. In particular, $\mathcal{K}$ belongs to $\mathscr{L}^\exists$.\\

    Moreover, if $\mathcal{K}$ contains $\mathcal{C}$ strictly, define $\tilde{k} : \mathcal{K} \to \R$ by $\tilde{k}(x) = k(x) + d(x,\mathcal{C})$. Since $\mathcal{C} \subsetneq \mathcal{K}$, the map $x \mapsto d(x,\mathcal{C})$ is not constant, and therefore $[\tilde{k}] \neq [k]$. Moreover, both $k$ and $\tilde{k}$ restrict to $c$ on $\mathcal{C}$. Hence, by the preceding result, $(\mathcal{K},k)$ and $(\mathcal{K},\tilde{k})$ belong to $\overline{\mathcal{LB}}(T^*M,\omega)$. We conclude that $\mathcal{K}$ does not belong to $\mathscr{L}^!$.
\end{proof}

\subsubsection{A Non-Uniquely Liftable Minimal Set: Proof of Proposition \ref{prop:NonUniqueness&Minimality}}

One may ask whether every element of $\mathscr{L}^\exists$ contains an element of $\mathscr{L}^!$. However, this is false, as shown by the following example:

\begin{exmp}[A minimal element of $\mathscr{L}^\exists$ containing no element of $\mathscr{L}^!$] \label{exp:MinimalNonUnique}
	We construct an element $\mathcal{K}$ of $\mathscr{L}^\exists$ which does not belong to $\mathscr{L}^!$ and which is minimal with respect to inclusion, meaning that if $\mathcal{C} \in \mathscr{L}^\exists$ is a subset of $\mathcal{K}$, then $\mathcal{C}=\mathcal{K}$.
	
	We first work on $[0,1]$ before passing to the circle $\mathbb{T}^1$. Let $C$ be the Smith--Volterra Cantor set constructed inductively as follows: set $C_0=[0,1]$, and if $C_{n-1}$ is a disjoint union of $2^{n-1}$ intervals, remove from the center of each component an open interval of length $4^{-n}$. Then, $C:=\bigcap_n C_n$ and $\leb(C)=1/2>0$.
	
	The complement of $C$ in $[0,1]$ is a disjoint union of open intervals $I_i=(a_i,b_i)$. We define the map $f : [0,1]\setminus C \to \R$ and the map $\sigma : C \to \R$ by
	\begin{align*}
		f(q) &= \sin \left( 2\pi \frac{q-a_i}{b_i-a_i} \right) \quad \text{on } I_i=(a_i,b_i) \\
		\sigma(q) &=
		\begin{cases}
			+1 & \text{if } q \in C \cap [0,\frac{1}{2}) \\
			-1 & \text{if } q \in C \cap [\frac{1}{2},1]
		\end{cases}
	\end{align*}
	We then define $f_\pm : [0,1] \to \R$ by requiring that $f_\pm$ restricts to $f$ on $[0,1]\setminus C$ and to $\pm \sigma$ on $C$. We consider the maps $u_\pm : [0,1] \to \R$ defined by
	\begin{equation}
		u_\pm(q) = \int_0^q f_\pm(\tau) \; d\tau
	\end{equation}
	Note that $u_\pm(0)=u_\pm(1)=0$, so that $u_\pm$ descend to maps on the circle $\mathbb{T}^1$. Moreover, we define $\mathcal{K}$ as the closure of the graph $\mathcal{G}(f)$ of $f$. In particular,
	\begin{equation}
		\mathcal{K} = \mathcal{G}(f) \cup \big(C\times[-1,1]\big) \subset T^*\mathbb{T}^1
	\end{equation}
	
	Set $k_\pm = u_\pm \circ \pi_{\mathbb{T}^1}|_{\mathcal{K}} : \mathcal{K} \to \R$, where $\pi_{\mathbb{T}^1} : T^*\mathbb{T}^1 \to \mathbb{T}^1$ is the canonical projection.	We claim the following.
	\begin{prop}
		The set $\mathcal{K}$ belongs to $\mathscr{L}^\exists$, is minimal with respect to the inclusion in $\mathscr{L}^\exists$, and the pairs $(\mathcal{K},[k_\pm])$ define two distinct elements of $\overline{\mathfrak{LB}}(T^*\T^1,\omega)$. 
	\end{prop}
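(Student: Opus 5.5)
The proof splits into three claims: $(\mathcal{K},k_\pm)$ lie in $\overline{\mathcal{LB}}(T^*\T^1,\omega)$; $[k_+]\neq[k_-]$; and $\mathcal{K}$ is minimal in $\mathscr{L}^\exists$.

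\textbf{Liftability.} I would realize both lifts as brane limits of graphs of exact one-forms. Take smooth functions $f^n_\pm$ on $[0,1]$ such that $\int_0^1 f^n_\pm=0$ (so $u^n_\pm(q):=\int_0^q f^n_\pm$ descends to $\T^1$) and
\begin{equation*}
\mathcal{G}(f^n_\pm)\to\mathcal{K} \text{ in the Hausdorff topology}, \quad f^n_\pm\to f_\pm \text{ in } L^1.
\end{equation*}
The limit $f_\pm$ itself has zero integral. The left half of $C$ and the right half have the same measure, so the $\pm\sigma$ contributions cancel. The sine arcs on the gaps integrate to zero.

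For the construction, I fix $N$ and keep the sine arcs on the finitely many gaps of length at least $1/N$. On each component $J$ of $C_N$, I put a smooth function that equals $\pm\sigma$ on most of $J$. It is interrupted by very thin full-amplitude oscillations sweeping $[-1,1]$, placed densely at the points of $C\cap J$; these produce the vertical segments $C\times[-1,1]$ in the Hausdorff limit. A small correction then restores $\int f^n_\pm=0$.

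Since $\leb([0,1]\setminus C_N\cup C)\to0$ and the functions are bounded by $1$, the $L^1$ convergence gives $u^n_\pm\to u_\pm$ uniformly. The primitives on the graphs are $h^n=u^n_\pm\circ\pi$, so brane convergence to $(\mathcal{K},k_\pm)$ follows from the Hausdorff convergence of the graphs together with the uniform convergence of the $u^n_\pm$. The delicate point is that the Hausdorff limit is exactly $\mathcal{K}$. At the accumulation points of the gaps, $\mathcal{G}(f)$ already accumulates onto all of $C\times[-1,1]$, so adding thin oscillations does not enlarge the limit set.

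\textbf{Distinctness.} We have $u_+-u_-=2\int_0^q \sigma\mathbf{1}_C$. This function is non-constant because $\leb(C\cap[0,\frac12))>0$. Since $\pi(\mathcal{K})=\T^1$, the functions $k_+-k_-$ and $u_+-u_-$ are the same function pulled back, so $k_+-k_-$ is non-constant and $[k_+]\neq[k_-]$.

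\textbf{Minimality, the main obstacle.} Let $\mathcal{C}\subset\mathcal{K}$ with $\mathcal{C}\in\mathscr{L}^\exists$. First, $\mathcal{C}$ projects onto $\T^1$: this comes from the graph selector (Corollary~\ref{GSConv}), or from the Hausdorff limits of exact Lagrangians in $T^*\T^1$ being essential. Over each gap $I_i$, the set $\mathcal{K}$ is the single graph of $f$, so $\mathcal{C}\supset\mathcal{G}(f|_{I_i})$ for every $i$. Taking closures gives $\mathcal{C}\supset\overline{\mathcal{G}(f)}=\mathcal{K}$.

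The step requiring care is justifying the closure: I need that every point of $C\times[-1,1]$ is a limit of points of the graphs over the gaps. This holds because every point of $C$ is approached by gaps $I_i$ of vanishing length, and on each such gap $f$ takes all values in $[-1,1]$. The surjectivity of the projection also needs checking. My first attempt would use Proposition~\ref{Graphext} applied to the approximants: $d_qu^n\in\cL^n$ almost everywhere, hence every fibre meets $\cL^n$, and this passes to the Hausdorff limit.
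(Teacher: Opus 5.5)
Your proposal is correct and follows essentially the same three-step route as the paper: smooth zero-mean approximants $f^n_\pm$ whose graphs Hausdorff-converge to $\mathcal{K}$ and which converge to $f_\pm$ in $L^1$, then non-constancy of $u_+-u_-$ coming from $\leb(C)>0$, then the singleton-fibre argument over $\T^1\setminus C$ followed by taking the closure. Your version adds detail that the paper only asserts, namely the explicit oscillation construction, the closure argument via short gaps accumulating at each point of $C$, and the surjectivity of the projection of $\mathcal{C}$; one small slip is that the measure you need to tend to zero is $\leb(C_N\setminus C)$, not $\leb([0,1]\setminus C_N\cup C)$ as written.
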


\begin{proof}
	We construct a sequence of smooth maps $f_{\pm,n} : \mathbb{T}^1 \to \R$ such that $\mathcal{G}(f_{\pm,n})$ converges to $\mathcal{K}$ in the Hausdorff topology and $\lim_n f_{\pm,n} = f_\pm$ in $L^1$. This can be achieved by choosing smooth approximations that converge to $f_\pm$ in $L^1$ on shrinking neighbourhoods of $C$, while converging uniformly to $f$ on their complements. Since $\int_{\mathbb{T}^1} f_\pm =0$, after subtracting a constant converging to $0$, we may further assume that $\int_{\mathbb{T}^1} f_{\pm,n}=0$. Set
	\begin{equation}
		u_{\pm,n}(q)=\int_0^q f_{\pm,n}(\tau)\;d\tau
	\end{equation}
	Then $u_{\pm,n}$ is smooth, $u_{\pm,n}(0) = u_{\pm,n}(1)$, $\mathcal{G}(du_{\pm,n})=\mathcal{G}(f_{\pm,n})$, and
	\begin{equation*}
		\Vert u_{\pm,n} - u_\pm \Vert_\infty \leq \Vert f_{\pm,n} - f_\pm \Vert_{L^1} \longrightarrow 0 \quad \text{as } n \to +\infty
	\end{equation*}
	Hence, $(\mathcal{G}(du_{\pm,n}),u_{\pm,n}\circ\pi_{\mathbb{T}^1})$ brane-converges to $(\mathcal{K},k_\pm)$. Thus, $(\mathcal{K},[k_\pm])$ are in $\overline{\mathfrak{LB}}(T^*\T^1,\omega)$.

	Moreover,
	\begin{equation}
		(u_+-u_-)'=2\sigma
		\quad \text{a.e. on } C
	\end{equation}
	and $\leb(C)>0$. Hence, $u_+-u_-$ is not constant and $[k_+]\neq[k_-]$.

	Finally, let $\mathcal{C}\subset\mathcal{K}$ belong to $\mathscr{L}^\exists$. Its projection onto $\mathbb{T}^1$ is surjective. For every $q$ in $\mathbb{T}^1 \setminus C$, the fibre of $\mathcal{K}$ over $q$ consists of the single point $(q,f(q))$, yielding $(q,f(q))\in\mathcal{C}$. Hence, $\mathcal{G}(f)$ is a subset of $\mathcal{C}$. Since $\mathcal{C}$ is closed and $\mathcal{K}=\overline{\mathcal{G}(f)}$, we obtain $\mathcal{C}=\mathcal{K}$. Thus, $\mathcal{K}$ is minimal with respect to inclusion in $\mathscr{L}^\exists$.
\end{proof}

	\begin{rem}
		\begin{enumerate}
			\item More generally, the elements $\Lambda_\pm$ of the Humilière completion defined in Proposition \ref{BraneGammaProp} and associated to $(\mathcal{K},[k_\pm])$ are distinct. Indeed, any continuous function $u : \mathbb{T}^1 \to \R$ defines an element $\Lambda_u$ of the Humilière completion, and
			\begin{equation}
				\gamma(\Lambda_{u_1},\Lambda_{u_2})=\osc(u_2-u_1)
			\end{equation}
			see \cite[Section 4]{MR4683312}.

			In our case, the approximating exact graphs converge in the Hausdorff topology to $\mathcal{K}$, hence $\gamma\text{-supp}(\Lambda_\pm)\subset \mathcal{K}$ by \cite[Proposition 6.20]{viterbo2026supportshumilierecompletiongammacoisotropic}. Conversely, every compact $\gamma$-support meets every cotangent fibre \cite[Proposition 6.11]{viterbo2026supportshumilierecompletiongammacoisotropic}. Since, for every $q$ in $\mathbb{T}^1 \setminus C$, the fibre of $\mathcal{K}$ over $q$ is the singleton $\{(q,f(q))\}$, we obtain $\mathcal{G}(f)\subseteq \gamma\text{-}\supp(\Lambda_\pm)$. By closedness and $\mathcal{K}=\overline{\mathcal{G}(f)}$, it follows that
			\begin{equation*}
				\gamma\text{-supp}(\Lambda_+) = \gamma\text{-supp}(\Lambda_-) = \mathcal{K}
			\end{equation*}
			Moreover,
			\begin{align*}
				\gamma(\Lambda_+, \Lambda_-) &=\osc(u_+-u_-) = \int_{C \cap \left[ 0,\frac{1}{2} \right] } 2\sigma = 2 \leb \left( C \cap \left[ 0,\frac{1}{2} \right] \right) = \frac{1}{2}
			\end{align*}
			Hence $\Lambda_+\neq\Lambda_-$ although they have the same $\gamma$-support. 
	
			The same singleton-fibre argument shows that $\mathcal{K}$ is minimal for inclusion among $\gamma$-supports. Thus, an inclusion-minimal $\gamma$-support does not necessarily determine a unique element of the Humili\`ere completion, giving a negative answer to the corresponding uniqueness question in \cite[Question 7.7]{viterbo2026supportshumilierecompletiongammacoisotropic}. 

            \item The example extends directly to higher dimensions. Assume $d \geq 2$ and work on $T^*\T^d$. We set
			\begin{equation*}
				\mathcal K_d:=\mathcal K\times 0_{\mathbb T^{d-1}} \quad \text{and} \quad k_{\pm,d}:=k_\pm\circ \text{pr}_1
			\end{equation*}
			where $\text{pr}_1$ denotes the projection onto the first factor $(q_1,p_1) \in T^*\T^1$, and define $v_{\pm,n}(q_1,\ldots,q_d):=u_{\pm,n}(q_1)$. Then, the exact graphs $\mathcal{G}(dv_{\pm,n})$ brane-converge to $(\mathcal K_d,k_{\pm,d})$. Hence, $\mathcal K_d\in\mathscr L^\exists$, $[k_{+,d}]\neq[k_{-,d}]$ and $\mathcal K_d$ is minimal with respect to inclusion in $\mathscr L^\exists$.
		\end{enumerate}
	\end{rem}
\end{exmp}

\subsubsection{Regular Case: Proof of Theorem \ref{thm:BirkhoffStratifiedLag}}

\begin{proof}[Proof of Theorem \ref{thm:BirkhoffStratifiedLag}]
	Let $(\cL_t,h_t)$ be the variational brane associated to $(\cL,h)$, with $h_0=h$. We prove that $[h_1]=[h_0]$, which allows us to apply Theorem \ref{MainThmInvariant} and conclude that $\cL$ is a Lipschitz graph over the base $M$.	

	We set $\phi=\phi_H^1$ and define $\mathcal{A} : T^*M \to \R$ by
	\begin{equation} \label{eq:ActionTimeOne}
		\mathcal{A}(x) = \mathcal{A}_H\left(x_\tau|_{[0,1]}\right) = \int_0^1 \Big( \lambda_{x_\tau}(\dot{x}_\tau) - H(\tau,x_\tau) \Big) \; d\tau, \quad \text{where } x_\tau=\phi_H^\tau(x)
	\end{equation}
    We have seen in \eqref{eq:HamiltonianExactForm} that $\phi^*\lambda-\lambda=d\mathcal{A}$. We define $k : \cL \to \R$ by
	
	\begin{equation} \label{thm:BirkhoffStratifiedLag:dem1}
		k = (h_1-h_0)\circ\phi = h+\mathcal{A}-h\circ\phi
	\end{equation}

	Let us show that $k$ is constant on $\cL$. Replacing $U$ by the union of all the neighbourhoods $\cL_x$, we may set 
	\begin{equation} \label{eq:Stratas}
		U=\bigcup_{x\in U}\cL_x \quad  V:=U\cap\phi^{-1}(U)
	\end{equation}

	Let $y\in V$, and choose $x,x'\in U$ such that $y\in\cL_x$ and $\phi(y)\in\cL_{x'}$. After shrinking the neighbourhoods if necessary, we may assume that $\phi(\cL_x)\subset\cL_{x'}$. On $\cL_x$, we have
	\begin{align*}
		d(k|_{\cL_x}) &= d(h|_{\cL_x}) +d(\mathcal{A}|_{\cL_x}) -d(h\circ\phi|_{\cL_x}) \\
		&= \lambda|_{\cL_x} + (\phi^*\lambda-\lambda)|_{\cL_x} - \phi^*\big(d(h|_{\cL_{x'}})\big) \\
		&= \lambda|_{\cL_x} +(\phi^*\lambda-\lambda)|_{\cL_x} - \phi^*\lambda|_{\cL_x} \\
		&=0
	\end{align*}
	Hence, $k$ is locally constant near $y$, and therefore on $V$.

	Since $\cL\setminus U$ is countable and $\phi(\cL)=\cL$, we have
	\begin{equation*}
		\cL\setminus V = (\cL\setminus U) \cup \phi^{-1}(\cL\setminus U),
	\end{equation*}
	which is countable. Hence, $k(\cL\setminus V)$ is countable. Moreover, $V$ is second countable and $k$ is locally constant on $V$, so $k(V)$ is countable. Therefore,
	\begin{equation*}
		k(\cL) = k(V)\cup k(\cL\setminus V)
	\end{equation*}
	is countable. On the other hand, $k$ is continuous and we know from Proposition \ref{prop:Connectedness} that $\cL$ is connected, so $k(\cL)$ is connected. Since every connected countable subset of $\R$ is a singleton, we conclude that $k$ is constant on $\cL$.

	By \eqref{thm:BirkhoffStratifiedLag:dem1} and the equality $\phi(\cL)=\cL$, it follows that $h_1-h_0$ is constant on $\cL$. Hence, $[h_0]=[h_1]$, and Theorem \ref{MainThmInvariant} concludes the proof.
\end{proof}

\subsubsection{Continuous case: Proof of Theorem \ref{thm:BirkhoffStratifiedLagC0}}

Recall from Definition \ref{def:ExactSympeo} the definition of strongly exact symplectic homeomorphism branes. This definition allows for the transport of primitives $h$ by the formula $(h+S)\circ\Phi^{-1}$. Additive constants can be absorbed in $S$.

\begin{prop}\label{prop:strongExactSubgroup}
    Strongly exact symplectic homeomorphisms of an exact symplectic manifold $(U,\lambda)$ form a subgroup of weakly exact symplectic homeomorphisms. Moreover, at the level of branes, the composition law is given by
    \[(\varphi,S)\circ(\psi,S')=(\varphi\circ\psi,S\circ\psi+S'),\]
    and the inverse by
    \[(\varphi,S)^{-1}=(\varphi^{-1},-S\circ\varphi^{-1}).\]
\end{prop}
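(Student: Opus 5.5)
The plan is to verify the group axioms directly from Definition \ref{def:ExactSympeo}, working with approximating sequences. Let $(\varphi,S)$ and $(\psi,S')$ be strongly exact symplectic homeomorphism branes of $(U,\lambda)$. Choose $C^1$ diffeomorphisms $\varphi_n\to\varphi$, $\psi_n\to\psi$ and $C^1$ functions $S_n\to S$, $S'_n\to S'$, all converging uniformly on compact sets, with $\varphi_n^*\lambda=\lambda+dS_n$ and $\psi_n^*\lambda=\lambda+dS'_n$. Each $\varphi_n$ is then an exact symplectic diffeomorphism, so strongly exact homeomorphisms are weakly exact, and the inclusion is immediate. The identity brane $(\mathrm{id},0)$ is clearly strongly exact.

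For composition, I compute
\[
(\varphi_n\circ\psi_n)^*\lambda=\psi_n^*(\lambda+dS_n)=\lambda+d(S'_n+S_n\circ\psi_n).
\]
So the pair $(\varphi_n\circ\psi_n,\,S_n\circ\psi_n+S'_n)$ satisfies the defining identity, and it remains to check uniform convergence on compact sets. Fix a compact set $K$. Since $\psi_n\to\psi$ uniformly on $K$, the set $\psi(K)$ is compact, and for $n$ large every $\psi_n(K)$ lies in a fixed compact neighbourhood $K'$ of $\psi(K)$ (local compactness is used here). On $K'$, $\varphi_n\to\varphi$ uniformly and $\varphi$ is uniformly continuous. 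Therefore
\[
d(\varphi_n\psi_n,\varphi\psi)\le \sup_{K'}d(\varphi_n,\varphi)+d\big(\varphi(\psi_n),\varphi(\psi)\big)\longrightarrow 0 .
\]
The same two-term split gives $S_n\circ\psi_n\to S\circ\psi$ uniformly on $K$. This yields the composition law $(\varphi\circ\psi,S\circ\psi+S')$.

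For inverses, pulling back the identity $\varphi_n^*\lambda=\lambda+dS_n$ by $\varphi_n^{-1}$ gives
\[
(\varphi_n^{-1})^*\lambda=\lambda-d(S_n\circ\varphi_n^{-1}).
\]
The main point, and what I expect to be the only real obstacle, is showing that $\varphi_n^{-1}\to\varphi^{-1}$ uniformly on compact sets. Here I would invoke the footnote after Definition \ref{def:ExactSympeo}: on locally compact, locally connected spaces, compact-open convergence of homeomorphisms to a homeomorphism implies convergence of the inverses. If I had to prove it, I would argue as follows. Given a compact $K$, the preimages $\varphi_n^{-1}(K)$ stay in a fixed compact set, which follows from a degree or invariance-of-domain argument using that $\varphi_n\to\varphi$ near $\varphi^{-1}(K)$. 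Then $\varphi^{-1}$ is uniformly continuous on a compact neighbourhood of $K$, and the identity
\[
\varphi^{-1}(y)-\varphi_n^{-1}(y)=\varphi^{-1}(y)-\varphi^{-1}\big(\varphi(\varphi_n^{-1}(y))\big),
\]
with $d\big(\varphi(\varphi_n^{-1}(y)),y\big)=d\big(\varphi(\varphi_n^{-1}(y)),\varphi_n(\varphi_n^{-1}(y))\big)\to0$, gives the uniform convergence. Granting this, the composition estimate above applied to $S_n\circ\varphi_n^{-1}$ shows it converges to $S\circ\varphi^{-1}$. This yields the inverse brane $(\varphi^{-1},-S\circ\varphi^{-1})$, and closure under inverse together with the previous step shows that strongly exact symplectic homeomorphisms form a subgroup.
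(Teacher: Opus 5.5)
Your proposal is correct and follows essentially the same route as the paper: you use the approximating sequences $\varphi_n\circ\psi_n$ and $\varphi_n^{-1}$ with primitives $S_n\circ\psi_n+S'_n$ and $-S_n\circ\varphi_n^{-1}$, and, like the paper, you rely on the footnote after Definition \ref{def:ExactSympeo} for the convergence of the inverses. You also spell out the compact-neighbourhood argument for the convergence of the compositions, which the paper leaves implicit; one small fix is to write $d(\varphi^{-1}(y),\varphi_n^{-1}(y))$ rather than a difference of points in the manifold.
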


\begin{proof}
    The identity is clearly a strongly exact symplectic homeomorphism (take the constant sequence equal to the identity, with primitive $0$). Now, let $\varphi$ and $\psi$ be strongly exact symplectic homeomorphisms of $U$, with approximating sequences $(\varphi_n)$ and $(\psi_n)$ of $C^1$ exact symplectic diffeomorphisms converging to $\varphi$ and $\psi$ respectively, and satisfying $\varphi_n^*\lambda-\lambda=dS_n$, $\psi_n^*\lambda-\lambda=dS'_n$, with $(S_n)$ and $(S'_n)$ converging uniformly on compact sets to continuous functions $S$ and $S'$.

    Then, for $n\geq 0$, the diffeomorphisms $\varphi_n\circ\psi_n$ and $\varphi_n^{-1}$ are exact, and satisfy $(\varphi_n\circ\psi_n)^*\lambda-\lambda = d(S_n\circ\psi_n+S'_n)$ and $(\varphi_n^{-1})^*\lambda-\lambda = d(-S_n\circ\varphi_n^{-1})$. Moreover, $\varphi_n\circ\psi_n$ and $\varphi_n^{-1}$ converge uniformly on compact sets to $\varphi\circ\psi$ and $\varphi^{-1}$ respectively. $S_n\circ\psi_n+S'_n$ and $-S_n\circ\varphi_n^{-1}$ also converge uniformly on compact sets to the continuous functions $S\circ\psi+S'$ and $-S\circ\varphi^{-1}$ respectively. Therefore, $\varphi\circ\psi$ and $\varphi^{-1}$ are strongly exact symplectic homeomorphisms, and the claim about their primitives holds.
\end{proof}

We use the following result by Humilière--Leclercq--Seyfaddini.

\begin{theo}[Humilière--Leclercq--Seyfaddini \cite{HLS}] \label{thm:HLS}
	Assume that a symplectic homeomorphism $\Phi : U \to V$ between open subsets of symplectic manifolds sends a smooth coisotropic submanifold $C$ onto a smooth submanifold $C'$. Then, $C'$ is coisotropic and $\Phi$ sends the characteristic foliation of $C$ to that of $C'$.
\end{theo}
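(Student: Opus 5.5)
The plan is to reduce both conclusions to a single dynamical characterisation of coisotropic submanifolds and their characteristic leaves. This characterisation should be phrased purely in terms of Hamiltonian flows and their generating functions, so that it can be transported by a symplectic homeomorphism. The characterisation I aim for is the following. A smooth submanifold $C$ is coisotropic if and only if, for every (smooth, then more generally continuous ``topological'') Hamiltonian $H$ that is constant on $C$, the flow $\phi^t_H$ preserves $C$. Moreover, in that case the flow sends each point of $C$ into its own characteristic leaf, and every leaf segment is realised by such a flow. The smooth direction of this dictionary is classical: $X_H$ is tangent to $C$ for all $H$ constant on $C$ exactly when $TC^\omega\subset TC$, and then $X_H|_C\in TC^\omega$. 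The characteristic leaves are then reached by choosing $H$ with prescribed differential along $C$, in the same way as in the realisation argument at the end of the proof of Theorem \ref{thm:rigidity}.

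The transport step is where $\Phi$ enters. Let $G$ be smooth and constant on $C'$, supported in $V$. Then $\Phi^{-1}\circ\phi^t_G\circ\Phi$ is a $C^0$-limit of Hamiltonian flows, since $\Phi$ is a $C^0$-limit of symplectic diffeomorphisms $\Phi_n$ and $\Phi_n^{-1}\phi^t_G\Phi_n$ is generated by $G\circ\Phi_n$. The generating functions $G\circ\Phi_n$ converge uniformly to the continuous function $G\circ\Phi$, which is constant on $C$. So $\Phi^{-1}\phi^t_G\Phi$ is a hameomorphism (topological Hamiltonian flow) whose generator is constant on $C$. I will invoke $C^0$-uniqueness of the generator of a hameomorphism, due to Viterbo and Buhovsky--Seyfaddini, to make this identification meaningful.

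The heart of the proof is a $C^0$-rigidity lemma. If $C$ is a smooth coisotropic submanifold and $\psi^t$ is a hameomorphism whose continuous generator is constant on $C$, then $\psi^t$ preserves $C$ and each of its characteristic leaves. I would argue by contradiction using the energy--capacity inequality. Suppose some point $x\in C$ is moved by $\psi^{t_0}$ off its leaf. Work in a local Darboux chart in which $C=\{p_{k+1}=\dots=p_d=0\}$, so that the leaves are the coordinate planes in the directions $q_{k+1},\dots,q_d$. Approximate $\psi$ by smooth flows $\phi_{H_n}$ with $H_n\to H$ uniformly, so that $H_n$ is uniformly small on a thin neighbourhood of $C$. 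Then compose with a smooth Hamiltonian flow, constant on $C$, that slides along leaves. This produces a Hamiltonian of arbitrarily small Hofer norm that displaces a fixed small ball, or a fixed leafwise-transverse ``cylinder'' of positive capacity adapted to the coisotropic model. That contradicts the energy--capacity inequality. I expect this step to be the main obstacle. The difficulty is to build, from a single misplaced point, a set of definite capacity whose displacement energy is controlled only by $\sup|H_n|$ near $C$. I would first try the reduction to the model case of $\R^{2k}\times T^*\R^{d-k}$ with $C=\R^{2k}\times 0$, using a displaceable set of the form $B^{2k}(r)\times(\text{neighbourhood of a leaf segment})$.

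With the lemma in hand, both conclusions follow from the transport step. For the first conclusion, suppose $C'$ were not coisotropic at some point. Then there is a smooth $G$, constant on $C'$, whose flow pushes a point of $C'$ off $C'$. Conjugating by $\Phi$ gives a hameomorphism generated by $G\circ\Phi$, constant on $C$, that moves a point off $C$, contradicting the lemma; hence $C'$ is coisotropic. For the second conclusion, every leaf segment of $C'$ is traced by $\phi^t_G$ for suitable $G$ constant on $C'$. Its preimage under $\Phi$ is a trajectory of a hameomorphism generated by a Hamiltonian constant on $C$, so by the lemma it stays inside a characteristic leaf of $C$. Applying the same argument to $\Phi^{-1}$, which is again a symplectic homeomorphism, yields the reverse inclusion, so $\Phi$ maps the characteristic foliation of $C$ onto that of $C'$.
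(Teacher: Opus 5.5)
The paper does not prove this statement; it quotes it from \cite{HLS}. Your outer architecture is the one used there. The theorem is deduced from a $C^0$-dynamical characterisation of coisotropic submanifolds, applied to $\Phi^{-1}\circ\phi^t_G\circ\Phi$, which is generated by $G\circ\Phi$. Your smooth dictionary, the transport step and the two final deductions are fine. Uniqueness of generators is not needed: state the lemma for every sequence $H_n\to H$ with $\phi^t_{H_n}\to\psi^t$ uniformly, and use $H_n=G\circ\Phi_n$.

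The genuine gap is the $C^0$-rigidity lemma. It is the entire content of the theorem, you only sketch it, and the sketched mechanism cannot work.
\begin{itemize}
\item $H_n\to H$ uniformly, and $H$ vanishes on $C$ but not off it. So the Hofer norm of $H_n$ tends to that of $H$, which is fixed and in general not small.
\item Composing with smooth flows sliding along leaves does not make the energy small off $C$.
\item You cannot cut $H_n$ off outside a thin neighbourhood of $C$. That would change the flow on the set you want to displace, whose trajectory leaves that neighbourhood.
\item Counting energy only near $C$ does not rescue the argument. A set adapted to $C$ of transverse width $\rho$ has capacity of order $\rho$. On it, $|H|$ is only bounded by the modulus of continuity $\omega_H(\rho)$, which for a merely continuous $H$ need not be $o(\rho)$.
\item On a fixed ball around $x$, $|H|$ need not be small at all compared with the capacity.
\end{itemize}

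What is missing is a quantity controlled by the values of $H_n$ on $C$ itself and rigid under uniform convergence of the flows. No displacement-energy bound for open sets provides this. One way to supply it:
\begin{itemize}
\item In coordinates with $C=\{y_{n-k+1}=\dots=y_n=0\}$, the leaf through $(a,b,x'',0)$ is $\{y'=b,\ y''=0\}\cap\{x'=a,\ y''=0\}$. This is an intersection of two Lagrangians contained in $C$. After a linear symplectic change of chart, each is a piece of a horizontal torus in $T^*\T^n$.
\item So, for flows compactly supported in such a chart, it suffices to show $\psi^t(L)=L$ when $H|_L=0$.
\item Apply the Lagrangian control property of Lagrangian spectral invariants (Leclercq--Zapolsky) to the fundamental and point classes. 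This gives $\gamma(\phi^t_{H_n}(L),L)\le\int_0^t\osc(H_{n,s}|_L)\,ds\to0$.
\item Suppose some $y\in L$ were not in $\psi^t(L)$. Then $\phi^t_{H_n}(L)$ would miss a fixed ball $B\ni y$ for large $n$.
\item Take any $K$ supported in $B$ with $\phi^1_K(L)\neq L$. Invariance and the triangle inequality give $0<\gamma(\phi^1_K(L),L)\le 2\gamma(\phi^t_{H_n}(L),L)$, a contradiction.
\item Hence $L\subset\psi^t(L)$, and invariance of domain gives equality.
\end{itemize}
Some input of this kind, one that sees only $H|_C$ and uses the $C^0$-convergence of the flows, is indispensable. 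Your proposal does not contain it.
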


This is a local statement, and neither $C$ nor $C'$ is required to be closed.\\

Recall that the characteristic leaves of a coisotropic manifold $C$ are the integral leaves of $\ker(\omega|_{TC})$.

\begin{lem} \label{lem:SympeoLocalConstance}
	Let $(\Phi : U_1 \to U_2,S)$ be a strongly exact symplectic homeomorphism brane between open subsets $U_i\subset T^*M_i$ for some manifolds $M_i$. If $\Phi(U_1 \cap 0_{M_1}) = U_2 \cap 0_{M_2}$, then $S|_{U_1 \cap 0_{M_1}}$ is locally constant.
\end{lem}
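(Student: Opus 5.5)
The plan is to show that $S$ is constant along characteristic-type directions of the zero section, using the Hausdorff/uniform convergence of the exact approximations together with Theorem \ref{thm:HLS}. The zero section $0_{M_1}$ is Lagrangian, hence coisotropic, and its characteristic foliation consists of the zero section itself: $\ker(\omega|_{T0_{M_1}}) = T0_{M_1}$. Since $\Phi$ maps $U_1\cap 0_{M_1}$ onto the smooth submanifold $U_2\cap 0_{M_2}$, Theorem \ref{thm:HLS} is consistent with this, but what I actually need is the information on $S$. That requires using the approximating sequence $(\varphi_n,S_n)$ with $\varphi_n^*\lambda_2 = \lambda_1 + dS_n$.

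The first step is local. Fix $q\in U_1\cap 0_{M_1}$ and a small embedded $C^1$ path $c:[0,1]\to U_1\cap 0_{M_1}$. I want to show that $S(c(1))=S(c(0))$. Since $\lambda_1$ vanishes on the zero section, integrating along $c$ gives
\[S_n(c(1))-S_n(c(0))=\int_c \varphi_n^*\lambda_2=\int_{\varphi_n\circ c}\lambda_2 .\]
So it suffices to show that $\int_{\varphi_n\circ c}\lambda_2\to 0$. The curves $\varphi_n\circ c$ converge uniformly to $\Phi\circ c$, which lies in $0_{M_2}$ where $\lambda_2$ vanishes. However, uniform convergence of curves does not control line integrals of $\lambda_2$, because $\varphi_n\circ c$ may oscillate. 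This is the main obstacle.

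To bypass it, I would use the Hamiltonian-flow characterization in the spirit of the proof of Theorem \ref{thm:rigidity}: realize $c$ as the trajectory of a Hamiltonian $G$ vanishing on $0_{M_1}$ near $c$, for example $G_t(q,p)=p\cdot X_t(q)$ cut off, as in the last paragraph of Section \ref{sec:rigidity}. Then conjugate $G$ by $\varphi_n$: the flow of $G\circ\varphi_n^{-1}$ is $\varphi_n\phi_G^t\varphi_n^{-1}$, and its action/primitive formula (Proposition \ref{LagsImagesPrimitive}) gives
\[S_n(\phi^1_G(x))-S_n(x)=\int_0^1 \big(\lambda_2(X)-G\circ\varphi_n^{-1}\big)\,dt-\int_0^1\big(\lambda_1(X_G)-G\big)\,dt ,\]
with $x=c(0)$. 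Since $G$ vanishes on the zero section and $\lambda_1(X_G)=p\cdot X=0$ there, the second integral is zero, and $G\circ\varphi_n^{-1}$ vanishes along the trajectory. The remaining term is $\int\lambda_2$ along $\varphi_n\circ c$, so the difficulty persists.

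The route I would actually commit to is therefore the sheaf/brane argument. Via Item \ref{prop:UniquenessPrimitive4} of Theorem \ref{thm:UniquenessPrimitiveC0}, transport the smooth brane $(0_{M_1},0)$ locally by $(\Phi,S)$. The proof of that item shows that $\Phi_n(\Gamma_{(0,0)})$ converges in Hausdorff distance to the graph of $S\circ\Phi^{-1}$ over $U_2\cap 0_{M_2}$. Localizing with cutoffs, by closing up to compact exact Lagrangians outside a small ball, I would obtain a bare Lagrangian brane whose support is locally the smooth manifold $0_{M_2}$ and whose primitive is $S\circ\Phi^{-1}$. Theorem \ref{thm:rigidity}, applied on the $C^1$ open part, then says that $S\circ\Phi^{-1}$ is a Liouville primitive of $\lambda_2|_{0_{M_2}}=0$, hence locally constant. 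The delicate points are the localization, which makes sure the limit support is a topological manifold and globalizes correctly, and the fact that the approximating $\varphi_n(0_{M_1})$ are only $C^1$. I expect the localization to be the hardest step.
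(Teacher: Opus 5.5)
\textbf{There is a genuine gap in the route you commit to.} Your diagnosis of the first two attempts is accurate: $S_n(c(1))-S_n(c(0))=\int_{\varphi_n\circ c}\lambda_2$, uniform convergence of $\varphi_n\circ c$ does not control this integral, and conjugating a Hamiltonian flow only reproduces the same line integral. The problem is the localization step, and it is not a technicality.

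Theorem \ref{thm:rigidity} is intrinsically global. It applies to a compact bare Lagrangian brane in $T^*M$, with $M$ closed, whose support is \emph{globally} a topological $d$-manifold; this is what identifies $\cL$ with the $\gamma$-support through $H^d$. The third remark after that theorem shows that rigidity genuinely fails without this hypothesis. The lemma, by contrast, is purely local: $\Phi$ only goes between open subsets of possibly different, possibly non-compact cotangent bundles. This is exactly how it is used for $\Phi_{x'}^{-1}\circ\phi\circ\Phi_x$ in the proof of Theorem \ref{thm:BirkhoffStratifiedLagC0}.

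To close up the discs $\varphi_n(0_{M_1}\cap B)$, you would have to interpolate on a collar between two things: Lagrangians that are only $C^0$-close to the zero section, and the zero section itself. No Weinstein-graph description or cut-off of a generating function is available for the former. You would also need convergent primitives and a limit that is a topological manifold. You give no such construction, and the natural one is circular. If the limit on the collar is again a piece of $0_{M_2}$, then Theorem \ref{thm:rigidity} forces the limit primitive to be locally constant there. So you would already need to know that $S\circ\Phi^{-1}$ is constant on $\Phi(0_{M_1}\cap\partial B)$, which is part of what you want to prove. Otherwise the collar must leave $0_{M_2}$ or create hairs, as in Theorem \ref{thm:NonUniqueness2D}, and you would have to control this with only $C^0$ information on $\varphi_n$.

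\textbf{The missing idea} is to make $S$ visible to coisotropic rigidity by going up two dimensions. You rightly note that Theorem \ref{thm:HLS} applied to $0_{M_1}$ says nothing about $S$. The paper instead applies it in the symplectization of $J^1$.
\begin{enumerate}
\item On $\mathscr{U}_i=U_i\times\R_z\times\R_r$ with $\Omega_i=d\big(e^r(dz-\lambda_{M_i})\big)$, set $\Psi(x,z,r)=(\Phi(x),z+S(x),r)$.
\item The lifts $\Psi_n$ built from $(\varphi_n,S_n)$ satisfy $\Psi_n^*(\lambda_{M_2}-dz)=\lambda_{M_1}-dz$, so they are exact symplectic.
\item The $\Psi_n$ converge locally uniformly to $\Psi$ precisely because $S_n\to S$. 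Hence $\Psi$ is a symplectic homeomorphism.
\item The submanifold $C_i=(U_i\cap 0_{M_i})\times\R_z\times\R_r$ satisfies $\Omega_i|_{C_i}=e^r\,dr\wedge dz$. So it is coisotropic, its characteristic leaves are the slices $\{z=z_0,\ r=r_0\}$, and $\Psi(C_1)=C_2$.
\item Theorem \ref{thm:HLS} forces $\Psi$ to send leaves into leaves. This says exactly that $z_0+S(q,0)$ is locally constant in $q$.
\end{enumerate}
This is the $C^0$ rigidity that replaces the line-integral control you were missing. It is local and needs no sheaf theory.
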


\begin{proof}
	On $\mathscr{U}_i=U_i \times \R_z \times \R_r$, let $\Omega_i = d \big( e^r (dz- \lambda_{M_i}) \big)$ be a symplectic form on $\mathscr{U}_i$ with Liouville form $\Lambda_i = e^r(\lambda_{M_i} - dz)$. We define a homeomorphism $\Psi : \mathscr{U}_1 \to \mathscr{U}_2$ by
	\begin{equation*}
		\Psi (x,z,r) = \big( \Phi(x),z + S(x), r \big)
	\end{equation*}

	Analogously, for the $C^1$ approximations $(\Phi_n,S_n)$ of $(\Phi,S)$, consider the corresponding lifts $\Psi_n$. We have
	\begin{align*}
		\Psi_n^*(\lambda_{M_2}-dz) &= \Phi_n^*\lambda_{M_2} - d(z+S_n) \\
		&= \lambda_{M_1} + dS_n - dz - dS_n\\
		&= \lambda_{M_1}-dz
	\end{align*}
	Since the coordinate $r$ is preserved, we get $\Psi_n^*\Lambda_2 = \Lambda_1$, and the maps $\Psi_n$ are exact symplectic. Moreover, they converge locally uniformly to $\Psi$. Hence, $\Psi$ is a symplectic homeomorphism.

	Now consider
	\begin{equation*}
		C_i=(U_i\cap 0_{M_i})\times\R_z\times\R_r
	\end{equation*}
	We have
	\begin{equation*}
		\Omega_i|_{C_i}= e^r dr \wedge dz, \quad \ker(\Omega_i|_{TC_i}) = TM_i \times \{0\} \times \{0\}
	\end{equation*}
	Hence, $C_i$ is coisotropic, and its local characteristic leaves are of the form
	\begin{equation}\label{eq:CharLeaves}
		\{(q,0,z_0,r_0) \; | \; q\in U_i \cap 0_{M_i}\}
	\end{equation}
	for fixed $z_0$ and $r_0$. By assumption, $\Psi(C_1)=C_2$. Hence, Theorem \ref{thm:HLS} applies. Writing $\Phi(q,0)=(f(q),0)$, we have
	\begin{equation*}
		\Psi(q,0,z_0,r_0) = \big( f(q),0, z_0+S(q,0),r_0 \big)
	\end{equation*}
	Theorem \ref{thm:HLS} asserts that the image of a local characteristic leaf of $C_1$ is contained in a characteristic leaf of $C_2$. Thus, we deduce from \eqref{eq:CharLeaves}, that the $z$-coordinate of $\Psi(q,0,z_0,r_0)$ is constant on such a leaf. Therefore, $S(q,0)$ is locally constant on $U_1 \cap 0_{M_1}$.
\end{proof}

\begin{proof}[Proof of Theorem \ref{thm:BirkhoffStratifiedLagC0}]
	We proceed as in the proof of Theorem \ref{thm:BirkhoffStratifiedLag}. Let $(\cL_t,h_t)$ be the variational brane associated to $(\cL,h)$, with $h_0=h$. We prove that $[h_1]=[h_0]$, which allows us to apply Theorem \ref{MainThmInvariant} and conclude that $\cL$ is a Lipschitz graph over the base $M$.

	We set $\phi=\phi_H^1$ and $\mathcal{A} : T^*M \to \R$ defined in \eqref{eq:ActionTimeOne} so that $\phi^*\lambda-\lambda=d\mathcal{A}$. Hence, $(\phi,\mathcal{A})$ is a strongly exact symplectomorphism. We define $k : \cL \to \R$ by
	\begin{equation} \label{thm:StratifiedLagC0:dem1}
		k = (h_1-h_0) \circ\phi = h+ \mathcal{A}- h \circ\phi
	\end{equation}

	We show that $k$ is constant on $\cL$. Replacing $U$ by the union of all points of $\cL$ admitting a local model as in the statement, we may consider the set $V$ and assume that $U$ verifies \eqref{eq:Stratas}. 
	
	Let $y\in V$, and choose two local models $(\Phi_x,S_x)$ and $(\Phi_{x'},S_{x'})$ such that $y\in\cL_x$ and $\phi(y)\in\cL_{x'}$. After shrinking the domains if necessary, set
	\begin{equation} \label{thm:StratifiedLagC0:dem2}
		\Phi = \Phi_{x'}^{-1}\circ\phi\circ\Phi_x \quad \text{and} \quad S = S_x + \mathcal{A}\circ\Phi_x - S_{x'}\circ\Phi
	\end{equation}

    By Proposition \ref{prop:strongExactSubgroup}, $(\Phi,S)$ is a strongly exact symplectic homeomorphism brane. Since $\phi(\cL)=\cL$ and $\cL_x$ and $\cL_{x'}$ are precisely the intersections of $\cL$ with the corresponding ambient neighbourhoods, $\Phi$ sends the local zero section of $0_{N_x}$ onto the local zero section of $0_{N_{x'}}$. Hence, Lemma \ref{lem:SympeoLocalConstance} applies and $S$ is locally constant on the local zero section.

	Let $z$ be in a sufficiently small neighbourhood of $y$ in $\cL_x$, and write $z_x=\Phi_x^{-1}(z)$. Then,
	\begin{align*}
		k(z) &= h(z) + \mathcal{A}(z) -h\circ\phi(z) \\
		&= h\circ\Phi_x(z_x) +\mathcal{A}\circ\Phi_x(z_x) - h\circ\phi\circ\Phi_x(z_x) \\
		&= S_x(z_x)+ \mathcal{A}\circ\Phi_x(z_x) -h\circ\Phi_{x'}\circ\Phi(z_x) \\
		&= S_x(z_x) + \mathcal{A}\circ\Phi_x(z_x) - S_{x'}\circ\Phi(z_x) \\
		&= S(z_x)
	\end{align*}
	where we used Property \eqref{thm:BirkhoffStratifiedLagC0:eq:h} of the statement. Hence, $k$ is locally constant on $V$. The rest of the proof is identical to that of Theorem \ref{thm:BirkhoffStratifiedLag}.
\end{proof}

\appendix

\section{Equivalence between Brane Convergence and Reduced Complexity Convergence} \label{SectionRC}

We show here that, for Lagrangian branes Hamiltonian isotopic to the zero section, brane convergence is equivalent to the “Reduced Complexity Convergence” introduced in \cite{Birkhoff}.

\begin{defi} \label{ReducedComplexity}
	Let $(\mathcal{L}_n)_{n \geq 0}$ and $\mathcal{L}$ be exact Lagrangian submanifolds of $T^*M$ Hamiltonian-isotopic to the zero section $0_M$. We say that the sequence $(\mathcal{L}_n)_{n \geq 0}$ \textit{converges with reduced complexity} if the two conditions below are satisfied:
	\begin{enumerate}[label = \roman*.]
		\item $\lim\limits_n d_H(\mathcal{L}_n , \mathcal{L})=0$.
		\item For a Hamiltonian diffeomorphism $\varphi$ such that $\mathcal{L} = \varphi(0_M)$,  if $l_n$ is a Liouville primitive on the Lagrangian submanifold $\varphi^{-1}(\mathcal{L}_n)$, then $\lim\limits_n\osc(l_n)=0$.
	\end{enumerate}
\end{defi}

\begin{prop}
	Let $(\mathcal{L}_n = \varphi_n(0_M),h_n)$ and $(\mathcal{L} = \varphi(0_M),h)$ be Lagrangian branes, with $\varphi_n$ and $\varphi$ in $\ham(T^*M,\omega)$. Then, $\mathcal{L}_n$ converges to $\mathcal{L}$ with reduced complexity if and only if $(\mathcal{L}_n = \varphi_n(0_M),[h_n])$ brane-converges to $(\mathcal{L} = \varphi(0_M),[h])$ in $\mathfrak{B}_c(T^*M)$.
\end{prop}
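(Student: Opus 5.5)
The plan is to pull everything back by the fixed Hamiltonian diffeomorphism $\varphi$ and compare both notions on the zero-section side. By Proposition \ref{BraneInvariance}, with $f$ a primitive of $\varphi^*\lambda-\lambda$, the map $(\mathcal{K},h)\mapsto(\varphi^{-1}(\mathcal{K}),(h\circ\varphi)-f)$ and its inverse are continuous for brane convergence, also in $\mathfrak{B}_c(T^*M)$. It is a homeomorphism because the lift $\Phi$ to $J^1M$ is a diffeomorphism, hence uniformly continuous on compact sets. Hausdorff convergence is preserved in the same way. I may therefore assume $\varphi=\mathrm{Id}$, so that $\mathcal{L}=0_M$ and $[h]=[0]$ (a Liouville primitive of the zero section is constant), and $l_n$ is a primitive of $\mathcal{L}_n$ itself. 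The reduced complexity condition (ii) does not depend on the choice of $\varphi$, since two choices differ by a Hamiltonian diffeomorphism preserving $0_M$. Reducing through the specific $\varphi$ from the statement is therefore legitimate.

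\emph{Brane convergence implies reduced complexity.} Take constants $c_n$ such that $(\mathcal{L}_n,h_n+c_n)$ brane-converges to $(0_M,0)$. Proposition \ref{BraneHausdorffProp} gives (i). For (ii), I use the formula for $\delta$: every $x\in\mathcal{L}_n$ has some $x'\in 0_M$ with $|h_n(x)+c_n|\le d_B$. Hence $\sup|h_n+c_n|\le d_B\to0$, and $\osc(h_n)\le 2d_B\to 0$. Since $l_n=h_n+\text{const}$, this gives (ii).

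\emph{Reduced complexity implies brane convergence.} Normalize $l_n$ so that $\min l_n=0$. Then $\|l_n\|_\infty=\osc(l_n)\to0$. Now estimate $d_B((\mathcal{L}_n,l_n),(0_M,0))$ by the two one-sided terms. For $x\in\mathcal{L}_n$, choosing the nearest point of $0_M$ gives a cost of at most $d(x,0_M)+|l_n(x)|\le d_H(\mathcal{L}_n,0_M)+\osc(l_n)$. Conversely, for $y\in 0_M$ there is $x\in\mathcal{L}_n$ with $d(x,y)\le d_H$, and the cost is at most $d_H+\osc(l_n)$. Both terms tend to $0$, so $(\mathcal{L}_n,[h_n])=(\mathcal{L}_n,[l_n])$ converges to $(0_M,[0])$. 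Transporting back by $\varphi$ gives the claim.

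The only genuine obstacle is the bookkeeping in the transport step. I must check that the primitive $(h\circ\varphi)-f$ produced by Proposition \ref{BraneInvariance} is indeed a Liouville primitive on $\varphi^{-1}(\mathcal{L}_n)$; this follows from item \ref{item:BraneInvariance2} applied to $\varphi^{-1}$. I must also check that the quotient by constants commutes with this transport, which holds because the transport simply adds $f$.
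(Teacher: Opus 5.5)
Your proposal is correct and follows the paper's proof. You reduce to $\mathcal{L}=0_M$ via Proposition \ref{BraneInvariance}, get $\osc(h_n)\le 2d_{\mathfrak{B}}$ pointwise for the forward direction, and for the converse you estimate the two one-sided terms of $d_B$ directly, giving $d_{\mathfrak{B}}\le d_H(\mathcal{L}_n,0_M)+\osc(l_n)$, where the paper instead passes through the topological $\limsup$ and $\liminf$ of the graphs in a common compact set; this is a harmless and slightly more quantitative variation.
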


\begin{proof}
	We start by showing the equivalence in the case where $\mathcal{L} = 0_M$. We assume brane convergence. Then we have for any constant $h \in \mathbb{R}$, and any point $x_n \in \mathcal{L}_n$,
	\begin{align*}
		|h_n(x_n) - h| = \inf_{x \in 0_M} |h_n(x_n) - h(x)| \leq \delta_{(\mathcal{L}_n,h_n)}(x_n,(0_M,h)) \leq d_B \big( (\mathcal{L}_n,h_n), (0_M,h) \big)
	\end{align*}
	Taking successively the supremum over $x_n \in \mathcal{L}_n$ and the infimum over $h \in \mathbb{R}$, we obtain
	\begin{align*}
		\inf_{h \in \mathbb{R}} \Vert h_n - h \Vert_\infty \leq d_{\mathfrak{B}} \big( (\mathcal{L}_n,[h_n]), (0_M,[0]) \big)
	\end{align*}
	Moreover, the infimum is realized at $h = \frac{1}{2}(\max h_n + \min h_n)$ which satisfies $\osc(h_n) = 2\Vert h_n - h \Vert_\infty$. Thus, we get
	\begin{align*}
		\osc(h_n) \leq 2d_{\mathfrak{B}} \big( (\mathcal{L}_n,[h_n]), (0_M,[0]) \big) \longrightarrow 0 \quad \text{as } n \to +\infty
	\end{align*}
	
	Now we assume reduced complexity convergence. There exists a sequence of constants $c_n \in \mathbb{R}$ such that
	\begin{align*}
		\Vert h_n - c_n \Vert_\infty = \frac{1}{2} \osc( h_n ) \longrightarrow 0 \quad \text{as } n \to +\infty
	\end{align*}
    
    Together with the Hausdorff convergence of $\cL_n$ to $0_M$, this implies that the topological $\limsup$ and $\liminf$ of the graphs $\Gamma_{(\mathcal{L}_n,h_n-c_n)}$ satisfy
    \begin{align*}
    \limsup_{n\to\infty} \Gamma_{(\mathcal{L}_n,h_n-c_n)}&\coloneq \bigcap_{n\geq 0}\overline{\bigcup_{k\geq n}\Gamma_{(\cL_k,h_k-c_k)}}\\
    &\subset (\limsup_{n\to\infty}\cL_n)\times \{0\}\\
    &\subset 0_M\times\{0\}
    \end{align*}
    and
    \begin{align*}\liminf_{n\to\infty} \Gamma_{(\mathcal{L}_n,h_n-c_n)}&\coloneq \{(x,z)\in J^1M\mid\exists(x_n,z_n)_{n\geq0},(x_n,z_n)\in \Gamma_{(\mathcal{L}_n,h_n-c_n)},\lim_n (x_n,z_n)=(x,z)\}\\
    &\supset (\liminf_{n\to\infty}\cL_n)\times\{0\}\\
    &\supset 0_M\times\{0\}.
    \end{align*}

    Therefore, the topological $\liminf$ and $\limsup$ agree and coincide with $0_M\times\{0\}$. Moreover, all the graphs are eventually contained in a common compact set, and therefore the equality of the $\liminf$ and $\limsup$ implies that $\Gamma_{(\mathcal{L}_n,h_n-c_n)}$ converges to $0_M\times\{0\}$ in the Hausdorff topology. Hence, $(\cL_n,[h_n])$ brane-converges to $(0_M,[0])$.\\

	The general case follows from Proposition \ref{BraneInvariance} which states that $(\mathcal{L}_n,[h_n])$ brane-converges to $(\mathcal{L} = \varphi(0_M),[h])$ if and only if $(\varphi^{-1}(\mathcal{L}_n),[l_n])$ brane-converges to $(0_M,[0])$, where $l_n$ are Liouville primitives on $\varphi^{-1}(\mathcal{L}_n)$.
\end{proof}

\bibliographystyle{alpha}
\addcontentsline{toc}{section}{References}
\bibliography{Biblio}

@article {MR3674224,
    AUTHOR = {Arnaud, Marie-Claude and Venturelli, Andrea},
     TITLE = {A multidimensional {B}irkhoff theorem for time-dependent
              {T}onelli {H}amiltonians},
   JOURNAL = {Calc. Var. Partial Differential Equations},
  FJOURNAL = {Calculus of Variations and Partial Differential Equations},
    VOLUME = {56},
      YEAR = {2017},
    NUMBER = {4},
     PAGES = {Paper No. 122, 27},
      ISSN = {0944-2669,1432-0835},
   MRCLASS = {37J50 (53D12 70H20)},
  MRNUMBER = {3674224},
MRREVIEWER = {Pawe\l \ Urbanski},
       DOI = {10.1007/s00526-017-1210-0},
       URL = {https://doi.org/10.1007/s00526-017-1210-0},
}

@article {MR3860396,
    AUTHOR = {Amorim, Lino and Oh, Yong-Geun and dos Santos, Joana Oliveira},
     TITLE = {Exact {L}agrangian submanifolds, {L}agrangian spectral
              invariants and {A}ubry-{M}ather theory},
   JOURNAL = {Math. Proc. Cambridge Philos. Soc.},
  FJOURNAL = {Mathematical Proceedings of the Cambridge Philosophical
              Society},
    VOLUME = {165},
      YEAR = {2018},
    NUMBER = {3},
     PAGES = {411--434},
      ISSN = {0305-0041,1469-8064},
   MRCLASS = {53D12 (37J05 53D37)},
  MRNUMBER = {3860396},
MRREVIEWER = {M\'{a}rio\ Jorge Dias Carneiro},
       DOI = {10.1017/S0305004117000561},
       URL = {https://doi.org/10.1017/S0305004117000561},
}

@PHDTHESIS{humil2008,
url = "http://www.theses.fr/2008EPXX0005",
title = "Continuité en topologie symplectique",
author = "Humilière, Vincent",
year = "2008",
pages = "1 vol. ( 154 p.)",
note = "Thèse de doctorat dirigée par Viterbo, Claude",
url = "http://www.theses.fr/2008EPXX0005/document",
school = "{\'E}cole polytechnique",
}

@article {MR1157321,
    AUTHOR = {Viterbo, Claude},
     TITLE = {Symplectic topology as the geometry of generating functions},
   JOURNAL = {Math. Ann.},
  FJOURNAL = {Mathematische Annalen},
    VOLUME = {292},
      YEAR = {1992},
    NUMBER = {4},
     PAGES = {685--710},
      ISSN = {0025-5831,1432-1807},
   MRCLASS = {58F05 (53C15 53C23 57R50 58E05)},
  MRNUMBER = {1157321},
MRREVIEWER = {Nikolai\ K.\ Smolentsev},
       DOI = {10.1007/BF01444643},
       URL = {https://doi.org/10.1007/BF01444643},
}

@article {MR2025275,
    AUTHOR = {Paternain, Gabriel P. and Polterovich, Leonid and Siburg, Karl
              Friedrich},
     TITLE = {Boundary rigidity for {L}agrangian submanifolds, non-removable
              intersections, and {A}ubry-{M}ather theory},
      NOTE = {Dedicated to Vladimir I. Arnold on the occasion of his 65th
              birthday},
   JOURNAL = {Mosc. Math. J.},
  FJOURNAL = {Moscow Mathematical Journal},
    VOLUME = {3},
      YEAR = {2003},
    NUMBER = {2},
     PAGES = {593--619, 745},
      ISSN = {1609-3321,1609-4514},
   MRCLASS = {37J50 (37J05 53D12 57R17 57R30)},
  MRNUMBER = {2025275},
MRREVIEWER = {Renato\ Iturriaga},
       DOI = {10.17323/1609-4514-2003-3-2-593-619},
       URL = {https://doi.org/10.17323/1609-4514-2003-3-2-593-619},
}

@incollection {MR1604386,
    AUTHOR = {Viterbo, C.},
     TITLE = {Solutions of {H}amilton-{J}acobi equations and symplectic
              geometry. {A}ddendum to: {\it {S}\'{e}minaire sur les
              \'{E}quations aux {D}\'{e}riv\'{e}es {P}artielles. 1994--1995}
              [\'{E}cole {P}olytech., {P}alaiseau, 1995; {MR}1362548
              (96g:35001)]},
 BOOKTITLE = {S\'{e}minaire sur les \'{E}quations aux {D}\'{e}riv\'{e}es
              {P}artielles, 1995--1996},
    SERIES = {S\'{e}min. \'{E}qu. D\'{e}riv. Partielles},
     PAGES = {8},
 PUBLISHER = {\'{E}cole Polytech., Palaiseau},
      YEAR = {1996},
      ISBN = {2-7302-0366-8},
   MRCLASS = {35F25 (58F05)},
  MRNUMBER = {1604386},
}

@unknown{unknown,
author = {Ottolenghi, Alberto},
year = {2015},
month = {11},
pages = {},
title = {Solutions généralisées pour l'équation de {H}amilton-{J}acobi dans le cas d'évolution - {P}ure {M}athematics {P}ostgraduate {D}iploma {D}issertation 1992},
doi = {10.13140/RG.2.1.1322.4401}
}

@article {MR3957150,
    AUTHOR = {Roos, Valentine},
     TITLE = {Variational and viscosity operators for the evolutionary
              {H}amilton-{J}acobi equation},
   JOURNAL = {Commun. Contemp. Math.},
  FJOURNAL = {Communications in Contemporary Mathematics},
    VOLUME = {21},
      YEAR = {2019},
    NUMBER = {4},
     PAGES = {1850018, 76},
      ISSN = {0219-1997,1793-6683},
   MRCLASS = {49L25 (35D40 35F21 35F25)},
  MRNUMBER = {3957150},
MRREVIEWER = {Shuang\ Liu},
       DOI = {10.1142/S0219199718500189},
       URL = {https://doi.org/10.1142/S0219199718500189},
}

@article {MR3151090,
    AUTHOR = {Wei, Qiaoling},
     TITLE = {Viscosity solution of the {H}amilton-{J}acobi equation by a
              limiting minimax method},
   JOURNAL = {Nonlinearity},
  FJOURNAL = {Nonlinearity},
    VOLUME = {27},
      YEAR = {2014},
    NUMBER = {1},
     PAGES = {17--41},
      ISSN = {0951-7715,1361-6544},
   MRCLASS = {35F21 (35D40 35F25 37J05 49L25)},
  MRNUMBER = {3151090},
MRREVIEWER = {Gawtum\ Namah},
       DOI = {10.1088/0951-7715/27/1/17},
       URL = {https://doi.org/10.1088/0951-7715/27/1/17},
}

@PHDTHESIS{jouko1994,
url = "
		http://www.theses.fr/1994PA077046",
title = "Singularités de minimax et solutions faibles d'équations aux derivées partielles",
author = "Joukovskaïa, Tatiana",
year = "1994",
pages = "1 vol. (50 p.)",
note = "Thèse de doctorat dirigée par Chaperon, Marc",
school = "Paris 7",
}

@article {MR2393423,
    AUTHOR = {Bernard, Patrick},
     TITLE = {The dynamics of pseudographs in convex {H}amiltonian systems},
   JOURNAL = {J. Amer. Math. Soc.},
  FJOURNAL = {Journal of the American Mathematical Society},
    VOLUME = {21},
      YEAR = {2008},
    NUMBER = {3},
     PAGES = {615--669},
      ISSN = {0894-0347,1088-6834},
   MRCLASS = {37J40 (37J50)},
  MRNUMBER = {2393423},
MRREVIEWER = {Karl\ Friedrich\ Siburg},
       DOI = {10.1090/S0894-0347-08-00591-2},
       URL = {https://doi.org/10.1090/S0894-0347-08-00591-2},
}

@book{fathi2008weak,
  title={The Weak KAM Theorem in Lagrangian Dynamics },
  author={Fathi, A.},
  isbn={9780521822282},
  series={(Book in Preparation). Cambridge Studies in Advanced Mathematics},
  year={2008},
  publisher={Cambridge University Press}
}

@article {MR1555175,
    AUTHOR = {Birkhoff, George D.},
     TITLE = {Surface transformations and their dynamical applications},
   JOURNAL = {Acta Math.},
  FJOURNAL = {Acta Mathematica},
    VOLUME = {43},
      YEAR = {1922},
    NUMBER = {1},
     PAGES = {1--119},
      ISSN = {0001-5962,1871-2509},
   MRCLASS = {99-04},
  MRNUMBER = {1555175},
       DOI = {10.1007/BF02401754},
       URL = {https://doi.org/10.1007/BF02401754},
}

@article {MR1157317,
    AUTHOR = {Bialy, Misha and Polterovich, Leonid},
     TITLE = {Hamiltonian systems, {L}agrangian tori and {B}irkhoff's
              theorem},
   JOURNAL = {Math. Ann.},
  FJOURNAL = {Mathematische Annalen},
    VOLUME = {292},
      YEAR = {1992},
    NUMBER = {4},
     PAGES = {619--627},
      ISSN = {0025-5831,1432-1807},
   MRCLASS = {58F05 (58F27)},
  MRNUMBER = {1157317},
MRREVIEWER = {Jaroslav\ Stark},
       DOI = {10.1007/BF01444639},
       URL = {https://doi.org/10.1007/BF01444639},
}

@article {MR1001842,
    AUTHOR = {Bialy, M. L. and Polterovich, L. V.},
     TITLE = {Lagrangian singularities of invariant tori of {H}amiltonian
              systems with two degrees of freedom},
   JOURNAL = {Invent. Math.},
  FJOURNAL = {Inventiones Mathematicae},
    VOLUME = {97},
      YEAR = {1989},
    NUMBER = {2},
     PAGES = {291--303},
      ISSN = {0020-9910,1432-1297},
   MRCLASS = {58F05 (58F27 70H05)},
  MRNUMBER = {1001842},
MRREVIEWER = {Jair\ Koiller},
       DOI = {10.1007/BF01389043},
       URL = {https://doi.org/10.1007/BF01389043},
}

@article {MR1159829,
    AUTHOR = {Bialy, M. and Polterovich, L.},
     TITLE = {Hamiltonian diffeomorphisms and {L}agrangian distributions},
   JOURNAL = {Geom. Funct. Anal.},
  FJOURNAL = {Geometric and Functional Analysis},
    VOLUME = {2},
      YEAR = {1992},
    NUMBER = {2},
     PAGES = {173--210},
      ISSN = {1016-443X,1420-8970},
   MRCLASS = {58F05 (53C23 57R50)},
  MRNUMBER = {1159829},
MRREVIEWER = {Nikolai\ K.\ Smolentsev},
       DOI = {10.1007/BF01896972},
       URL = {https://doi.org/10.1007/BF01896972},
}

@article {MR1027911,
    AUTHOR = {Bialy, M. L.},
     TITLE = {Aubry-{M}ather sets and {B}irkhoff's theorem for geodesic
              flows on the two-dimensional torus},
   JOURNAL = {Comm. Math. Phys.},
  FJOURNAL = {Communications in Mathematical Physics},
    VOLUME = {126},
      YEAR = {1989},
    NUMBER = {1},
     PAGES = {13--24},
      ISSN = {0010-3616,1432-0916},
   MRCLASS = {58F17 (53C22 58F05)},
  MRNUMBER = {1027911},
MRREVIEWER = {Victor\ Bangert},
       URL = {http://projecteuclid.org/euclid.cmp/1104179721},
}

@article {MR1067380,
    AUTHOR = {Herman, Michael-R.},
     TITLE = {In\'{e}galit\'{e}s ``a priori'' pour des tores lagrangiens
              invariants par des diff\'{e}omorphismes symplectiques},
   JOURNAL = {Inst. Hautes \'{E}tudes Sci. Publ. Math.},
  FJOURNAL = {Institut des Hautes \'{E}tudes Scientifiques. Publications
              Math\'{e}matiques},
    VOLUME = {70},
      YEAR = {1989},
     PAGES = {47--101},
      ISSN = {0073-8301,1618-1913},
   MRCLASS = {58F05 (58F22 58F27 58F30)},
  MRNUMBER = {1067380},
       URL = {http://www.numdam.org/item?id=PMIHES_1989__70__47_0},
}

@article {MR2860422,
    AUTHOR = {Bernard, Patrick and dos Santos, Joana Oliveira},
     TITLE = {A geometric definition of the {M}a\~{n}\'{e}-{M}ather set and
              a theorem of {M}arie-{C}laude {A}rnaud},
   JOURNAL = {Math. Proc. Cambridge Philos. Soc.},
  FJOURNAL = {Mathematical Proceedings of the Cambridge Philosophical
              Society},
    VOLUME = {152},
      YEAR = {2012},
    NUMBER = {1},
     PAGES = {167--178},
      ISSN = {0305-0041,1469-8064},
   MRCLASS = {37J50 (37J05 70H20)},
  MRNUMBER = {2860422},
MRREVIEWER = {M\'{a}rio\ Jorge Dias Carneiro},
       DOI = {10.1017/S0305004111000685},
       URL = {https://doi.org/10.1017/S0305004111000685},
}

@article {MR4597700,
    AUTHOR = {Dias Carneiro, M\'{a}rio J. and Ruggiero, Rafael O.},
     TITLE = {On the graph theorem for {L}agrangian invariant tori with
              totally irrational invariant sets},
   JOURNAL = {Manuscripta Math.},
  FJOURNAL = {Manuscripta Mathematica},
    VOLUME = {171},
      YEAR = {2023},
    NUMBER = {3-4},
     PAGES = {423--436},
      ISSN = {0025-2611,1432-1785},
   MRCLASS = {53D25 (37J51 53D12)},
  MRNUMBER = {4597700},
       DOI = {10.1007/s00229-022-01391-1},
       URL = {https://doi.org/10.1007/s00229-022-01391-1},
}

@article {MR2738994,
    AUTHOR = {Arnaud, Marie-Claude},
     TITLE = {On a theorem due to {B}irkhoff},
   JOURNAL = {Geom. Funct. Anal.},
  FJOURNAL = {Geometric and Functional Analysis},
    VOLUME = {20},
      YEAR = {2010},
    NUMBER = {6},
     PAGES = {1307--1316},
      ISSN = {1016-443X,1420-8970},
   MRCLASS = {37J50 (37E40 37J05 37J10 70H20)},
  MRNUMBER = {2738994},
MRREVIEWER = {Darko\ Milinkovi\'{c}},
       DOI = {10.1007/s00039-010-0091-6},
       URL = {https://doi.org/10.1007/s00039-010-0091-6},
}

@article {MR1451248,
    AUTHOR = {Fathi, Albert},
     TITLE = {Th\'{e}or\`eme {KAM} faible et th\'{e}orie de {M}ather sur les
              syst\`emes lagrangiens},
   JOURNAL = {C. R. Acad. Sci. Paris S\'{e}r. I Math.},
  FJOURNAL = {Comptes Rendus de l'Acad\'{e}mie des Sciences. S\'{e}rie I.
              Math\'{e}matique},
    VOLUME = {324},
      YEAR = {1997},
    NUMBER = {9},
     PAGES = {1043--1046},
      ISSN = {0764-4442},
   MRCLASS = {58F27 (58F05 70H35)},
  MRNUMBER = {1451248},
MRREVIEWER = {Ugo\ C.\ Bessi},
       DOI = {10.1016/S0764-4442(97)87883-4},
       URL = {https://doi.org/10.1016/S0764-4442(97)87883-4},
}

@article {MR0690039,
    AUTHOR = {Crandall, Michael G. and Lions, Pierre-Louis},
     TITLE = {Viscosity solutions of {H}amilton-{J}acobi equations},
   JOURNAL = {Trans. Amer. Math. Soc.},
  FJOURNAL = {Transactions of the American Mathematical Society},
    VOLUME = {277},
      YEAR = {1983},
    NUMBER = {1},
     PAGES = {1--42},
      ISSN = {0002-9947,1088-6850},
   MRCLASS = {35F20},
  MRNUMBER = {690039},
MRREVIEWER = {Moshe\ Marcus},
       DOI = {10.2307/1999343},
       URL = {https://doi.org/10.2307/1999343},
}

@article {MR2346451,
    AUTHOR = {Fathi, Albert and Maderna, Ezequiel},
     TITLE = {Weak {KAM} theorem on non compact manifolds},
   JOURNAL = {NoDEA Nonlinear Differential Equations Appl.},
  FJOURNAL = {NoDEA. Nonlinear Differential Equations and Applications},
    VOLUME = {14},
      YEAR = {2007},
    NUMBER = {1-2},
     PAGES = {1--27},
      ISSN = {1021-9722,1420-9004},
   MRCLASS = {49L25 (37J15 37J40)},
  MRNUMBER = {2346451},
       DOI = {10.1007/s00030-007-2047-6},
       URL = {https://doi.org/10.1007/s00030-007-2047-6},
}

@book {MR0709590,
    AUTHOR = {Clarke, Frank H.},
     TITLE = {Optimization and nonsmooth analysis},
    SERIES = {Canadian Mathematical Society Series of Monographs and
              Advanced Texts},
      NOTE = {A Wiley-Interscience Publication},
 PUBLISHER = {John Wiley \& Sons, Inc., New York},
      YEAR = {1983},
     PAGES = {xiii+308},
      ISBN = {0-471-87504-X},
   MRCLASS = {49-02 (49A52 58C20 90C48)},
  MRNUMBER = {709590},
MRREVIEWER = {Jean-Baptiste\ Hiriart-Urruty},
}

@article {MR2150356,
    AUTHOR = {Arnaud, M.-C.},
     TITLE = {Convergence of the semi-group of {L}ax-{O}leinik: a geometric
              point of view},
   JOURNAL = {Nonlinearity},
  FJOURNAL = {Nonlinearity},
    VOLUME = {18},
      YEAR = {2005},
    NUMBER = {4},
     PAGES = {1835--1840},
      ISSN = {0951-7715,1361-6544},
   MRCLASS = {37J50 (70H20)},
  MRNUMBER = {2150356},
MRREVIEWER = {M\'ario\ Jorge Dias Carneiro},
       DOI = {10.1088/0951-7715/18/4/021},
       URL = {https://doi.org/10.1088/0951-7715/18/4/021},
}

@article {MR1650261,
    AUTHOR = {Fathi, Albert},
     TITLE = {Sur la convergence du semi-groupe de {L}ax-{O}leinik},
   JOURNAL = {C. R. Acad. Sci. Paris S\'{e}r. I Math.},
  FJOURNAL = {Comptes Rendus de l'Acad\'{e}mie des Sciences. S\'{e}rie I.
              Math\'{e}matique},
    VOLUME = {327},
      YEAR = {1998},
    NUMBER = {3},
     PAGES = {267--270},
      ISSN = {0764-4442},
   MRCLASS = {37J50 (37J40)},
  MRNUMBER = {1650261},
MRREVIEWER = {Luigi Chierchia},
       DOI = {10.1016/S0764-4442(98)80144-4},
       URL = {https://doi.org/10.1016/S0764-4442(98)80144-4},
}

@article {MR2041603,
    AUTHOR = {Bernard, Patrick and Roquejoffre, Jean-Michel},
     TITLE = {Convergence to time-periodic solutions in time-periodic
              {H}amilton-{J}acobi equations on the circle},
   JOURNAL = {Comm. Partial Differential Equations},
  FJOURNAL = {Communications in Partial Differential Equations},
    VOLUME = {29},
      YEAR = {2004},
    NUMBER = {3-4},
     PAGES = {457--469},
      ISSN = {0360-5302},
   MRCLASS = {35F20 (35B10 37J40 37J50)},
  MRNUMBER = {2041603},
MRREVIEWER = {Paola Loreti},
       DOI = {10.1081/PDE-120030404},
       URL = {https://doi.org/10.1081/PDE-120030404},
}

@misc{Representation,
      title={Representation of Global Viscosity Solutions for {T}onelli {H}amiltonians}, 
      author={Skander Charfi},
      year={2025},
      eprint={2503.16035},
      archivePrefix={arXiv},
      primaryClass={math.DS},
      url={https://arxiv.org/abs/2503.16035}, 
}

@misc{Recurrent,
      title={A Smooth, Recurrent, Non-Periodic Viscosity Solution of the {H}amilton-{J}acobi Equation}, 
      author={Skander Charfi},
      year={2025},
      eprint={2505.08700},
      archivePrefix={arXiv},
      primaryClass={math.DS},
      url={https://arxiv.org/abs/2505.08700}, 
}

@misc{Birkhoff,
      title={A Multidimensional {B}irkhoff Theorem for Recurrent {L}agrangian Submanifolds by a {T}onelli {H}amiltonian}, 
      author={Skander Charfi},
      year={2025},
      eprint={2507.14561},
      archivePrefix={arXiv},
      primaryClass={math.DS},
      url={https://arxiv.org/abs/2507.14561}, 
}

@article{HLS,
author = {Humilière, Vincent and Leclercq, Rémi and Seyfaddini, Sobhan},
year = {2015},
month = {03},
pages = {767-799},
title = {Coisotropic rigidity and ${C}^{0}$ -symplectic geometry},
volume = {164},
journal = {Duke Mathematical Journal},
doi = {10.1215/00127094-2881701}
}

@incollection {MR2596633,
    AUTHOR = {Fukaya, K. and Seidel, P. and Smith, I.},
     TITLE = {The symplectic geometry of cotangent bundles from a
              categorical viewpoint},
 BOOKTITLE = {Homological mirror symmetry},
    SERIES = {Lecture Notes in Phys.},
    VOLUME = {757},
     PAGES = {1--26},
 PUBLISHER = {Springer, Berlin},
      YEAR = {2009},
      ISBN = {978-3-540-86374-8},
   MRCLASS = {53D37 (18G40 53D35 53D40 57R17)},
  MRNUMBER = {2596633},
MRREVIEWER = {Timothy\ Perutz},
       DOI = {10.1007/978-3-540-68030-7\_1},
       URL = {https://doi.org/10.1007/978-3-540-68030-7_1},
}

@article {MR2947545,
    AUTHOR = {Abouzaid, Mohammed},
     TITLE = {Nearby {L}agrangians with vanishing {M}aslov class are
              homotopy equivalent},
   JOURNAL = {Invent. Math.},
  FJOURNAL = {Inventiones Mathematicae},
    VOLUME = {189},
      YEAR = {2012},
    NUMBER = {2},
     PAGES = {251--313},
      ISSN = {0020-9910,1432-1297},
   MRCLASS = {53D12 (53D37 53D40)},
  MRNUMBER = {2947545},
MRREVIEWER = {Michael\ J.\ Usher},
       DOI = {10.1007/s00222-011-0365-0},
       URL = {https://doi.org/10.1007/s00222-011-0365-0},
}

@article {MR2565051,
    AUTHOR = {Nadler, David},
     TITLE = {Microlocal branes are constructible sheaves},
   JOURNAL = {Selecta Math. (N.S.)},
  FJOURNAL = {Selecta Mathematica. New Series},
    VOLUME = {15},
      YEAR = {2009},
    NUMBER = {4},
     PAGES = {563--619},
      ISSN = {1022-1824,1420-9020},
   MRCLASS = {53D37 (32S60 53D40 57R56)},
  MRNUMBER = {2565051},
MRREVIEWER = {Richard\ P.\ Thomas},
       DOI = {10.1007/s00029-009-0008-0},
       URL = {https://doi.org/10.1007/s00029-009-0008-0},
}

@article {MR1094198,
    AUTHOR = {Chaperon, Marc},
     TITLE = {Lois de conservation et g\'eom\'etrie symplectique},
   JOURNAL = {C. R. Acad. Sci. Paris S\'er. I Math.},
  FJOURNAL = {Comptes Rendus de l'Acad\'emie des Sciences. S\'erie I.
              Math\'ematique},
    VOLUME = {312},
      YEAR = {1991},
    NUMBER = {4},
     PAGES = {345--348},
      ISSN = {0764-4442},
   MRCLASS = {58F05 (35D05 35F25 49S05)},
  MRNUMBER = {1094198},
MRREVIEWER = {Jean-Claude\ Sikorav},
}

@misc{guillermou2015,
      title={Quantization of conic {L}agrangian submanifolds of cotangent bundles}, 
      author={Stéphane Guillermou},
      year={2015},
      eprint={1212.5818},
      archivePrefix={arXiv},
      primaryClass={math.SG},
      url={https://arxiv.org/abs/1212.5818}, 
}

@book {MR3524783,
    AUTHOR = {Oh, Yong-Geun},
     TITLE = {Symplectic topology and {F}loer homology. {V}ol. 2},
    SERIES = {New Mathematical Monographs},
    VOLUME = {29},
      NOTE = {Floer homology and its applications},
 PUBLISHER = {Cambridge University Press, Cambridge},
      YEAR = {2015},
     PAGES = {xxiii+446},
      ISBN = {978-1-107-10967-4},
   MRCLASS = {53D40 (53Dxx 57R58)},
  MRNUMBER = {3524783},
MRREVIEWER = {Hansj\"org\ Geiges},
}

@article {MR1755825,
    AUTHOR = {Schwarz, Matthias},
     TITLE = {On the action spectrum for closed symplectically aspherical
              manifolds},
   JOURNAL = {Pacific J. Math.},
  FJOURNAL = {Pacific Journal of Mathematics},
    VOLUME = {193},
      YEAR = {2000},
    NUMBER = {2},
     PAGES = {419--461},
      ISSN = {0030-8730,1945-5844},
   MRCLASS = {53D40 (57R58)},
  MRNUMBER = {1755825},
MRREVIEWER = {David\ E.\ Hurtubise},
       DOI = {10.2140/pjm.2000.193.419},
       URL = {https://doi.org/10.2140/pjm.2000.193.419},
}

@incollection {MR2103018,
    AUTHOR = {Oh, Yong-Geun},
     TITLE = {Construction of spectral invariants of {H}amiltonian paths on
              closed symplectic manifolds},
 BOOKTITLE = {The breadth of symplectic and {P}oisson geometry},
    SERIES = {Progr. Math.},
    VOLUME = {232},
     PAGES = {525--570},
 PUBLISHER = {Birkh\"auser Boston, Boston, MA},
      YEAR = {2005},
      ISBN = {0-8176-3565-3},
   MRCLASS = {53D40 (53D45 58E30)},
  MRNUMBER = {2103018},
MRREVIEWER = {Darko\ Milinkovi\'c},
       DOI = {10.1007/0-8176-4419-9\_18},
       URL = {https://doi.org/10.1007/0-8176-4419-9_18},
}

@article {MR2383268,
    AUTHOR = {Leclercq, R\'emi},
     TITLE = {Spectral invariants in {L}agrangian {F}loer theory},
   JOURNAL = {J. Mod. Dyn.},
  FJOURNAL = {Journal of Modern Dynamics},
    VOLUME = {2},
      YEAR = {2008},
    NUMBER = {2},
     PAGES = {249--286},
      ISSN = {1930-5311,1930-532X},
   MRCLASS = {57R17 (53D40 55T10)},
  MRNUMBER = {2383268},
       DOI = {10.3934/jmd.2008.2.249},
       URL = {https://doi.org/10.3934/jmd.2008.2.249},
}

@article {MR3850107,
    AUTHOR = {Leclercq, R\'emi and Zapolsky, Frol},
     TITLE = {Spectral invariants for monotone {L}agrangians},
   JOURNAL = {J. Topol. Anal.},
  FJOURNAL = {Journal of Topology and Analysis},
    VOLUME = {10},
      YEAR = {2018},
    NUMBER = {3},
     PAGES = {627--700},
      ISSN = {1793-5253,1793-7167},
   MRCLASS = {57R17 (53D12 53D40)},
  MRNUMBER = {3850107},
MRREVIEWER = {Roman\ Golovko},
       DOI = {10.1142/S1793525318500267},
       URL = {https://doi.org/10.1142/S1793525318500267},
}

@article {MR1266111,
    AUTHOR = {Laudenbach, F. and Sikorav, J.-C.},
     TITLE = {Hamiltonian disjunction and limits of {L}agrangian
              submanifolds},
   JOURNAL = {Internat. Math. Res. Notices},
  FJOURNAL = {International Mathematics Research Notices},
      YEAR = {1994},
    NUMBER = {4},
     PAGES = {161 ff., approx. 8 pp.\},
      ISSN = {1073-7928,1687-0247},
   MRCLASS = {58F05 (53C15)},
  MRNUMBER = {1266111},
MRREVIEWER = {Wilhelm\ Klingenberg},
       DOI = {10.1155/S1073792894000176},
       URL = {https://doi.org/10.1155/S1073792894000176},
}

@book {MR728564,
    AUTHOR = {Herman, Michael-R.},
     TITLE = {Sur les courbes invariantes par les diff\'eomorphismes de
              l'anneau. {V}ol. 1},
    SERIES = {Ast\'erisque},
    VOLUME = {103-104},
      NOTE = {With an appendix by Albert Fathi,
              With an English summary},
 PUBLISHER = {Soci\'et\'e{} Math\'ematique de France, Paris},
      YEAR = {1983},
     PAGES = {i+221},
   MRCLASS = {58F05 (54H20 57R50 58-02 58F11)},
  MRNUMBER = {728564},
MRREVIEWER = {Helmut\ R\"ussmann},
}

@article{BSMF_1932__60__1_0,
     author = {Birkhoff, George D.},
     title = {Sur quelques courbes ferm\'ees remarquables},
     journal = {Bulletin de la Soci\'et\'e Math\'ematique de France},
     pages = {1--26},
     year = {1932},
     publisher = {Soci\'et\'e math\'ematique de France},
     volume = {60},
     doi = {10.24033/bsmf.1182},
     zbl = {0005.22002},
     language = {fr},
     url = {https://www.numdam.org/articles/10.24033/bsmf.1182/}
}

@article {MR1505024,
    AUTHOR = {Charpentier, M.},
     TITLE = {Sur quelques propri\'et\'es des courbes de {M}. {B}irkhoff},
   JOURNAL = {Bull. Soc. Math. France},
  FJOURNAL = {Bulletin de la Soci\'et\'e{} Math\'ematique de France},
    VOLUME = {62},
      YEAR = {1934},
     PAGES = {193--224},
      ISSN = {0037-9484},
   MRCLASS = {99-04},
  MRNUMBER = {1505024},
       URL = {http://www.numdam.org/item?id=BSMF_1934__62__193_0},
}

@article {MR951271,
    AUTHOR = {Le Calvez, P.},
     TITLE = {Propri\'et\'es des attracteurs de {B}irkhoff},
   JOURNAL = {Ergodic Theory Dynam. Systems},
  FJOURNAL = {Ergodic Theory and Dynamical Systems},
    VOLUME = {8},
      YEAR = {1988},
    NUMBER = {2},
     PAGES = {241--310},
      ISSN = {0143-3857,1469-4417},
   MRCLASS = {58F12},
  MRNUMBER = {951271},
MRREVIEWER = {Dietrich\ Flockerzi},
       DOI = {10.1017/S0143385700004442},
       URL = {https://doi.org/10.1017/S0143385700004442},
}

@article {MR5052827,
    AUTHOR = {Arnaud, Marie-Claude and Humili\`ere, Vincent and Viterbo,
              Claude},
     TITLE = {Higher dimensional {B}irkhoff attractors (with an appendix by
              {M}axime {Z}avidovique)},
   JOURNAL = {J. \'Ec. polytech. Math.},
  FJOURNAL = {Journal de l'\'Ecole polytechnique. Math\'ematiques},
    VOLUME = {13},
      YEAR = {2026},
     PAGES = {759--808},
      ISSN = {2429-7100,2270-518X},
   MRCLASS = {37C70 (35F21 37E40 53D12 57R17)},
  MRNUMBER = {5052827},
       DOI = {10.5802/jep.336},
       URL = {https://doi.org/10.5802/jep.336},
}

@misc{viterbo2019sheafquantizationlagrangiansfloer,
      title={Sheaf Quantization of {L}agrangians and {F}loer cohomology}, 
      author={Claude Viterbo},
      year={2019},
      eprint={1901.09440},
      archivePrefix={arXiv},
      primaryClass={math.SG},
      url={https://arxiv.org/abs/1901.09440}, 
}

@article {MR4683312,
    AUTHOR = {Asano, Tomohiro and Guillermou, St\'ephane and Humili\`ere,
              Vincent and Ike, Yuichi and Viterbo, Claude},
     TITLE = {The {$\gamma$}-support as a micro-support},
   JOURNAL = {C. R. Math. Acad. Sci. Paris},
  FJOURNAL = {Comptes Rendus Math\'ematique. Acad\'emie des Sciences. Paris},
    VOLUME = {361},
      YEAR = {2023},
     PAGES = {1333--1340},
      ISSN = {1631-073X,1778-3569},
   MRCLASS = {53D12},
  MRNUMBER = {4683312},
MRREVIEWER = {Andrea\ D'Agnolo},
       DOI = {10.5802/crmath.499},
       URL = {https://doi.org/10.5802/crmath.499},
}

@article {MR4768582,
    AUTHOR = {Guillermou, St\'ephane and Viterbo, Claude},
     TITLE = {The singular support of sheaves is {$\gamma $}-coisotropic},
   JOURNAL = {Geom. Funct. Anal.},
  FJOURNAL = {Geometric and Functional Analysis},
    VOLUME = {34},
      YEAR = {2024},
    NUMBER = {4},
     PAGES = {1052--1113},
      ISSN = {1016-443X,1420-8970},
   MRCLASS = {53D12 (18F20 37J99)},
  MRNUMBER = {4768582},
MRREVIEWER = {Andrea\ D'Agnolo},
       DOI = {10.1007/s00039-024-00682-x},
       URL = {https://doi.org/10.1007/s00039-024-00682-x},
}

@article {MR5021042,
    AUTHOR = {Asano, Tomohiro and Guillermou, St\'ephane and Ike, Yuichi and
              Viterbo, Claude},
     TITLE = {Regular {L}agrangians are smooth {L}agrangians},
   JOURNAL = {J. Math. Soc. Japan},
  FJOURNAL = {Journal of the Mathematical Society of Japan},
    VOLUME = {78},
      YEAR = {2026},
    NUMBER = {1},
     PAGES = {37--54},
      ISSN = {0025-5645,1881-1167},
   MRCLASS = {53D12 (35A27 37J11)},
  MRNUMBER = {5021042},
       DOI = {10.2969/jmsj/93799379},
       URL = {https://doi.org/10.2969/jmsj/93799379},
}

@book {MR1299726,
    AUTHOR = {Kashiwara, Masaki and Schapira, Pierre},
     TITLE = {Sheaves on manifolds},
    SERIES = {Grundlehren der mathematischen Wissenschaften [Fundamental
              Principles of Mathematical Sciences]},
    VOLUME = {292},
      NOTE = {With a chapter in French by Christian Houzel,
              Corrected reprint of the 1990 original},
 PUBLISHER = {Springer-Verlag, Berlin},
      YEAR = {1994},
     PAGES = {x+512},
      ISBN = {3-540-51861-4},
   MRCLASS = {58G07 (18F20 32C38 35A27)},
  MRNUMBER = {1299726},
}

@incollection {MR1736217,
    AUTHOR = {Po{\'z}niak, Marcin},
     TITLE = {Floer homology, {N}ovikov rings and clean intersections},
 BOOKTITLE = {Northern {C}alifornia {S}ymplectic {G}eometry {S}eminar},
    SERIES = {Amer. Math. Soc. Transl. Ser. 2},
    VOLUME = {196},
     PAGES = {119--181},
 PUBLISHER = {Amer. Math. Soc., Providence, RI},
      YEAR = {1999},
      ISBN = {0-8218-2075-3},
   MRCLASS = {53D40 (57R58)},
  MRNUMBER = {1736217},
MRREVIEWER = {David\ E.\ Hurtubise},
       DOI = {10.1090/trans2/196/08},
       URL = {https://doi.org/10.1090/trans2/196/08},
}

@book {MR4211770,
    AUTHOR = {Zhang, Jun},
     TITLE = {Quantitative {T}amarkin theory},
    SERIES = {CRM Short Courses},
 PUBLISHER = {Springer, Cham},
      YEAR = {[2020] \copyright 2020},
     PAGES = {x+146},
      ISBN = {978-3-030-37887-5; 978-3-030-37888-2},
   MRCLASS = {53Dxx (18F20 18G80 35A25 55N31 55U35)},
  MRNUMBER = {4211770},
MRREVIEWER = {Andrea\ D'Agnolo},
       DOI = {10.1007/978-3-030-37888-2},
       URL = {https://doi.org/10.1007/978-3-030-37888-2},
}

@misc{nakamura2020c0limitslegendriansubmanifolds,
      title={${C}^0$-limits of {L}egendrian Submanifolds}, 
      author={Lukas Nakamura},
      year={2020},
      eprint={2008.00924},
      archivePrefix={arXiv},
      primaryClass={math.SG},
      url={https://arxiv.org/abs/2008.00924}, 
}

@misc{asano2025c0rigiditylegendrianscoisotropicssheaf,
      title={${C}^0$-rigidity of {L}egendrians and coisotropics via sheaf quantization}, 
      author={Tomohiro Asano and Yuichi Ike and Christopher Kuo and Wenyuan Li},
      year={2025},
      eprint={2510.01746},
      archivePrefix={arXiv},
      primaryClass={math.SG},
      url={https://arxiv.org/abs/2510.01746}, 
}

@article {MR4876451,
    AUTHOR = {Dimitroglou Rizell, Georgios and Sullivan, Michael G.},
     TITLE = {{$C^0$}-limits of {L}egendrians and positive loops},
   JOURNAL = {Compos. Math.},
  FJOURNAL = {Compositio Mathematica},
    VOLUME = {160},
      YEAR = {2024},
    NUMBER = {12},
     PAGES = {2904--2915},
      ISSN = {0010-437X,1570-5846},
   MRCLASS = {53D10 (53D12)},
  MRNUMBER = {4876451},
MRREVIEWER = {Stanis\l aw\ Pawe\l\ Kasperczuk},
       DOI = {10.1112/S0010437X24007474},
       URL = {https://doi.org/10.1112/S0010437X24007474},
}

@article{viterbo2026supportshumilierecompletiongammacoisotropic, title={On the supports in the Humilière completion and $\gamma$-coisotropic sets}, volume={162}, DOI={10.1017/S0010437X26103212}, number={7}, journal={Compositio Mathematica}, author={Viterbo, Claude}, year={2026}, pages={1439–1486}}

@article {MR4334195,
    AUTHOR = {Membrez, Cedric and Opshtein, Emmanuel},
     TITLE = {{$\mathcal C^0$}-rigidity of {L}agrangian submanifolds and
              punctured holomorphic disks in the cotangent bundle},
   JOURNAL = {Compos. Math.},
  FJOURNAL = {Compositio Mathematica},
    VOLUME = {157},
      YEAR = {2021},
    NUMBER = {11},
     PAGES = {2433--2493},
      ISSN = {0010-437X,1570-5846},
   MRCLASS = {53D05 (53D12)},
  MRNUMBER = {4334195},
MRREVIEWER = {Chun-Gen\ Liu},
       DOI = {10.1112/S0010437X21007570},
       URL = {https://doi.org/10.1112/S0010437X21007570},
}

@article {MR3619673,
    AUTHOR = {Hungerb\"uhler, Norbert and Mettler, Thomas and Wasem, Micha},
     TITLE = {Convex integration and {L}egendrian approximation of curves},
   JOURNAL = {J. Convex Anal.},
  FJOURNAL = {Journal of Convex Analysis},
    VOLUME = {24},
      YEAR = {2017},
    NUMBER = {1},
     PAGES = {309--317},
      ISSN = {0944-6532,2363-6394},
   MRCLASS = {53D10},
  MRNUMBER = {3619673},
MRREVIEWER = {Mahuya\ Datta},
}

@book {MR864505,
    AUTHOR = {Gromov, Mikhael},
     TITLE = {Partial differential relations},
    SERIES = {Ergebnisse der Mathematik und ihrer Grenzgebiete (3) [Results
              in Mathematics and Related Areas (3)]},
    VOLUME = {9},
 PUBLISHER = {Springer-Verlag, Berlin},
      YEAR = {1986},
     PAGES = {x+363},
      ISBN = {3-540-12177-3},
   MRCLASS = {58G99 (35A99 35B99 53C42 58-02)},
  MRNUMBER = {864505},
MRREVIEWER = {Hung-Hsi\ Wu},
       DOI = {10.1007/978-3-662-02267-2},
       URL = {https://doi.org/10.1007/978-3-662-02267-2},
}

@book {MR3024860,
    AUTHOR = {Spring, David},
     TITLE = {Convex integration theory},
    SERIES = {Modern Birkh\"auser Classics},
      NOTE = {Solutions to the $h$-principle in geometry and topology,
              Reprint of the 1998 edition [MR1488424]},
 PUBLISHER = {Birkh\"auser/Springer Basel AG, Basel},
      YEAR = {2010},
     PAGES = {viii+213},
      ISBN = {978-3-0348-0059-4; 978-3-0348-0060-0},
   MRCLASS = {01A75},
  MRNUMBER = {3024860},
}

@article {MR4381219,
    AUTHOR = {Theilli\`ere, M\'elanie},
     TITLE = {Convex integration theory without integration},
   JOURNAL = {Math. Z.},
  FJOURNAL = {Mathematische Zeitschrift},
    VOLUME = {300},
      YEAR = {2022},
    NUMBER = {3},
     PAGES = {2737--2770},
      ISSN = {0025-5874,1432-1823},
   MRCLASS = {53C42 (53C21)},
  MRNUMBER = {4381219},
MRREVIEWER = {Mahuya\ Datta},
       DOI = {10.1007/s00209-021-02785-9},
       URL = {https://doi.org/10.1007/s00209-021-02785-9},
}

@book {MR2397738,
    AUTHOR = {Geiges, Hansj\"org},
     TITLE = {An introduction to contact topology},
    SERIES = {Cambridge Studies in Advanced Mathematics},
    VOLUME = {109},
 PUBLISHER = {Cambridge University Press, Cambridge},
      YEAR = {2008},
     PAGES = {xvi+440},
      ISBN = {978-0-521-86585-2},
   MRCLASS = {57R17 (53D35)},
  MRNUMBER = {2397738},
MRREVIEWER = {John\ B.\ Etnyre},
       DOI = {10.1017/CBO9780511611438},
       URL = {https://doi.org/10.1017/CBO9780511611438},
}

@incollection {MR2179261,
    AUTHOR = {Etnyre, John B.},
     TITLE = {Legendrian and transversal knots},
 BOOKTITLE = {Handbook of knot theory},
     PAGES = {105--185},
 PUBLISHER = {Elsevier B. V., Amsterdam},
      YEAR = {2005},
      ISBN = {0-444-51452-X},
   MRCLASS = {57R17 (53D35 57M25 57M27)},
  MRNUMBER = {2179261},
MRREVIEWER = {Lenhard\ L.\ Ng},
       DOI = {10.1016/B978-044451452-3/50004-6},
       URL = {https://doi.org/10.1016/B978-044451452-3/50004-6},
}

@article {MR2231385,
    AUTHOR = {Czarnecki, Marc-Olivier and Rifford, Ludovic},
     TITLE = {Approximation and regularization of {L}ipschitz functions:
              convergence of the gradients},
   JOURNAL = {Trans. Amer. Math. Soc.},
  FJOURNAL = {Transactions of the American Mathematical Society},
    VOLUME = {358},
      YEAR = {2006},
    NUMBER = {10},
     PAGES = {4467--4520},
      ISSN = {0002-9947,1088-6850},
   MRCLASS = {49J45 (49J52)},
  MRNUMBER = {2231385},
MRREVIEWER = {Adam\ B.\ Levy},
       DOI = {10.1090/S0002-9947-06-04103-1},
       URL = {https://doi.org/10.1090/S0002-9947-06-04103-1},
}

@article {MR3070514,
    AUTHOR = {Kragh, Thomas},
     TITLE = {Parametrized ring-spectra and the nearby {L}agrangian
              conjecture},
      NOTE = {With an appendix by Mohammed Abouzaid},
   JOURNAL = {Geom. Topol.},
  FJOURNAL = {Geometry \& Topology},
    VOLUME = {17},
      YEAR = {2013},
    NUMBER = {2},
     PAGES = {639--731},
      ISSN = {1465-3060,1364-0380},
   MRCLASS = {53D12 (53D40 55P42 55T10)},
  MRNUMBER = {3070514},
MRREVIEWER = {Michael\ J.\ Usher},
       DOI = {10.2140/gt.2013.17.639},
       URL = {https://doi.org/10.2140/gt.2013.17.639},
}

@article {MR3849284,
    AUTHOR = {Abouzaid, Mohammed and Kragh, Thomas},
     TITLE = {Simple homotopy equivalence of nearby {L}agrangians},
   JOURNAL = {Acta Math.},
  FJOURNAL = {Acta Mathematica},
    VOLUME = {220},
      YEAR = {2018},
    NUMBER = {2},
     PAGES = {207--237},
      ISSN = {0001-5962,1871-2509},
   MRCLASS = {53D12 (53D40 57R17 57R58)},
  MRNUMBER = {3849284},
MRREVIEWER = {Darko\ Milinkovi\'c},
       DOI = {10.4310/ACTA.2018.v220.n2.a1},
       URL = {https://doi.org/10.4310/ACTA.2018.v220.n2.a1},
}

@article {MR2415347,
    AUTHOR = {Humili\`ere, Vincent},
     TITLE = {On some completions of the space of {H}amiltonian maps},
   JOURNAL = {Bull. Soc. Math. France},
  FJOURNAL = {Bulletin de la Soci\'et\'e{} Math\'ematique de France},
    VOLUME = {136},
      YEAR = {2008},
    NUMBER = {3},
     PAGES = {373--404},
      ISSN = {0037-9484,2102-622X},
   MRCLASS = {53D35 (37J05 53D12 53D40)},
  MRNUMBER = {2415347},
MRREVIEWER = {Michael\ J.\ Usher},
       DOI = {10.24033/bsmf.2560},
       URL = {https://doi.org/10.24033/bsmf.2560},
}

@article {MR4719181,
    AUTHOR = {Chass\'e, Jean-Philippe},
     TITLE = {Hausdorff limits of submanifolds of symplectic and contact
              manifolds},
   JOURNAL = {Differential Geom. Appl.},
  FJOURNAL = {Differential Geometry and its Applications},
    VOLUME = {94},
      YEAR = {2024},
     PAGES = {Paper No. 102123, 22},
      ISSN = {0926-2245,1872-6984},
   MRCLASS = {53D12 (53C23 53D05)},
  MRNUMBER = {4719181},
       DOI = {10.1016/j.difgeo.2024.102123},
       URL = {https://doi.org/10.1016/j.difgeo.2024.102123},
}

@article {MR4263685,
    AUTHOR = {Buhovsky, Lev and Humili\`ere, Vincent and Seyfaddini, Sobhan},
     TITLE = {The action spectrum and {$C^0$} symplectic topology},
   JOURNAL = {Math. Ann.},
  FJOURNAL = {Mathematische Annalen},
    VOLUME = {380},
      YEAR = {2021},
    NUMBER = {1-2},
     PAGES = {293--316},
      ISSN = {0025-5831,1432-1807},
   MRCLASS = {53D05 (37J06)},
  MRNUMBER = {4263685},
MRREVIEWER = {St\'ephane\ Tchuiaga},
       DOI = {10.1007/s00208-021-02183-w},
       URL = {https://doi.org/10.1007/s00208-021-02183-w},
}

@article {MR1043418,
    AUTHOR = {Polterovich, Leonid},
     TITLE = {The second {B}irkhoff theorem for optical {H}amiltonian
              systems},
   JOURNAL = {Proc. Amer. Math. Soc.},
  FJOURNAL = {Proceedings of the American Mathematical Society},
    VOLUME = {113},
      YEAR = {1991},
    NUMBER = {2},
     PAGES = {513--516},
      ISSN = {0002-9939,1088-6826},
   MRCLASS = {58F05 (53C22 58F17)},
  MRNUMBER = {1043418},
MRREVIEWER = {Jean-Claude\ Sikorav},
       DOI = {10.2307/2048537},
       URL = {https://doi.org/10.2307/2048537},
}

\end{document}